\newcolumntype{C}[1]{>{\centering\let\newline\\\arraybackslash\hspace{0pt}}m{#1}}
\newtheorem{thm}{Theorem}[section]
\newtheorem{cor}[thm]{Corollary}
\newtheorem{lemma}[thm]{Lemma}
\newtheorem{prop}[thm]{Proposition}
\newtheorem{defn}[thm]{Definition}
\newtheorem{rmk}[thm]{Remark}
\newtheorem{eg}[thm]{Example}
\numberwithin{equation}{section}
\newcommand{\wh}{\widehat}
\renewcommand{\epsilon}{\varepsilon}
\renewcommand{\top}{\mathrm{top}}
\renewcommand{\bot}{\mathrm{bot}}
\newcommand{\intr}{\mathrm{int}}
\newcommand{\brloc}{\mathrm{brloc}}
\newcommand{\boa}{\boldsymbol{\alpha}}
\newcommand{\bob}{\boldsymbol{\beta}}
\newcommand{\x}{\mathbf{x}}
\newcommand{\y}{\mathbf{y}}
\newcommand{\z}{\mathbf{z}}
\newcommand{\s}{\mathfrak{s}}
\newcommand{\xt}{\mathbf{x}^\text{top}}
\newcommand{\xb}{\mathbf{x}^\text{bot}}
\newcommand{\sym}{\text{Sym}}
\newcommand{\Z}{\mathbb{Z}}
\newcommand{\R}{\mathbb{R}}
\newcommand{\T}{\mathbb{T}}
\newcommand{\C}{\mathbb{C}}
\newcommand{\Fo}{\mathcal{F}}
\newcommand{\even}{\mathrm{even}}
\newcommand{\odd}{\mathrm{odd}}
\DeclareMathOperator{\Fit}{\mathrm{Fit}}
\DeclareMathOperator{\coker}{\mathrm{coker}}
\DeclareMathOperator{\rank}{\mathrm{rank}}
\DeclareMathOperator{\spn}{\mathrm{span}}
\DeclareMathOperator{\sgn}{\mathrm{sign}}
\DeclareMathOperator{\tr}{\mathrm{tr}}
\DeclareMathOperator{\len}{\mathrm{length}}
\DeclareMathOperator{\spinc}{\text{spin}^\text{c}}
\DeclareMathOperator{\Spinc}{\text{Spin}^\text{c}}
\begin{document}

\title[Heegaard Floer theory and pseudo-Anosov flows I]{Heegaard Floer theory and pseudo-Anosov flows I: Generators and categorification \\ of the zeta function}

\author{Antonio Alfieri}
\address{department of mathematics\\
University of Georgia at Athens \\
1023 D. W. Brooks Drive, Athens, GA 30605}
\email{alfieriantonio90@gmail.com}

\author{Chi Cheuk Tsang}
\address{Département de mathématiques \\
Université du Québec à Montréal \\
201 President Kennedy Avenue \\
Montréal, QC, Canada H2X 3Y7}
\email{tsang.chi\_cheuk@uqam.ca}

\begin{abstract} 
We bring to light a new connection between dynamics and Heegaard Floer homology. On a closed 3-manifold $Y$ we consider a pseudo-Anosov flow $\phi$  with no perfect fits with respect to its singularity locus $L \subset Y$, or perhaps a larger collection of closed orbits.
Using work of Agol and Guéritaud on veering branched surfaces we produce a chain complex computing the link Floer homology of $L$ in the framing specified by the degeneracy curves of the flow.
Using work of Landry, Minsky, and Taylor we show that the generators of the chain complex correspond to certain closed multi-orbits of $\phi$.
We prove that two canonical generators $\mathbf{x}^\mathrm{top}$ and $\mathbf{x}^\mathrm{bot}$ determine non-trivial homology classes located in the $\spinc$-grading of the flow, and its opposite.
Finally, we observe that our specific model of the chain complex for link Floer homology naturally supports a grading with dynamical significance. This grading, a modification of the regular Maslov grading, is shown to categorify a suitable normalization of the zeta function associated to $\phi$.
\end{abstract}

\maketitle

\setcounter{tocdepth}{3}
\makeatletter
\def\l@subsection{\@tocline{2}{0pt}{2.5pc}{5pc}{}}
\def\l@subsubsection{\@tocline{2}{0pt}{5pc}{7.5pc}{}}
\makeatother

\section{Introduction}

\thispagestyle{empty}

This paper concerns the study of certain three-dimensional dynamical systems with hyperbolic dynamics called pseudo-Anosov flows. 
A flow $\phi$ on a closed oriented 3-manifold $Y$ is \textit{pseudo-Anosov} if there exists a pair of singular two-dimensional foliations $\Fo^s$ and $\Fo^u$, called the \emph{stable} and \emph{unstable foliations} respectively, such that:
\begin{itemize}
    \item The singularity locus of $\Fo^s$ and $\Fo^u$ coincide and is equal to a finite union of closed orbits of $\phi$, called the \emph{singular orbits} of the flow. 
    Close to these orbits the two foliations look as in \Cref{fig:paflow} left.
    \item Away from the singular orbits, the leaves of $\Fo^s$ and $\Fo^u$ intersect transversely in the orbits of $\phi$. In particular each leaf of $\Fo^s$ and of $\Fo^u$ is foliated by orbits of $\phi$.
    \item The orbits in each leaf of $\Fo^s$ are forward asymptotic, and the orbits in each leaf of $\Fo^u$ are backward asymptotic.
\end{itemize}
 A flow is called \emph{Anosov} if the stable and unstable foliations are non-singular.

Basic examples of pseudo Anosov flows are the geodesics flow on the unit cotangent bundle of a closed hyperbolic surface $S$, and the suspension flow of a pseudo Anosov map $f:S\to S$. The first example that was not the geodesics flow or a suspension flow was described by Handel and Thurston \cite{HT80}. Further examples can be produced by performing Goodman-Fried surgery on a pre-existing flow \cite{Goo83, Fri83}. See also work of Salmoiraghi and the second author \cite{Sal21,Tsa24} for some more recent results.

Despite the amount of work that has been done in the study of the topology and dynamics of pseudo Anosov flows, there is still a lot we do not know about these objects. 
It is expected that each closed 3-manifold admit only finitely many inequivalent (pseudo) Anosov flows, and it was conjectured by Ghys that all Anosov flows are obtained by a finite sequence of Goodman-Fried surgeries from the suspension flow of an Anosov map $f: T^2 \to T^2$. 

Furthermore, if the stable and unstable foliations $\mathcal{F}^s$ and $\mathcal{F}^u$ of a pseudo-Anosov flow are coorientable, they give rise to coorientable taut foliations. Thus the study of these flows is related to the $L$-space conjecture. 
Indeed, in \cite{CD03} Calegari and Dunfield showed that pseudo-Anosov flows gives rise to  circular orderings on the fundamental group $\pi_1(Y)$ of the underlying manifold, and in \cite{BGH24} Boyer, Gordon and Hu used an argument based on pseudo-Anosov flows to confirm the $L$-space conjecture in the case of certain 3-manifolds arising as cyclic branched covers over links. 

It would be interesting to know which 3-manifolds  admitting  taut foliations have an Anosov flow. 
For example we know that all surgeries on the figure-eight knot  admit taut foliations \cite{Rob01}, but it is shown in \cite{Yu23} that only integer surgeries admit Anosov flows. More interestingly, large surgeries on the pretzel knot $P(-2, 3, 7)$ give rise to a Anosov flows despite $P(-2, 3, 7)$ being an $L$-space knot. 
And so it is meaningful to ask for a classification of $L$-spaces admitting Anosov flows, even though these would have non-orientable foliations.

When allowing singularities the situation becomes more general and even more intriguing. Empirically, pseudo-Anosov flows are very prevalent among 3-manifolds. Every irreducible atoroidal 3-manifold with $b_1 \geq 1$ has a pseudo-Anosov flow \cite{Mos96}, and it is a conjecture of Thurston that every irreducible atoroidal 3-manifold admitting a cooriented taut foliation admits also a pseudo-Anosov flow.

Finally, there is an unexplored line of investigation regarding the possible isotopy classes of closed orbits. 
We know, for example, that if a knot $K \subset Y$ is a closed orbit of a pseudo-Anosov flow with orientable stable and unstable foliations, then $K$ is \emph{persistently foliar}, that is, for all but one boundary slope there is a co-oriented taut foliation meeting the boundary of the knot complement transversely in a foliation by curves of that slope. In the Heegaard Floer homology jargon one would say: $L$-space knots are not closed orbits of pseudo-Anosov flows with orientable foliations. 

Similarly, one can ask what topological restrictions a link $L\subset Y$ must satisfy in order for it to be the singularity locus of a pseudo-Anosov flow.
One of the main motivations for our work is to approach this question from the perspective of link Floer homology.

\subsection{Dynamics and Floer theory}

Since Poincaré, a recurring theme in topology is that the dynamics of a flow $\phi : \R \times Y \to Y$ carries topological information about the manifold $Y$.
This is a pervasive idea in symplectic and contact topology:

\begin{itemize}
\item When $\phi$ is a Hamiltonian flow on a closed sympletic manifold $M$, Floer defined a chian comlex  $HF(M, \phi)$ generated by the closed orbits of $\phi$. This is used to show that the number of closed orbits of an Hamiltonian system is bounded from below by the sum of the betti numbers (Arnold conjecture).
\item When $\phi$ is a (nondegenerate) Reeb flow on a closed 3-manifold, Hutchings \cite{Hut14} defined the embedded contact homology $ECH(Y,\phi)$ of $\phi$:
The chain complex $ECC(Y,\phi)$ is generated by certain multi-sets of closed orbits, and the differential is computed by counting pseudo-holomorphic curves. The homology of this chain complex $ECH(Y,\phi)$ turns out to be a topological invariant of the 3-manifold $Y$ \cite{Tau10}. 
\item Similar ideas where pioneered by Eliashberg, Givental and Hofer for Reeb flows in arbitrary dimensions in a set up known as Symplectic Field Theory \cite{EGH00}. 
\end{itemize}

In recent years, work has been put into adapting these methods to the case when $\phi$ is an Anosov flow. 
Recently Cieliebak-Lazarev-Massoni-Moreno \cite{CLMM22}, using the bi-contact machinery developed by Hozoori \cite{Hoz24}, defined an $A_\infty$ category whose objects are the closed orbits.

Furthermore, we have been informed by Zung \cite{Zun25} that is possible to define  a chain complex $CA(\phi)$ that is generated by multi-sets of closed orbits of an Anosov flow, and whose differential is computed by counting Fried pants. The homology of this chain complex $HA(\phi)$ is in general an infinitely generated $\mathbb{F}[\omega]$-module.

In this paper we describe a new approach to this program based on   Heegaard Floer homology, developed by Ozsv\' ath and Szab\' o in 2001.
This has a few advantages. Firstly, it works in general for \textit{pseudo-Anosov flows}, which are much more common than Anosov flows. 
Secondly, the Heegaard Floer homology package, via its theory of domains, allows us to use \emph{combinatorial methods}. This is a huge advantage which we will build on the second paper of the series \cite{AT25b}. 
Thirdly, the \emph{topological meaning} of our  invariant is known \cite{Juh06} \cite{OS08}.
This is in contrast to the aforementioned attempts for Anosov flows, where the output of the theory, at the moment of writing, is not known to be topological invariants of the underlying 3-manifold, or of the singularity link.

\subsection{Summary of the paper} 

We start with a pseudo-Anosov flow $\phi$ on a closed oriented 3-manifold $Y$.
We shall denote with $L\subset Y$ the singularity locus of the pseudo-Anosov flow $\phi$. 

Given a finite collection $\mathcal{C}$ of closed orbits, one defines the \textit{blown-up flow} $\phi^\sharp$ on the link complement $Y^\sharp = Y \backslash \nu(\mathcal{C})$. The restriction of $\phi^\sharp$ to the interior of $M$ can be identified with the restriction of $\phi$ to $Y \backslash \mathcal{C}$.  Meanwhile, each boundary component $T_\gamma$ of $M$ corresponds to a closed orbit $\gamma \in \mathcal{C}$. 
The restriction of $\phi^\sharp$ to $T_\gamma$ contains attracting closed orbit along the circles of intersection between $T_\gamma$ and the leaf of $\Fo^u$ that contains $\gamma$. We refer to each attracting closed orbit as a \textit{degeneracy curve}.

We say that $\phi$ has \textit{no perfect fits} relative to $\mathcal{C}$ if $\phi^\sharp$ does not admit two closed orbits $\gamma_1$ and $\gamma_2$ such that $\gamma_1$ is homotopic to $\gamma_2^{-1}$ in $Y^\sharp$.
We shall work under the technical hypothesis that: (1) $\phi$ has no perfects fits relative  to the singularity link, or (2) more generally, $\phi$ has no perfect fits relative to some possibly larger finite collection of closed orbits $\mathcal{C} \supset L$. This second  hypothesis is less restrictive and is automatically satisfied if the pseudo-Anosov flow is \emph{transitive}, that is, it has a dense orbit.  

In this situation we consider the link Floer homology of $\mathcal{C}\subset Y$ framed by the degeneracy locus of the blown-up flow $\phi^\sharp$. That is we shall consider the total space $Y^\sharp = Y \backslash \nu(\mathcal{C})$ of $\phi^\sharp$ as a sutured manifold with two sutures for each degeneracy curve of $\phi^\sharp$. This has an associated sutured Floer homology group $SFH(Y^\sharp)$ in the sense of Juh\' asz \cite{Juh06} that can be interpreted as a framed version of the link Floer homology of $(Y, \mathcal{C})$. (To compute the regular link Floer homology as in \cite{OS08} one would have to place as sutures two meridians on each boundary component of $Y \backslash \nu(\mathcal{C})$.)

\begin{thm} \label{thm:introinformal}
Let $\phi$ be a pseudo-Anosov flow on a closed oriented 3-manifold $Y$.
Let $\mathcal{C}$ be a finite collection of closed orbits relative to which $\phi$ has no perfect fits.
Then there exists a chain complex $SFC(\phi,\mathcal{C})$ whose generators correspond to certain closed multi-orbits of $\phi^\sharp$ and whose homology equals the sutured Heegaard Floer homology $SFH(Y^\sharp)$ of the total space of the blown-up flow.
\end{thm}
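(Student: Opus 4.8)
The plan is to build the chain complex $SFC(\phi,\mathcal{C})$ from the veering triangulation / veering branched surface canonically associated to $\phi^\sharp$, and then identify its generators with closed multi-orbits using the Landry--Minsky--Taylor dictionary. First I would invoke the work of Agol and Gu\'eritaud (as extended by Landry--Minsky--Taylor): since $\phi$ has no perfect fits relative to $\mathcal{C}$, the blown-up flow $\phi^\sharp$ on $Y^\sharp = Y\setminus\nu(\mathcal{C})$ gives rise to a \emph{veering triangulation} $\tau$ of the cusped manifold, or dually a veering branched surface $\mathcal{B}$ carrying the stable/unstable laminations. The key structural input is that $\tau$ comes with a combinatorial flow-graph (the ``flow graph'' or ``stable branched surface'' picture) whose cycles encode closed orbits of $\phi^\sharp$. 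I would then use the sutured manifold $(Y^\sharp, \Gamma)$ with two sutures per degeneracy curve, oriented as the degeneracy locus prescribes, and extract from $\tau$ a sutured Heegaard diagram $\mathcal{H} = (\Sigma, \boldsymbol{\alpha}, \boldsymbol{\beta})$ — the standard recipe producing a Heegaard diagram from a (layered, or more generally veering) ideal triangulation, placing $\alpha$- and $\beta$-curves one per tetrahedron and basepoints dictated by the cusps/sutures. Juh\'asz's theorem then gives $H_*(SFC(\mathcal{H})) \cong SFH(Y^\sharp)$, so it remains only to interpret the generators.

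The heart of the argument is the identification of generators $\x\in\mathbb{T}_{\boa}\cap\mathbb{T}_{\bob}$ with closed multi-orbits of $\phi^\sharp$. A generator is a tuple of intersection points, one on each $\alpha$-curve and each $\beta$-curve; in the triangulation picture this is exactly a choice, for each tetrahedron, of one of finitely many ``states,'' subject to the matching condition that it defines a perfect matching between the two families of curves. I would show, following the combinatorics of the veering triangulation, that such a matching is equivalent to a choice of cycle in the flow graph of $\tau$ — i.e. a $\mathbb{Z}_{\geq 0}$-weighting of the edges of the flow graph that is balanced at every vertex — which by Landry--Minsky--Taylor corresponds to a closed multi-orbit of $\phi^\sharp$ (possibly with multiplicity, possibly disconnected, hence ``multi''). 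The $\spinc$-structures of the sutured manifold will correspond to the homology classes of these multi-orbits in $H_1(Y^\sharp;\mathbb{Z})$, which is the bookkeeping that makes the correspondence respect the $\spinc$-decomposition of $SFH$. The two distinguished generators $\xt$ and $\xb$ will come from the two ``extremal'' coherent choices of state (the all-top and all-bottom states of the veering tetrahedra), which geometrically pick out the degeneracy-type multi-orbits on the cusps.

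I expect the main obstacle to be the bookkeeping that makes the Heegaard diagram honest and the correspondence exact: one must choose the diagram so that (i) it is an admissible sutured diagram in Juh\'asz's sense (requiring a suitable winding/stabilization to kill periodic domains, or an argument that the veering structure already forces admissibility in each $\spinc$-class), (ii) the placement of basepoints relative to the two sutures on each degeneracy curve reproduces the framed link Floer homology rather than some other sutured invariant, and (iii) the combinatorial bijection between generators and flow-graph cycles is genuinely a bijection and not merely a surjection with multiplicities — this is where the no-perfect-fits hypothesis does real work, as it guarantees the flow graph has no ``canceling'' pairs of cycles and that every balanced weighting is realized by an embedded multi-orbit. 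Verifying (iii) carefully, by tracing through the local model of a veering tetrahedron and its contribution to $\boa\cap\bob$, is the technical crux; the rest is assembling known machinery.
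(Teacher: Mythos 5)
Your overall strategy is the paper's: pass to the veering branched surface/triangulation that Agol--Gu\'eritaud associate to $(\phi,\mathcal{C})$, build a canonical sutured Heegaard diagram from it, invoke Juh\'asz, and use Landry--Minsky--Taylor to turn generators into closed multi-orbits. But the step you call the ``technical crux'' --- the combinatorial identification of generators --- is wrong as stated. You identify generators with balanced $\mathbb{Z}_{\geq 0}$-weightings of the \emph{flow graph} of $\tau$. That cannot be the right model: balanced nonnegative weightings form an infinite set (any multiple of a cycle is again one), while the chain complex has finitely many generators; and the flow graph is a different object from the graph that actually governs the diagram. In the paper the Heegaard surface is the boundary of a neighborhood of the \emph{dual graph} $G$ (the branch locus), with one $\alpha$-curve per triple point and one $\beta$-curve per sector; since every sector of a veering branched surface is a diamond with exactly four corners (\Cref{prop:vbssector}) and the number of sectors equals the number of triple points, a Heegaard state is precisely an assignment of a corner to each sector hitting every triple point once (\Cref{prop:generators}), and these biject with \emph{embedded} multi-loops of the \emph{augmented} dual graph $G_+$ (\Cref{prop:statesasembloops}) --- weights $0$ or $1$, each vertex visited at most once. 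The passage to closed multi-orbits of $\phi^\sharp$ then goes through sweep-equivalence/strumming of loops of $G_+$ into $G$ and the LMT correspondence (\Cref{prop:looporbitcorr}). The no-perfect-fits hypothesis does its work in producing the veering branched surface in the first place (via maximal rectangles in the orbit space), not by ruling out ``canceling pairs of cycles'' in the flow graph.

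The second genuine issue is admissibility, which you flag but leave unresolved in a way that would break the theorem. You cannot wind or stabilize to achieve admissibility: that would change the diagram and destroy the asserted correspondence between generators and multi-orbits, which is part of the statement. The paper instead proves that the canonical diagram is \emph{automatically} admissible (\Cref{prop:admissibility}): every $\alpha$- and $\beta$-curve contains a coordinate of the top state $\xt$, at which two of the four adjacent elementary domains carry basepoints; hence an effective domain with $n_\z=0$ is embedded (\Cref{prop:effdomainemb}) and cannot contain a full $\alpha$- or $\beta$-curve (\Cref{lemma:nofullalphabetacurve}), so no effective periodic domain exists. Without an argument of this kind --- which requires the local left/right structure of the diagram near the branch loops --- your proposal establishes at best that \emph{some} diagram computes $SFH(Y^\sharp)$, not that the specific complex whose generators are multi-orbits does.
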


Our starting point to construct the model chain complex $SFC(\phi,\mathcal{C})$ is the work of Agol-Guéritaud and Schleimer-Segerman \cite{SS19} putting in one-to-one correspondence pairs $(\phi, \mathcal{C})$ as in \Cref{thm:introinformal} and veering branched surfaces (see \Cref{defn:vbs}).  
Given a veering branched surface $B$, we construct in \Cref{sec:vbstosfh} a canonical Heegaard diagram $(\Sigma, \boa, \bob)$ associated to $B$, and we define $SFC(\phi,\mathcal{C}) = CF(\Sigma, \boa, \bob)$. We then use work of Landry-Minsky-Taylor \cite{LMT23} to associate to each generator of $SFC(\phi,\mathcal{C})$ a multi-orbit of $\phi^\sharp$. 

In sutured Heegaard Floer homology, every generator of the chain complex has an associated $\spinc$-structure. This gives rise to the $\spinc$-grading on the group $SFH(Y^\sharp)$ we study in this paper.
Here, recall that a \textit{$\spinc$-structure} is an equivalence class of vector fields. 
For example, a vector field directing the \textit{blown-up flow} $\phi^\sharp$ pins down a $\spinc$-structure on $Y^\sharp$ which we denote by $\s_{\phi^\sharp}$. Note that the set $\Spinc(Y^\sharp)$ of all $\spinc$-structures is an affine space over $H^2(Y^\sharp, \partial Y^\sharp)\cong H_1(Y^\sharp)$, and so the $\spinc$-grading can be thought as a $H_1(Y^\sharp)$-grading. This $\spinc$-grading admits a neat description in our chain complex $SFC(\phi, \mathcal{C})$.

\begin{thm} \label{thm:introspinc}
The homology class of the multi-orbit $\gamma_\x$ associated to a generator $\x$ of the model chain complex $SFC(\phi,\mathcal{C})$ determines its $\spinc$-grading via the formula:
\[\s(\x)= \overline{\s_{\phi^\sharp}} + PD[\gamma_\x] \ ,\]  
where $PD: H_1(Y^\sharp) \to H^2(Y^\sharp, \partial Y^\sharp)$ denotes Poincaré duality.
\end{thm}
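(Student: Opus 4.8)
The plan is to compute both sides of the formula via their behavior under the $H_1(Y^\sharp)$-action on $\Spinc(Y^\sharp)$, reducing the identity to a statement about a single reference generator together with a compatibility of "difference" maps. Concretely, I would first recall the standard fact from sutured Floer theory that for two generators $\x, \y$ of a Heegaard diagram one has $\s(\x) - \s(\y) = PD[\epsilon(\x,\y)]$, where $\epsilon(\x,\y) \in H_1(Y^\sharp)$ is the homology class of the $1$-cycle obtained by concatenating a path in $\boldsymbol{\alpha}$ from $\x$ to $\y$ with a path in $\boldsymbol{\beta}$ from $\y$ to $\x$ (this is Juh\'asz's analogue of the Ozsv\'ath--Szab\'o difference formula). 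So it suffices to show two things: (i) for a single well-chosen generator $\x_0$ the formula $\s(\x_0) = \overline{\s_{\phi^\sharp}} + PD[\gamma_{\x_0}]$ holds, and (ii) for any two generators $\x, \y$ one has $[\gamma_\x] - [\gamma_\y] = \epsilon(\x,\y)$ in $H_1(Y^\sharp)$, so that the two sides transform the same way.

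For step (ii), the key input is the explicit dictionary, coming from Landry--Minsky--Taylor \cite{LMT23} together with our construction in \Cref{sec:vbstosfh}, between generators of $CF(\Sigma,\boldsymbol{\alpha},\boldsymbol{\beta})$ and closed multi-orbits of $\phi^\sharp$. Under this dictionary, moving one coordinate of a generator from an intersection point $a$ to an intersection point $b$ along an arc of an $\alpha$- or $\beta$-curve corresponds to modifying the associated multi-orbit by a specific arc carried by the veering branched surface $B$ (an arc along the branch locus, or through a sector). The point is that the path in $\boldsymbol{\alpha}\cup\boldsymbol{\beta}$ defining $\epsilon(\x,\y)$ is homologous, inside $Y^\sharp$, to the corresponding change in the multi-orbit cycle, because the Heegaard surface $\Sigma$ is built from the branched surface in a way that makes $\boldsymbol{\alpha}$- and $\boldsymbol{\beta}$-arcs isotopic into flow-transverse and flow-carried pieces respectively. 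I would make this precise by choosing, for each intersection point, a short path from it to a point lying on the corresponding orbit segment, and checking that these choices are consistent so the local contributions telescope; this is bookkeeping with the combinatorics of the veering triangulation/branched surface.

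For step (i), I would pick $\x_0$ to be the canonical generator $\xt$ (or $\xb$), whose associated multi-orbit is, by our earlier identification, either empty or a standard collection of orbits with a controlled homology class, and whose $\spinc$-structure should be computable directly from the Heegaard diagram. The natural claim is that $\s(\xt)$ is exactly $\overline{\s_{\phi^\sharp}}$ up to the correction $PD[\gamma_{\xt}]$; to see this I would exhibit a vector field representing $\s(\xt)$ built from the data of the diagram near $\xt$ and compare it, using a homotopy supported away from a handle, with a vector field directing the blown-up flow $\phi^\sharp$. Here the conjugation $\overline{\phantom{s}}$ enters because the roles of $\boldsymbol{\alpha}$ and $\boldsymbol{\beta}$ (equivalently, of the stable versus unstable directions, or of the "top" versus "bottom" handlebodies) are swapped relative to the orientation conventions of \cite{Juh06}; I would verify the sign/conjugation by tracking orientations through the construction of \Cref{sec:vbstosfh}.

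**The main obstacle** I anticipate is step (ii): establishing the precise homological identity $[\gamma_\x] - [\gamma_\y] = \epsilon(\x,\y)$ requires a careful and honest translation between two combinatorial models — the Heegaard-diagram model where $\epsilon$ lives, and the Landry--Minsky--Taylor flow-graph/branched-surface model where the multi-orbits live — and showing that the isotopies implicit in our construction do not introduce any extra homology. I expect step (i) to be comparatively routine once the conventions are pinned down, and the difference formula itself is a black box from \cite{Juh06}, so essentially all of the real content of the proof is in making the generator-to-multi-orbit correspondence homologically faithful.
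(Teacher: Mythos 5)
Your proposal is correct and follows essentially the same route as the paper: the paper reduces to the reference generator $\xb$ via the difference formula $\s(\x)-\s(\y)=PD[\epsilon(\x,\y)]$ (\Cref{lemma:epsilonrelspinc}), proves $\epsilon(\x,\xb)=[\mu_\x]=[\gamma_\x]$ by an explicit choice of $\beta$-arcs tracing out the multi-loop (\Cref{lemma:statesepsilon}, your step (ii) specialized to $\y=\xb$), and computes $\s(\xb)=\overline{\s_{\phi^\sharp}}$ by rotating the Morse gradient field into the (reversed) flow direction (\Cref{prop:topbotspincgrading}, your step (i)). The only cosmetic difference is that the conjugate $\overline{\s_{\phi^\sharp}}$ arises because the base generator $\xb$ carries the reversed flow direction (while $\xt$ carries $\s_{\phi^\sharp}$ itself), not from an $\alpha$/$\beta$ orientation swap.
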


As a consequence of the special combinatorics of veering branched surfaces, we can pick out two special generators of the chain complex $SFC(\phi, \mathcal{C})$.

\begin{thm} \label{thm:introtopbotgenerator}
There is a canonically defined generator $\xb$, corresponding to the empty multi-orbit, whose associated $\spinc$-structure $\mathfrak{s}(\xb)$ equals $\overline{\s_{\phi^\sharp}}$.
The generator $\xb$ has bottom $\spinc$-grading in the sense that for every other generator $\y$, the element $\mathfrak{s}(\y)-\mathfrak{s}(\xb)$ is the homology class of a nonempty closed multi-orbit.

Symmetrically, there is a canonically defined generator $\xt$, whose associated $\spinc$ structure equals $\s_{\phi^\sharp}$. 
The generator $\xt$ has top $\spinc$-grading in the sense that for every other generator $\y$, the element $\mathfrak{s}(\xt) - \mathfrak{s}(\y)$ is the homology class of a nonempty closed multi-orbit.
\end{thm}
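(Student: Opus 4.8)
The plan is to reduce the entire statement to \Cref{thm:introspinc} together with two combinatorial facts about the closed multi-orbits of $\phi^\sharp$ that arise from generators of $SFC(\phi,\mathcal{C})$. Recall that in our construction the generator $\xb$ is the one whose associated Landry--Minsky--Taylor multi-orbit $\gamma_{\xb}$ is empty, so feeding $[\gamma_{\xb}]=0$ into \Cref{thm:introspinc} immediately gives $\s(\xb)=\ol{\s_{\phi^\sharp}}$. It then remains to prove the bottom-grading property of $\xb$, to produce the symmetric generator $\xt$, and to establish its top-grading property together with its $\spinc$-grading.

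For the bottom-grading property I would prove the combinatorial claim that $\xb$ is the \emph{unique} generator whose associated multi-orbit is empty. Unwinding the Landry--Minsky--Taylor correspondence \cite{LMT23}, the multi-orbit $\gamma_\y$ of a generator $\y$ is assembled from local data attached to the pieces of the veering branched surface $B$, and it vanishes precisely when $\y$ makes the ``lowest'' choice at every piece --- a configuration realized by no generator but $\xb$. Granting this, every $\y\neq\xb$ has $\gamma_\y$ a nonempty closed multi-orbit, and \Cref{thm:introspinc} gives $\s(\y)-\s(\xb)=PD[\gamma_\y]$, which is exactly the assertion.

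Symmetrically, define $\xt$ to be the generator making the maximal local choice at every piece of $B$; this is canonical for the same reason $\xb$ is, and one checks it assembles into a genuine tuple of intersection points. The crux --- and the step I expect to be the main obstacle --- is the claim that \emph{$\gamma_\y$ is a sub-multi-orbit of $\gamma_{\xt}$ for every generator $\y$}, so that the formal difference $\gamma_{\xt}-\gamma_\y$ is again a genuine closed multi-orbit of $\phi^\sharp$, nonempty unless $\y=\xt$. Proving this calls for a local-to-global analysis of $B$: one verifies the domination in each local model, where it is a finite check, and then argues that these local dominations patch together because the transitions defining $\gamma_\y$ and $\gamma_{\xt}$ are supported compatibly along the branch locus. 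Given the claim, \Cref{thm:introspinc} gives $\s(\xt)-\s(\y)=PD[\gamma_{\xt}-\gamma_\y]$, a nonempty-multi-orbit class whenever $\y\neq\xt$.

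Finally, to pin down $\s(\xt)$ one can compute the class $[\gamma_{\xt}]\in H_1(Y^\sharp)$ directly from the veering data --- it is the maximal cycle carried by $B$ --- and check that $PD[\gamma_{\xt}]=\s_{\phi^\sharp}-\ol{\s_{\phi^\sharp}}$, whence $\s(\xt)=\ol{\s_{\phi^\sharp}}+PD[\gamma_{\xt}]=\s_{\phi^\sharp}$ by \Cref{thm:introspinc}. More conceptually, the conjugation symmetry of sutured Floer homology makes the set of $\spinc$-structures carrying $SFH(Y^\sharp)$ invariant under $\s\mapsto\ol\s$; since $\s(\xb)$ and $\s(\xt)$ are, by the above, the strict bottom and top of this set, conjugation must interchange them, forcing $\s(\xt)=\ol{\s(\xb)}=\s_{\phi^\sharp}$. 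This symmetry is also the structural reason the top statement mirrors the bottom one: complementation of states, sending a generator $\y$ to the one with multi-orbit $\gamma_{\xt}-\gamma_\y$, realizes the conjugation involution at the chain level.
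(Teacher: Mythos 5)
The bottom half of your argument is sound and is essentially the paper's: $\mu_{\xb}=\varnothing$ exactly when every sector is assigned its bottom corner, so $\xb$ is the unique generator with empty multi-orbit, and $\s(\y)-\s(\xb)=PD[\gamma_\y]$ with $\gamma_\y\neq\varnothing$ for $\y\neq\xb$. One caveat on logical order: in the paper \Cref{thm:introspinc} is itself \emph{deduced from} the computation $\s(\xb)=\ol{\s_{\phi^\sharp}}$, so quoting it to obtain that equality is circular relative to the paper's development. The paper instead computes $\s(\xb)$ and $\s(\xt)$ directly (\Cref{prop:topbotspincgrading}) by taking a Morse function compatible with the diagram and rotating its gradient field, off neighborhoods of the trajectories determined by the state, until it becomes (anti-)parallel to the flow direction; this is also how $\s(\xt)=\s_{\phi^\sharp}$ is actually pinned down.

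The genuine gap is the step you yourself flag as the crux: the claim that $\gamma_\y$ is a sub-multi-orbit of $\gamma_{\xt}$ is false. The multi-loop $\mu_{\xt}$ consists of all the vertical edges of the augmented dual graph $G_+$, whereas if $\y(S_i)$ is a side corner of $S_i$ then $e_{\y,i}$ is a bottom side of $S_i$, an honest edge of $G$. The loops of $\mu_\y$ therefore lie in sweep-equivalence classes that in general share nothing with those of $\mu_{\xt}$, so under the correspondence of \Cref{prop:looporbitcorr} the closed orbits constituting $\gamma_\y$ need not appear in $\gamma_{\xt}$ at all, and ``$\gamma_{\xt}-\gamma_\y$'' is not a multi-orbit; there is no subtraction of multi-orbits available here, only of homology classes. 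The paper's fix is to build a second, \emph{upward} multi-loop: for each sector $S_i$ take the path $\overset{\vee}{e}_{\y,i}$ in $G_+$ from $\y(S_i)$ to the top corner of $S_i$; the union $\overset{\vee}{\mu}_\y$ is a (possibly non-embedded) multi-loop, nonempty unless $\y=\xt$, and the same $\epsilon$-function computation as for the bottom state gives $\s(\xt)-\s(\y)=[\overset{\vee}{\gamma}_\y]$. Your fallback for $\s(\xt)=\s_{\phi^\sharp}$ via conjugation symmetry is also shakier than it looks: reversing a vector field does not preserve the boundary conditions defining $\Spinc(M,\Gamma)$, so the involution $\s\mapsto\ol{\s}$ on a sutured manifold requires care, and your proposed ``complementation of states'' realizing it at the chain level presupposes the false containment.
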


We then show that there is no holomorphic disk connecting the special generators to any other generator of the Heegaard Floer chain complex.

\begin{thm} \label{thm:introtopbotnontrivial}
The generators $\xt$ and $\xb$ are cycles determining non-trivial homology classes in the sutured Floer homology $SFH(Y^\sharp)$. 
\end{thm}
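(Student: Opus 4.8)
The plan is to prove the stronger statement indicated just above the theorem: no holomorphic bigon in $CF(\Sigma,\boa,\bob)$ has $\xt$ or $\xb$ as one of its two endpoints. Granting this, $\partial\xb=0$ because there are no disks out of $\xb$, and $\xb\notin\operatorname{im}\partial$ because there are no disks into $\xb$, so $[\xb]\neq 0$ in $SFH(Y^\sharp)$; the case of $\xt$ is symmetric — indeed $\xt$ and $\xb$, like the generators they are built from, are exchanged by the orientation-reversing symmetry of $B$ and of the Heegaard diagram, so it is enough to treat $\xb$. By positivity of domains (a holomorphic bigon has a non-negative domain, positive wherever it is non-constant) it suffices to show: for every generator $\y\neq\xb$ there is no non-negative non-constant domain in $\pi_2(\xb,\y)$ nor in $\pi_2(\y,\xb)$. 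Since the differential preserves the $\spinc$-grading, and by \Cref{thm:introspinc} two generators share a $\spinc$-grading exactly when the associated multi-orbits are homologous in $Y^\sharp$, we may assume $\gamma_\y$ is null-homologous; as $\y\neq\xb$, \Cref{thm:introtopbotgenerator} moreover identifies $\gamma_\y$ as a \emph{non-empty} closed multi-orbit of $\phi^\sharp$.

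The key step is to rule this out: no non-empty closed multi-orbit of $\phi^\sharp$ is null-homologous. Here the hypotheses enter decisively. The veering branched surface $B$ encoding $(\phi,\mathcal{C})$ comes, by the work of Landry--Minsky--Taylor, equipped with a full-dimensional open cone of cohomology classes in $H^1(Y^\sharp;\R)$ that are positive on every closed orbit of $\phi^\sharp$ — equivalently, the cone of homology directions of $\phi^\sharp$ is salient, a consequence of the no-perfect-fits hypothesis. Choosing a class $\omega$ in this cone gives $\langle\omega,[\gamma_\y]\rangle>0$ for any non-empty closed multi-orbit $\gamma_\y$, so $[\gamma_\y]\neq 0$, contradicting the previous paragraph. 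Consequently $\xb$ is the \emph{unique} generator of $SFC(\phi,\mathcal{C})$ in its $\spinc$-grading $\overline{\s_{\phi^\sharp}}$ (and $\xt$ the unique one in $\s_{\phi^\sharp}$), whence trivially $\partial\xb=0$ and $\xb$ bounds nothing; in fact this shows $SFH(Y^\sharp,\overline{\s_{\phi^\sharp}})\cong SFH(Y^\sharp,\s_{\phi^\sharp})\cong\mathbb{F}$, generated by $[\xb]$ and $[\xt]$.

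The main obstacle is extracting that cohomological positivity from the setup — checking that the passage from $(\phi,\mathcal{C})$ through the veering branched surface to the Heegaard diagram does not lose the fact that the closed orbits of $\phi^\sharp$ span a salient cone in $H_1(Y^\sharp;\R)$; this is the one place where the no-perfect-fits condition, forbidding a closed orbit from being homotopic to the inverse of another, is genuinely used, and it should follow by quoting the appropriate structure from \cite{LMT23}. A little care will also be needed with the sutured conventions (whether admissible domains may meet the sutures) and with periodic domains, though both concerns evaporate once $\xb$ is known to stand alone in its $\spinc$-grading. If one prefers not to invoke the full strength of \cite{LMT23}, there is a more hands-on route to the no-disk statement directly: $\xb$ occupies the bottom intersection point of every local ladder of $(\Sigma,\boa,\bob)$ coming from $B$, and a local analysis at these points shows that any class joining $\xb$ to a different generator must pass through a quadrant of negative multiplicity — this version is combinatorially the most delicate.
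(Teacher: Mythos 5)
Your overall strategy (show that $\xt$ and $\xb$ are neither the initial nor the final state of any holomorphic disk, hence are cycles not in the image of $\partial$) is the same as the paper's. But the route you take to establish the no-disk claim has a genuine gap. Your key step is the assertion that no non-empty closed multi-orbit of $\phi^\sharp$ is null-homologous, which you propose to extract from a ``full-dimensional open cone of cohomology classes positive on every closed orbit.'' Such a class exists essentially only when the flow is circular (by Fried's criterion this is the suspension-flow situation of \Cref{cor:introfibered}); for a general pseudo-Anosov flow with no perfect fits the cone of homology directions need not be salient, and null-homologous closed orbits abound --- already for flows on manifolds with $b_1(Y)=0$ every closed orbit is torsion in $H_1(Y)$, and nontrivial generators in the $\spinc$-grading $\overline{\s_{\phi^\sharp}}$ can and do occur. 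The no-perfect-fits condition forbids a closed orbit from being \emph{freely homotopic} to the inverse of another (\Cref{rmk:perfectfitdefns}); it does not forbid homological cancellation among distinct orbits. Consequently your conclusion that $\xb$ is the unique generator in its $\spinc$-grading, and that $SFH(Y^\sharp,\overline{\s_{\phi^\sharp}})\cong\mathbb{F}$, is also too strong: the paper only claims $\dim\geq 1$ in \Cref{cor:introdimgeq2}.

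The argument that actually works is the one you defer to as a fallback in your last sentence. The paper proves (\Cref{prop:topstatenontrivial}, \Cref{prop:botstatenontrivial}) that $\xt$ and $\xb$ are not the initial or final state of \emph{any} effective domain $D$ with $n_\z(D)=0$, including domains connecting them to generators in the same $\spinc$-grading. This is done by the local analysis you gesture at: by \Cref{prop:effdomainemb} such a $D$ is embedded, and writing \Cref{eq:initialfinallocal} first at the coordinates of $\xt$ (which sit at the upper corner of every left elementary domain, adjacent to two basepointed regions) one kills all left domains, and then at the coordinates of $\xb$ (at the lower corner of every right elementary domain) one kills all right domains. This is not especially delicate once the left/right domain structure of \Cref{prop:globalcombinatorics} is in place, and it is where the veering combinatorics genuinely enters. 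Your proposed symmetry exchanging $\xt$ and $\xb$ is also not established anywhere; the paper simply runs the two parallel arguments.
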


\Cref{thm:introtopbotnontrivial} implies the following corollary.

\begin{cor} \label{cor:introdimgeq2}
Let $(Y,\phi)$ be a pseudo-Anosov flow on a closed 3-manifold $Y$, $\mathcal{C}$ be a finite collection of closed orbits relative to which $\phi$ has no perfect fits, and $(Y^\sharp, \phi^\sharp)$ the associated blown-up flow. Then $\dim SFH(Y^\sharp,\s_{\phi^\sharp}) \geq 1$ and $\dim SFH(Y^\sharp,\overline{\s_{\phi^\sharp}}) \geq 1$.
\end{cor}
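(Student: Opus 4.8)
The plan is to read off \Cref{cor:introdimgeq2} as a formal consequence of \Cref{thm:introtopbotgenerator,thm:introtopbotnontrivial}, using only the fact that sutured Floer homology decomposes along $\spinc$-structures. Recall that each generator $\x$ of the chain complex $SFC(\phi,\mathcal{C})$ of \Cref{thm:introinformal} carries a well-defined $\spinc$-structure $\s(\x)$, that the differential preserves $\s$, and hence that $SFH(Y^\sharp) = \bigoplus_{\s \in \Spinc(Y^\sharp)} SFH(Y^\sharp,\s)$, with a cycle supported in a single $\spinc$-grading $\s$ representing a nonzero class in $SFH(Y^\sharp)$ if and only if it represents a nonzero class in the summand $SFH(Y^\sharp,\s)$.

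First I would invoke \Cref{thm:introtopbotgenerator} to record that $\xt$ and $\xb$ are honest generators of $SFC(\phi,\mathcal{C})$ --- in particular homogeneous for the $\spinc$-grading --- with $\s(\xt) = \s_{\phi^\sharp}$ and $\s(\xb) = \overline{\s_{\phi^\sharp}}$; the underlying computation of these gradings goes through \Cref{thm:introspinc} together with the fact that $\xt$ and $\xb$ correspond to the empty multi-orbit. Next I would invoke \Cref{thm:introtopbotnontrivial}, which asserts that $\xt$ and $\xb$ are cycles and that the classes $[\xt], [\xb] \in SFH(Y^\sharp)$ are nonzero. Combining these with the decomposition above, $[\xt]$ is a nonzero element of $SFH(Y^\sharp,\s_{\phi^\sharp})$ and $[\xb]$ is a nonzero element of $SFH(Y^\sharp,\overline{\s_{\phi^\sharp}})$, whence $\dim SFH(Y^\sharp,\s_{\phi^\sharp}) \geq 1$ and $\dim SFH(Y^\sharp,\overline{\s_{\phi^\sharp}}) \geq 1$. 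If it happens that $\s_{\phi^\sharp} = \overline{\s_{\phi^\sharp}}$, the two statements concern the same summand and still hold; alternatively one could deduce one from the other via the conjugation symmetry of sutured Floer homology.

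There is essentially no obstacle internal to this corollary: all of the substance has been pushed into the two cited theorems, whose proofs --- extracting the canonical generators $\xt$ and $\xb$ from the combinatorics of the veering branched surface, pinning down their $\spinc$-gradings, and ruling out any holomorphic disk having $\xt$ or $\xb$ as an end --- are where the real work lies. The only point worth a sentence of care in the write-up is that the objects produced by \Cref{thm:introtopbotgenerator} are single intersection points of the associated Heegaard diagram rather than formal sums, so that ``homogeneous of $\spinc$-degree $\s(\x)$'' is literally meaningful; this is immediate from their description as the generators associated to the empty multi-orbit.
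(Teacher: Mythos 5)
Your proposal is correct and follows exactly the paper's argument: the corollary is deduced by combining \Cref{thm:introtopbotgenerator} (which pins down $\s(\xt)=\s_{\phi^\sharp}$ and $\s(\xb)=\overline{\s_{\phi^\sharp}}$) with \Cref{thm:introtopbotnontrivial} and the $\spinc$-decomposition of $SFH$. The extra remarks about homogeneity and the possible coincidence $\s_{\phi^\sharp}=\overline{\s_{\phi^\sharp}}$ are fine but not needed.
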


We also recover the easier direction of the fiberedness detection theorem \cite{Juh08}.

\begin{cor} \label{cor:introfibered}
Suppose that $(Y,\phi)$ is the suspension flow of a pseudo-Anosov mapping $f:S \to S$ of a closed surface $S$. Let $\mathcal{C}$ be any finite collection of closed orbit containing the singular orbits.
We define $e = \sum_{x \in S \cap \mathcal{C}} p_x$, where the sum is taken over the periodic points $x \in S \cap \mathcal{C}$, and we suppose each $x$ is $p_x$-pronged.

Then under the identification $\Spinc(Y^\sharp) \cong H_1(Q)$ that sends $\overline{\s_{\phi^\sharp}}$ to $0$,
$$\dim SFH(Y^\sharp,n) := \dim \bigoplus_{\langle \mathfrak{s}, [S^\sharp] \rangle = n} SFH(Y^\sharp,\mathfrak{s}) =
\begin{cases}
0 & \text{if $n > -\frac{3e}{2}$} \\
1 & \text{if $n = -\frac{3e}{2}$} \\
1 & \text{if $n = 0$} \\
0 & \text{if $n < 0$}
\end{cases}.$$
\end{cor}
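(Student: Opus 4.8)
The plan is to read the answer off from the fact that, for a suspension flow, $Y^\sharp$ is a \emph{fibered} sutured manifold, and to anchor the two nonzero $\spinc$-gradings with the extremal generators $\xt$ and $\xb$ supplied by \Cref{thm:introtopbotgenerator} and \Cref{thm:introtopbotnontrivial}. First I would unwind the hypotheses: blowing up a suspension flow along $\mathcal{C}$ yields the suspension flow of the restriction $\hat f$ of $f$ to $S^\sharp := S\setminus\nu(S\cap\mathcal{C})$, so $Y^\sharp$ is the mapping torus of $\hat f$, fibered with fibre $S^\sharp$. The number $e$ is purely topological: regular periodic points are $2$-pronged and contribute nothing, so the Euler--Poincar\'e(--Hopf) formula $\sum_x(2-p_x)=2\chi(S)$ for the invariant foliations gives $e=-2\chi(S^\sharp)$, hence $-\tfrac{3e}{2}=3\chi(S^\sharp)$. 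I would then locate the grading of $\s_{\phi^\sharp}$ by a relative Euler class computation: the normal plane field of $\phi^\sharp$ restricts to $TS^\sharp$ on the fibre, so $\langle c_1(\s_{\phi^\sharp}),[S^\sharp]\rangle$ equals $\chi(S^\sharp)$ corrected by the twisting of the degeneracy framing against the fibre framing on each boundary torus $T_\gamma$; summing those corrections (each governed by the pronging along $\gamma$) and reusing Euler--Poincar\'e shows that $\s_{\phi^\sharp}$ occupies grading $n=-\tfrac{3e}{2}$, while $\overline{\s_{\phi^\sharp}}$ occupies $n=0$ by the chosen normalization. By \Cref{thm:introtopbotgenerator} the generators $\xb$ and $\xt$ then lie in gradings $0$ and $-\tfrac{3e}{2}$ respectively, and by \Cref{thm:introtopbotnontrivial} they are cycles surviving to nonzero homology classes, so $\dim SFH(Y^\sharp,0)\geq 1$ and $\dim SFH(Y^\sharp,-\tfrac{3e}{2})\geq 1$.

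Next I would upgrade these bounds to equalities and dispose of the exterior gradings. Since $Y^\sharp$ is a surface bundle over $S^1$, the easy direction of the fiberedness detection theorem \cite{Juh08} — which is precisely what the corollary advertises recovering — gives that the sutured Floer homology in the top $\spinc$-grading is exactly $\ZZ$; applying the conjugation symmetry $SFH(Y^\sharp,\s)\cong SFH(Y^\sharp,\overline{\s})$ yields rank $1$ in the bottom grading as well. Combined with the previous paragraph this gives $\dim SFH(Y^\sharp,0)=\dim SFH(Y^\sharp,-\tfrac{3e}{2})=1$, and $SFH(Y^\sharp,n)=0$ for $n>0$ or $n<-\tfrac{3e}{2}$ by the adjunction / sutured Thurston-norm bound: the support of $SFH$ lies between the two extremal $\spinc$-structures of the taut sutured manifold $Y^\sharp$.

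The step I expect to be the main obstacle is the vanishing of $SFH(Y^\sharp,n)$ for $n$ \emph{strictly between} $-\tfrac{3e}{2}$ and $0$, which is the only place that genuinely uses that $\phi$ is a suspension rather than an arbitrary pseudo-Anosov flow. Decomposing $Y^\sharp$ along $\pm S^\sharp$ controls only the two extremal gradings via Juh\'asz's surface decomposition theorem \cite{Juh08}, so for the interior values of $n$ I would either (i) exhibit, for each such $n$, a well-groomed decomposing surface along which $Y^\sharp$ decomposes to a non-taut sutured manifold, forcing $\bigoplus_{\langle\s,[S^\sharp]\rangle=n}SFH(Y^\sharp,\s)=0$; or (ii) argue on the chain level, using that for a suspension flow the veering branched surface of \Cref{sec:vbstosfh} is the one dual to Agol's veering triangulation of the punctured mapping torus, so that $CF(\Sigma,\boa,\bob)$ is isotopic to the standard fibered Heegaard diagram, and then showing the subcomplex of $SFC(\phi,\mathcal{C})$ in the interior gradings is acyclic, with the cancellations organized by the identification of generators with multi-orbits in \Cref{thm:introspinc}. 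Option (i) is conceptually cleaner but requires building the decomposing surfaces by hand; option (ii) is more explicit and feeds directly into the combinatorial model developed in the rest of the paper. Either way, the resulting two-point support, together with the value $-\tfrac{3e}{2}$ at the bottom and the normalization sending $\overline{\s_{\phi^\sharp}}\mapsto 0$, yields the displayed case distinction, the $n=0$ line being exactly the easy direction of \cite{Juh08} applied to $Y^\sharp$.
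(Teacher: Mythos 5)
There are two genuine problems with your proposal. First, it is circular at the crucial step: you invoke the easy direction of Juh\'asz's fiberedness detection theorem (and an ``adjunction / sutured Thurston-norm bound'') to pin down the extremal gradings, but the corollary is explicitly advertised as \emph{recovering} that direction of \cite{Juh08} from the flow-theoretic machinery, so it cannot be used as an input. The paper's proof needs none of this: by \Cref{thm:introtopbotgenerator}, for every generator $\y \notin \{\xt,\xb\}$ both $\s(\y)-\s(\xb)$ and $\s(\xt)-\s(\y)$ are homology classes of \emph{nonempty} closed multi-orbits of $\phi^\sharp$, and since the flow is transverse to the fiber these pair strictly positively with $[S^\sharp]$. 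Hence every non-extremal generator lies in a grading strictly between those of $\xb$ and $\xt$, so the chain groups at the two extremal gradings are one-dimensional and vanish beyond them; combined with \Cref{thm:introtopbotnontrivial} this already gives the exact dimensions $1,1,0,0$. You have all the ingredients for this argument in hand but never assemble them, reaching instead for external theorems. Relatedly, the paper locates the grading of $\xt$ by the combinatorial identity $2[\mu_{\xt}]=3\sum_i[b_i]$ together with $\langle\sum_i[b_i],[S^\sharp]\rangle=\sum_x p_x=e$, rather than by the relative Euler class computation you sketch; your version might be made to work, but as written the ``framing corrections'' are not carried out.

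Second, you have misread what the corollary asserts. It makes no claim about the gradings strictly between the two extremes, and the interior vanishing you identify as ``the main obstacle'' is in fact \emph{false} in general: the paragraph immediately following the corollary states that in the sequel the next nonzero gradings are shown to count periodic points of $\phi^\sharp$ of least period, so the intermediate groups are typically nonzero. Your entire third paragraph is therefore devoted to proving a statement that is neither claimed nor true, and neither of your proposed strategies (surface decompositions forcing non-tautness, or acyclicity of the interior subcomplex) can succeed. Once this is dropped, what remains of your argument is the lower bound from $\xt$ and $\xb$ — which does match the paper — plus the circular appeal to \cite{Juh08} for the upper bound, which must be replaced by the strict-betweenness argument above.
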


In the sequel \cite{AT25b}, we will upgrade \Cref{cor:introfibered} by showing that the dimensions of the next nonzero gradings count the number of periodic points of $\phi^\sharp$ of the least period. In particular, the second-to-top grading counts the number of fixed points of $\phi^\sharp$.

In this paper, we address the  question of counting the dimension `with signs', i.e. we study a decategorification of $SFH(Y^\sharp)$.
Let $G$ be the finitely generated free abelian group $H_1(Y^\sharp)/\text{Torsion}$.
Recall that given a $\mathbb{Z}/2$-grading $\nu$ on $SFH(Y^\sharp)$ (and an identification $\Spinc(Y^\sharp) \cong H_1(Y^\sharp)$), we can form the polynomial invariant 
$$\chi_\nu (SFH(Y^\sharp)) = \sum_{\mathfrak{s} \in \Spinc(Y^\sharp)} (\dim (SFC_{\nu=0}(Y^\sharp,\mathfrak{s})) - \dim (SFC_{\nu=1}(Y^\sharp,\mathfrak{s}))) \cdot \mathfrak{s} \in \mathbb{Z}[G].$$
In \cite{FJR11}, Friedl-Juhász-Rasmussen showed that if $\nu$ is the Maslov grading, then $\chi_\nu (SFH(Y^\sharp))$ is a type of Turaev torsion of $Y^\sharp$. In particular it depends only on the algebraic data of the homomorphisms $\pi_1(R_\pm(Y^\sharp)) \to \pi_1(Y^\sharp)$.

We show that under a different grading, $\chi_\nu(SFH(Y^\sharp))$ recovers a dynamical invariant: the \textit{zeta function} of $\phi^\sharp$, defined as 
$$\zeta_{\phi^\sharp} = \prod_{\text{primitive $\gamma$}} (1-[\gamma])^{-1} \in \mathbb{Z}[[G]].$$

\begin{thm} \label{thm:introzeta}
Under the same setting as \Cref{thm:introinformal}, if $b_1(Y^\sharp) \geq 2$, then there is a $\mathbb{Z}/2$-grading $\nu$ on $SFH(Y^\sharp)$ such that
$$\chi_\nu (SFH(Y^\sharp)) = \zeta_{\phi^\sharp}^{-1}.$$
\end{thm}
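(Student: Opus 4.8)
The plan is to compute $\chi_\nu(SFH(Y^\sharp))$ directly from the chain complex $SFC(\phi,\mathcal{C})$ provided by \Cref{thm:introinformal}, using the description of its generators as closed multi-orbits of $\phi^\sharp$ together with the $\spinc$-grading formula of \Cref{thm:introspinc}. Recall that since the polynomial invariant $\chi_\nu$ is a chain-homotopy invariant, it suffices to work with any convenient $\mathbb{Z}/2$-grading $\nu$ on the \emph{chain} level; the content of the theorem is to choose $\nu$ so that the signed count of generators in each $\spinc$-structure matches the coefficients of $\zeta_{\phi^\sharp}^{-1}$. By \Cref{thm:introspinc}, a generator $\x$ lives in $\spinc$-grading $\overline{\s_{\phi^\sharp}} + PD[\gamma_\x]$, so after the affine identification $\Spinc(Y^\sharp)\cong H_1(Y^\sharp)\to G$ sending $\overline{\s_{\phi^\sharp}}\mapsto 0$, the generator $\x$ contributes $(-1)^{\nu(\x)}[\gamma_\x]\in\mathbb{Z}[G]$. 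So I need:
\[
\sum_{\x}(-1)^{\nu(\x)}[\gamma_\x] \;=\; \zeta_{\phi^\sharp}^{-1} \;=\; \prod_{\text{primitive }\gamma}(1-[\gamma]).
\]

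The first step is to understand combinatorially which multi-orbits $\gamma_\x$ arise. Under the Landry–Minsky–Taylor correspondence, the generators of $CF(\Sigma,\boa,\bob)$ for the canonical Heegaard diagram associated to the veering branched surface $B$ should be indexed by the ways of matching up the $\alpha$- and $\beta$-curves, and each such matching records a collection of closed orbits carried by the branched surface (equivalently, cycles in the associated ``flow graph'' or ``$\mathcal{AB}$-graph'' of $B$). I expect that the multi-orbits that occur are exactly the \emph{simple} multi-orbits — finite unions of \emph{distinct} primitive closed orbits, each traversed once — so that the generating function $\sum_\x [\gamma_\x] t^{|\gamma_\x|}$ (tracking the number of component orbits by a bookkeeping variable) is a finite truncation of $\prod_{\text{primitive }\gamma}(1+t[\gamma])$, with one term $[\gamma_{i_1}]\cdots[\gamma_{i_k}]$ for each $k$-element set of distinct primitive orbits appearing as a ``state'' of the diagram. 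The second step is to identify the right $\mathbb{Z}/2$-grading: I would take $\nu(\x)$ to be the parity of the number of component orbits of $\gamma_\x$, i.e. $\nu(\x)\equiv |\gamma_\x|\pmod 2$, which is a modification of the Maslov grading as advertised in the introduction (one shows it differs from the Maslov grading by a global shift and by the parity of the ``length'' or ``spin'' of the associated permutation — this is where the $b_1(Y^\sharp)\geq 2$ hypothesis and the passage to $G$ free-abelian enters, to make the sign well-defined and the normalization canonical). With this choice, the signed sum telescopes: $\sum_\x(-1)^{|\gamma_\x|}[\gamma_\x]=\prod_{\text{primitive }\gamma}(1-[\gamma])=\zeta_{\phi^\sharp}^{-1}$, provided \emph{every} finite set of distinct primitive orbits actually occurs as some $\gamma_\x$, which should follow from the completeness of the veering-branched-surface model for the orbit space of $\phi^\sharp$.

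I anticipate two main obstacles. The first, and principal, one is proving that the set $\{\gamma_\x\}$ of multi-orbits coming from generators is \emph{exactly} the set of all simple multi-orbits (all distinct, each primitive, each once) — not just contained in it. One inclusion (each $\gamma_\x$ is of this form) is essentially the no-perfect-fits hypothesis forbidding an orbit and its inverse, combined with the combinatorics of veering triangulations; the reverse inclusion (every simple multi-orbit arises) requires knowing that every cycle in the flow graph is realized by a state of the Heegaard diagram, which I would extract from the Agol–Guéritaud and Landry–Minsky–Taylor correspondence by a careful bijection between states and unions of vertex-disjoint directed cycles in the relevant graph. The second obstacle is the bookkeeping of \emph{infinite} products: $\zeta_{\phi^\sharp}^{-1}\in\mathbb{Z}[[G]]$ is a genuine power series while $SFC(\phi,\mathcal{C})$ has finitely many generators, so the equality must be read coefficient-by-coefficient in $\mathbb{Z}[G]$ — I would fix a class $g\in G$, note that only finitely many simple multi-orbits $\gamma$ have $[\gamma]=g$ (using $b_1\geq 2$ and properness of the orbit-length/homology pairing, so that there are no accumulating contributions), and check the signed count of generators in $\spinc$-grading $g$ equals the $g$-coefficient of the formal product. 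Verifying that this ``local finiteness'' genuinely holds — i.e. that the Heegaard diagram only sees finitely many orbits in each homology class and that these account for every term of $\prod(1-[\gamma])$ — is the technical heart, and is exactly the place where I would expect to lean hardest on the structural results of \cite{LMT23}.
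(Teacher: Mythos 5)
Your central step fails. You assert that the multi-orbits $\gamma_\x$ attached to generators range over \emph{all} finite collections of distinct primitive closed orbits of $\phi^\sharp$, and that the signed sum over generators therefore telescopes term-by-term to $\prod_{\gamma}(1-[\gamma])$. This cannot be right: $SFC(\phi,\mathcal{C})$ is finitely generated --- its generators biject with the \emph{embedded} multi-loops of the finite augmented dual graph $G_+$ (\Cref{prop:statesasembloops}) --- whereas a pseudo-Anosov flow has infinitely many primitive closed orbits and hence infinitely many ``simple'' multi-orbits, so the reverse inclusion you need (``every simple multi-orbit arises as some $\gamma_\x$'') is false. The equality of the finite Laurent polynomial $\chi_\nu(SFH(Y^\sharp))$ with the infinite product $\zeta_{\phi^\sharp}^{-1}$ is not a bijective matching of terms but a determinant identity: in the paper one identifies $\chi_\nu$ with the anti-veering polynomial $A=\det[D_A]$ (\Cref{thm:SFHcategorifiesantiveering}) and separately proves $A=\zeta_{\phi^\sharp}^{-1}$ via the Bowen--Lanford computation for the flow graph (\Cref{prop:zetadoubleveering}) combined with the factorizations $V=\Theta\cdot\prod(1\pm[a_i])$ and $A=\Theta\cdot\prod(1-[b_i])$ (\Cref{thm:tautveering}, \Cref{thm:tautantiveering}, the latter occupying all of \Cref{sec:factorantiveerpoly}). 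Your proposal also ignores that the zeta function naturally computed by graph combinatorics is that of a \emph{partial} flow (the double ungluing), not of $\phi^\sharp$: the two differ exactly by correction factors for orbits homotopic to branch loops and anti-branch loops (Equations \eqref{eq:zetablowuptounstable} and \eqref{eq:zetaunstabletodouble}); for instance $\phi^\sharp$ has $2k$ primitive closed orbits in a single homotopy class on each boundary torus, which no single loop of the dual graph accounts for. This bookkeeping is where most of the real work of \Cref{sec:polyinv} happens and it is absent from your plan.

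There is a secondary gap in the grading. Your $\nu(\x):=|\gamma_\x|\bmod 2$ is never shown to be compatible with the differential, and the appeal to ``$\chi_\nu$ is a chain-homotopy invariant'' begs the question: the chain-level Euler characteristic equals the homology-level one only if the differential is odd with respect to $\nu$. The paper's grading is $\nu(\x)=\#\{S_i \mid \x(S_i)\ \text{is a side corner}\}+\sgn\x$, chosen precisely so that the signed count of states in each $\spinc$-class is the expansion of $\det[D_A]$, and its compatibility with the differential is the nontrivial combinatorial statement $\nu(\x)-\nu(\y)\equiv\mu(D)\pmod 2$ for every effective domain with $n_\z(D)=0$ (\Cref{prop:munumod2}), proved by an induction on domains using the special structure of the diagram. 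Without an analogue of this for your grading, and without the determinant identities above, the proposal does not close.
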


The equality in \Cref{thm:introzeta} holds with some additional factors if instead $b_1(Y^\sharp)=1$.
We refer to \Cref{prop:zetablowupantiveering} and \Cref{thm:SFHcategorifiesantiveering} for detailed statements.

\subsection{Outline of the paper}
The paper is organized as follows.
The first part consists of \Cref{sec:paflowvbs} and \Cref{sec:suturedfloer} and covers preliminaries on pseudo-Anosov flows, and sutured Floer homology respectively. Since this paper addresses problems and ideas studied by researchers working in different subareas of mathematics, we opted for a lengthier exposition. We expect \Cref{sec:paflowvbs} to be more beneficial to people with a background in Floer theory, and \Cref{sec:suturedfloer} to people with a stronger background in pseudo-Anosov dynamics.

The second part includes \Cref{sec:vbstosfh} and \Cref{sec:chaincomplexcombin}.
\Cref{sec:vbstosfh} explains the main construction of the paper. More specifically, we explain how to associate to a veering branched surface $B$ on a 3-manifold with torus boundary $M$ a canonical balanced Heegaard diagram $(\Sigma, \boa, \bob)$ for $M$ with sutured structure $\Gamma$ induced by $B$ on $\partial M$.
We study the generators of the Heegaard Floer chain complex $CF(\Sigma, \boa, \bob)$ and show that if $B$ corresponds to a pseudo-Anosov flow $\phi$ and a collection of closed orbits $\mathcal{C}$ then the generators of $CF(\Sigma, \boa, \bob)$ correspond to certain multi-orbits of $\phi^\sharp$. 

In \Cref{sec:chaincomplexcombin} we go deeper in the understanding of the combinatorics of the Heegaard diagram $(\Sigma, \boa, \bob)$. We record the structure of the diagram and use this structure to show that the diagram is admissible, as well as to show that the top and bottom generators $\xt$ and $\xb$ determine non-vanishing homology classes.

The third part runs from \Cref{sec:polyinv} to \Cref{sec:sfhcategorifies}, and is aimed at showing \Cref{thm:introzeta}.
In \Cref{sec:polyinv}, we define some polynomial invariants of veering branched surfaces and discuss the relation between these and various zeta functions associated to $\phi$.
The proof of one of the results, \Cref{thm:tautantiveering}, is deferred to \Cref{sec:factorantiveerpoly} since the ideas involved do not play a role in the rest of the paper.
In \Cref{sec:sfhcategorifies}, we show that $SFH$ categorifies one of the polynomial invariants.

{\it {\bf Acknowledgments.} The first author would like to thank Gordana Mati\' c for many useful conversations that helped shape this manuscript.
The second author would like to thank Ian Agol and Jonathan Zung for many enlightening conversations on Floer theory, pseudo-Anosov flows, and veering triangulations, many of which influenced the directions and results of this paper. 
We would also like to thank Robert Lipshitz and Anna Parlak for helpful comments. 
The ideas we expose in this paper were developed when the two authors were postdoctoral fellows at CIRGET, the geometry and topology lab of Université du Québec à Montréal. We would like to thank Steven Boyer for creating the fertile scientific environment in which this collaboration originated.}

{\bf Notational conventions.} Throughout this paper:
\begin{itemize}
    \item We use the following graph theory terminology: Let $\Gamma$ be a directed graph. 
    \begin{itemize}
        \item A \textit{loop} in $\Gamma$ will mean a cyclic sequence of edges $(e_i)_{i \in \mathbb{Z}/N}$ where the terminal vertex of $e_i$ equals the initial vertex of $e_{i+1}$ for each $i$. We consider two loops to be the same if they are related by reindexing the edges by a cyclic permutation.
        \item An \textit{embedded loop} in $\Gamma$ is a loop that visits each vertex at most once. 
        A \textit{primitive loop} is a loop that is not a multiple of another loop. 
        For example, an embedded loop must be primitive. 
        When the context is clear, we will conflate an embedded loop $(e_i)_{i \in \mathbb{Z}/N}$ with the union $\bigcup_{i \in \mathbb{Z}/N} e_i$.
        \item A \textit{multi-loop} in $\Gamma$ is a finite collection of loops. An \textit{embedded multi-loop} is a finite collection of embedded loops that are pairwise disjoint. For example, the empty set is an embedded multi-loop.
        \item A \textit{cycle} in $\Gamma$ is a simplicial homology 1-cycle, i.e. an element of $H_1(\Gamma) \subset C_1(\Gamma)$. Each multi-loop determines a cycle, but multiple multi-loops can determine the same cycle. On the other hand, each cycle is determined by at most one embedded loop.
    \end{itemize}
    \item We use the following dynamics terminology:
    \begin{itemize}
        \item A \textit{flow} on a space $X$ is a continuous map $\phi: \mathbb{R} \times X \to X$ such that $\phi(0,x) = x$ and $\phi(s,\phi(t,x))=\phi(s+t,x)$ for all $s,t \in \mathbb{R}$ and all $x \in X$.
        \item A \textit{closed orbit} of $\phi$ is a continuous map $\alpha: \mathbb{R}/L\mathbb{Z} \to X$ of the form $\alpha(t) = \phi(t,x)$ for some $x \in X$. We consider two closed orbits to be the same if they are related by reparametrization.
        \item A \textit{primitive closed orbit} is a closed orbit that is not a multiple of another closed orbit. 
        When the context is clear, we will conflate a primitive closed orbit with its image.
        \item A \textit{closed multi-orbit} is a finite collection of closed orbits.
        For example, the empty set is a closed multi-orbit.
    \end{itemize}
\end{itemize}

\section{Pseudo-Anosov flows and veering branched surfaces} \label{sec:paflowvbs}

In this section, we recall some facts about pseudo-Anosov flows and veering branched surfaces, including the correspondence theorem between these objects. A general reference for this material is \cite[Chapters 1 and 2]{Tsathesis}.

\subsection{Pseudo-Anosov flows}

For the purposes of this paper, a \textit{pseudo-Anosov flow} on a closed 3-manifold $Y$ is a flow $\phi$ for which there is a pair of singular 2-dimensional foliations $(\Fo^s, \Fo^u)$ whose leaves intersect transversely in the flow lines, such that the flow lines in each leaf of the \textit{stable foliation} $\Fo^s$ are forward asymptotic, and the flow lines in each leaf of the \textit{unstable foliation} $\Fo^u$ are backward asymptotic.
The singularity locus of the stable and unstable foliations must coincide and be equal to a collection of primitive closed orbits of $\phi$. 
We refer to these as the \emph{singular} orbits of $\phi$.

We illustrate a local picture of $(\Fo^s, \Fo^u)$ near a 3-pronged singular orbit in \Cref{fig:paflow} left, and a local picture of $(\Fo^s, \Fo^u)$ away from the singular orbits in \Cref{fig:paflow} right.

\begin{figure}
    \centering
    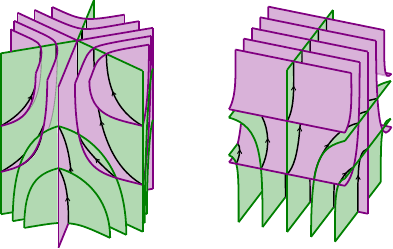
    \caption{Local picture of a pseudo-Anosov flow. Left: Near a singular orbit. Right: Away from a singular orbit.}
    \label{fig:paflow}
\end{figure}

The precise definition of a pseudo-Anosov flow involves some details on regularity and Markov partitions; we refer the reader to \cite[Section 4]{FM01}.
See also \cite[Section 3.1]{Mos96} or \cite[Section 5.4]{AT24} for some discussion on differing definitions in the literature.
These technicalities will not be important for this paper.

Two pseudo-Anosov flows $(Y_1,\phi_1)$ and $(Y_2,\phi_2)$  are \textit{orbit equivalent} if there exists a homeomorphism $h:Y_1 \to Y_2$ that sends the orbits of $\phi_1$ to those of $\phi_2$ in an orientation preserving way, but not necessarily preserving their parametrizations. 
We will consider orbit equivalent flows as being equivalent.

Given a finite collection of primitive closed orbits $\mathcal{C}$, one can define the \textit{blown-up flow} $\phi^\sharp$ on the complement of said collection of closed orbits $Y^\sharp= Y \backslash \nu(\mathcal{C})$.
The restriction of $\phi^\sharp$ to the interior of $Y^\sharp$ can be identified with the restriction of $\phi$ to $Y \backslash \mathcal{C}$. 
Meanwhile, each boundary component $T_\gamma$ of $Y^\sharp$ corresponds to a primitive closed orbit $\gamma \in \mathcal{C}$. 
The stable and unstable foliations of $\gamma$ intersect $T_\gamma$ in an equal number of circles. 
When restricted to $T_\gamma$, the blown-up flow $\phi^\sharp$ has repelling and attracting closed orbits exactly along these circles, respectively.
The \textit{degeneracy class} of $\phi$ on $T_\gamma$ is the isotopy class of the collection of primitive repelling closed orbits on $T_\gamma$, or equivalently, the isotopy class of the collection of primitive attracting closed orbits on $T_\gamma$.
We illustrate an example in \Cref{fig:paflowblowup} where we blow up along \Cref{fig:paflow} left. 

\begin{figure}
    \centering
    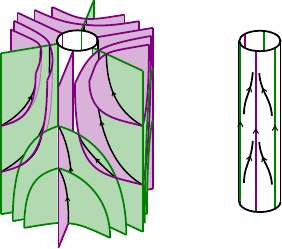
    \caption{Blowing up along \Cref{fig:paflow} left.}
    \label{fig:paflowblowup}
\end{figure}

Conversely, there is a map $\pi: Y^\sharp \to Y$ that collapses each boundary component down to a primitive closed orbit and sends the flow lines of $\phi^\sharp$ to the flow lines of $\phi$. 
More generally, given a collection of slopes $s=(s_T)$ on each boundary component $T$ of $Y^\sharp$ such that $|\langle s_T,l_T \rangle| \geq 2$, where $l_T$ is the degeneracy class of $\phi$ on $T$, there is a pseudo-Anosov flow $\phi^\sharp(s)$ on the closed manifold $Y^\sharp(s)$ obtained by Dehn filling $Y^\sharp$ along the slope $s_T$ for each boundary component $T$ and a map $\pi: Y^\sharp \to Y^\sharp(s)$ that collapses each boundary component $T$ down to a $|\langle s_T,l_T \rangle|$-pronged primitive closed orbit and sends the flow lines of $\phi^\sharp$ to the flow lines of $\phi^\sharp(s)$. 
We denote by $\mathcal{C}(s)$ the collection of primitive closed orbits that is the image of $\partial Y^\sharp$ under $\pi$.
We refer to this procedure as \textit{blowing down} $\phi^\sharp$ along the multi-slope $s$.

See \cite{LMT24b} for a more careful treatment of blow-ups and blow-downs.

Finally, we recall the notion of no perfect fits.
Let $\phi$ be a pseudo-Anosov flow on a closed 3-manifold $Y$. Consider the lifted flow $\widetilde{\phi}$ on the universal cover $\widetilde{Y}$. The \textit{orbit space} $\mathcal{O}$ is defined to be the set of flow lines of $\widetilde{\phi}$, equipped with the quotient topology. It is shown in \cite[Proposition 4.1]{FM01} that $\mathcal{O}$ is homeomorphic to the plane $\mathbb{R}^2$. 
The lifted 2-dimensional stable/unstable foliations $\widetilde{\Fo^{s/u}}$ on $\widetilde{Y}$ induce 1-dimensional foliations $\mathcal{O}^{s/u}$ on $\mathcal{O}$.
Let $\mathcal{C}$ be a nonempty finite collection of primitive closed orbits of $\phi$.
Then the preimage $\widetilde{\mathcal{C}}$ of $\mathcal{C}$ is a collection of flow lines of $\widetilde{\phi}$, thus determines a set of points in $\mathcal{O}$.

A \textit{perfect fit rectangle} is a properly embedded rectangle-with-one-ideal-vertex $R$ in $\mathcal{O}$ where the restrictions of $\mathcal{O}^{s/u}$ foliate $R$ as a product. In other words, $R$ is the image of a proper embedding $[0,1]^2 \backslash \{(1,1)\} \to \mathcal{O}$ so that the pullback of $\mathcal{O}^{s/u}$ to $[0,1]^2 \backslash \{(1,1)\}$ is the foliation by vertical/horizontal lines respectively. See \Cref{fig:perfectfitrect}.
We say that $\phi$ \textit{has no perfect fits relative to $\mathcal{C}$} if every perfect fit rectangle intersects $\widetilde{\mathcal{C}}$.
We say that $\phi$ \textit{has no perfect fits} if it has no perfect fits with respect to the collection of singular orbits.

\begin{figure}
    \centering
    \resizebox{!}{2.5cm}{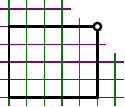}
    \caption{A perfect fit rectangle.}
    \label{fig:perfectfitrect}
\end{figure}

The dynamical significance of the no perfect fit condition is recorded by the following proposition.

\begin{prop} \label{prop:npfhtpyclassunique}
Let $\phi$ be a pseudo-Anosov flow with no perfect fits relative to a collection of closed orbits $\mathcal{C}$. Let $\phi^\sharp$ be the blow-up of $\phi$ along $\mathcal{C}$. Then two distinct closed orbits $\gamma_1$ and $\gamma_2$ of $\phi^\sharp$ are homotopic in $Y^\sharp$ only if they lie on the same boundary component.
\end{prop}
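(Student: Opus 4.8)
The plan is to pass to the orbit space and reduce the statement to the classification of free homotopy classes of closed orbits of a pseudo‑Anosov flow in terms of chains of lozenges. I would carry this out directly for the blown‑up flow $\phi^\sharp$, rather than for $\phi$, since $\phi^\sharp$ is exactly the object whose closed orbits the proposition talks about.

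\emph{Step 1: the orbit space of $\phi^\sharp$.} The blown‑up flow $\phi^\sharp$ on the manifold‑with‑boundary $Y^\sharp$ has a well‑defined orbit space $\mathcal O^\sharp$, obtained from $\mathcal O$ by blowing up the discrete, $\pi_1(Y)$‑equivariant set $\widetilde{\mathcal C}$ (replacing each point by a boundary circle); it is a simply connected planar surface‑with‑boundary carrying induced singular foliations coming from $\mathcal O^{s/u}$. Here I would lean on the treatment of blow‑ups in \cite{LMT24b} (cf.\ also \cite{LMT23}). The first genuine input is the bookkeeping lemma that \emph{$\phi$ has no perfect fits relative to $\mathcal C$ if and only if $\mathcal O^\sharp$ contains no perfect fit rectangle}: a perfect fit rectangle in $\mathcal O^\sharp$ projects to one in $\mathcal O$ disjoint from $\widetilde{\mathcal C}$, while a perfect fit rectangle in $\mathcal O$ that meets $\widetilde{\mathcal C}$ loses its product structure upon blowing up, so none is created.

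\emph{Step 2: lozenges.} By Fenley's structure theorem for free homotopies of closed orbits (\cite{FM01}; see also \cite{LMT23}), transported to the blown‑up setting, if $\gamma_1$ and $\gamma_2$ are distinct closed orbits of $\phi^\sharp$ that are freely homotopic in $Y^\sharp$, then \emph{either} they lie on a common boundary torus, \emph{or} their lifts to $\mathcal O^\sharp$ — chosen as the fixed points of a common deck transformation $g$ representing the shared conjugacy class — are joined by a nonempty chain of lozenges $\mathcal L_1,\dots,\mathcal L_n$ with all corners fixed by $g$. In the second case, each $\mathcal L_j$ has two ideal vertices, and a neighbourhood of an ideal vertex inside $\mathcal L_j$ (bounded by the two half‑leaves meeting there together with two cross‑cut leaf segments) is a perfect fit rectangle in $\mathcal O^\sharp$, contradicting Step 1. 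Hence $\gamma_1$ and $\gamma_2$ must lie on a common boundary torus, which is the assertion of the proposition. (Conversely the statement is sharp: the degeneracy curves on a fixed $T_\gamma$ are mutually parallel, hence freely homotopic, so the boundary case genuinely occurs.)

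\emph{Main obstacle.} The logical skeleton above is short; the real work is in transporting the orbit‑space technology to $\phi^\sharp$, which is not a pseudo‑Anosov flow on a closed manifold but a flow on a manifold with boundary having attracting and repelling closed orbits along $\partial Y^\sharp$. I expect Step 2 to be the crux: one must verify that Fenley's lozenge description of free homotopy classes holds in the bounded setting, including the precise dichotomy that the \emph{only} free homotopies producing no lozenge are those between parallel curves on a common boundary torus. Setting up $\mathcal O^\sharp$ rigorously in Step 1 is comparatively routine but also needs care with the foliated structure along the new boundary circles. If one instead prefers to argue downstairs in $Y$ via the collapsing map $\pi\colon Y^\sharp\to Y$ — where distinct closed orbits $\gamma_1,\gamma_2$ of $\phi^\sharp$ not sharing a boundary torus map to distinct closed orbits of $\phi$ joined by a lozenge chain in $\mathcal O$ — then the same difficulty resurfaces as the need to show that the image in $\mathcal O$ of the homotopy annulus cannot dodge the points of $\widetilde{\mathcal C}$ that the no‑perfect‑fits hypothesis traps inside the interiors of those lozenges, which requires a more hands‑on analysis of how a $g$‑equivariant annulus projects into the orbit space.
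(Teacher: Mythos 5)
Your overall strategy (orbit space, perfect fits, lozenges) is the standard toolkit, but the argument as written has a genuine gap exactly where you flag it: Step~2 invokes ``Fenley's structure theorem for free homotopies \dots transported to the blown-up setting,'' including the precise dichotomy that the only lozenge-free free homotopies in $Y^\sharp$ are between orbits on a common boundary torus. No such theorem is available off the shelf for a semi-flow on a manifold with boundary whose boundary tori carry circles of mutually homotopic attracting/repelling orbits, and establishing it is not a routine transport --- among other things, free homotopy in $Y^\sharp$ is governed by $\pi_1(Y^\sharp)$, which is strictly larger than $\pi_1(Y)$, so the space $\mathcal{O}^\sharp$ you build by blowing up $\widetilde{\mathcal{C}}\subset\mathcal{O}$ is the orbit space of a \emph{non-universal} cover of $Y^\sharp$ and does not directly see conjugacy in $\pi_1(Y^\sharp)$. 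There is also a smaller unjustified claim in Step~2: a lozenge corner produces a perfect fit rectangle, but the no-perfect-fits hypothesis is only \emph{relative to} $\mathcal{C}$, so you must additionally argue that this rectangle can be chosen disjoint from $\widetilde{\mathcal{C}}$ (equivalently, that it survives into $\mathcal{O}^\sharp$); since the rectangles nesting into the ideal vertex are all unbounded in $\mathcal{O}$, discreteness of $\widetilde{\mathcal{C}}$ alone does not give this.

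The paper sidesteps all of this by going in the opposite direction: instead of blowing up, it blows \emph{down}. Choose filling slopes $s_T$ with $|\langle s_T,l_T\rangle|\geq 3$ on every boundary component; the resulting pseudo-Anosov flow $\phi^\sharp(s)$ on the closed manifold $Y^\sharp(s)$ has no perfect fits relative to $\mathcal{C}(s)$ (by transferring perfect fit rectangles between orbit spaces), and since every orbit of $\mathcal{C}(s)$ is now singular, it has no perfect fits in the absolute sense. One may then apply Fenley's theorem on closed manifolds (\cite[Theorem 4.8]{Fen99}) to conclude that the images of $\gamma_1$ and $\gamma_2$, being homotopic in $Y^\sharp(s)$, coincide; distinctness of $\gamma_1,\gamma_2$ then forces them to come from the same blown-up boundary torus. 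If you want to salvage your approach, the cleanest fix is to replace your Step~2 by this blow-down reduction, which lands you squarely in the setting where the lozenge machinery is already proven.
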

\begin{proof}
This is implied by the arguments in \cite[Lemma 2.13]{Tsa24a}. For completeness, we sketch a proof.

Suppose $\gamma_1$ and $\gamma_2$ are homotopic in $Y^\sharp$.
For each boundary component $T$ of $Y^\sharp$, we pick a slope $s_T$ such that $|\langle s_T,l_T \rangle| \geq 3$ where $l_T$ is the degeneracy class. Via transferring perfect fit rectangles between orbit spaces as in \cite[Proposition 2.7]{Tsa24a}, one can see that the blown-down flow $\phi^\sharp(s)$ has no perfect fits relative to $\mathcal{C}(s)$. 
Since all orbits in $\mathcal{C}(s)$ are singular, $\phi^\sharp(s)$ has no perfect fits.

Meanwhile, the images of $\gamma_1$ and $\gamma_2$ are homotopic in $Y^\sharp(s)$. Hence by \cite[Theorem 4.8]{Fen99}, these images are equal. Thus $\gamma_1$ and $\gamma_2$ must have been quotiented down from the same boundary component of $Y^\sharp$.
\end{proof}

We point out that the no perfect fits condition is non-restrictive for most pseudo-Anosov flows of interest: 
A pseudo-Anosov flow on a closed 3-manifold $Y$ is \textit{transitive} if the set of closed orbits of $\phi$ is dense in $Y$.
The arguments in \cite[Proposition 2.7]{Mos92a} imply that a pseudo-Anosov flow is intransitive if and only if there is a transverse separating torus.
In particular, a pseudo-Anosov flow on an atoroidal 3-manifold must be transitive.

\begin{prop}[{\cite[Proposition 2.7, Remark 2.11]{Tsa24a}}]
Let $\phi$ be a pseudo-Anosov flow. There exists a collection of closed orbits $\mathcal{C}$ relative to which $\phi$ has no perfect fits if and only if $\phi$ is transitive. In this case, one can in fact choose $\mathcal{C}$ to be the union of the singular orbits and one other closed orbit. \qed
\end{prop}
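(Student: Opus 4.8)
The two implications are handled separately. The inputs are the characterization of intransitivity by transverse separating tori recorded above (from \cite[Proposition 2.7]{Mos92a}), used in one direction, and Fenley's analysis of perfect fits and lozenges in the orbit space \cite{Fen99}, used in the other.

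\textbf{Transitive implies such a $\mathcal{C}$ exists.} The first ingredient is a \emph{lifting criterion}: if $R$ is a perfect fit rectangle with $\intr R \cap \wt L = \emptyset$, then $\intr R$ is the image in $\mathcal{O}$ of an open, $\wt\phi$-saturated subset of $\wt Y$, hence projects to a nonempty open, $\phi$-saturated subset $U_R \subseteq Y$; and if some closed orbit $\gamma$ of $\phi$ meets $U_R$, then lifting a point of $\gamma \cap U_R$ and saturating produces a lift of the flow line of $\gamma$ that projects to a point of $\intr R$, so $R$ meets $\wt{\mathcal{C}}$ as soon as $\gamma \in \mathcal{C}$. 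Thus it is enough to find a single closed orbit $\gamma$ meeting $U_R$ for every perfect fit rectangle $R$ with $\intr R \cap \wt L = \emptyset$. The second, deeper, ingredient is that for a transitive flow these rectangles are controlled by finitely much data modulo $\pi_1(Y)$: the perfect fits organize into chains of lozenges, of which there are only finitely many $\pi_1(Y)$-orbits, so there are finitely many nonempty open sets $U_{R_1}, \dots, U_{R_n}$ such that every $U_R$ as above contains one of them --- here one uses that $R' \subseteq R$ forces $U_{R'} \subseteq U_R$, so one may restrict attention to innermost rectangles. Finally, the closed orbits of a transitive pseudo-Anosov flow are dense and satisfy a specification property, so there is a single closed orbit $\gamma$ that is $\epsilon$-dense for $\epsilon$ small enough that $\gamma$ meets each of the finitely many open sets $U_{R_i}$. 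Then $\mathcal{C} = L \cup \{\gamma\}$ has no perfect fits.

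\textbf{Existence of such a $\mathcal{C}$ implies transitive.} We prove the contrapositive. If $\phi$ is intransitive, \cite[Proposition 2.7]{Mos92a} provides an embedded separating torus $T$ transverse to $\phi$, which after an isotopy lies in the wandering region of $\phi$. Passing to the universal cover, a component of the preimage of $T$ maps injectively into $\mathcal{O}$ and, being flow-transverse, induces there a product region $P$ --- an open subsurface, invariant under an infinite subgroup of $\pi_1(Y)$, on which $\mathcal{O}^s$ and $\mathcal{O}^u$ restrict to mutually transverse product foliations --- whose image in $Y$ is contained in the wandering set. One checks that $P$ has an ideal vertex (arising, for instance, from a closed leaf of $\Fo^s \cap T$ or $\Fo^u \cap T$), and therefore contains a perfect fit rectangle $R$ with $\intr R \subseteq P$; then $U_R$ is a nonempty open set disjoint from the non-wandering set of $\phi$. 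Since every closed orbit of $\phi$ is non-wandering, no closed orbit of $\phi$ meets $U_R$, so by the lifting criterion $R$ is disjoint from $\wt{\mathcal{C}}$ for \emph{every} finite collection $\mathcal{C}$ of closed orbits. Hence no finite $\mathcal{C}$ works.

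\textbf{Main obstacle.} The crux is the finiteness statement in the first implication --- that a transitive pseudo-Anosov flow has only finitely many $\pi_1(Y)$-orbits of lozenges, equivalently that its perfect fit rectangles project over a finite list of open sets up to containment. This is the one place where transitivity is genuinely used; it rests on Fenley's structure theory \cite{Fen99} organizing chains of lozenges along finitely many essential annuli, and it is exactly the feature that fails --- via the product region of a transverse separating torus --- in the intransitive case. The remaining points (the lifting criterion, and the use of specification to produce one closed orbit meeting finitely many prescribed open sets) are routine.
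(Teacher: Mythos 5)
The paper does not actually prove this proposition: it is imported wholesale from \cite[Proposition 2.7, Remark 2.11]{Tsa24a} with an immediate \textsc{qed}, so your argument can only be compared with the cited source. On its own terms, the forward direction has a genuine gap, and it sits exactly at the step you single out as the crux. A perfect fit rectangle can be shrunk arbitrarily far toward its ideal vertex (restricting the proper embedding $[0,1]^2\setminus\{(1,1)\}\to\mathcal{O}$ to a closed sub-rectangle containing $(1,1)$ is again proper), so there are no ``innermost'' perfect fit rectangles, and the containment $R'\subseteq R\Rightarrow U_{R'}\subseteq U_R$ runs the wrong way for your reduction: the rectangles you must hit are the arbitrarily small ones escaping to infinity in $\mathcal{O}$, and such an $R'$ contains a fixed nonempty open $U_{R_i}$ only if the perfect fit is \emph{periodic}, i.e.\ invariant under a deck transformation that keeps recurring the shrinking rectangles to the same image in $Y$. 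Moreover the finiteness you invoke is false: for a skewed $\mathbb{R}$-covered Anosov flow, e.g.\ the geodesic flow of a hyperbolic surface (which is transitive, so the proposition applies to it), every leaf makes a perfect fit with the two boundary lines of the diagonal strip $\mathcal{O}$, so there are uncountably many perfect fits, most of them non-periodic, and already infinitely many lozenges up to deck transformation (one per closed orbit). A single extra closed orbit $\gamma$ does suffice there, but the mechanism is that the $\pi_1(Y)$-orbit of the lifts of $\gamma$ accumulates at \emph{every} ideal vertex from inside the product region --- for the geodesic flow, density of the $\pi_1$-orbit of the endpoint pair of $\gamma$ in $S^1_\infty\times S^1_\infty$, equivalently density of its stable and unstable leaves --- which is a density statement at infinity of $\mathcal{O}$ and is not implied by $\gamma$ being $\epsilon$-dense in $Y$ and meeting finitely many prescribed open sets. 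So the specification step does not close the argument.

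Your converse direction is right in outline (intransitive $\Rightarrow$ transverse separating torus $\Rightarrow$ a perfect fit rectangle whose flow-saturation lies in the wandering set, which no closed orbit can meet, and your lifting criterion then correctly rules out every finite $\mathcal{C}$). But the middle implication --- that the torus yields a product region in $\mathcal{O}$ contained in the wandering set, that this region has an ideal vertex, and hence contains a genuine perfect fit rectangle --- is where all of the content lies, and it is asserted rather than proved; as written it is a plausible sketch, not an argument.
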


\begin{rmk} \label{rmk:perfectfitdefns}
There is an alternate characterization of no perfect fits that is increasingly favored in the literature:
A pseudo-Anosov flow $\phi$ has no perfect fits relative to a nonempty finite collection of closed orbits $\mathcal{C}$ if and only if the blown-up flow $\phi^\sharp$ does not admit two closed orbits $\gamma_1$ and $\gamma_2$ such that $\gamma_1$ is homotopic to $\gamma_2^{-1}$ in $Y^\sharp$.
This fact follows from the results in \cite{Fen16}, as explained in \cite[Remark 3.9]{LMT24b}.
\end{rmk}

\subsection{Veering branched surfaces}

Let $M$ be a compact oriented 3-manifold with torus boundary components.
A \textit{branched surface} in $M$ is a 2-complex $B \subset M$ where every point in $B$ has a neighborhood smoothly modeled on a point in \Cref{fig:bsdefn}.

\begin{figure}
    \centering
    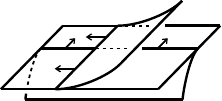
    \caption{The local model for a branched surface. The arrows indicate the maw coorientation.}
    \label{fig:bsdefn}
\end{figure}

The union of the non-manifold points of $B$ is called the \textit{branch locus} of $B$ and is denoted by $\brloc(B)$. There is a natural structure of a 4-valent graph on $\brloc(B)$. The vertices of $\brloc(B)$ are called the \textit{triple points} of $B$. 
The \textit{sectors} of $B$ are the complementary regions of $\brloc(B)$ in $B$.
Each edge of $\brloc(B)$ has a canonical coorientation on $B$, which we call the \textit{maw coorientation}, given by the direction from the side with more sectors to the side with less sectors, as indicated in \Cref{fig:bsdefn}.

A \textit{branch loop} of $B$ is a smoothly immersed closed curve in $\brloc(B)$. Each edge of $\brloc(B)$ lies in exactly one branch loop. In other words, $\brloc(B)$ admits a canonical decomposition as a union of branch loops.
Each sector of $B$ carries a natural structure of surfaces with corners. The corners are where the boundary switches from lying on one branch loop to another branch loop locally.

The complementary regions of $B$ in $M$ are 3-manifolds with boundary, along with the data of a disjoint collection of \textit{cusp curves}, i.e. closed curves along which the boundary fails to be smoothly immersed.
The cusp curves are in one-to-one correspondence with the branch loops of $B$.

\begin{defn} \label{defn:vbs}
A branched surface $B$ in $M$ is \textit{veering} if:
\begin{enumerate}
    \item Each sector of $B$ is homeomorphic to a disc.
    \item Each component of $M \backslash B$ is a \emph{cusped torus shell}, i.e. a thickened torus with a nonempty collection of cusp curves on one of its boundary components.
    \item There is a choice of orientation on each branch loop so that at each triple point, the orientation of each branch loop induces the maw coorientation on the other branch loop. See \Cref{fig:veeringcondition}.
\end{enumerate}
\end{defn}

\begin{figure}
    \centering
    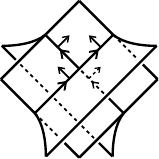
    \caption{A local picture around a triple point in a branched surface satisfying \Cref{defn:vbs}(3).}
    \label{fig:veeringcondition}
\end{figure}

The choice of orientations in (3) is unique if it exists. As such, given any veering branched surface $B$, we will implicitly adopt those orientations on the branch loops of $B$. In turn, the cusp curves of the complementary regions inherit natural orientations.

For each boundary component $T$ of $M$, there is a unique complementary region $C$ such that $T$ is a boundary component of $C$. The cusp curves on the other boundary component of $C$ determines an isotopy class of oriented curves on $T$. We refer to this isotopy class as the \textit{ladderpole class} on $T$.

The edges of $\brloc(B)$ also inherit orientations from the branch loops.
This upgrades the structure of $\brloc(B)$ as a 4-valent graph to a $(2,2)$-valent directed graph, i.e. a directed graph where every vertex has 2 incoming edges and 2 outgoing edges. 
It is customary to refer to $\brloc(B)$ with this upgraded structure as the \textit{dual graph} of $B$.
In this paper we denote it by $G$.

\subsection{Correspondence theorem}

We are ready to state the correspondence theorem between pseudo-Anosov flows and veering branched surfaces.

\begin{thm} \label{thm:vbspaflowcorr}
Let $\mathcal{F}$ denote the set of triples $(Y,\phi,\mathcal{C})$, where
\begin{itemize}
    \item $Y$ is a closed oriented 3-manifold,
    \item $\phi$ is a pseudo-Anosov flow on $Y$, and
    \item $\mathcal{C}$ is a nonempty finite collection of primitive closed orbits of $\phi$, containing the singular orbits, such that $\phi$ has no perfect fits relative to $\mathcal{C}$,
\end{itemize}
modulo orbit equivalence by homeomorphisms isotopic to identity.

Let $\mathcal{B}$ denote the set of triples $(M,B,s)$, where 
\begin{itemize}
    \item $M$ is a compact oriented 3-manifold with torus boundary components,
    \item $B$ is a veering branched surface on $M$, and
    \item $s=(s_T)$ is a collection of slopes on each boundary component $T$ of $M$ such that $|\langle s_T,l_T \rangle| \geq 2$ for each $T$, where $l_T$ is the ladderpole class of $B$ on $T$,
\end{itemize}
modulo isotopy.

Then:
\begin{enumerate}[label=(\arabic*)]
    \item There exists a function $\mathsf{V}:\mathcal{F} \to \mathcal{B}$ of the form
    $$\mathsf{V}(Y, \phi, \mathcal{C}) = (Y \backslash \nu(\mathcal{C}), B(\phi, \mathcal{C}), \text{meridians}).$$
    \item There exists a function $\mathsf{P}:\mathcal{B} \to \mathcal{F}$ of the form 
    $$\mathsf{P}(M, B, s) = (M(s), \phi(B, s), \text{cores of filling solid tori}).$$
    \item $\mathsf{V}$ and $\mathsf{P}$ are inverse to each other.
\end{enumerate}

Furthermore, under these bijections, if $(Y, \phi, \mathcal{C})$ and $(M, B, s)$ correspond to each other, then:
\begin{enumerate}[label=(\roman*)]
    \item $B$ carries the blown-up unstable foliation of $\phi$ in $M$. In particular, the ladderpole class of $B$ equals the degeneracy class of $\phi$ on each boundary component of $M$.
    \item The collection of homotopy classes of loops of the dual graph $G$ of $B$ equals the collection of homotopy classes of closed orbits of the blown-up flow $\phi^\sharp$ in $M$.
\end{enumerate}
\end{thm}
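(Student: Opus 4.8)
\emph{Overall approach and the baseline case.} The statement is, in essence, a synthesis of three bodies of work, and the plan is to assemble it in that spirit, treating first the case in which $\mathcal{C}$ is exactly the singular locus $L$ (so that $s$ consists of meridians) and then bootstrapping to arbitrary admissible $\mathcal{C}$. For the baseline case one invokes the construction of Agol and Guéritaud, as expounded in \cite{Tsathesis}: from a pseudo-Anosov flow $\phi$ on $Y$ with no perfect fits one builds a veering ideal triangulation $\tau(\phi)$ of $Y\setminus\nu(L)$, whose cells are indexed by combinatorial configurations of the two singular foliations on the orbit space $\mathcal{O}$, the no-perfect-fits hypothesis being precisely what makes these configurations assemble into a locally finite triangulation. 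Collapsing each tetrahedron of $\tau(\phi)$ onto an appropriate pair of its faces produces a branched surface $B(\phi,L)$, and one checks directly from the combinatorics that $B(\phi,L)$ satisfies \Cref{defn:vbs}, the maw coorientations and branch-loop orientations being determined by the flow direction and the coorientation of $\widetilde{\Fo^u}$. Conversely, Schleimer--Segerman \cite{SS19} recover $\tau(\phi)$ --- and with it the flow --- from the veering branched surface via its link space; equivalently, one uses that a veering branched surface carries a unique pseudo-Anosov flow on each Dehn filling $M(s)$ with $|\langle s_T,l_T\rangle|\geq 2$. These two constructions are mutually inverse up to isotopy; the only point that is not bookkeeping is that the reconstructed flow is orbit equivalent to $\phi$ by a homeomorphism isotopic to the identity, which is a Fenley-type rigidity statement proved exactly as the uniqueness step in \Cref{prop:npfhtpyclassunique}, via \cite[Theorem 4.8]{Fen99}.

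\emph{Passing to general $\mathcal{C}$.} It is cleanest to run the argument on the $\mathcal{B}$ side. A triple $(M,B,s)$ records not only the blown-down flow $\phi(B,s)$ on $M(s)$ but also the distinguished collection $\mathcal{C}(s)$ of closed orbits --- the images of $\partial M$ under the collapsing map $\pi$ --- and a boundary component with $|\langle s_T,l_T\rangle|=2$ collapses to a regular orbit while one with $|\langle s_T,l_T\rangle|\geq 3$ collapses to a singular orbit. Since a veering branched surface carries a flow all of whose singular orbits lie among the filled-in cores, $\mathcal{C}(s)$ automatically contains every singular orbit, matching the constraint that defines $\mathcal{F}$. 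One then checks that blow-up and blow-down are compatible with the two constructions --- blowing up an extra regular orbit of $\phi$ modifies $B$ by introducing one further cusped torus shell, inverse to the Dehn filling that collapses a $2$-pronged core --- which reduces the general case to the baseline case. Here one cites \cite{LMT24b} and \cite{Tsathesis} for the precise behavior of blow-ups and blow-downs.

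\emph{The ``furthermore'' statements.} For (i), $B(\phi,\mathcal{C})$ is by construction assembled from pieces of the blown-up unstable foliation $\Fo^u_\sharp$ of $\phi^\sharp$ in $M$ --- each sector lies in a leaf of $\Fo^u_\sharp$ --- so $B$ carries $\Fo^u_\sharp$; on each boundary torus $T_\gamma$ the cusp curves of the adjacent cusped torus shell are then parallel to the circles $T_\gamma\cap\Fo^u_\sharp$, which are the attracting closed orbits of $\phi^\sharp$ on $T_\gamma$, so the ladderpole class equals the degeneracy class. Statement (ii) is the main substantive input and is the content of Landry--Minsky--Taylor \cite{LMT23}: their flow graph of the veering triangulation $\tau(\phi)$ is exactly our dual graph $G=\brloc(B)$ with its $(2,2)$-valent directed structure, and they show that every loop of $G$ is freely homotopic in $M$ to a closed orbit of $\phi^\sharp$, that every closed orbit of $\phi^\sharp$ is freely homotopic to such a loop, and --- using the no-perfect-fits hypothesis --- that these assignments descend to a bijection on free homotopy classes. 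By (i) their standing hypotheses are ours, so this applies directly.

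\emph{The main obstacle.} The genuine difficulty is not a new argument but the reconciliation of the three frameworks --- Agol--Guéritaud's triangulations, Schleimer--Segerman's link-space reconstruction, and Landry--Minsky--Taylor's flow-graph dynamics --- into one clean bijective statement with uniform conventions, in particular fixing which of $\Fo^s,\Fo^u$ is carried and how the flow direction matches the orientations of $\brloc(B)$. The two points that are not purely formal are (a) the Fenley-type rigidity that makes reconstruction recover the flow up to a homeomorphism isotopic to the identity, and (b) the Landry--Minsky--Taylor theorem behind (ii), where the no-perfect-fits hypothesis is used essentially: without it the flow graph can exhibit ``dead-end'' behavior and the bijection between its loops and the closed orbits fails.
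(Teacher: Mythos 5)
Your overall strategy --- assembling the theorem from Agol--Gu\'eritaud for the construction, Schleimer--Segerman for the inverse, and Landry--Minsky--Taylor for (ii) --- is exactly the paper's, which likewise only outlines (1), (i), (ii) and cites the Schleimer--Segerman series for (2) and (3). Two points, however, do not survive scrutiny. First, your reduction to the ``baseline case $\mathcal{C}=L$'' is not available in general: $L$ may be empty (Anosov flows, which the theorem must cover since $\mathcal{C}$ is merely required to be nonempty and to contain $L$), and even when $L\neq\varnothing$ the flow may have perfect fits relative to $L$ while having none relative to the larger collection $\mathcal{C}$ --- the paper stresses that for transitive flows one typically must enlarge $L$ precisely to kill the perfect fits. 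So the baseline you propose to bootstrap from need not exist. The Agol--Gu\'eritaud construction should instead be run uniformly in $\mathcal{C}$: the tetrahedra are indexed by rectangles in the orbit space $\mathcal{O}$ that are maximal with respect to $\widetilde{\mathcal{C}}$ (not with respect to the singular set), and the no-perfect-fits-rel-$\mathcal{C}$ hypothesis is exactly what guarantees that every point of $\mathcal{O}$ lies in such a maximal rectangle. This is how the paper's outline proceeds, and it renders the blow-up/blow-down bootstrapping unnecessary.

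Second, your justification of (ii) misidentifies the relevant graph: you assert that Landry--Minsky--Taylor's \emph{flow graph} ``is exactly our dual graph $G=\brloc(B)$.'' It is not --- the flow graph $\Phi$ is a different directed graph embedded in $B$, and the paper uses it separately in \Cref{prop:zetadoubleveering}, where it is explicitly distinguished from the dual graph. What (ii) needs is the dual-graph correspondence: loops of $G$ up to sweep-equivalence biject with dynamic (half-)planes modulo deck transformations, hence with closed orbits of $\phi^\sharp$, as in \cite[Lemma 3.17]{LMT23}; concretely, a loop in $G$ determines a nested sequence of maximal rectangles whose intersection is a single point of the orbit space invariant under the corresponding deck transformation. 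As written, your appeal to \cite{LMT23} points at the wrong object, and the ``dead-end'' phenomena you invoke concern the flow graph rather than the $(2,2)$-valent dual graph. The remainder of your sketch --- the role of Fenley's rigidity in the uniqueness step, and the derivation of (i) from the fact that $B$ is built from bands of the blown-up unstable foliation inside each tetrahedron --- matches the paper's argument.
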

\begin{proof}
We first explain how the theorem can be assembled from results in the literature: 
\cite[Proposition 2.15 and Proposition 3.2]{Tsa23} show that veering branched surfaces are the dual objects to \textit{veering triangulations}. 
These are a type of triangulations satisfying certain combinatorial conditions. Their exact definition can be found in \cite{FG13}, but it will not be of concern to us in this paper.
The important fact here is that the set $\mathcal{B}$ can be identified with the set $\mathcal{T}$ of triples $(M,\Delta,s)$, where 
\begin{itemize}
    \item $M$ is a compact oriented 3-manifold with torus boundary components,
    \item $\Delta$ is a veering triangulation on $M$, and
    \item $s=(s_T)$ is a collection of slopes on each boundary component $T$ of $M$ such that $|\langle s_T,l_T \rangle| \geq 2$ for each $T$, where $l_T$ is the ladderpole curve of $\Delta$ on $T$,
\end{itemize}
modulo isotopy.

With this understood, (1) and (i) follow from work of Agol-Guéritaud (see \cite{LMT23} or \cite{SS21}), (2) and (3) are shown by Schleimer-Segerman \cite{SS20}, \cite{SS21}, \cite{SS19}, \cite{SS23}, \cite{SSpart5}, and (ii) is shown by Landry-Minsky-Taylor \cite{LMT23}.

To aid the reader's intuition, we spend the remainder of this section giving an outline of a proof of (1), (i), and (ii), which are the items most relevant to this paper. 
More in-depth explanations of these ideas, as well as those appearing in the proofs of (2), (3), and more can be found in \cite[Chapter 2]{Tsathesis}.

Let $\phi$ be a pseudo-Anosov flow. Let $\mathcal{O}$ be the orbit space of $\phi$.
We define a \textit{maximal rectangle} to be a rectangle $R$ embedded in $\mathcal{O}$ where the restrictions of $\mathcal{O}^{s/u}$ foliate $R$ as a product, such that there are no elements of $\widetilde{\mathcal{C}}$ in the interior of $R$, but there is an element of $\widetilde{\mathcal{C}}$ in the interior of each edge of $R$.
The fact that $\phi$ has no perfect fits relative to $\mathcal{C}$ implies every point of $\mathcal{O}$ lies in the interior of some maximal rectangle.

We associate to each maximal rectangle $R$ a tetrahedron $t_R$. It is helpful to think of $t_R$ as lying over $R$, with the 4 vertices of $t_R$ lying over the 4 elements of $\widetilde{\mathcal{C}}$ that meet $R$. See \Cref{fig:maxrecttoveertet} first two rows, where the red/blue colors on the edges of $t_R$ can be ignored for the purposes of this paper.
We build a triangulation $\overline{\Delta}$ out of these tetrahedra by gluing $t_{R_1}$ and $t_{R_2}$ along a face whenever $R_1$ and $R_2$ meet 3 common elements of $\widetilde{\mathcal{C}}$, the face that is glued being the one with vertices along the 3 common elements. See \Cref{fig:maxrecttoveertet} bottom row.

\begin{figure}
    \centering
    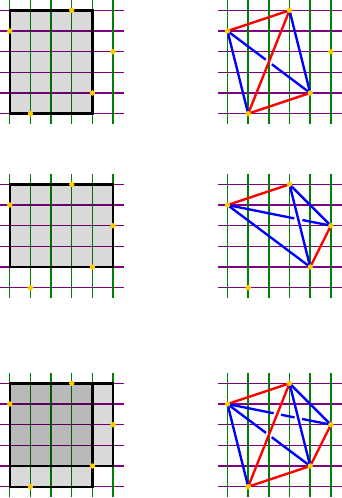
    \caption{Building a triangulation $\overline{\Delta}$ out of tetrahedra $t_R$ corresponding to maximal rectangles $R$.}
    \label{fig:maxrecttoveertet}
\end{figure}

More concretely, we can perform the construction of $\overline{\Delta}$ within $\widetilde{Y}$.
However, note that $\overline{\Delta}$ is not a triangulation of $\widetilde{Y}$, since each flow line in $\widetilde{\mathcal{C}}$ is `collapsed' down to a point in $\overline{\Delta}$. Instead, we can remove a neighborhood of each vertex to get an \textit{ideal triangulation} $\widetilde{\Delta}$ of $\widetilde{Y} \backslash \nu(\widetilde{\mathcal{C}})$.
Landry-Minsky-Taylor \cite{LMT23} show that one can further arrange for $\widetilde{\Delta}$ to be $\pi_1 Y$-invariant, and so that the faces of $\widetilde{\Delta}$ are transverse to the flow lines of $\widetilde{\phi}$, as suggested by the figures. 
These facts implies that one can quotient out the action of $\pi_1 Y$ to get an ideal triangulation $\Delta$ of $Y \backslash \nu(\mathcal{C})$, where the faces of $\Delta$ are transverse to the flow lines of $\phi$. 
(This is the veering triangulation referred to above.)

The latter fact also implies that the unstable foliation $\widetilde{\Fo^u}$ intersects each tetrahedron transversely. 
In fact, one can check that the restriction of $\widetilde{\Fo^u}$ to each tetrahedron, once blown up, consists of a band of quadrilaterals and two bands of triangles. See \Cref{fig:veertettovbs} top right.
Thus this blown-up foliation is carried by the branched surface given by the corresponding union of one quadrilateral and two triangles. See \Cref{fig:veertettovbs} bottom right.

\begin{figure}
    \centering
    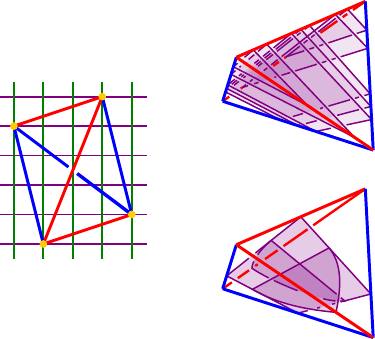
    \caption{Approximating $\widetilde{\Fo^u}$ within each tetrahedron by a branched surface.}
    \label{fig:veertettovbs}
\end{figure}

One can further show that the branched surfaces in each tetrahedron piece together to give a $\pi_1 Y$-invariant branched surface in $\widetilde{Y} \backslash \nu(\widetilde{\mathcal{C}})$. Taking the quotient over $\pi_1 Y$, we get a veering branched surface $B$ on $Y \backslash \nu(\mathcal{C})$ carrying the blown-up unstable foliation of $\phi$. This shows (1) and (i).

To show (ii), observe that $B$ is the dual 2-complex to $\Delta$. 
Given a closed orbit $\gamma$ of the blown-up flow $\phi^\sharp$, since the faces of $\Delta$ are transverse to the flow lines of $\phi$, thus to that of $\phi^\sharp$, $\gamma$ passes through a sequence of faces of $\Delta$. This determines a loop $c$ in the dual graph $G$ that is homotopic to $\gamma$. 

Conversely, given a loop $c$ in $\Gamma$, consider a lift $\widetilde{c}$ of $c$ in $\widetilde{Y}$. The sequence of vertices passed through by $\widetilde{c}$ corresponds to a sequence of tetrahedra of $\widetilde{\Delta}$, which in turn corresponds to a sequence of maximal rectangles in the orbit space $\mathcal{O}$. In the forward direction, the rectangles get longer in the $\mathcal{O}^s$ direction and shorter in the $\mathcal{O}^u$ direction, while in the backward direction, the rectangles get shorter in the $\mathcal{O}^s$ direction and longer in the $\mathcal{O}^u$ direction. This property implies that the intersection of all rectangles is a point in the orbit space. Such a point is a flow line $\widetilde{\gamma}$ of $\widetilde{\phi}$. One can show that $\widetilde{\gamma}$ is invariant under the deck transformation $[c] \in \pi_1 Y$, thus descends to a closed orbit $\gamma$ of $\phi$ homotopic to $c$ in $Y$. 

Running this argument in the universal cover of $\mathcal{O} \backslash \widetilde{\mathcal{C}}$, one can keep track of homotopy classes in $Y \backslash \nu(\mathcal{C})$ instead of $Y$. This way one can arrange for $\gamma$ to be a closed orbit of $\phi^\sharp$ homotopic to $c$ in $Y \backslash \nu(\mathcal{C})$. We refer to \cite{LMT23} for details.
\end{proof}

\section{Sutured Floer homology} \label{sec:suturedfloer}

In this section, we recall some background on sutured Floer homology. 
This theory was developed by Juhász \cite{Juh06}, building on ideas explored by Ozsváth and Szabó \cite{OS04c}. 
A good general reference for this material is the upcoming book by Ozsváth, Stipsicz, and Szabó \cite{OSS} (with the first few chapters, covering all the material needed for this paper, being available online at the time of writing). 

For the purposes of this paper, a \emph{sutured manifold} is a pair $(M,\Gamma)$ where $M$ is a compact oriented 3-manifold  whose boundary is decomposed into two subsurfaces $R_+$ and $R_-$ meeting along a collection of homotopically essential closed curves $\Gamma$, called the \emph{sutures}.

\begin{defn}[Balanced sutured manifold]
A sutured manifold $(M, \Gamma)$ is called \emph{balanced} if $R_+$ and $R_-$ have the same Euler characteristics: $\chi(R_+)=\chi(R_-)$, and each connected component of $\partial M$ contains at least one suture.
\end{defn}

The two main sources of sutured manifolds we shall consider in this paper are summarized below.

\begin{eg}[Neighbourhoods of branched surfaces] \label{eg:bstosut}
Suppose $B$ is a branched surface in a closed oriented 3-manifold $Y$. Then a tubular neighbourood $M=\mathcal{N}(B)$ of $B$ is a oriented 3-manifold with boundary. As shown in  \Cref{fig:bstosut},  $\partial M$ decomposes in two parts: the horizontal boundary $\partial_h M$, and the vertical boundary $\partial_v M$. 
We can put a sutured manifold structure on $M$ by declaring $R_+ = \partial_v M$ and $R_- = \partial_h M$.
\end{eg}

\begin{figure}
    \centering
    \selectfont\fontsize{8pt}{8pt}
\begingroup%
  \makeatletter%
  \providecommand\color[2][]{%
    \errmessage{(Inkscape) Color is used for the text in Inkscape, but the package 'color.sty' is not loaded}%
    \renewcommand\color[2][]{}%
  }%
  \providecommand\transparent[1]{%
    \errmessage{(Inkscape) Transparency is used (non-zero) for the text in Inkscape, but the package 'transparent.sty' is not loaded}%
    \renewcommand\transparent[1]{}%
  }%
  \providecommand\rotatebox[2]{#2}%
  \newcommand*\fsize{\dimexpr\f@size pt\relax}%
  \newcommand*\lineheight[1]{\fontsize{\fsize}{#1\fsize}\selectfont}%
  \ifx\svgwidth\undefined%
    \setlength{\unitlength}{326.63936585bp}%
    \ifx\svgscale\undefined%
      \relax%
    \else%
      \setlength{\unitlength}{\unitlength * \real{\svgscale}}%
    \fi%
  \else%
    \setlength{\unitlength}{\svgwidth}%
  \fi%
  \global\let\svgwidth\undefined%
  \global\let\svgscale\undefined%
  \makeatother%
  \begin{picture}(1,0.15097606)%
    \lineheight{1}%
    \setlength\tabcolsep{0pt}%
    \put(0,0){\includegraphics[width=\unitlength,page=1]{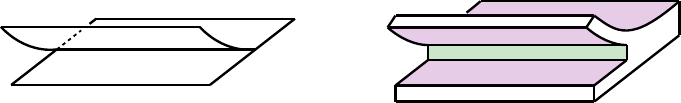}}%
    \put(0.44193885,0.06604117){\color[rgb]{0,0.50196078,0}\makebox(0,0)[lt]{\lineheight{1.25}\smash{\begin{tabular}[t]{l}$R_+ = \partial_v M$\end{tabular}}}}%
    \put(0.44193885,0.02930337){\color[rgb]{0.50196078,0,0.50196078}\makebox(0,0)[lt]{\lineheight{1.25}\smash{\begin{tabular}[t]{l}$R_- = \partial_h M$\end{tabular}}}}%
  \end{picture}%
\endgroup%

    \caption{The sutured manifold structure on a tubular neighborhood of a branched surface.}
    \label{fig:bstosut}
\end{figure}

\begin{eg}[Blown-up pseudo-Anosov flows] \label{eg:paflowtosut}
Suppose $(Y,\phi)$ is a pseudo-Anosov flow, and that  $(Y^\sharp, \phi^\sharp)$ is the result of blowing up $(Y, \phi)$  along a collection of primitive closed orbits $\mathcal{C}$. Then $Y^\sharp$ admits a sutured manifold structure where the sutures are the primitive closed orbits of the blown-up flow $\phi^\sharp$ on $\partial Y^\sharp$. 
\end{eg}

If a veering branched surface $B$ corresponds to a pseudo-Anosov flow $\phi$  and a collection of closed orbits $\mathcal{C}$ as in \Cref{thm:vbspaflowcorr}, then since $B$ carries the blown-up unstable foliation of $\phi$, the sutured manifold structure on $Y^\sharp$ induced by $B$ agrees with the one induced by $\phi^\sharp$.

\begin{rmk}
Note that these sutured structures are balanced since $R_+$ and $R_-$ are  union of annuli.   
\end{rmk}

\subsection{Heegaard diagrams} \label{subsec:heegaarddiagram}

We can describe sutured manifolds combinatorially using some finite two-dimensional diagrams.

\begin{defn}[Sutured Heegaard diagram]
A \emph{sutured Heegaard diagram} is a triple $(\Sigma, \boa, \bob)$ consisting of a compact, oriented surface with boundary $\Sigma$, and  two multi-curves $\boa=\{\alpha_1, \dots ,\alpha_n\}$, and $\bob=\{\beta_1, \dots ,\beta_m\}$ contained in the interior of $\Sigma$.
\end{defn}

Note that in a sutured Heegaard diagram $(\Sigma, \boa, \bob)$ we can pinch the boundary components of the surface $\Sigma$ to turn it into a closed surface $\overline{\Sigma}$ with a collection of marked points $\z=\{z_1, \dots, z_\ell\}$ contained in the complement of the $\alpha$- and the $\beta$-curves. The quadruple $(\overline{\Sigma}, \boa, \bob, \z)$ is called a \emph{multi-pointed Heegaard diagram}. 
Conversely, given a multi-pointed Heegaard diagram $(\overline{\Sigma}, \boa, \bob, \z)$ we can remove a small disk centered at each basepoint $z_i$ and get a sutured Heegaard diagram with as many boundary components as basepoints.

Starting from a sutured Heegaard diagram $(\Sigma, \boa, \bob)$ with $n$ $\alpha$-curves and $m$ $\beta$-curves one can construct a sutured manifold $(M, \Gamma)$ as follows:
\begin{enumerate}
    \item Start with a thickening $\Sigma \times [-1,1]$ of the surface $\Sigma$.
    \item Attach a three-dimensional $2$-handle with core $\alpha_i \times \{-1\}$ for $i=1, \dots , n$. 
    \item Attach a three-dimensional $2$-handle with core $\beta_i \times \{-1\}$  for $i=1, \dots , m$ and call the resulting manifold $M$.
    \item Finally, take the sutures to be $\Gamma= \partial \Sigma \times \{0\}$, and take $R_\pm$ to be the component of $\Sigma \backslash \Gamma$ containing $\partial \Sigma \times \{\pm 1\}$.
\end{enumerate}

In \cite[Proposition 2.9]{Juh06} it is shown that a sutured Heegaard diagram $(\Sigma, \boa, \bob)$ defines a balanced sutured manifold if and only if the following conditions are met:
\begin{itemize}
    \item $|\boa|=|\bob|$, that is, there are as many $\alpha$-curves as many $\beta$-curves,
    \item every connected component of 
    $\Sigma\  \setminus \  \bigcup_{i=1}^d \alpha_i$ contains at least one component of $\partial \Sigma$, and every connected component of 
    $\Sigma\  \setminus \  \bigcup_{i=1}^d \beta_i$ contains at least one component of $\partial \Sigma$.
\end{itemize}
Sutured diagrams satisfying these conditions are called \emph{balanced}. 
Conversely, it is shown in \cite[Proposition 2.13]{Juh06} that every balanced sutured manifold can be described by a balanced Heegaard diagram.

\subsection{Floer chain complex} \label{subsec:floerchaincomplex}

Suppose that $(\Sigma, \boa, \bob)$ is a balanced sutured Heegaard diagram with $d$ $\alpha$-curves and $d$ $\beta$-curves.
Pinch the boundary components of the Heegaard surface to turn it into a multi-pointed Heegaard diagram $(\overline{\Sigma},\boa,\bob,\z)$ with basepoints $\z=\{z_1, \dots , z_k\}$. 
Furthermore fix a volume form $\nu$ on $\Sigma$ and a compatible complex structure $j$.

The \emph{$d$-fold symmetric product} $\text{Sym}^d(\overline{\Sigma})$ of the surface $\overline{\Sigma}$ is defined as the space of degree $d$ effective divisors on $\overline{\Sigma}$, that is the space of all formal linear combinations $\x=x_1+\dots +x_d$ with $x_i \in \overline{\Sigma}$. One can identify $\text{Sym}^d(\overline{\Sigma})$ with the  quotient space $\overline{\Sigma}^{\times d}/S_d$ where $\overline{\Sigma}^{\times d}$ denotes the $d$-fold cartesian product $\overline{\Sigma} \times \dots \times \overline{\Sigma}$, and $S_d$ the symmetric group on $d$ letters acting on $\overline{\Sigma}^{\times d}$ by permutation of the coordinates:
\[\sigma \cdot (x_1, \dots, x_d) = (x_{\sigma (1)}, \dots, x_{\sigma (d)}) \ .\]
Note that despite the action of the symmetric group not being free, $\text{Sym}^d(\overline{\Sigma})$ has the structure of a smooth complex manifold. This is because $\text{Sym}^d(\overline{\Sigma})$ is locally biholomorphic to $\text{Sym}^d(\mathbb{C})$, and $\text{Sym}^d(\mathbb{C})\cong \C^d$ via the fundamental theorem of algebra.

The cartesian product $\overline{\Sigma}^{\times d}$ is a symplectic manifold with symplectic form $\widetilde{\omega}=\nu^{\times d}$. To define a symplectic form on $\text{Sym}^d(\overline{\Sigma})$  note that the projection $\pi:  \overline{\Sigma}^{\times d} \to \text{Sym}^d(\overline{\Sigma})$ is a $d!$-fold branched covering with branching set at the \emph{fat diagonal}
\[\Delta = \{ x_1+\dots +x_d \in \text{Sym}^d(\overline{\Sigma}) : x_i=x_j \text{ for some } i \not=j  \}  \ .\]
In general, when there is a branched covering $\widetilde{X} \to X$ and a symplectic form $\widetilde{\omega}$ on the total space $\widetilde{X}$, Perutz \cite{Per08} showed that one  can find a symplectic form $\omega$ on the base  manifold so that the following conditions are met:
\begin{itemize}
    \item $\pi_*\widetilde{\omega}=\omega$ in the complement of a neighbourhood of the branching set $\Delta$, 
    \item $\pi_*[\widetilde{\omega}]=[\omega]$ in the de Rham cohomology $H^2(\text{Sym}^d(\overline{\Sigma}), \R)$.
\end{itemize}
We equip $\sym^d (\overline{\Sigma})$ with such a symplectic form $\omega$.

In the symmetric product $(\text{Sym}^d(\overline{\Sigma}), \omega)$ the projections of 
\[\alpha_1 \times \dots \times \alpha_d \subset \overline{\Sigma}^{\times d} \ \ \ \ \text{ and } \ \ \ \ \ \beta_1 \times \dots \times \beta_d \subset \overline{\Sigma}^{\times d} \ \] 
are two \textit{Lagrangian submanifolds}, i.e. two half-dimensional submanifolds $\mathbb{T}_\alpha$ and $\mathbb{T}_\beta$ with the property that $\omega|_{\mathbb{T}_\alpha} =\omega|_{\mathbb{T}_\beta} \equiv 0$. Note that $\mathbb{T}_\alpha \pitchfork \mathbb{T}_\beta$ provided that the $\alpha$- and the $\beta$-curves intersect transversely in the underlying Heegaard diagram $(\Sigma, \boa, \bob)$.  

Similarly, the basepoints $z_1, \dots, z_k\in \overline{\Sigma}$ give rise to codimension-two submanifolds of  $\text{Sym}^d(\overline{\Sigma})$. Indeed, one can take $V_{z_i} = \{z_i\} \times \text{Sym}^{d-1}(\overline{\Sigma})$ as the subspace of all degree $d$ effective divisors $\x= x_1+\dots + x_d$ such that $x_j=z_i$ for some index $j=1, \dots , d$.

Summarizing, we have a symplectic manifold  $(W,\omega)=\left( \text{Sym}^d(\overline{\Sigma})  ,  \omega \right)$
and two compact Lagrangian submanifolds $L_1=\mathbb{T}_\alpha$ and $L_2=\mathbb{T}_\beta$.
In this set-up Floer \cite{Flo88} fixes a generic path $J_s$ of compatible almost-complex structures, and studies, for any two intersection points $\x$ and $\y\in L_1 \cap L_2$ of the two Lagrangians, all maps $u: \R \times [-1, 1]\to W$ solving the (perturbed)  Cauchy-Riemann equation
  \[J_s\  \frac{\partial u}{\partial t} - \frac{\partial u}{\partial s}=0 \]
subject to the boundary conditions:
\begin{enumerate}
    \item\label{item:disk1} $u(\R\times \{-1\}) \subset L_1$ and $u(\R\times \{1\}) \subset L_2$
    \item\label{item:disk2} $\lim_{t\to +\infty} u(t, s)= \x $ and $\lim_{t\to -\infty} u(t, s)= \y $
\end{enumerate}
and the \emph{finite energy} condition:
\[E(u)=\int u^*\omega = \int \int \left|\frac{\partial u}{\partial s} \right| ds\  dt < + \infty.\]
Maps satisfying conditions \eqref{item:disk1} and \eqref{item:disk2} are called \emph{Whitney disks}. These form a subspace $B(\x,\y) \subset C^\infty(\R \times [-1, 1], W)$.  We shall denote by $\pi_2(\x, \y)= \pi_0(B(\x,\y))$ the set of its connected components, and by $\mathcal{M}(\phi)$ the set of pseudo-holomorphic representatives in a given homotopy class $\phi \in \pi_2(\x, \y)$ of strips.

To each homotopy class $\phi \in \pi_2(\x, \y)$ one can associate an integer $\mu(\phi)\in \Z$ called the \emph{Maslov index} \cite{Vit87}.
This can be interpreted as the expected dimension of the moduli space  $\mathcal{M}(\phi)$ of pseudo-holomorphic representatives.

\begin{thm}[Floer]
For a generic choice of the path of almost complex structures, $\mathcal{M}(\phi)$ is a smooth finite-dimensional manifold of dimension $\mu(\phi)$. \qed
\end{thm}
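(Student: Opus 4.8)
The plan is to realize $\mathcal{M}(\phi)$ as the zero set of a Fredholm section of a Banach bundle and then apply the Sard--Smale theorem to a universal moduli space in which the path of almost complex structures varies. First I would set up the functional-analytic framework. Fix $p>2$ and a small exponential weight $\delta>0$, and let $\mathcal{B}=\mathcal{B}_\phi$ be the Banach manifold of maps $u\colon\R\times[-1,1]\to W$ of class $W^{1,p}_{\mathrm{loc}}$ lying in the homotopy class $\phi$, satisfying the Lagrangian boundary conditions $u(\R\times\{-1\})\subset L_1$, $u(\R\times\{1\})\subset L_2$, and converging exponentially with rate $\delta$ to $\x$ and $\y$ as $t\to\pm\infty$; here one uses that the intersection points are nondegenerate precisely because $\mathbb{T}_\alpha\pitchfork\mathbb{T}_\beta$. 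Over $\mathcal{B}$ sits the Banach bundle $\mathcal{E}$ whose fiber at $u$ is the space of $L^p_\delta$ sections of $\Lambda^{0,1}T^*(\R\times[-1,1])\otimes u^*TW$, and the perturbed Cauchy--Riemann operator defines a smooth section $\bar\partial_J\colon\mathcal{B}\to\mathcal{E}$ whose zero set is exactly $\mathcal{M}(\phi)$.

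Second, I would establish the Fredholm and index theory. The vertical differential $D_u\bar\partial_J$ at a solution $u$ is a compact perturbation of a $\bar\partial$-type operator on the strip with totally real boundary conditions coming from $T\mathbb{T}_\alpha$ and $T\mathbb{T}_\beta$; such operators are Fredholm, and the Riemann--Roch/Maslov-index computation (Viterbo, Robbin--Salamon, or the gluing formula used by Ozsv\'ath--Szab\'o) identifies the Fredholm index of $D_u\bar\partial_J$ with the Maslov index $\mu(\phi)$. Consequently, at any solution where $D_u\bar\partial_J$ is surjective, the implicit function theorem makes $\mathcal{M}(\phi)$ a smooth manifold of dimension $\mu(\phi)$ near $u$, so everything reduces to achieving this surjectivity for a generic path.

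Third, I would form the universal moduli space $\mathcal{M}(\phi;\mathcal{J})=\bigcup_{J\in\mathcal{J}}\mathcal{M}(\phi;J)$, where $\mathcal{J}$ is a suitable Banach manifold of paths of compatible almost complex structures on $(W,\omega)$ — in the symmetric-product setting one works with the Floer-type space of nearly symmetric almost complex structures that keep the hypersurfaces $V_{z_i}$ complex, completed in a $C^\varepsilon$ (or Floer) norm. One shows the universal section is transverse to the zero section, so $\mathcal{M}(\phi;\mathcal{J})$ is a Banach manifold, the projection $\pi\colon\mathcal{M}(\phi;\mathcal{J})\to\mathcal{J}$ is a Fredholm map of index $\mu(\phi)$, and by Sard--Smale its regular values form a comeager subset of $\mathcal{J}$. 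For such a regular value $J$, the fiber $\mathcal{M}(\phi;J)=\pi^{-1}(J)$ is a smooth manifold of dimension $\mu(\phi)$. Finally, elliptic bootstrapping upgrades $W^{1,p}$ solutions to smooth maps, so the moduli space is independent of the completions chosen.

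The main obstacle is the transversality of the universal section. Given a nonzero element $\eta$ in the cokernel of $D_u\bar\partial_J$ — an $L^q_{-\delta}$ section annihilated by the formal adjoint — one must produce an infinitesimal variation $Y$ of the path $J$ with $\langle Y\circ du\circ j,\eta\rangle\neq 0$. This requires that the strip possess an injective point at which the perturbation can be localized: on some slice $s=s_0\in(-1,1)$ one finds $t_0$ so that $u(t_0,s_0)$ is a regular value of $u(\cdot,s_0)$, avoids the critical locus of $du$, and avoids $\bigcup_i V_{z_i}$ and the diagonal, so that $J_{s_0}$ may be freely perturbed near it; unique continuation for $\eta$ then forces $\eta\equiv 0$, a contradiction. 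Carrying out this somewhere-injectivity/regular-point argument cleanly while staying inside the class of nearly symmetric almost complex structures is the delicate part of the proof.
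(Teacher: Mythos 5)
The paper does not actually prove this statement: it is quoted as Floer's theorem (in the form adapted to Heegaard Floer theory by Ozsv\'ath--Szab\'o) and cited as background, so there is no in-paper argument to compare with. Your sketch is the standard Fredholm/Sard--Smale proof from that literature and is essentially correct as an outline --- with the minor remark that since $\mathbb{T}_\alpha \pitchfork \mathbb{T}_\beta$ one can work in unweighted $W^{1,p}$ (exponential decay is automatic) --- and the delicate point you flag, achieving transversality while staying in the class of nearly symmetric almost complex structures that keep the $V_{z_i}$ and the diagonal holomorphic, is precisely what Ozsv\'ath--Szab\'o's transversality proposition handles, the alternative standard route being to fix $J$ and perturb the $\alpha$- and $\beta$-curves by Hamiltonian isotopies.
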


Inspired by this work of Floer, Ozsv\' ath and Szab\' o \cite{OS04c} studied the chain complex $C_*(\mathbb{T}_{\boa}, \mathbb{T}_{\bob}; J_s)$ freely generated over $\mathbb{F} = \mathbb{F}_2$ by the intersection points of the Lagrangian tori $\mathbb{T}_{\boa}$ and $\mathbb{T}_{\bob}$ produced by an Heegaard diagram
\[C_*(\mathbb{T}_{\boa}, \mathbb{T}_{\bob}; J_s)= \bigoplus_{\x\in \mathbb{T}_{\boa} \cap \mathbb{T}_{\bob}} \mathbb{F} \cdot  \x \ , \]
equipped with differential
\begin{equation} \label{eq:differential}
    \partial \x = \sum_{\y \in \mathbb{T}_{\boa} \cap \mathbb{T}_{\bob}} \ \ \sum_{\left\{\phi \in \pi_2(\x, \y) \mid \ \#|\phi\cap V_{z_i}|=0,\ \mu(\phi)=1\right\}} \ \# \left(\frac{\mathcal{M}(\phi)}{\R} \right) \cdot \y. 
\end{equation} 
They showed that, provided that one uses a special type of Heegaard diagrams called \emph{admissible}, the following holds. 

\begin{thm}[Ozsv\' ath-Szab\' o] \label{thm:cfwelldefined}
For an admissible diagram the differential $\partial$ displayed in \Cref{eq:differential} is well-defined, satisfies $\partial^2=0$, and the homology of the chain complex $C_*(\T_{\boa}, \T_{\bob}; J_s)$ is  independent  from the chosen path of almost-complex structures, and invariant under Hamiltonian isotopies of $\T_{\boa}$ and $\T_{\bob}$. \qed
\end{thm}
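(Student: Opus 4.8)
The plan is to follow the standard scheme from Lagrangian Floer theory, as adapted to the Heegaard Floer setting by Ozsv\'ath--Szab\'o \cite{OS04c} and to the sutured setting by Juh\'asz \cite{Juh06}, carrying out four separate verifications: finiteness of the count defining $\partial$, the identity $\partial^2=0$, independence of the path of almost complex structures, and invariance under Hamiltonian isotopy. The common input is Floer's transversality theorem quoted above --- for generic $J_s$ every moduli space $\mathcal{M}(\phi)$ is a smooth manifold of dimension $\mu(\phi)$ --- together with a Gromov-type compactness theorem and the combinatorial consequences of admissibility.

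For well-definedness, I would first note that for $\mu(\phi)=1$ the quotient $\mathcal{M}(\phi)/\R$ is a smooth $0$-manifold, so it remains to check that only finitely many classes $\phi\in\pi_2(\x,\y)$ contribute. The existence of a pseudo-holomorphic representative forces the domain of $\phi$ to have nonnegative local multiplicities, and the constraint $\#|\phi\cap V_{z_i}|=0$ forces those multiplicities to vanish near each $z_i$; admissibility --- the statement that every nonzero periodic domain has a negative coefficient --- then confines the set of such $\phi$ to a finite set, and for each of them Gromov compactness shows $\mathcal{M}(\phi)/\R$ is compact, hence a finite set of points. The possible degenerations in a Gromov limit are sphere bubbles, boundary degenerations, and splittings into broken strips: sphere bubbling is a codimension-two phenomenon that does not occur for generic $J_s$ in a moduli space of this dimension; boundary degenerations are excluded because any such configuration necessarily has positive multiplicity at some $z_i$ (the boundary-degeneration theorem); and a strip-breaking would produce a factor of negative Maslov index, impossible for generic $J_s$. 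Hence $\partial\x$ is a finite sum and $\partial$ is well-defined.

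For $\partial^2=0$ I would examine the moduli spaces $\mathcal{M}(\phi)/\R$ with $\mu(\phi)=2$; the same admissibility and compactness arguments show these are compact $1$-manifolds whose boundary consists precisely of the broken configurations $\phi_1*\phi_2=\phi$ with $\mu(\phi_1)=\mu(\phi_2)=1$ (sphere bubbling and boundary degenerations being excluded as before). Since a compact $1$-manifold has an even number of boundary points, fixing generators $\x$ and $\y$ and summing over all $\phi\in\pi_2(\x,\y)$ with $\mu(\phi)=2$ and $\#|\phi\cap V_{z_i}|=0$ shows that the $\y$-coefficient of $\partial^2\x$ vanishes in $\mathbb{F}_2$.

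For independence of $J_s$ and for Hamiltonian-isotopy invariance I would run the usual continuation-map argument: given two generic choices (two paths of almost complex structures, or two positions of $\T_{\bob}$ related by a Hamiltonian isotopy), interpolate by a generic non-autonomous family and count index-$0$ strips for the associated equation to obtain chain maps $\Phi$ and $\Psi$ in the two directions; a generic two-parameter interpolation then produces chain homotopies showing $\Psi\circ\Phi\simeq\mathrm{id}$ and $\Phi\circ\Psi\simeq\mathrm{id}$, by the same boundary-of-a-$1$-manifold bookkeeping as for $\partial^2=0$ but with a broken end replaced by a continuation configuration; admissibility once more guarantees all counts are finite. The main obstacle is not any single algebraic step but the compactness--transversality package itself --- in particular verifying that admissibility really does cut the a priori infinite set of positive domains down to a finite one and that the only codimension-one degenerations are the desired strip-breakings --- and for the details of that package I would defer to \cite{OS04c}, \cite{Juh06}, and \cite{OSS}.
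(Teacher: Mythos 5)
The paper offers no proof of this theorem: it is stated as a quoted result with an immediate \qed, the argument being deferred entirely to \cite{OS04c}, \cite{Juh06}, and \cite{OSS}. Your sketch is a correct outline of the standard argument in those references --- finiteness of the count from admissibility together with Gromov compactness, $\partial^2=0$ from the ends of index-two moduli spaces with sphere bubbles and boundary degenerations excluded by the basepoint condition, and invariance via continuation maps --- and you defer the compactness--transversality package to the same sources the paper does, so the two are consistent.
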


The definition of admissibility will be presented in \Cref{subsec:admissibility} below.
Returning to our sutured Heegaard diagram $(\Sigma,\boa,\bob)$ and assuming admissibility, we define
\[CF(\Sigma, \boa, \bob)=C_*(\T_{\boa}, \T_{\bob}; J_s) \ ,\] 
where $J_s$ is some generic path of almost-complex structures  on the $d$-fold symmetric product $\sym^d(\overline{\Sigma})$. 

\begin{thm}[Juhász] \label{thm:sfhindependence}
The homology of the chain complex $CF(\Sigma, \boa, \bob)$ only depends on the topology of the associated  sutured manifold. \qed
\end{thm}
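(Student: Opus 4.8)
The plan is to follow the invariance strategy of Heegaard Floer homology, in the form worked out by Juhász for balanced sutured diagrams. The backbone is a Reidemeister--Singer-type statement: any two balanced sutured Heegaard diagrams presenting the same balanced sutured manifold $(M,\Gamma)$ are related by a finite sequence of moves of three types --- (i) isotopies of the $\boa$- and $\bob$-curves inside $\Sigma$, (ii) handleslides among the $\boa$-curves and among the $\bob$-curves, and (iii) stabilizations, meaning one connect-sums $\Sigma$ with a torus and adds a new $\alpha$-curve and a new $\bob$-curve meeting transversely in a single point supported in the new handle. I would prove this by reading off from the diagram the relative handle decomposition of $M$ described in \Cref{subsec:heegaarddiagram} and invoking relative Cerf theory to move between any two such decompositions, checking along the way that the balancedness condition can be maintained (and that the choice made when pinching boundary components into basepoints only affects the picture up to Hamiltonian isotopy). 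The remaining content is then to show that $H_*(CF(\Sigma,\boa,\bob))$ is unchanged under each of the three moves, while never leaving the class of admissible diagrams for which the differential in \Cref{eq:differential} makes sense.

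Independence of the almost-complex data and invariance under Hamiltonian isotopy of $\T_{\boa}$ and $\T_{\bob}$ are already supplied by \Cref{thm:cfwelldefined}. An isotopy of a curve that does not change the intersection pattern realizes a Hamiltonian isotopy of the corresponding Lagrangian torus, so such moves are covered directly; an isotopy that creates or cancels a pair of intersection points is handled by the usual device of making the isotopy exact after a further Hamiltonian perturbation and counting the single small holomorphic bigon it introduces. For handleslide invariance I would pass to the triple diagram $(\Sigma, \boa', \boa, \bob)$, where $\boa'$ arises from $\boa$ by the handleslide, identify the distinguished top-graded generator $\Theta \in \T_{\boa'} \cap \T_{\boa}$ coming from the genus-one model in the region of the slide, and define a chain map by counting Maslov index zero holomorphic triangles with one input equal to $\Theta$. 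A holomorphic-quadrilateral associativity argument, together with the model computation in the handleslide region, then shows this map is a chain homotopy equivalence, with homotopy inverse induced by the reverse handleslide.

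For stabilization invariance I would run a neck-stretching argument: after stabilizing, every generator of $CF(\Sigma',\boa',\bob')$ is an old generator together with the unique new intersection point $c$ in the attached handle, and for a sufficiently long connecting neck Gromov compactness forces every Maslov index one holomorphic disk in the stabilized diagram to degenerate into a disk carried by the old diagram, with $c$ a trivial constant factor; this produces a chain isomorphism onto $CF(\Sigma,\boa,\bob)$. Finally, I would observe that any balanced sutured diagram can be made admissible by winding the $\bob$-curves sufficiently many times along curves realizing a basis of $H_1(M,\partial M)$, and that the moves (i)--(iii) connecting two admissible diagrams may themselves be taken through admissible diagrams, adding winding where needed; this keeps the whole argument inside the setting of \Cref{subsec:admissibility}, so that \Cref{thm:cfwelldefined} applies at each stage and the composite of all the chain homotopy equivalences produced above yields the asserted isomorphism of homology groups.

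The hard part is the analytic core of the last two moves: the gluing and transversality results needed to make the holomorphic-triangle count well defined and to prove the associativity that upgrades it to a chain homotopy equivalence, together with the Gromov-compactness input governing the stabilization degeneration. A secondary difficulty is the relative Reidemeister--Singer theorem and the bookkeeping that keeps all intermediate diagrams simultaneously balanced and admissible. These are precisely the steps carried out in detail by Juhász \cite{Juh06}, and I would import them rather than reprove them here.
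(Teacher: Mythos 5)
Your outline is a faithful summary of the standard invariance argument (sutured Reidemeister--Singer moves, isotopy/handleslide/stabilization invariance, and admissibility bookkeeping), which is exactly how Juh\'asz proves this in \cite{Juh06}; the paper itself offers no proof and simply cites that work, as you ultimately do as well. So your approach matches the paper's, and deferring the analytic core to \cite{Juh06} is appropriate here.
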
 

The homology of $CF(\Sigma, \boa, \bob)$ is called \textit{sutured Floer homology} and is typically denoted by $SFH(M, \Gamma)$ to stress the dependence of the homology on the topology of the sutured manifold $(M,\Gamma)$ associated to $(\Sigma, \boa, \bob)$, and hence the independence from the particular combinatorics of the diagram $(\Sigma, \boa, \bob)$.

\subsection{Generators} \label{subsec:generators}

The chain complex $CF(\Sigma, \boa, \bob)$ is freely generated by the intersection points of $\T_{\boa}$ and $\T_{\bob}$ inside $\sym^d(\Sigma)$, a high dimensional symplectic manifold.
Note that a point $ x_1+\dots + x_d\in \sym^d(\Sigma)$ is in the intersection $\T_{\boa}\cap\T_{\bob}$ iff there exists a permutation $\sigma\in S_d$ such that $x_i \in \alpha_i \cap \beta_{\sigma(i)}$ for $i=1, \dots, d$.
The observation implies that the generators of $CF(\Sigma, \boa, \bob)$ are in one-to-one correspondence with the following objects:

\begin{defn}[Heegaard states]
A \emph{Heegaard state} is a subset of $d$ distinct points $\x = \{x_1,\dots,x_d\} \subset \Sigma$ for which there exists a permutation $\sigma\in \mathfrak{S}_d$ such that $x_i \in \alpha_i \cap \beta_{\sigma(i)}$ for $i=1, \dots, d$. 
We denote by $\mathfrak{S}(\Sigma, \boa, \bob)$ the set of all states.
\end{defn}

For the rest of this paper, we will think of $CF(\Sigma, \boa, \bob)$ as being freely generated by the Heegaard states of $(\Sigma, \alpha, \beta)$.

\subsection{Domains} \label{subsec:domains}
On the other hand, understanding the differential of $CF(\Sigma, \boa, \bob)$ is an hard task. 
For this purpose Ozsv\' ath and Szab\' o developed the combinatorial theory of domains in \cite{OS04c} we presently review.

In what follows, $D_1, \dots, D_r$ denote the closure of the connected components of
 \[\overline{\Sigma} \setminus\left( \bigcup_{i=1}^d \alpha_i \cup \bigcup_{i=1}^d \beta_i  \right) \ .\]
We call $D_1, \dots, D_r$ the \emph{elementary domains} of the diagram.

\begin{defn}[Domains]
A \emph{domain} is a formal linear combination $D=\sum_i n_i D_i$  with integer coefficients $n_i \in \Z$. The coefficient $n_i$ is called the \emph{multiplicity} of the domain $D$ at $D_i$. 
Equivalently, given a point $z\in D_i$ we define the \emph{local multiplicity} of $D$ at $z$ as the multiplicity of $D$ at $D_i$. This is usually denoted by $n_z(D)$.
\end{defn}

The set of domains form a finitely generated free $\Z$-module of rank $r$, where $r$ is the number of elementary domains.

Associated to a homotopy class $\phi \in \pi_2(\x, \y)$ there is a domain
\[D(\phi)= \sum_{i=1}^r n_i(\phi)  D_i \ , \]
where $n_i(\phi)$ is defined as the intersection number $\#|\phi \cap (\{w\} \times \sym^{d-1}(\Sigma)) |$, 
for some $w \in D_i$. 
Here the intersection number  $n_i(\phi)$ does not depend on $w$, since different choices of $w$ in the same elementary domain do not change the isotopy class of $ \{w\} \times \sym^{d-1}(\Sigma)$ relative to $\T_\alpha \cup \T_\beta$.

Note that in the differential of $CF(\Sigma, \alpha, \beta)$, displayed in \Cref{eq:differential}, the condition that $\#|\phi \cap V_{z_i}| = 0$ for all the basepoints $z_i$ is equivalent to the condition that the domain $D(\phi)$ has local multiplicity $n_{z_i}(D(\phi))=0$ at all basepoints. 
Motivated by this, for the rest of this paper, we will write `$n_\z(D) = 0$' as a shorthand to mean that in the domain $D$ the coefficients $n_i$ is zero whenever $D_i$ contains a basepoint.

\begin{defn}[Connecting domains, initial/final state] \label{defn:initialfinalstate}
Let $\x=x_1+ \dots + x_d$ and $\y=y_1+ \dots + y_d$ be two Heegaard states, and $D=\sum_i n_i D_i$ a domain. 
Thinking of $D$ as a $2$-chain in $C_2(\overline{\Sigma})$, $\partial D$ is a $1$-cycle in the graph $\boa \cup \bob$. 
Since $C_1(\boa \cup \bob)= C_1(\boa) \oplus C_1(\bob)$, the differential $\partial D$ naturally decomposes as the sum of two $1$-chains $\partial D=\partial_\alpha D + \partial_\beta D$, where $\partial_\alpha D= \partial D \cap \boa$ and $\partial_\beta D= \partial D \cap \bob$. 

We say that the domain \emph{$D$ connects $\x$ to $\y$} if $\partial_\alpha D=\y -\x$ and  $\partial_\beta D= \x -\y$. If $D$ connects $\x$ to $\y$ we call $\x$ the \emph{initial state} of $D$ and $\y$ its \emph{final state}.
\end{defn}

See \Cref{fig:indexeg} for some examples of domains connecting a state $\x$ to a state $\y$.

\begin{figure}
    \centering
    \selectfont\fontsize{6pt}{6pt}
    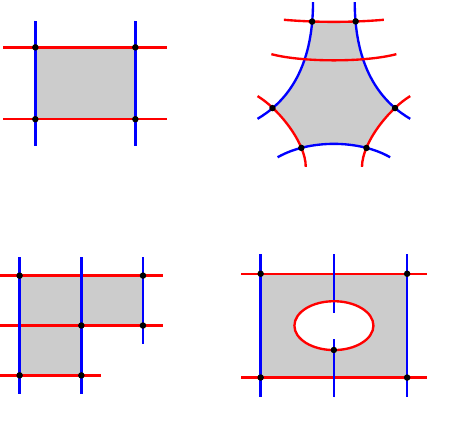
    \caption{Some examples of domains that connect a state $\x$ to a state $\y$. Here the coefficient of each shaded effective domain is $1$. The Maslov indices of these domain can be computed using Equation \eqref{eq:lipshitzindex}.}
    \label{fig:indexeg}
\end{figure}

\begin{lemma}\label{equations}
Suppose $D$ is a domain connecting a state $\x$ to a state $\y$. Let $p\in \alpha \cap \beta$ be an intersection point between an $\alpha$- and a $\beta$-curve.
Then we have that
\begin{equation} \label{eq:initialfinallocal}
n_i-n_j+n_k-n_l = 
\begin{cases}
1 & \text{if $p \in \x \backslash \y$} \\
-1 & \text{if $p \in \y \backslash \x$} \\
0 & \text{otherwise} \\
\end{cases}
\end{equation}
where $n_i$, $n_j$, $n_k$, $n_l$ denote the local multiplicities of the four elementary domains $D_i$, $D_j$, $D_k$, $D_l$ meeting at $p$ ordered counterclockwise as shown in \Cref{fig:initialfinallocal}. Vice versa, any domain $D$ satisfying these equation at each intersection point $p\in \alpha \cap \beta$ connects $\x$ to $\y$.
\end{lemma}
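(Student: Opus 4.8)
The plan is to observe that the asserted equivalence is entirely local: each equation in \eqref{eq:initialfinallocal} involves only the four elementary domains meeting at one point $p\in\alpha\cap\beta$, and the condition that $D$ connects $\x$ to $\y$ can be tested vertex by vertex on the graph $\boa\cup\bob$. So I would first record two elementary facts. Since the $\alpha_i$ are pairwise disjoint and the $\beta_j$ are pairwise disjoint, the vertices of $\boa\cup\bob$ are exactly the crossings $\alpha\cap\beta$; hence $\partial(\partial_\alpha D)$ and $\partial(\partial_\beta D)$ are $0$-chains supported on $\alpha\cap\beta$, and from $\partial\partial D=0$ one gets $\partial(\partial_\alpha D)=-\partial(\partial_\beta D)$. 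Unwinding \Cref{defn:initialfinalstate}, this reduces the statement "$D$ connects $\x$ to $\y$" to the single identity $\partial(\partial_\alpha D)=\y-\x$ of $0$-chains on $\alpha\cap\beta$ (the $\beta$-condition being then automatic).

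The main computation is to express the coefficient of a fixed $p\in\alpha\cap\beta$ in $\partial(\partial_\alpha D)$ in terms of $n_i,n_j,n_k,n_l$. I would work in a small disk around $p$ in which $\alpha$ and $\beta$ are the coordinate axes and $D_i,D_j,D_k,D_l$ are the four quadrants ordered counterclockwise as in \Cref{fig:initialfinallocal}: only the two arcs of $\boa\cup\bob$ along the $\alpha$-axis contribute, and orienting both away from $p$ and using that the coefficient of an oriented edge in $\partial D$ is (multiplicity on the left) minus (multiplicity on the right), one finds that the coefficient of $p$ in $\partial(\partial_\alpha D)$ is $\pm(n_i-n_j+n_k-n_l)$. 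I expect this bookkeeping step — in particular matching the overall sign and the cyclic order of the labels to the convention fixed in \Cref{fig:initialfinallocal} — to be the only real obstacle; a careful check on a one-quadrant domain pins the sign down, and one should find it equals $-(n_i-n_j+n_k-n_l)$. (If two of the quadrants lie in a single elementary domain, the corresponding multiplicities simply agree and nothing changes.)

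With this local formula in hand both implications fall out. For the forward direction, $\partial(\partial_\alpha D)=\y-\x$, and the coefficient of $p$ in $\y-\x$ is $+1$, $-1$, or $0$ according to whether $p$ lies in $\y\setminus\x$, in $\x\setminus\y$, or in both or neither of $\x,\y$ (using that $\x$ and $\y$ are sums of distinct points, so common points cancel); equating this with $-(n_i-n_j+n_k-n_l)$ yields \eqref{eq:initialfinallocal}. For the converse, if \eqref{eq:initialfinallocal} holds at every $p\in\alpha\cap\beta$, then the local formula forces $\partial(\partial_\alpha D)$ and $\y-\x$ to have the same coefficient at every vertex of $\boa\cup\bob$, hence $\partial(\partial_\alpha D)=\y-\x$; combined with $\partial(\partial_\alpha D)=-\partial(\partial_\beta D)$ this gives $\partial(\partial_\beta D)=\x-\y$, so $D$ connects $\x$ to $\y$.
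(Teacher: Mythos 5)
Your argument is correct and is essentially the paper's proof: the paper likewise compares the coefficient of $p$ in $\partial\partial_\alpha D=\y-\x$ with the local contributions of the four quadrants (namely $-1$ for $D_i,D_k$ and $+1$ for $D_j,D_l$, i.e.\ your $-(n_i-n_j+n_k-n_l)$), which yields \eqref{eq:initialfinallocal} and its converse. Your extra remarks — reducing to the $\alpha$-equation via $\partial\partial D=0$ and handling repeated quadrants — only make explicit what the paper leaves implicit.
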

\begin{proof}
The lemma follows from comparing the coefficient of $p$ in $\partial \partial_\alpha D = \y - \x$, together the fact that the coefficient is $-1$ for $D=D_i,D_k$ and $1$ for $D=D_j,D_l$.
\end{proof}

\begin{figure}
    \centering
    \selectfont\fontsize{10pt}{10pt}
\begingroup%
  \makeatletter%
  \providecommand\color[2][]{%
    \errmessage{(Inkscape) Color is used for the text in Inkscape, but the package 'color.sty' is not loaded}%
    \renewcommand\color[2][]{}%
  }%
  \providecommand\transparent[1]{%
    \errmessage{(Inkscape) Transparency is used (non-zero) for the text in Inkscape, but the package 'transparent.sty' is not loaded}%
    \renewcommand\transparent[1]{}%
  }%
  \providecommand\rotatebox[2]{#2}%
  \newcommand*\fsize{\dimexpr\f@size pt\relax}%
  \newcommand*\lineheight[1]{\fontsize{\fsize}{#1\fsize}\selectfont}%
  \ifx\svgwidth\undefined%
    \setlength{\unitlength}{79.51210623bp}%
    \ifx\svgscale\undefined%
      \relax%
    \else%
      \setlength{\unitlength}{\unitlength * \real{\svgscale}}%
    \fi%
  \else%
    \setlength{\unitlength}{\svgwidth}%
  \fi%
  \global\let\svgwidth\undefined%
  \global\let\svgscale\undefined%
  \makeatother%
  \begin{picture}(1,0.83884052)%
    \lineheight{1}%
    \setlength\tabcolsep{0pt}%
    \put(0,0){\includegraphics[width=\unitlength,page=1]{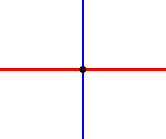}}%
    \put(0.16234278,0.58344597){\color[rgb]{0,0,0}\makebox(0,0)[lt]{\lineheight{1.25}\smash{\begin{tabular}[t]{l}$D_j$\end{tabular}}}}%
    \put(0.67169804,0.58344597){\color[rgb]{0,0,0}\makebox(0,0)[lt]{\lineheight{1.25}\smash{\begin{tabular}[t]{l}$D_i$\end{tabular}}}}%
    \put(0.16234278,0.14954963){\color[rgb]{0,0,0}\makebox(0,0)[lt]{\lineheight{1.25}\smash{\begin{tabular}[t]{l}$D_k$\end{tabular}}}}%
    \put(0.67169804,0.14954963){\color[rgb]{0,0,0}\makebox(0,0)[lt]{\lineheight{1.25}\smash{\begin{tabular}[t]{l}$D_l$\end{tabular}}}}%
    \put(0.54213481,0.31118639){\color[rgb]{0,0,0}\makebox(0,0)[lt]{\lineheight{1.25}\smash{\begin{tabular}[t]{l}$p$\end{tabular}}}}%
  \end{picture}%
\endgroup%

    \caption{A pictorial key for \Cref{eq:initialfinallocal}.}
    \label{fig:initialfinallocal}
\end{figure}

In \cite[Proposition 2.15]{OS04c} it is shown that the map $\phi \mapsto D(\phi)$ describes a bijection between $\pi_2(\x, \y)$ and the set of domains connecting $\x$ to $\y$.
So we shall make no distinction between Whitney disks and domains.

There is still the question of whether a homotopy class of Whitney disks contains no pseudo-holomorphic representative. The criterion of effectiveness gives a simple first obstruction.

\begin{defn}[Effective domain]
A domain $D=\sum_{i=1}^r n_i D_i$ is \emph{effective} if $n_i\geq 0$ for $i=1, \dots, r$. 
In this case we write $D\geq0$.
\end{defn}

It follows from positivity of intersections that only effective domains can have holomorphic representatives:  

\begin{lemma} \label{lemma:holdisctodomain}
If $\mathcal{M}(\phi)$ is non-empty, then $D(\phi)\geq 0$. \qed
\end{lemma}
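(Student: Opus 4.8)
The plan is to deduce the effectiveness of $D(\phi)$ from \emph{positivity of intersections} for pseudo-holomorphic curves. Recall that, by construction, the multiplicity $n_i(\phi)$ of $D(\phi)$ along an elementary domain $D_i$ equals the algebraic intersection number of a pseudo-holomorphic representative of $\phi$ with the codimension-two submanifold $V_w = \{w\} \times \sym^{d-1}(\overline{\Sigma})$, for any point $w$ in the interior of $D_i$; since this count does not depend on $w$, we are free to choose $w$ conveniently. The point is that $V_w$ is a \emph{complex} hypersurface of $\sym^d(\overline{\Sigma})$, so its intersections with a holomorphic strip are forced to be non-negative.

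Concretely, I would argue as follows. Fix an index $i$, a pseudo-holomorphic representative $u \in \mathcal{M}(\phi)$, and a point $w$ in the interior of $D_i$, so that in particular $w$ lies off all the $\alpha$- and $\beta$-curves. First I would observe that $V_w$ is disjoint from $\T_{\boa} \cup \T_{\bob}$: a point of $\T_{\boa}$ is of the form $\{p_1,\dots,p_d\}$ with $p_k \in \alpha_k$, and such a point lies on $V_w$ only if $w \in \alpha_k$ for some $k$, which is excluded; the same reasoning applies to $\T_{\bob}$. Since $u$ sends $\R \times \{-1\}$ into $\T_{\boa}$ and $\R \times \{1\}$ into $\T_{\bob}$, and is asymptotic at its two ends to the Heegaard states $\x$ and $\y$ — which are tuples of points of $\boa \cap \bob$ and hence not on $V_w$ — the preimage $u^{-1}(V_w)$ is a compact subset of the open strip $\R \times (-1,1)$; in particular $u$ is not contained in $V_w$. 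Then I would invoke positivity of intersections: at each of the finitely many points of $u^{-1}(V_w)$, the local intersection multiplicity of the holomorphic strip against the complex hypersurface $V_w$ is a strictly positive integer, so $n_i(\phi) = \#(u \cdot V_w) \geq 0$. Letting $i$ range over all elementary domains then gives $D(\phi) \geq 0$.

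The only delicate point — the step I expect to be the main obstacle — is justifying positivity of intersections here, since the Cauchy--Riemann equation uses a path $J_s$ of almost complex structures that need not be integrable. The standard remedy is to arrange $J_s$ to be split, i.e.\ of the form $\sym^d(j_s)$, in a neighbourhood of the divisors $V_{z_i}$ (and, since $n_i(\phi)$ is independent of $w$, near a single chosen $V_w$ for each $i$), so that $V_w$ is genuinely $J_s$-holomorphic there; the local positivity theorem for domain-dependent pseudo-holomorphic maps against pseudo-holomorphic hypersurfaces (as used in \cite{OS04c}) then applies verbatim. Everything else — disjointness of $V_w$ from the Lagrangian tori, compactness and finiteness of $u^{-1}(V_w)$, and the fact that $u$ does not map into $V_w$ — is formal bookkeeping.
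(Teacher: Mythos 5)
Your argument is correct and is precisely the positivity-of-intersections argument that the paper itself invokes: the lemma is stated there with no written proof beyond the remark ``it follows from positivity of intersections'' (it is \cite[Lemma 3.2]{OS04c}). Your elaboration --- computing $n_i(\phi)$ as an intersection number with $V_w$, checking $V_w$ is disjoint from $\T_{\boa}\cup\T_{\bob}$ and from the asymptotic states, and arranging $J_s$ to be split near the relevant divisors so that local positivity applies --- fills in the standard details faithfully.
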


Finally, there is the concern of checking $\mu(\phi)=1$ in order to determine whether $\phi$ contributes to the differential.
This task can be accomplished using the index formula of Lipshitz. 
Before stating the formula in \Cref{thm:lipshitzindex}, let us set up some definitions.

Let $S$ be a surface with corners. The \textit{Euler measure} of $S$ is defined to be 
$$e(S) = \chi(S) - \frac{1}{4} \text{\# corners}$$
where $\chi$ is the Euler characteristic of the underlying surface with boundary (i.e. forgetting the data of the corners).
In particular each elementary domain $D_i$, having a natural structure as a surface with corners, has an Euler measure $e(D_i)$.
The \textit{Euler measure} of a domain $D = \sum_i n_i D_i$ is defined to be $e(D) = \sum_i n_i e(D_i)$.

Suppose $x$ is a point in $\alpha \cap \beta$. Suppose $D_i, D_j, D_k, D_l$ are the elementary domains with a corner at $x$. The \textit{average multiplicity} of a domain $D = \sum_i n_i D_i$ at $x$ is defined to be $n_x(D) = \frac{1}{4}(n_i+n_j+n_k+n_l)$.
The \textit{average multiplicity} of a domain $D$ at a state $\x = \{x_1,\dots,x_d\}$ is defined to be $n_\x(D) = \sum_{i=1}^d n_{x_i}(D)$.

\begin{thm}[Lipshitz {\cite[Proposition 4.8]{Lip06}}] \label{thm:lipshitzindex}
Let $\x$ and $\y$ be two Heegaard states. Suppose $\phi \in \pi_2(\x,\y)$. Then 
\begin{equation} \label{eq:lipshitzindex}
\mu(\phi) = e(D(\phi))+n_\x(D(\phi))+n_\y(D(\phi)).
\end{equation} \qed
\end{thm}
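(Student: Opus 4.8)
This is Lipshitz's index formula, and the route I would take is his cylindrical reformulation of Heegaard Floer homology, in which $\mu(\phi)$ becomes the Fredholm index of an honest $\bar\partial$-problem that can be computed by Riemann--Roch. First I would replace a Whitney disk $u\colon \R\times[0,1]\to\sym^d(\overline{\Sigma})$ by its associated holomorphic curve $F\colon(S,\partial S)\to\big(\overline{\Sigma}\times[0,1]\times\R,\ (\boa\times\{1\}\times\R)\cup(\bob\times\{0\}\times\R)\big)$, where $S$ is a Riemann surface with boundary carrying strip-like ends over the points of $\x$ and of $\y$, the composite $S\to[0,1]\times\R$ is a $d$-fold branched cover, and the composite $S\to\overline{\Sigma}$ pushes $[S]$ forward to the domain $D(\phi)$, i.e.\ it has multiplicity $n_i(\phi)$ over the interior of each elementary domain $D_i$. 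Following Lipshitz, one checks that a generic holomorphic representative is embedded, that for embedded $S$ the Euler characteristic of $S$ is determined by $D(\phi)$ through an adjunction-type intersection count, and --- the crucial input --- that the Fredholm index of this problem equals the Maslov index $\mu(\phi)$ of the original disk. (One could instead stay inside the symmetric-product model and compute $\mu(\phi)$ from $c_1(\sym^d\overline{\Sigma})$ and the homological data of $\phi$, as in Ozsv\'ath--Szab\'o, but the cylindrical computation makes the combinatorial form above more transparent.)

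Next I would carry out the Riemann--Roch computation for the linearized operator of $F$. Its index splits as a bulk term built from the Euler characteristic of $S$, a rotation/Maslov correction localized at each corner of $\partial S$ --- the points where $\partial S$ switches between an $\alpha$-arc and a $\beta$-arc, which sit over the corners of the elementary domains that $D(\phi)$ covers --- and an asymptotic correction at each strip-like end, one over each point of $\x$ and of $\y$. The content of the proposition is that these pieces reassemble exactly into $e(D(\phi))+n_\x(D(\phi))+n_\y(D(\phi))$: the bulk-plus-corner part becomes $e(D(\phi))$ once $\chi(S)$ is rewritten via Riemann--Hurwitz for the cover $S\to[0,1]\times\R$ and one notices that the normalization $e(D_i)=\chi(D_i)-\tfrac14\big(\#\,\text{corners of }D_i\big)$ is rigged precisely so that $\sum_i n_i\,e(D_i)$ reproduces the $\chi(S)$-plus-corner total; while at an end over a point $x\in\x$ the asymptotic operator is governed by the four local multiplicities $n_i,n_j,n_k,n_l$ of $D(\phi)$ in the quadrants at $x$, and its contribution to the index works out to the average multiplicity $n_x(D(\phi))=\tfrac14(n_i+n_j+n_k+n_l)$. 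Summing over the ends over $\x$ gives $n_\x(D(\phi))$ and symmetrically those over $\y$ give $n_\y(D(\phi))$. A final normalization check --- reconciling the extra $\R$-factor of the cylindrical target and the conformal moduli of the source $S$ --- ensures the count lands on $\mu(\phi)$ with no off-by-one shift.

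The step I expect to be the main obstacle is the identification in the first paragraph: matching the deformation theory of holomorphic disks in $\sym^d(\overline{\Sigma})$ with that of holomorphic curves in $\overline{\Sigma}\times[0,1]\times\R$, i.e.\ proving the two Fredholm indices agree, together with the intersection-theoretic fact that an embedded source $S$ has its Euler characteristic pinned down by $D(\phi)$. Once one is entitled to compute in the cylindrical model, the index calculation itself is mostly bookkeeping; the only genuine cleverness is hidden in the definitions of the Euler measure (the $-\tfrac14$ per corner) and of the average multiplicity $n_x$ (the $\tfrac14$-weighting), which are reverse-engineered so that the Riemann--Roch contributions telescope into $e+n_\x+n_\y$. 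A secondary nuisance is sign management at the corners --- distinguishing the convex corners, where $S$ is locally a half-disk, from the concave ones --- and checking that the count does not depend on the generic almost-complex structure or perturbation, which is handled by the standard transversality package for these moduli spaces.
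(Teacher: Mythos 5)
The paper does not prove this statement---it is quoted verbatim from Lipshitz and justified only by the citation to \cite[Proposition 4.8]{Lip06}. Your outline is a faithful reconstruction of Lipshitz's actual argument (cylindrical reformulation, Riemann--Roch for the linearized operator, the embedded-curve Euler characteristic computation, and the index comparison with the symmetric-product model), and you correctly identify where the real analytic work lies; so your proposal matches the intended source of the result, and there is nothing in the paper's treatment to contrast it with.
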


See \Cref{fig:indexeg} for some examples.

Motivated by \Cref{thm:lipshitzindex}, we will define the \textit{index} of a domain $D$ connecting a state $\x$ to another state $\y$ to be $\mu(D) = e(D)+n_\x(D)+n_\y(D)$.
Note that $\mu(D)$ is an abuse of notation since this quantity depends on $\x$ and $\y$ as well. 
However, in practice $\x$ and $\y$ will be clear from context.

\subsection{$\Spinc$-grading} \label{subsec:spincgrading}

To decide if two generators are connected by some domain, one can also use $\spinc$-structures.  

Let $(M,\Gamma)$ be a sutured manifold. Fix a regular neighborhood of the sutures $A \cong \Gamma \times I$. Fix a non-vanishing vector field $X_0$ defined in a neighbourhood of $\partial M$ pointing into $M$ along $R_-(\Gamma) \backslash A$, out of $M$ along $R_+(\Gamma) \backslash A$, and agreeing on $A$ with the gradient of the projection $\Gamma \times I \to I$.
Note that the space of such vector fields is contractible. 

A \emph{$\spinc$-structure} on $(M,\Gamma)$ is a non-vanishing global vector field $X$ agreeing with $X_0$ in a neighbourhood of the boundary, modulo the relation that two $\spinc$ structures $X$ and $X'$ are equivalent if there exists a ball $B \subset \intr(M)$ such that $X|_{M\setminus B}$ is homotopic to $X'|_{M\setminus B}$ rel $\partial M$.

We denote with $\Spinc(M,\Gamma)$ the set of all $\spinc$-structures on $(M,\Gamma)$. Some elementary obstruction theory shows that $\Spinc(M,\Gamma) \neq \varnothing$, and that there is a free and transitive action of $H^2(M, \partial M)$ on $\Spinc(M,\Gamma)$, that is:  $\Spinc(M,\Gamma)$ is an affine space over the group $H^2(M, \partial M) \cong H_1(M)$.  

Now consider a balanced Heegaard diagram $(\Sigma, \boa, \bob)$ for $(M,\Gamma)$ with $d$ many $\alpha$- and $\beta$-curves.
In \cite{Juh06} Juhász associates to each generator $\x\in \T_{\boa} \cap \T_{\bob}$ a $\spinc$ structure $\s(\x)$ as follows. 

Let $f:M \to [0, 3]$ be a Morse function \emph{compatible} with the given balanced diagram, that is, such that:
\begin{itemize}
   \item  $f$ agrees with the projection $\Gamma \times I \to I$ in a neigbourhood $A$ of the sutures, $f^{-1}(0)=R_-(\Gamma) \backslash A$, and  $f^{-1}(3)=R_+(\Gamma) \backslash A$ (in particular $\nabla f$ agrees with the vector field $X_0$ in a neighbourhood of $\partial M$)
    \item $f$ has no index-zero and no index-three critical points.
    \item $f$ has $d$ index-one critical points $P_1, \dots , P_d$, and  $d$ index-two critical points $Q_1, \dots , Q_d$. Furthermore, $f^{-1}(3/2)=\Sigma$,  $f(P_i)=1$, and $f(Q_i)=2$.
   \item The $\alpha$-curves are the points on $\Sigma$ flowing into the index-one critical points via $-\nabla f$, that is, $\alpha_i = W^u(P_i) \cap \Sigma$, for $i=1, \dots, d$, where $W^u(P_i)$ denotes the unstable manifold of the index-one critical points.
    \item The $\beta$-curves are the points on $\Sigma$ flowing into the index-two critical points via $\nabla f$, that is, $\beta_i = W^s(Q_i) \cap \Sigma$, for $i=1, \dots, d$, where $W^s(P_i)$ denotes the stable manifold of the index-two critical points.
\end{itemize}

\begin{figure}
    \centering
    \selectfont\fontsize{8pt}{8pt}
\begingroup%
  \makeatletter%
  \providecommand\color[2][]{%
    \errmessage{(Inkscape) Color is used for the text in Inkscape, but the package 'color.sty' is not loaded}%
    \renewcommand\color[2][]{}%
  }%
  \providecommand\transparent[1]{%
    \errmessage{(Inkscape) Transparency is used (non-zero) for the text in Inkscape, but the package 'transparent.sty' is not loaded}%
    \renewcommand\transparent[1]{}%
  }%
  \providecommand\rotatebox[2]{#2}%
  \newcommand*\fsize{\dimexpr\f@size pt\relax}%
  \newcommand*\lineheight[1]{\fontsize{\fsize}{#1\fsize}\selectfont}%
  \ifx\svgwidth\undefined%
    \setlength{\unitlength}{164.94661106bp}%
    \ifx\svgscale\undefined%
      \relax%
    \else%
      \setlength{\unitlength}{\unitlength * \real{\svgscale}}%
    \fi%
  \else%
    \setlength{\unitlength}{\svgwidth}%
  \fi%
  \global\let\svgwidth\undefined%
  \global\let\svgscale\undefined%
  \makeatother%
  \begin{picture}(1,0.96378884)%
    \lineheight{1}%
    \setlength\tabcolsep{0pt}%
    \put(0,0){\includegraphics[width=\unitlength,page=1]{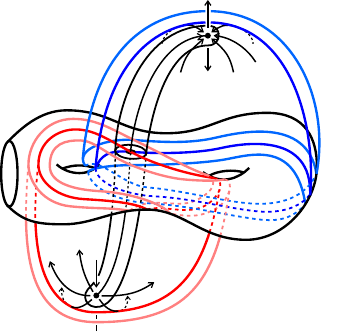}}%
    \put(0.68464189,0.74471298){\color[rgb]{0,0,0}\makebox(0,0)[lt]{\lineheight{1.25}\smash{\begin{tabular}[t]{l}$Q_1$\end{tabular}}}}%
    \put(0.45718682,0.62164595){\color[rgb]{0,0,0}\makebox(0,0)[lt]{\lineheight{1.25}\smash{\begin{tabular}[t]{l}$B_1$\end{tabular}}}}%
    \put(0.18754185,0.14481097){\color[rgb]{0,0,0}\makebox(0,0)[lt]{\lineheight{1.25}\smash{\begin{tabular}[t]{l}$P_1$\end{tabular}}}}%
    \put(0.14851425,0.32903654){\color[rgb]{1,0,0}\makebox(0,0)[lt]{\lineheight{1.25}\smash{\begin{tabular}[t]{l}$\alpha_1$\end{tabular}}}}%
    \put(0.73007815,0.16612993){\color[rgb]{0,0,0}\makebox(0,0)[lt]{\lineheight{1.25}\smash{\begin{tabular}[t]{l}\large $\Sigma$\end{tabular}}}}%
    \put(0.75065344,0.59348158){\color[rgb]{0,0,1}\makebox(0,0)[lt]{\lineheight{1.25}\smash{\begin{tabular}[t]{l}$\beta_1$\end{tabular}}}}%
  \end{picture}%
\endgroup%

    \caption{Constructing the $\spinc$-structure $\mathfrak{s}(x)$ associated to a Heegaard state $\x$.}
    \label{fig:morsespindefn}
\end{figure}

Given an Heegaard state $\x$ of  $(\Sigma, \boa, \bob)$ we can find a  permutation  $\sigma \in \mathfrak{S}_d$ and intersection points $x_i \in \alpha_i \cap \beta_{\sigma(i)} $ such that $\x= x_1+ \dots + x_d$. Each $x_i$ gives rise to a trajectory $\gamma_{x_i}$ of the gradient flow $\nabla f$ such that
\[\lim_{t\to -\infty} \gamma_{x_i}(t)=P_i \ \ \  \text{ and } \ \ \  \lim_{t\to +\infty} \gamma_{x_i}(t)=Q_{\sigma(i)}  \ .\]
Choosing a collection of pairwise disjoint balls $B_1, \dots, B_d$ containing these trajectories we obtain a non-vanishing vector field $V_\x= \nabla f|_{M \setminus (B_1\cup  \dots \cup B_d)}$ agreeing with $X_0$ near $\partial M$. 
Since each ball $B_i$ contains a canceling pair of critical points, the vector field $V_\x$ can be extended to a non-vanishing vector field over $B_1\cup  \dots \cup B_d$. 
We define $\mathfrak{s}(\x)\in \Spinc(M, \Gamma)$ as the homotopy class of ${V}_\x$.
See \Cref{fig:morsespindefn} for a pictorial summary.

\begin{lemma}[{\cite[Corollary 4.8]{Juh06}}]
Suppose that $\x$ and $\y$ are two Heegaard states. 
If $\mathfrak{s}(\x) \neq \mathfrak{s}(\x)$ then $\pi_2(\x,\y)= \varnothing$. 
If $d>1$ the converse is also true: If $\mathfrak{s}(\x)= \mathfrak{s}(\x)$ then $\pi_2(\x,\y) \neq \varnothing$. \qed
\end{lemma}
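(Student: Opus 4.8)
The intended content, correcting an evident typographical slip, is that a Whitney disk can only join two Heegaard states carrying the same $\spinc$-structure, and that (when $d>1$) the converse also holds. The plan is to prove both halves at once by packaging the relevant obstruction into a single difference class $\epsilon(\x,\y)\in H_1(M)$, following the template Ozsv\'ath--Szab\'o set up for closed manifolds in \cite{OS04c} and adapting the bookkeeping to the sutured setting of \cite{Juh06}.

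First I would define $\epsilon(\x,\y)$: writing $\x=\{x_i\}$, $\y=\{y_i\}$ with $x_i\in\alpha_i\cap\beta_{\sigma(i)}$ and $y_i\in\alpha_i\cap\beta_{\tau(i)}$, choose arcs $a_i\subset\alpha_i$ from $x_i$ to $y_i$ and arcs $b_j\subset\beta_j$ closing up to form a $1$-cycle $\gamma(\x,\y)\subset\Sigma$, and set $\epsilon(\x,\y)=[\gamma(\x,\y)]\in H_1(M)$. The first routine check is independence of the arcs: changing $a_i$ modifies $\gamma(\x,\y)$ by $[\alpha_i]$, which vanishes in $H_1(M)$ since $\alpha_i$ bounds a cocore of the $2$-handle glued along it; additivity $\epsilon(\x,\y)+\epsilon(\y,\x')=\epsilon(\x,\x')$ then follows formally. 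The easy half of the lemma drops out immediately: given $\phi\in\pi_2(\x,\y)$ with domain $D(\phi)$, \Cref{defn:initialfinalstate} says $\partial_\alpha D(\phi)$ is an admissible choice of the arcs $a_i$ up to full $\alpha$-curves and $\partial_\beta D(\phi)$ an admissible choice of the $b_j$ up to full $\beta$-curves, so $\epsilon(\x,\y)=[\partial D(\phi)]$, and $\partial D(\phi)$ bounds the $2$-chain $D(\phi)$ in $\overline\Sigma$ (the correction terms around basepoint circles being null-homologous in $M$, as boundaries of subsurfaces of $R_\pm\subset\partial M$); hence $\epsilon(\x,\y)=0$.

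Next I would establish the identification $\s(\x)-\s(\y)=PD\,\epsilon(\x,\y)$ in $H^2(M,\partial M)$, together with the statement that, for $d>1$, $\epsilon(\x,\y)=0$ forces $\pi_2(\x,\y)\neq\varnothing$; the lemma is then immediate. For the first: using the compatible Morse function of \Cref{subsec:spincgrading}, each $x_i$ gives a gradient trajectory $\gamma_{x_i}$ from $P_i$ to $Q_{\sigma(i)}$ crossing $\Sigma$ only at $x_i$, and since $\bigcup_i\gamma_{x_i}$ and $\bigcup_i\gamma_{y_i}$ share endpoints their difference is a $1$-cycle $c(\x,\y)$ in $\intr M$; a relative obstruction computation—tracking how $V_\x$ and $V_\y$ differ across the cancelling index-$1$/index-$2$ balls, using that reversing a single trajectory changes the homotopy class of the vector field by the Poincar\'e dual of that trajectory—gives $\s(\x)-\s(\y)=PD[c(\x,\y)]$, and splitting each trajectory at $x_i$ and homotoping the two halves into the handle cocores identifies $c(\x,\y)$ with $\pm\gamma(\x,\y)$ in $H_1(M)$. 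For the converse with $d>1$: join $\x$ to $\y$ by paths $\delta_\alpha\subset\T_{\boa}$ and $\delta_\beta\subset\T_{\bob}$ (possible since these tori are connected); a Whitney disk exists iff the loop $\delta_\alpha\ast\overline{\delta_\beta}$ can be made null-homotopic by rechoosing the paths, i.e. iff its class in $\pi_1(\sym^d\overline\Sigma)/(\mathrm{im}\,\pi_1\T_{\boa}+\mathrm{im}\,\pi_1\T_{\bob})$ vanishes, and for $d\geq2$ we have $\pi_1(\sym^d\overline\Sigma)\cong H_1(\overline\Sigma)$, under which the two images are $\langle[\alpha_i]\rangle$ and $\langle[\beta_j]\rangle$, so the obstruction lands in a quotient of $H_1(M)$ where it equals the image of $\epsilon(\x,\y)$, hence $0$.

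I expect the main obstacle to be the vector-field identity $\s(\x)-\s(\y)=PD\,\epsilon(\x,\y)$: it requires a careful local analysis of how the homotopy class of a nonvanishing vector field changes when a gradient flow is modified across a ball containing a cancelling pair of critical points, and this is where the two a priori different notions of ``$\spinc$-difference'' get reconciled. The second sensitive point is the appeal to $\pi_1(\sym^d\overline\Sigma)\cong H_1(\overline\Sigma)$, valid only for $d>1$—exactly the hypothesis in the converse—; for $d=1$ the relevant fundamental group is the (generally nonabelian) surface group, the obstruction lives in a double-coset space rather than an abelian quotient, and the converse genuinely fails. All remaining sutured-specific bookkeeping (basepoints, the role of $R_\pm\subset\partial M$, admissibility) parallels the closed case of \cite{OS04c} and is carried out in \cite{Juh06}.
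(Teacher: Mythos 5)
This lemma is not proved in the paper: it is quoted from \cite[Corollary 4.8]{Juh06}, and the companion identity $\s(\x)-\s(\y)=PD[\epsilon(\x,\y)]$ that your argument hinges on is likewise quoted without proof as \Cref{lemma:epsilonrelspinc}. Your overall architecture --- reduce both implications to the difference class $\epsilon(\x,\y)\in H_1(M)$, identify it with the $\spinc$-difference by a Morse-theoretic obstruction computation, and invoke $\pi_1(\sym^d\overline{\Sigma})\cong H_1(\overline{\Sigma})$ for $d\geq 2$ for the converse --- is exactly the standard Ozsv\'ath--Szab\'o/Juh\'asz argument, so there is no divergence of method to report. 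You also correctly isolate the two delicate points (the vector-field identity and the failure of the converse for $d=1$).

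There is, however, a genuine gap in your forward implication. Working in the pinched surface $\overline{\Sigma}$, you dispose of the multiplicities of $D(\phi)$ at the basepoint regions by asserting that the ``correction terms around basepoint circles'' are null-homologous in $M$ ``as boundaries of subsurfaces of $R_\pm\subset\partial M$.'' This is false. The circles in question are the boundary components of $\Sigma$, i.e.\ the sutures; a single suture is a homotopically essential curve on $\partial M$ and does not individually bound a subsurface of $R_\pm$ (in this paper's setting $R_\pm$ are unions of annuli and the sutures are parallel to the degeneracy curves, which are typically of infinite order in $H_1(M)$). What your computation actually yields is that a basepoint-unconstrained Whitney disk in $\sym^d(\overline{\Sigma})$ forces $\epsilon(\x,\y)$ to lie in the span of the suture classes in $H_1(M)$, not to vanish --- consistent with your own remark, in the converse direction, that the obstruction in the pinched picture naturally lives only in the quotient $H_1(M)/\langle[\Gamma_i]\rangle$. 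The repair is to run the forward direction in the setting the cited statement actually refers to: Whitney disks in $\sym^d(\Sigma)$ for the surface-with-boundary $\Sigma$ (equivalently, domains with $n_\z=0$). There the loop $\delta_\alpha\ast\overline{\delta_\beta}$ bounds the disk in $\sym^d(\Sigma)$, hence is zero in $H_1(\sym^d\Sigma)\cong H_1(\Sigma)$, and this gives $\epsilon(\x,\y)=0$ in $H_1(\Sigma)/\langle[\alpha_i],[\beta_j]\rangle\cong H_1(M)$ directly, with no correction terms and for every $d\geq 1$. The rest of your sketch then goes through as written.
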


The first statement implies that if two Heegaard states have different $\spinc$ structures then there is no differential between them.
Thus we can split $CF(\Sigma, \boa, \bob)$ into a direct sum of smaller chain complexes
\[CF(\Sigma, \boa, \bob) = \bigoplus_{\s \in  \Spinc(Q)} CF(\Sigma, \boa, \bob, \s) \ ,\]
where $CF(\Sigma, \boa, \bob, \s)$ denotes the sub-vector space generated by all states with $\spinc$-structure $\s$, that is:
\[CF(\Sigma, \boa, \bob, \s) = \bigoplus_{\s(\x)=\s } \mathbb{F} \cdot \x  \ . \]
This splitting has a conceptual meaning, since it says that  the homology splits in a direct sum along $\spinc$ structures 
$$SFH(M, \Gamma)=\bigoplus_{\s \in \Spinc(M, \Gamma)} SFH(M, \Gamma, \s) \ ,$$
and a practical meaning, since it allows us to apply a divide and conquer strategy in the computation of the Heegaard Floer homology differential.

\section{From veering branched surface to Heegaard diagram} \label{sec:vbstosfh}

In this section, we will present the main construction of the paper. 
This is a construction of a canonical sutured Heegaard diagram for a sutured manifold $(M, \Gamma)$ induced from a veering branched surface $B$ (see \Cref{eg:bstosut}).

\subsection{The Heegaard diagram} \label{subsec:vbstoheegaarddiagram}

Let $B$ be a veering branched surface on a 3-manifold with torus boundary $M$.
We shall denote by $v_1, \dots , v_n$ the triple points of $B$, and by $C_1, \dots, C_k$, the complementary regions of $B$. 
We denote by $G$ the dual graph of $B$. Recall that this is the 1-skeleton of $B$ with along with the data of orientations on the edges.
Finally, we denote by $S_1,\dots,S_m$ be the sectors of $B$.

The goal of this section is to describe a sutured Heegaard diagram  $(\Sigma, \boa, \bob)$ for  $M$ equipped with the sutured structure $\Gamma$ induced by $B$ on $\partial M$ as in \Cref{eg:paflowtosut}.

Let $U=\mathcal{N}(G)$ be the boundary of a tubular neighbourhood of $G$. 

\begin{lemma} \label{lemma:Sigma0genus}
$U$ is a genus $n+1$ handlebody.
\end{lemma}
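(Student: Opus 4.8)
The plan is to read off the genus of the handlebody $\mathcal{N}(G)$ purely from the combinatorics of the dual graph $G=\brloc(B)$, using the standard fact that a closed regular neighbourhood of a connected graph $\Gamma$ embedded in an orientable $3$-manifold is an orientable handlebody of genus $b_1(\Gamma)=|E(\Gamma)|-|V(\Gamma)|+1$.

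First I would count the vertices and edges of $G$. By definition $G=\brloc(B)$ has one vertex for each triple point of $B$, so $|V(G)|=n$. The substantive input is the branched-surface structure recorded after \Cref{defn:vbs}: $G$ carries the structure of a $(2,2)$-valent directed graph, so every vertex has exactly two incoming and two outgoing edges, hence valence $4$. The handshake lemma then gives $|E(G)|=\tfrac12\sum_{v}\deg(v)=\tfrac12(4n)=2n$. Next I would observe that $G$ is connected: since $B$ lies in the connected manifold $M$ (and is dual to a connected triangulation) it is connected, and $B$ is obtained from $G$ by attaching the sectors $S_1,\dots,S_m$, each a disc glued along its boundary circle by \Cref{defn:vbs}(1); attaching a $2$-cell along a circle does not change the set of connected components, so $\pi_0(G)=\pi_0(B)$ is a point.

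Finally I would assemble the pieces. Interpreting $\mathcal{N}(G)$ as a closed regular neighbourhood of $G$ in the orientable $3$-manifold $M$, it is built from $|V(G)|$ zero-handles (balls around the vertices) and $|E(G)|$ one-handles (a solid tube along each edge), attached compatibly with the orientation; since $G$ is connected one may cancel $|V(G)|-1=n-1$ of the one-handles against zero-handles, exhibiting $\mathcal{N}(G)$ as a handlebody with one zero-handle and $|E(G)|-|V(G)|+1=2n-n+1=n+1$ one-handles, i.e. a genus-$(n+1)$ handlebody. There is no real obstacle here: the lemma is essentially bookkeeping, and the only place the hypotheses genuinely enter is the edge count $|E(G)|=2n$, which is exactly where the $(2,2)$-valence of the branch locus of a veering branched surface is used; the connectedness check and the handle-cancellation statement are routine.
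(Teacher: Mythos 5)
Your proof is correct and follows essentially the same route as the paper: the $(2,2)$-valence of the dual graph plus the handshake lemma gives $|E(G)|=2n$, and the genus is then read off from $\chi(G)$ (equivalently $b_1(G)=|E|-|V|+1$). Your extra remarks on connectedness of $G$ and the handle-cancellation picture make explicit what the paper leaves implicit (it tacitly uses $\dim H_0(U)=1$), but the substance is identical.
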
 
\begin{proof}
Recall that $G$ is a 4-valent graph.
If $e$ denotes the number of edges of $G$, then by the Handshake Lemma $2e = \sum_i \deg(v_i)= 4n$. 
Thus if $g$ denotes the genus of $U$, then $1-g = \dim H_0(U)-\dim H_1(U)= \chi(U)=\chi(G)=n-e= n-2n =-n$.
\end{proof}

Let $\Sigma_0=\partial U$ be the boundary of $U$. By \Cref{lemma:Sigma0genus}, $\Sigma_0$ is a surface of genus $n+1$. In $\Sigma_0$ there are two multi-curves $\boa=\{\alpha_1, \dots, \alpha_n\}$ and $\bob=\{\beta_1, \dots, \beta_m\}$: The $\alpha$-curves are placed in correspondence of the triple points of $B$ as suggested in \Cref{fig:vbstoheegaardtriplepoint}. 
(In particular $\alpha_i=\partial D_i$ for some disk $D_i\subset M$ with interior lying in $M\setminus \Sigma$.) 
The $\beta$-curves are defined as the boundary of the sectors of $B$. More precisely, $\beta_i=S_i\cap \Sigma_0$ for $i=1, \dots, m$. 

\begin{figure}
    \centering
    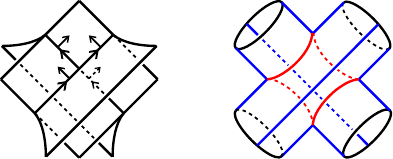
    \caption{The $\alpha$-curves are placed in correspondence of the triple points of $B$.}
    \label{fig:vbstoheegaardtriplepoint}
\end{figure}

Note that $(\Sigma_0, \boa, \bob)$ is \emph{not} an Heegaard diagram for $M$. Indeed, after compressing the $\alpha$-curves along the disks $D_i$, and the $\beta$-curves along the sectors we get a 3-manifold $M_0\subset M$ with complement $M \setminus M_0$ a collection of solid tori $\{T_1, \dots, T_\ell\}$, one for each cusp curve on $\partial M$. 

Note that the core $b_i$  of each solid torus $T_i\cong b_i\times D^2$ runs parallel to a branch loop, and it co-bounds an annulus $A_i \subset M$ with some cusp curve on $\partial M$. See the green annuli in \Cref{fig:vbstoheegaardcutannulus} middle. 

Let $M'_0\subset M_0 \subset M$ be the 3-manifold obtained from $M_0$ by cutting along this collection of annuli. In the manifold $M_0'$ the surface $\Sigma_0$ descends to an embedded surface $\Sigma \subset M_0'$ with boundary $\partial \Sigma \subset \partial M_0'$. 

\begin{figure}
    \centering
    \selectfont\fontsize{12pt}{12pt}
\begingroup%
  \makeatletter%
  \providecommand\color[2][]{%
    \errmessage{(Inkscape) Color is used for the text in Inkscape, but the package 'color.sty' is not loaded}%
    \renewcommand\color[2][]{}%
  }%
  \providecommand\transparent[1]{%
    \errmessage{(Inkscape) Transparency is used (non-zero) for the text in Inkscape, but the package 'transparent.sty' is not loaded}%
    \renewcommand\transparent[1]{}%
  }%
  \providecommand\rotatebox[2]{#2}%
  \newcommand*\fsize{\dimexpr\f@size pt\relax}%
  \newcommand*\lineheight[1]{\fontsize{\fsize}{#1\fsize}\selectfont}%
  \ifx\svgwidth\undefined%
    \setlength{\unitlength}{366.05601874bp}%
    \ifx\svgscale\undefined%
      \relax%
    \else%
      \setlength{\unitlength}{\unitlength * \real{\svgscale}}%
    \fi%
  \else%
    \setlength{\unitlength}{\svgwidth}%
  \fi%
  \global\let\svgwidth\undefined%
  \global\let\svgscale\undefined%
  \makeatother%
  \begin{picture}(1,0.37523267)%
    \lineheight{1}%
    \setlength\tabcolsep{0pt}%
    \put(0,0){\includegraphics[width=\unitlength,page=1]{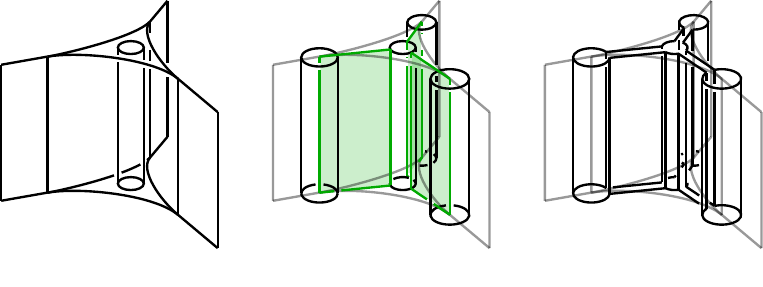}}%
    \put(0.13854616,0.00577843){\color[rgb]{0,0,0}\makebox(0,0)[lt]{\lineheight{1.25}\smash{\begin{tabular}[t]{l}$M$\end{tabular}}}}%
    \put(0.4950502,0.00577849){\color[rgb]{0,0,0}\makebox(0,0)[lt]{\lineheight{1.25}\smash{\begin{tabular}[t]{l}$M_0$\end{tabular}}}}%
    \put(0.85155383,0.00577849){\color[rgb]{0,0,0}\makebox(0,0)[lt]{\lineheight{1.25}\smash{\begin{tabular}[t]{l}$M'_0$\end{tabular}}}}%
  \end{picture}%
\endgroup%

    \caption{The 3-manifold obtained from $(\Sigma_0, \boa, \bob)$ is the submanifold $M_0 \subset M$ with complement $M \setminus M_0$ a collection of solid tori $\{T_1, \dots, T_\ell\}$, one for each cusp curve on $\partial M$. In $M_0$, we have a collection of annuli $A_i$, each with one boundary component along the core of $T_i$ and the other boundary component along $\partial M$.}
   \label{fig:vbstoheegaardcutannulus}
\end{figure}

Furthermore, we observe that the annuli $A_i$ intersect $\Sigma_0$ away from the $\alpha$- and $\beta$-curves. 
See \Cref{fig:vbstoheegaardbranchloop}.
Thus we can consider the diagram $(\Sigma, \boa, \bob)$.

\begin{figure}
    \centering
    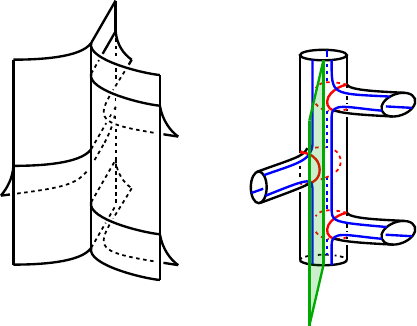
    \caption{The annuli $A_i$ intersect $\Sigma_0$ away from the $\alpha$- and the $\beta$-curves.}
    \label{fig:vbstoheegaardbranchloop}
\end{figure}

\begin{prop}  
$(\Sigma, \boa, \bob)$ is a sutured Heegaard diagram for $(M, \Gamma)$.
\end{prop}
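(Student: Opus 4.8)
The plan is to run the handle-attachment construction recalled in \Cref{subsec:heegaarddiagram} on the diagram $(\Sigma,\boa,\bob)$ and to identify the resulting sutured manifold with $(M,\Gamma)$, using the geometric picture built up above: the embedded surface $\Sigma_0=\partial U\subset M$ (of genus $n+1$ by \Cref{lemma:Sigma0genus}), the disks $D_i\subset M$ with $\partial D_i=\alpha_i$, the sectors $S_i$ of $B$ with $S_i\cap\Sigma_0=\beta_i$, the complementary solid tori $T_1,\dots,T_\ell$ with $M\setminus M_0=\bigsqcup_j T_j$, the annuli $A_j$ joining the core of $T_j$ to a cusp curve $\delta_j\subset\partial M$, and the circles $c_j:=A_j\cap\Sigma_0$, along which $\Sigma_0$ is cut to obtain $\Sigma$. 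I would first record that $(\Sigma,\boa,\bob)$ is a sutured Heegaard diagram at all: $\Sigma_0$ is a closed orientable surface, the $c_j$ form a finite nonempty collection of disjoint simple closed curves — nonempty since every boundary torus of $M$ carries a cusp curve, hence a solid torus $T_j$ and an annulus $A_j$ — so $\Sigma$ is a compact orientable surface with nonempty boundary; and since the $c_j$ are disjoint from $\boa\cup\bob$ (\Cref{fig:vbstoheegaardbranchloop}), the $\alpha$- and $\beta$-curves descend to disjoint embedded closed curves in $\intr(\Sigma)$.

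Next I would identify the output of the construction. Fix a bicollar $\Sigma_0\times[-1,1]\subset M$ with $\Sigma_0\times[-1,0]$ on the side of $U$. By definition, $M_0$ is obtained from $\Sigma_0\times[-1,1]$ by attaching a three-dimensional $2$-handle along each $\alpha_i\times\{-1\}$ with core disk $D_i$ and along each $\beta_i\times\{+1\}$ with core disk in the sector $S_i$; this is precisely the construction of \Cref{subsec:heegaarddiagram} applied to the closed surface $\Sigma_0$ (so with empty sutures). Because each $c_j$ is disjoint from $\boa\cup\bob$, the vertical annulus $c_j\times[-1,1]$ misses every $2$-handle attaching region, so cutting $\Sigma_0$ along $\bigsqcup_j c_j$ commutes with the handle attachments. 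Hence the sutured manifold produced from $(\Sigma,\boa,\bob)$ is $M_0$ cut along the annuli $c_j\times[-1,1]$, with $\partial\Sigma\times\{0\}$ as its sutures, and with $R_\pm$ the parts of the resulting boundary inherited from $\Sigma\times\{\pm1\}$ together with the $2$-handles.

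It then remains to show that this cut manifold is $(M,\Gamma)$. The key geometric claims are: (a) each vertical annulus $c_j\times[-1,1]$ is properly isotopic in $M_0$ to $A_j$, which follows by pushing the two sub-annuli of $A_j$ on either side of $c_j$ across the complementary regions cut out by the $D_i$ and the $S_i$; hence the cut manifold is $M'_0=M_0$ cut along $\bigsqcup_j A_j$. (b) Writing $N(A_j)\cong A_j\times[-1,1]$ for a bicollar, $V_j:=T_j\cup N(A_j)$ is a solid torus (two solid tori glued along an annulus longitudinal in each), it meets $M'_0$ in a single annulus $\Theta_j\subset\partial V_j$ whose core is isotopic to $\delta_j$, a core of $A_j$, hence a core of $V_j$; so $\Theta_j$ is longitudinal in $V_j$ and $V_j$ is merely a bicollar of $\Theta_j$. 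Consequently
$$M=M_0\cup\bigsqcup_j T_j=M'_0\cup\bigsqcup_j V_j\cong M'_0 .$$
Finally, tracking $\partial\Sigma\times\{0\}$ through this collapse, each pair of parallel copies of $c_j$ is carried to a pair of parallel copies of $\delta_j$ on $\partial M$, and these are exactly the curves separating $\partial_v M=R_+$ from $\partial_h M=R_-$; so the sutures agree with $\Gamma$ as in \Cref{eg:bstosut}.

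The main obstacle is step (b) together with the final suture bookkeeping: rigorously verifying that cutting $M_0$ along the annuli recovers $M$ — i.e. that each $V_j=T_j\cup N(A_j)$ is a boundary collar of $M'_0$ — and that the new boundary circles $\partial\Sigma\times\{0\}$ are exactly the cusp curves of $B$ on $\partial M$. This is a purely local analysis in a neighborhood of each solid torus $T_j$ (equivalently, of each cusp curve of $B$), best organized with explicit reference to \Cref{fig:vbstoheegaardcutannulus} and \Cref{fig:vbstoheegaardbranchloop}; the rest of the argument is formal.
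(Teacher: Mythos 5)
Your proposal is correct and follows essentially the same route as the paper's proof, which simply asserts that compressing the $\alpha$- and $\beta$-curves in $(\Sigma,\boa,\bob)$ yields $M_0'$, observes that $M_0'$ is naturally homeomorphic to $M$, and notes that the two boundary circles of $\Sigma$ per cusp curve give the sutures $\Gamma$. Your steps (a) and (b) are a careful expansion of exactly those assertions, at a level of detail the paper leaves implicit.
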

\begin{proof}
By construction the 3-manifold obtained by compressing the $\alpha$- and $\beta$-curves in $(\Sigma, \boa, \bob)$ is $M'_0$.
Observe that $M'_0$ is naturally homeomorphic to $M$. Under this homeomorphism, there are two components of $\partial \Sigma_0$ for each cusp curve. Hence the sutured manifold structure on $M$ is $\Gamma$.
\end{proof}

\begin{prop} \label{prop:diagrambalanced} 
The diagram $(\Sigma, \boa, \bob)$ is balanced.
\end{prop}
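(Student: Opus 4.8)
The plan is to check the two combinatorial criteria of Juhász for a sutured Heegaard diagram to be balanced, recalled in the excerpt after \cite[Proposition 2.9]{Juh06}: that $|\boa| = |\bob|$, and that every connected component of $\Sigma \setminus \boa$ and of $\Sigma \setminus \bob$ contains at least one component of $\partial \Sigma$. Since the previous proposition already establishes that $(\Sigma, \boa, \bob)$ is a sutured Heegaard diagram for $(M,\Gamma)$, only these two points remain. By construction $|\boa| = n$, the number of triple points of $B$, and $|\bob| = m$, the number of sectors of $B$, so the first criterion is exactly the equality $n = m$.

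For that equality I would compute Euler characteristics through the passage from the diagram back to the sutured manifold: $R_-(\Gamma)$ is obtained from $\Sigma$ by surgering along the curves of $\boa$ (deleting annular neighbourhoods and capping the resulting circles with discs), and likewise $R_+(\Gamma)$ from $\bob$; surgering along a single simple closed curve raises $\chi$ by $2$, so $\chi(R_+(\Gamma)) = \chi(\Sigma) + 2m$. Now $\Sigma$ is obtained from the closed surface $\Sigma_0 = \partial U$, of genus $n+1$ by \Cref{lemma:Sigma0genus}, by cutting along disjoint circles, hence $\chi(\Sigma) = \chi(\Sigma_0) = -2n$; and we have already observed that $R_+(\Gamma)$ is a union of annuli, so $\chi(R_+(\Gamma)) = 0$. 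Together these force $m = n$. (Alternatively one can get $n = m$ purely combinatorially: under the duality between veering branched surfaces and veering triangulations used in \Cref{thm:vbspaflowcorr}, the sectors of $B$ correspond to the edges of the associated veering triangulation, whose number equals the number of tetrahedra, namely $n$.)

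For the connectivity criterion, suppose toward a contradiction that some component $V$ of $\Sigma \setminus \boa$ has $\overline{V} \cap \partial \Sigma = \varnothing$. Then $\partial V$ consists only of curves of $\boa$, so upon surgering these away, $V$ becomes a \emph{closed} component of $R_-(\Gamma)$, contradicting that $R_-(\Gamma)$ is a union of annuli. Hence every component of $\Sigma \setminus \boa$ meets $\partial \Sigma$, and the identical argument with $\bob$ and $R_+(\Gamma)$ handles the $\beta$-curves. The only substantive input beyond this bookkeeping is the fact that $R_\pm(\Gamma)$ is a union of annuli: this is where the dynamical structure enters, via the identification of the sutured structure induced by $B$ with the one induced by the blown-up flow $\phi^\sharp$ on $Y^\sharp$, and it is the step to invoke with care; everything else is formal.
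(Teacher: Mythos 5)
Your proof is correct, and it takes a genuinely different (and slightly more complete) route than the paper's. The paper only verifies the count $|\boa|=|\bob|$, and does so through the spine: since $M$ has torus boundary, $0=\chi(M)=\chi(B)=\chi(G)+m$, and $\chi(G)=-n$ by \Cref{lemma:Sigma0genus}, hence $m=n$. You instead push the Euler characteristic through the Heegaard surface and the sutured boundary: $\chi(\Sigma)=\chi(\Sigma_0)=-2n$, surgery along $\bob$ gives $\chi(R_+(\Gamma))=\chi(\Sigma)+2m$, and the fact that $R_+(\Gamma)$ is a union of annuli (the remark following \Cref{eg:paflowtosut}, transported to the diagram-built manifold via the preceding proposition identifying it with $(M,\Gamma)$) forces $m=n$. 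The two computations carry the same information, but your mechanism has the added benefit of handling the second balancing criterion in the same breath: a component of $\Sigma\setminus\boa$ whose closure misses $\partial\Sigma$ would cap off under the $\alpha$-surgeries to a closed component of $R_-(\Gamma)$, which is impossible for a union of annuli, and symmetrically for $\bob$ and $R_+(\Gamma)$. The paper's proof does not address this second condition at all (it is implicit in the construction, and can also be read off later from the description of $\Sigma\setminus\boa$ as punctured annuli in \Cref{prop:globalcombinatorics}), so your write-up is the more thorough one; your parenthetical count via the dual veering triangulation (sectors dual to edges, triple points dual to tetrahedra, and edges equal in number to tetrahedra for an ideal triangulation with torus cusps) is an equally valid shortcut to $m=n$.
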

\begin{proof}
The $\alpha$-curves are in one-to-one correspondence with the triple points $v_1, \dots,v_n$, and the $\beta$-curves with the sectors $S_1, \dots, S_m$. Since $M$ has torus boundary we have that $0=\chi(M)=\chi(B)= \chi(G)+m$. On the other hand, as shown in \Cref{lemma:Sigma0genus} we have that $\chi(G)=-n$. Thus $0=\chi(\Gamma)+m=m-n$. 
\end{proof}

\subsection{Heegaard states} \label{subsec:vbsstates}

Summarizing, we can canonically associate to each veering branched surface $B$ a balanced Heegaard diagram $(\Sigma, \boa, \bob)$.
The goal of this subsection is to obtain a description of the Heegaard states of this diagram in terms of $B$.
The entire discussion shall be based on the following proposition, and the consequential definition.

\begin{figure}
    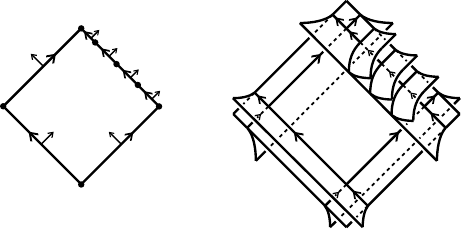
    \caption{Each sector of a veering branched surface is a diamond.}
    \label{fig:vbssector}
\end{figure}

\begin{lemma}[Diamonds] \label{prop:vbssector}
Each sector $S$ of a veering branched surface is a diamond, i.e. a disc with four corners, where the orientations on the sides all point from one corner to the opposite corner. 
\end{lemma}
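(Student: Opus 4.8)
The plan is to reduce the statement to a local analysis at the triple points of $B$ together with one global combinatorial count, using the correspondence with pseudo-Anosov flows for geometric intuition. By \Cref{defn:vbs}(1) we already know that $S$ is a disc, so what remains is to show that $\partial S$ has exactly four corners and that the four sides, with the orientations they inherit from the branch loops of $B$, all point from one corner to the opposite one. Recall that $\partial S$ is an immersed loop in the dual graph $G=\brloc(B)$, that each edge of $G$ is oriented by the veering orientation of its branch loop, and that a \emph{corner} of $S$ occurs precisely where $\partial S$ passes from one branch loop to another, which happens at a triple point.

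First I would classify corners locally. By \Cref{defn:vbs}(3) the branch-loop orientations and maw coorientations around any triple point are arranged as in \Cref{fig:veeringcondition}, so up to the symmetries of that figure there is a single local model; reading the germs of $\partial S$ at a corner off this model, one finds that a corner of $S$ is of exactly one of two types. Either the two edges of $\partial S$ meeting the corner are \emph{coherently} oriented, so that the boundary orientation flows through the corner (one edge in, one out), or they are \emph{incoherently} oriented, with both pointing away from the corner (a \emph{source}) or both toward it (a \emph{sink}). The same local model gives a monotonicity statement: along $\partial S$, between two consecutive incoherent corners every edge is coherently oriented. It follows that $\partial S$ has exactly one source corner $b$ and exactly one sink corner $t$, that $b$ and $t$ are non-adjacent, and that each of the two arcs of $\partial S$ from $b$ to $t$ is oriented from $b$ toward $t$. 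In particular, once we know $S$ has exactly four corners, the assertion about the side orientations is immediate, with $b$ and $t$ the two opposite corners.

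The crux is thus the count: each of the two arcs of $\partial S$ from $b$ to $t$ should contain exactly one coherent corner, so that $S$ is a quadrilateral. This is where the real work lies, since one must rule out bigon sectors with no coherent corner as well as sectors with several coherent corners on one side. I see two ways to finish. The first is combinatorial: promote the triple-point classification above to a global statement via the correspondence between veering branched surfaces and veering triangulations of \cite{Tsa23}, under which the sectors of $B$ are dual to the edges of a veering triangulation; the combinatorial structure of an edge and the tetrahedra around it in a veering triangulation is rigid---governed by the two tetrahedra for which the edge has dihedral angle $\pi$---and this rigidity is exactly what forces the dual sector to be four-cornered with the stated orientation pattern (see \cite{FG13} and \cite[Chapter 2]{Tsathesis}). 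The second route is geometric: by \Cref{thm:vbspaflowcorr}, $B$ carries the blown-up unstable foliation $\Fo^u$ of a pseudo-Anosov flow $\phi^\sharp$ on $M$, so after an isotopy $S$ becomes a compact sub-disc of a leaf of $\Fo^u$ foliated by arcs of flow lines; on such a disc the boundary is tangent to the arc-foliation at exactly two points (an index count, since $\chi(S)=1$), which one identifies with $b$ and $t$, and the local model of \Cref{fig:veertettovbs} near the two ends then shows each of the two complementary boundary arcs bends exactly once. In the write-up I would present the combinatorial argument in full and keep the flow picture as motivation; the corner count is the step that needs the most care.
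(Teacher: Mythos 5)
Your local analysis (classifying corners at triple points via \Cref{defn:vbs}(3) and deducing that sources, sinks and coherent corners must alternate in a fixed pattern) is sound and matches the first half of the paper's argument. But there are two gaps. First, ``it follows that $\partial S$ has exactly one source corner $b$ and exactly one sink corner $t$'' does not follow from the local model plus your monotonicity remark: those only show that sources and sinks alternate around $\partial S$, so an octagon with two sources and two sinks is perfectly consistent with everything you have established at that point. The uniqueness of $b$ and $t$ is part of the global count, not a prelude to it. Second, and more seriously, you correctly identify the corner count as the crux but then do not prove it: route 1 simply appeals to the ``rigidity'' of veering triangulations without an argument, and route 2's index count is off --- for a disc foliated by arcs, Poincar\'e--Hopf gives (\#elliptic $-$ \#hyperbolic tangencies)$/2 = \chi(S) = 1$, so you get \emph{at least} two boundary tangencies, not exactly two, and in any case tangencies of $\partial S$ with the flow do not directly count corners of the sector.

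The idea your proposal is missing is the paper's global Euler measure argument. The local analysis shows each sector has $4n$ corners for some $n$, and $n\geq 1$ after ruling out $n=0$ using \Cref{defn:vbs}(1) and (2); hence each sector has Euler measure $1-n\leq 0$, with equality iff it is a diamond. Now \Cref{defn:vbs}(2) says the boundary of each complementary region is a torus tiled by sectors, and additivity of the Euler measure gives $\sum (1-n_i) = \chi(T^2)=0$, forcing every summand to vanish. This one-line count is exactly what closes the argument, and it is the step to add (or, failing that, you should explicitly cite \cite[Lemma 3.3]{Tsa23} for the count rather than present the two incomplete routes as if either could be completed routinely).
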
 

See \Cref{fig:vbssector}, where the black dots are the triple points, and we have indicated the orientation and coorientation of each edge in the boundary of $S$. 

\begin{proof}[Proof of \Cref{prop:vbssector}]
A complete proof can be found in \cite[Lemma 3.3]{Tsa23} (but note that the convention for orienting the branch loops is opposite in \cite{Tsa23}). Here we sketch the proof to aid the reader's intuition.

Let $S$ be a sector. As one goes along the sides of a boundary component of $S$, \Cref{defn:vbs}(3) implies that the maw coorientation flips between pointing into $S$ and pointing out of $S$ every two sides. Together with \Cref{defn:vbs}(1), we conclude that $S$ must be a disc with $4n$ corners for some $n \geq 0$.

If $n=0$ and the maw coorientation is pointing into $S$ everywhere, then the sector `around' $S$ contradicts \Cref{defn:vbs}(1). 
If $n=0$ and the maw coorientation is pointing out of $S$ everywhere, then we contradict \Cref{defn:vbs}(2).
Hence we must have $n \geq 1$.

Note that the Euler measure of $S$ is $1-n$. 
This is a nonpositive integer, and is zero if and only if $n=1$.

Fix a complementary region $C$ of $B$. The boundary of $C$ is a union of sectors that is homeomorphic to a torus.
Using the fact that the Euler measure is additive, the sum of the Euler measures of the sectors in $\partial C$ is the Euler characteristic of the torus, which is zero. Thus the index of each sector in $\partial C$ is zero, and they must all be diamonds.
\end{proof}

\begin{defn}[Corners] \label{defn:corners}
As suggested by \Cref{fig:vbssector}, we refer to the corner of a sector $S$ which all sides are oriented towards as the \emph{top corner} of $S$. We refer to the opposite corner as the \emph{bottom corner}.
We call the remaining two corners of $S$  \emph{side corners}. 
Finally we shall refer to the two sides of $S$ that meet the top corner as the \emph{top sides}, and the remaining two sides that meet the bottom corner as the \emph{bottom sides}. 
\end{defn}

Note that the top sides of each sector may consist of multiple edges of the branch locus, whereas the bottom sides must only consist of a single edge. 
We will describe this phenomenon in more detail in \Cref{prop:vbstogglefan}.

A consequence of \Cref{prop:vbssector} is that each $\beta$-curve $\beta_i$ has exactly four intersection points with the $\alpha$-curves. The four intersection points correspond to the four corners of the sector $S_i$.
See \Cref{fig:vbstoheegaardsector}.

\begin{figure}
    \centering
    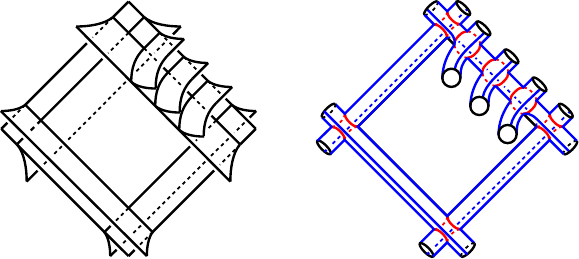
    \caption{Each $\beta$-curve $\beta_i$ has exactly four intersection points with the $\alpha$-curves, corresponding to the four corners of the sector $S_i$.}
    \label{fig:vbstoheegaardsector}
\end{figure}

We can thus characterize the Heegaard states as follows:

\begin{prop} \label{prop:generators}
Let $B$ a veering branched surface with sectors $S_1, \dots, S_n$ and triple points $v_1, \dots , v_n$.  The Heegaard states $\mathfrak{S}(\Sigma, \boa , \bob)$ of the Heegaard diagram $(\Sigma, \boa , \bob)$ associated to $B$ are in natural  correspondence with ways of assigning to each sector one of its four corners, so that each triple point is picked exactly once.
\end{prop}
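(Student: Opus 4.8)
The plan is to establish the correspondence in \Cref{prop:generators} by matching up the two combinatorial descriptions through the geometry of the Heegaard diagram $(\Sigma,\boa,\bob)$. Recall that a Heegaard state is a collection of $d = n$ points $\x = \{x_1,\dots,x_n\}$ with $x_i \in \alpha_i \cap \beta_{\sigma(i)}$ for some permutation $\sigma \in \mathfrak{S}_n$; equivalently, it is a perfect matching in the bipartite intersection graph whose two vertex sets are the $\alpha$-curves and the $\beta$-curves, with an edge for each point of $\alpha_i \cap \beta_j$. So I want to show this set of perfect matchings is in bijection with the set of functions assigning to each sector one of its four corners, subject to each triple point being hit exactly once.

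The key structural input is the local picture at each triple point and at each sector. First I would record, from \Cref{fig:vbstoheegaardtriplepoint} and \Cref{fig:vbstoheegaardsector}, that each $\alpha$-curve $\alpha_v$ (indexed by a triple point $v$) meets exactly the $\beta$-curves coming from sectors that have a corner at $v$, and that each such intersection point of $\alpha_v$ with $\beta_{S}$ corresponds precisely to a corner of $S$ located at $v$. Using \Cref{prop:vbssector}, each sector $S$ is a diamond with exactly four corners, so $\beta_S$ has exactly four intersection points with the $\alpha$-curves, one for each corner; this is the content already noted after \Cref{defn:corners}. Crucially, the four corners of $S$ sit at four (not necessarily distinct — but in fact distinct, which should be remarked or cited) triple points, and a point of $\alpha_v \cap \beta_S$ is determined by the pair (the sector $S$, the corner of $S$ at $v$). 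Thus choosing $x_i \in \alpha_{v_i} \cap \beta_{\sigma(i)}$ is the same as choosing, for the sector $S_{\sigma(i)}$, one of its corners, namely the corner located at $v_i$.

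With that dictionary in place, the bijection is essentially bookkeeping. Given a Heegaard state, the permutation $\sigma$ together with the chosen intersection points assigns to each sector $S_j = S_{\sigma(i)}$ the corner of $S_j$ sitting at the triple point $v_i$; since $\sigma$ is a bijection on $\{1,\dots,n\}$ and there are $n$ triple points and $n$ sectors, each triple point $v_i$ is "used" by exactly one sector, i.e. each triple point is picked exactly once. Conversely, given a corner-assignment in which every triple point is picked exactly once, the map sending each sector to the triple point of its chosen corner is a bijection from sectors to triple points; reading it as a permutation $\sigma$ (after fixing the indexings $\alpha_i \leftrightarrow v_i$, $\beta_j \leftrightarrow S_j$) and taking $x_i$ to be the intersection point of $\alpha_{v_i}$ with the $\beta$-curve of the assigned sector at the assigned corner produces a Heegaard state. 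These two constructions are manifestly inverse to each other.

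The main obstacle I anticipate is not conceptual but a matter of carefully verifying the local claim that the four corners of a sector lie at four \emph{distinct} triple points, and more importantly that every intersection point of an $\alpha$-curve with a $\beta$-curve arises in exactly the way described — i.e. that the $\alpha$-curve $\alpha_v$ does not acquire extra intersections with $\beta_S$ beyond the one at the corner, and that distinct corners of the same sector at the same triple point cannot occur. This requires looking closely at the local model of \Cref{fig:vbstoheegaardtriplepoint}: the $\alpha$-curve $\alpha_v$ is the boundary of a small disk $D_v$ transverse to $\Sigma$ near $v$, and one must check it meets each incident $\beta$-curve exactly once. I would handle this by a direct inspection of the neighborhood of a triple point in $B$, using the $(2,2)$-valent structure of the dual graph $G$ and the fact that the four sectors incident to $v$ are arranged so that $\partial D_v$ crosses each exactly once; this is implicit in the figures and can be cited to \cite[Lemma 3.3]{Tsa23} or spelled out in a sentence or two.
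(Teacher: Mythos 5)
Your proposal is correct and follows essentially the same route as the paper: identify the four intersection points of each $\beta_i$ with the four corners of the sector $S_i$ (via the diamond lemma, \Cref{prop:vbssector}), and then read a Heegaard state as a corner assignment hitting each triple point once. The extra care points you raise (distinctness of corners, no spurious intersections) are reasonable to flag but are not needed for the bijection itself, and the paper simply reads them off the local model.
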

\begin{proof}
Given such an assignment, we can pair the $\beta$-curve $\beta_i$ corresponding to $S_i$ to the $\alpha$-curve that corresponds to the corner that is assigned to $S_i$.
Conversely, given a state, we can assign a sector $S_i$ to the corner corresponding to the point on $\beta_i$ in the state.  
\end{proof}

\subsection{Top and bottom states} \label{subsec:tbstates}

As observed in \Cref{prop:vbssector} each sector of a veering branched surface has the shape of a \emph{diamond}.
In \Cref{defn:corners} we distinguished two particular corners which we called the top and bottom corners. 

\begin{defn}[Top/bottom states]
The \emph{top state} $\xt$ is the state defined by assigning to each sector its top corner.
The \emph{bottom state} $\xb$ is the stated defined by assigning to each sector its bottom corner.
\end{defn}

\begin{prop}
The assignments $\xt$ and $\xb$ define two Heegaard states.
\end{prop}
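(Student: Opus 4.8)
The plan is to reduce the statement to \Cref{prop:generators} and then to analyze the branched surface locally near each triple point. Recall from \Cref{prop:generators} that a Heegaard state of $(\Sigma,\boa,\bob)$ is the same datum as an assignment of one of its four corners to each sector of $B$ in such a way that every triple point is assigned exactly once. The assignments defining $\xt$ and $\xb$ (pick the top corner of each sector, respectively the bottom corner) are legitimate assignments, since by \Cref{prop:vbssector} and \Cref{defn:corners} every sector is a diamond with a well-defined top and bottom corner. So the content of the statement is precisely that each triple point of $B$ is the top corner of exactly one sector, and the bottom corner of exactly one sector. I would first note that there are as many sectors as triple points -- there are $n$ triple points, and $m=n$ sectors, as shown in the proof of \Cref{prop:diagrambalanced} -- so that it suffices to prove that each triple point is the top corner of \emph{at most} one sector, and the bottom corner of \emph{at most} one sector; a pigeonhole count then upgrades ``at most one'' to ``exactly one''.

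The heart of the argument is a local inspection at a triple point $v$. First, since the dual graph $G$ of $B$ is $(2,2)$-valent, exactly two of the four edges of $G$ incident to $v$ point into $v$ and exactly two point out of $v$. Second, the four edges at $v$ are distributed among the branch loops of $B$, and every branch loop is smoothly immersed; hence these four edges assemble into two smooth strands through $v$, each carrying one incoming and one outgoing edge. Consequently, in the cyclic order of the four edges around $v$, the two incoming edges are adjacent, and likewise the two outgoing edges. Now if a sector $S$ has $v$ as a corner, the two sides of the diamond $S$ meeting at $v$ begin, at $v$, with the two edges of $G$ that bound the local wedge of $B$ occupied by $S$ at $v$; by \Cref{defn:corners}, $v$ is the top corner of $S$ exactly when both these sides run toward $v$ -- equivalently, when that wedge is the one bounded by the two incoming edges -- and $v$ is the bottom corner of $S$ exactly when that wedge is the one bounded by the two outgoing edges. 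Inspecting the standard local model of a branched surface at a triple point shows that the wedge bounded by the two incoming edges is a corner wedge of exactly one sector, and likewise for the wedge bounded by the two outgoing edges. Hence $v$ is the top corner of exactly one sector and the bottom corner of exactly one sector. It follows that the top-corner and bottom-corner assignments each pick every triple point exactly once, so by \Cref{prop:generators} both $\xt$ and $\xb$ are Heegaard states.

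The step I expect to be the main obstacle is the last claim in the local inspection: that the incoming-edge wedge (and the outgoing-edge wedge) at $v$ is a corner wedge of exactly one sector. The subtlety is that several sheets of $B$ come together at $v$, but the ones that are not the ``bottom'' sheet meet $v$ along a branch loop and therefore do so smoothly, contributing a germ of a sector at $v$ but no corner there; one must check that the corners at $v$ come precisely from the four wedges cut out by the branch locus on the bottom sheet, of which the incoming-edge wedge is one. This is a routine verification against the local model -- of the same flavor as, and extractable from, the argument proving \Cref{prop:vbssector} and the references cited there -- but it is the place where care is needed. Everything else is formal once the cyclic arrangement ``in, in, out, out'' of the four edges at $v$ has been established.
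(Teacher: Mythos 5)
Your proof is correct and follows essentially the same route as the paper: both reduce the claim to the equality of the numbers of sectors and triple points (from \Cref{prop:diagrambalanced}) plus a local inspection at a triple point, followed by a pigeonhole argument. The only immaterial difference is that the paper verifies locally that each triple point is the top corner of \emph{at least} one sector and pigeonholes up, whereas you verify \emph{at most} (in fact exactly) one and pigeonhole down; your ``in, in, out, out'' analysis of the four edges at a triple point is a correct reading of the local model in \Cref{fig:veeringcondition}.
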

\begin{proof} We should verify that there is no triple point that is picked up twice by $\xt$. In \Cref{prop:diagrambalanced} we verified that there are as many triple points as many sectors. Thus, by pigeon hole principle, if a triple point was simultaneously the top  corner of two different sectors there would be some triple point that is not the top corner of any sector. On the other hand the local picture \Cref{fig:veeringcondition} shows that each triple point is the top corner of some sector, a contradiction. A similar argument applies to $\xb$.
\end{proof}

\subsection{Relation to closed orbits} \label{subsec:statestoorbits}

Suppose now that the sutured manifold $(M, \Gamma)$ is the blow-up $Y^\sharp$ of a pseudo-Anosov flow $(Y, \phi)$ along a collection of closed orbits $\mathcal{C}$ and that $B$ is associated to $(\phi, \mathcal{C})$ as in \Cref{thm:vbspaflowcorr}. In this section we consider the Heegaard diagram $(\Sigma, \boa, \bob)$ associated to $B$  in relation to the flow $\phi$. We explain how to assign to each Heegaard state $\x$ of $(\Sigma, \boa, \bob)$   a closed multi-orbit $\gamma_\x$ of the flow $\phi^\sharp$.

\begin{defn}[Augmented dual graph] \label{defn:augmenteddualgraph}
Let $B$ a veering branched surface. The \emph{augmented dual graph}  $G_+$ of $B$ is defined as the directed graph obtained from the $1$-skeleton $G$ by adding an extra edge connecting the bottom to the top vertex of each sector.
\end{defn}

Let $\x$ be a Heegaard state. We consider $\x$ to be a way of assigning each sector to one of its corner as in \Cref{prop:generators}.
For each sector $S_i$, let $\x(S_i)$ be the triple point assigned to $S_i$.
\begin{itemize}
    \item If $\x(S_i)$ is the bottom corner of $S_i$, take $e_{\x,i}$ to be the empty set.
    \item If $\x(S_i)$ is not the bottom corner of $S_i$, take $e_{\x,i}$ to be the edge of $G_+$ connecting the bottom corner of $S_i$ to $\x(S_i)$. (See \Cref{fig:statemultiloopdefn}.)
\end{itemize}

\begin{figure}
    \centering
    \selectfont\fontsize{12pt}{12pt}
\begingroup%
  \makeatletter%
  \providecommand\color[2][]{%
    \errmessage{(Inkscape) Color is used for the text in Inkscape, but the package 'color.sty' is not loaded}%
    \renewcommand\color[2][]{}%
  }%
  \providecommand\transparent[1]{%
    \errmessage{(Inkscape) Transparency is used (non-zero) for the text in Inkscape, but the package 'transparent.sty' is not loaded}%
    \renewcommand\transparent[1]{}%
  }%
  \providecommand\rotatebox[2]{#2}%
  \newcommand*\fsize{\dimexpr\f@size pt\relax}%
  \newcommand*\lineheight[1]{\fontsize{\fsize}{#1\fsize}\selectfont}%
  \ifx\svgwidth\undefined%
    \setlength{\unitlength}{258.7668904bp}%
    \ifx\svgscale\undefined%
      \relax%
    \else%
      \setlength{\unitlength}{\unitlength * \real{\svgscale}}%
    \fi%
  \else%
    \setlength{\unitlength}{\svgwidth}%
  \fi%
  \global\let\svgwidth\undefined%
  \global\let\svgscale\undefined%
  \makeatother%
  \begin{picture}(1,0.41337832)%
    \lineheight{1}%
    \setlength\tabcolsep{0pt}%
    \put(0,0){\includegraphics[width=\unitlength,page=1]{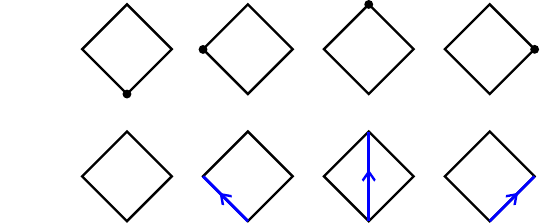}}%
    \put(-0.0023398,0.31115073){\color[rgb]{0,0,0}\makebox(0,0)[lt]{\lineheight{1.25}\smash{\begin{tabular}[t]{l}$\x(S_i)$\end{tabular}}}}%
    \put(0.00925366,0.07348486){\color[rgb]{0,0,0}\makebox(0,0)[lt]{\lineheight{1.25}\smash{\begin{tabular}[t]{l}$e_{\x,i}$\end{tabular}}}}%
  \end{picture}%
\endgroup%

    \caption{Picking the edges $e_{\x,i}$, the union of which is the multi-loop $\mu_x$ associated to $\x$.}
    \label{fig:statemultiloopdefn}
\end{figure}

\begin{lemma} \label{lemma:stateloopisloop}
The union of edges $\bigcup_i e_{\x,i}$ is an embedded multi-loop of $G_+$.
\end{lemma}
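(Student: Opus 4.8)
The plan is to show that the set of edges $H = \bigcup_i e_{\x,i}$, viewed as a subgraph of $G_+$, has the property that at every vertex the number of incoming edges of $H$ equals the number of outgoing edges of $H$, and this common value is at most $1$. A subgraph of a directed graph with this property is a disjoint union of pairwise vertex-disjoint simple directed cycles — follow outgoing edges from any non-isolated vertex, use finiteness and in-degree $1$ to close up a simple cycle, and observe it exhausts all edges at its vertices — and this is exactly an embedded multi-loop in the sense of the notational conventions. So it suffices to verify the in-degree/out-degree balance.

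The single structural input is that \emph{each triple point $v$ of $B$ is the bottom corner of exactly one sector $S_v$}. This is precisely the ``similar argument'' for $\xb$ in the proof that $\xt,\xb$ are Heegaard states: each triple point is the bottom corner of at least one sector by the local picture \Cref{fig:veeringcondition}, and by \Cref{prop:diagrambalanced} there are as many triple points as sectors, so ``at least one'' forces ``exactly one''. Dually, by \Cref{prop:generators} the state $\x$ assigns to each triple point $v$ exactly one sector $S^v$ with $\x(S^v)=v$. I also record that whenever $e_{\x,i}\neq\varnothing$ it is a genuine directed edge of $G_+$ with tail the bottom corner of $S_i$ and head $\x(S_i)$: if $\x(S_i)$ is the top corner this is the augmented edge of $S_i$, and if $\x(S_i)$ is a side corner this is a bottom side of $S_i$, which by the remark following \Cref{defn:corners} is a single edge of $\brloc(B)$, oriented away from the bottom corner because all sides of the diamond $S_i$ are oriented towards its top corner (\Cref{prop:vbssector}).

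Granting this, the count is immediate. The number of edges of $H$ with tail $v$ counts sectors $S_i$ with bottom corner $v$ and $\x(S_i)\neq v$; since $S_v$ is the unique sector with bottom corner $v$, this is $1$ if $\x(S_v)\neq v$ and $0$ if $\x(S_v)=v$. The number of edges of $H$ with head $v$ counts sectors $S_i$ with $\x(S_i)=v$ and $v$ not the bottom corner of $S_i$; since $S^v$ is the unique sector with $\x(S^v)=v$, this is $1$ if $v$ is not the bottom corner of $S^v$ and $0$ otherwise. A two-line case split on whether $\x(S_v)=v$ finishes it: if $\x(S_v)=v$ then $S^v=S_v$ and $v$ is its bottom corner, so both degrees are $0$; if $\x(S_v)\neq v$ then $S^v\neq S_v$, hence $v$ is not the bottom corner of $S^v$ by uniqueness of $S_v$, so both degrees are $1$. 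In either case in-degree $=$ out-degree $\le 1$, as required.

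I do not expect a real obstacle: once the uniqueness statement ``each triple point is the bottom corner of exactly one sector'' is on the table, the argument is bookkeeping, the only thing to keep straight being that the two dual counting principles — $\x$ picks each triple point once; each triple point is the bottom corner of one sector — govern the in-degree and the out-degree respectively. The one place meriting a little care is the identification, when $\x(S_i)$ is a side corner, of $e_{\x,i}$ with a single oriented edge of $\brloc(B)$ from the bottom corner to that side corner, which relies on bottom sides of a diamond consisting of a single edge together with the orientation convention on diamonds.
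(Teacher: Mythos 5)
Your proof is correct and follows essentially the same route as the paper: reduce to checking that at each triple point the chosen edges have in-degree equal to out-degree, both at most one, using that $\x$ picks each triple point exactly once together with the bijection between sectors and their bottom corners. Your version merely spells out two points the paper leaves implicit (that each triple point is the bottom corner of exactly one sector, and that balanced degree $\le 1$ forces a disjoint union of embedded cycles), so no substantive difference.
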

\begin{proof}
Enumerate the triple points $v_1, \dots, v_n$ so that $v_i$ is the bottom vertex of the sector $S_i$. It suffices to show that at each $v_i$, either $v_i$ is disjoint from the chosen edges, or exactly one chosen edge enters $v_i$ and exactly one chosen edge exits $v_i$.

If $v_i = \x(S_i)$, then by definition none of the chosen edges exit $v_i$. At the same time, since $\x$ picks each triple point exactly once, $\x(S_j) \neq v_i$ for $j \neq i$, so none of the chosen edges enter $v_i$.

Otherwise $v_i = \x(S_j)$ for some $j \neq i$.
Since $\x$ picks each triple point exactly once, such a $j$ is unique. Hence exactly one chosen edge, namely $e_{\x,j}$ enters $v_i$.
In particular, $\x(S_i) \neq v_i$, so exactly one chosen edge, namely $e_{\x,i}$ exits $v_i$.
\end{proof}

\begin{defn}[Multi-loop $\mu_\x$]\label{multiloop}
Let $(\Sigma, \boa, \bob)$ be the Heegaard diagram of a veering branched surface. Define the multi-loop associated to a Heegaard state $\x\in \mathfrak{S}(\Sigma, \boa, \bob)$ to be the embedded multi-loop $\mu_\x = \bigcup_i e_{\x,i}$ of $G_+$. 
\end{defn}

In fact, the converse of \Cref{lemma:stateloopisloop} also holds.

\begin{prop} \label{prop:statesasembloops}
The map $\x \mapsto \mu_\x$ defines a bijection between the Heegaard states and the embedded multi-loops of $G_+$.
\end{prop}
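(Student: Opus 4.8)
The plan is to construct an explicit inverse to the map $\x \mapsto \mu_\x$. Given an embedded multi-loop $\mu$ of the augmented dual graph $G_+$, I want to read off which corner of each sector $S_i$ has been selected, and verify that this assignment hits each triple point exactly once, so that it defines a Heegaard state by \Cref{prop:generators}. The key observation is that the "new" edges of $G_+$ (the ones joining a bottom vertex to a top vertex of a sector) are in bijection with the sectors, and each new edge of $G_+$ lying in $\mu$ records precisely the datum of which non-bottom corner a sector points to; a sector whose new edge is absent from $\mu$ will be assigned its bottom corner.

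First I would set up the inverse map $\Phi$ carefully. Given an embedded multi-loop $\mu$ in $G_+$, for each sector $S_i$ consider the new edge $\epsilon_i$ of $G_+$ joining the bottom vertex of $S_i$ to its top vertex. If $\epsilon_i \subset \mu$, set $\Phi(\mu)(S_i)$ to be the terminal endpoint of $\epsilon_i$, which is the top corner of $S_i$ (this is the "$\x(S_i)$ is not the bottom corner" case — note that the edge $e_{\x,i}$ in \Cref{fig:statemultiloopdefn} runs from the bottom corner of $S_i$ toward $\x(S_i)$, and it is an edge of $G_+$, which here forces $\x(S_i)$ to be the top corner since that is the unique vertex the new edge points to). If $\epsilon_i \not\subset \mu$, set $\Phi(\mu)(S_i)$ to be the bottom corner of $S_i$. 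Here I should double-check against \Cref{defn:augmenteddualgraph} and \Cref{fig:statemultiloopdefn} whether the selected non-bottom corner is always the top corner, or whether a side corner can also occur; if side corners occur as well, then each sector contributes two new edges (bottom-to-side) and $\Phi$ reads off which, if any, is used. Either way the formula for $\Phi$ is the obvious one suggested by the figures.

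Next I would check that $\Phi(\mu)$ is a legitimate Heegaard state, i.e. each triple point $v$ is picked exactly once. Enumerate the triple points $v_1,\dots,v_n$ so that $v_i$ is the bottom vertex of $S_i$, exactly as in the proof of \Cref{lemma:stateloopisloop}. Because $\mu$ is an embedded multi-loop, at each vertex $v_i$ of $G_+$ either no edge of $\mu$ is incident to $v_i$, or exactly one edge of $\mu$ enters and exactly one exits. If no edge of $\mu$ touches $v_i$, then in particular $\epsilon_i \not\subset \mu$, so $\Phi(\mu)$ assigns $S_i$ its bottom corner $v_i$; and no other sector's chosen corner equals $v_i$ (since that would require an edge of $\mu$ entering $v_i$). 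If exactly one edge of $\mu$ enters and one exits $v_i$, then the exiting one must be $\epsilon_i$ (the only $\mu$-edges with tail at a bottom vertex are the new edges, by how $\mu_\x$ was built — I should confirm the old edges of $G$ never appear in any $\mu_\x$, which is immediate from \Cref{multiloop} since each $e_{\x,i}$ is a single new edge), so $\Phi(\mu)(S_i)$ is a non-bottom corner and $v_i$ is not selected by $S_i$; and the unique edge entering $v_i$ is $\epsilon_j$ for the unique sector $S_j$ with $\Phi(\mu)(S_j)=v_i$. In both cases $v_i$ is picked exactly once, so $\Phi(\mu) \in \mathfrak{S}(\Sigma,\boa,\bob)$.

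Finally I would verify $\Phi \circ (\x \mapsto \mu_\x) = \mathrm{id}$ and $(\mu \mapsto \mu_{\Phi(\mu)}) = \mathrm{id}$, both of which are immediate by unwinding the definitions: the new edge $\epsilon_i$ lies in $\mu_\x$ if and only if $\x(S_i)$ is not the bottom corner, which is exactly when $\Phi$ recovers that corner, and conversely. Combined with \Cref{lemma:stateloopisloop}, which already shows the forward map lands in embedded multi-loops, this proves the bijection. I expect the only real subtlety — the "main obstacle", though a mild one — to be bookkeeping the precise combinatorial structure of $G_+$ near a triple point: making sure that the new edges are the only edges of $G_+$ emanating from bottom vertices that can appear in the multi-loops $\mu_\x$, and that "not the bottom corner" pins down the corner uniquely (top, versus top-or-side). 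This is settled by the local pictures \Cref{fig:veeringcondition}, \Cref{fig:vbssector}, and \Cref{fig:statemultiloopdefn} together with \Cref{defn:corners}, so no serious difficulty remains.
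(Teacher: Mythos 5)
There is a genuine gap, and it lies in the construction of the inverse map, which is the heart of the proof. You commit (in the verification steps) to the reading that a non-bottom assignment always selects the \emph{top} corner, so that $\mu_\x$ consists only of the newly added vertical edges of $G_+$; you even assert that ``old edges of $G$ never appear in any $\mu_\x$'' as immediate from \Cref{multiloop}. This is false. A Heegaard state may assign a sector one of its \emph{side} corners, and in that case the edge $e_{\x,i}$ prescribed by \Cref{multiloop} is the edge of $G_+$ from the bottom corner of $S_i$ to that side corner --- which is a bottom side of $S_i$, i.e.\ an original edge of $G$ (each bottom side is a single edge of $G$ by \Cref{prop:vbstogglefan}); \Cref{defn:augmenteddualgraph} adds only one new edge per sector, from bottom to top, not bottom-to-side edges. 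Consequently your $\Phi$ is not an inverse: for a state $\x$ with $\x(S_i)$ a side corner, $\mu_\x$ contains no vertical edge for $S_i$, so your $\Phi(\mu_\x)$ assigns $S_i$ its bottom corner and $\Phi(\mu_\x)\neq\x$; and on embedded multi-loops of $G_+$ that use $G$-edges exiting a bottom vertex (which do occur in the image of $\x\mapsto\mu_\x$), your rule produces assignments that need not pick each triple point exactly once. The step ``the exiting edge at $v_i$ must be $\epsilon_i$'' is exactly where the argument breaks.

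The repair is the natural one, and it is what the paper does. Given an embedded multi-loop $c$ of $G_+$ and a sector $S_i$ with bottom corner $v_i$: if $c$ misses $v_i$, set $\x(S_i)=v_i$; otherwise, by embeddedness there is a unique edge of $c$ exiting $v_i$, and one sets $\x(S_i)$ to be its terminal vertex. The point you were missing is the local structure at $v_i$: the outgoing edges of $G_+$ at $v_i$ are precisely the two bottom sides of $S_i$ together with the vertical edge of $S_i$, so the terminal vertex of any exiting edge is automatically a (side or top) corner of $S_i$. With this corrected $\Phi$, your remaining skeleton goes through essentially as you wrote it: embeddedness gives that each triple point is picked exactly once (so $\Phi(c)$ is a state by \Cref{prop:generators}), and unwinding the definitions gives that $\Phi$ and $\x\mapsto\mu_\x$ are mutually inverse.
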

\begin{proof}
Let $c$ be an embedded loop of $G_+$. We associate a Heegaard state $\x$ to $c$ as follows:
For each sector $S_i$, let $v_i$ be its bottom corner. 
If $c$ does not pass through $v_i$, then we set $\x(S_i)=v_i$. 
Otherwise, since $c$ is embedded, there is exactly one edge exiting $v_i$. We set $\x(S_i)$ to be the terminal vertex of this edge. By definition of $G_+$ this must be a corner of $S_i$.

The fact that $c$ is embedded implies that each triple point is picked exactly once by $\x$, so $\x$ is indeed a Heegaard state.

It is straightforward to verify that this is an inverse to the map $\x \mapsto \mu_\x$.
\end{proof}

\begin{eg} \label{eg:topbottomloop}
For the bottom state $\xb$ the edges $e_{\xb,i}$ are all empty sets, thus $\mu_{\xb}$ is the empty multi-loop.
For the top state $\xt$ the edges $e_{\xt,i}$ are all the vertical edges in the sectors, thus $\mu_{\xt}$ is the multi-loop that traverses all the vertical edges of $G_+$.
\end{eg}

Next, we wish to assign to $\mu_\x$ a closed multi-orbit of the blown-up flow $\phi^\sharp$.
Here, recall that $\phi$ is the pseudo-Anosov flow corresponding to the chosen veering branched surface $B$, and $\phi^\sharp$ is the blown-up flow, which is defined on $M$.

Each loop $c_+$ of $G_+$ can be homotoped to a loop $c$ of $G$ in $M$ by taking each edge in $c_+$ that does not lie in $G$, i.e. each edge that connects the bottom vertex to the top vertex of some sector $S$, and replacing it by the path in $G$ that traverses a bottom side and a top side of $S$.
We refer to $c$ as a \emph{strum} of $c_+$.
See \Cref{fig:strum}.

\begin{figure}
    \centering
    \selectfont\fontsize{8pt}{8pt}
\begingroup%
  \makeatletter%
  \providecommand\color[2][]{%
    \errmessage{(Inkscape) Color is used for the text in Inkscape, but the package 'color.sty' is not loaded}%
    \renewcommand\color[2][]{}%
  }%
  \providecommand\transparent[1]{%
    \errmessage{(Inkscape) Transparency is used (non-zero) for the text in Inkscape, but the package 'transparent.sty' is not loaded}%
    \renewcommand\transparent[1]{}%
  }%
  \providecommand\rotatebox[2]{#2}%
  \newcommand*\fsize{\dimexpr\f@size pt\relax}%
  \newcommand*\lineheight[1]{\fontsize{\fsize}{#1\fsize}\selectfont}%
  \ifx\svgwidth\undefined%
    \setlength{\unitlength}{147.67640602bp}%
    \ifx\svgscale\undefined%
      \relax%
    \else%
      \setlength{\unitlength}{\unitlength * \real{\svgscale}}%
    \fi%
  \else%
    \setlength{\unitlength}{\svgwidth}%
  \fi%
  \global\let\svgwidth\undefined%
  \global\let\svgscale\undefined%
  \makeatother%
  \begin{picture}(1,0.65114128)%
    \lineheight{1}%
    \setlength\tabcolsep{0pt}%
    \put(0,0){\includegraphics[width=\unitlength,page=1]{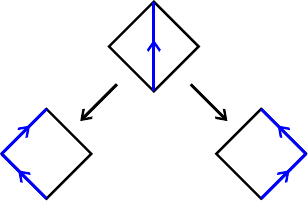}}%
    \put(0.17200929,0.34456134){\color[rgb]{0,0,0}\makebox(0,0)[lt]{\lineheight{1.25}\smash{\begin{tabular}[t]{l}strum\end{tabular}}}}%
    \put(0.67549017,0.34456134){\color[rgb]{0,0,0}\makebox(0,0)[lt]{\lineheight{1.25}\smash{\begin{tabular}[t]{l}strum\end{tabular}}}}%
  \end{picture}%
\endgroup%

    \caption{Each edge of $G_+ \backslash G$ can be strum into a path of $G$ in two ways.}
    \label{fig:strum}
\end{figure}

More generally, we say that two loops in $G_+$ are \emph{sweep-equivalent} if they are related by a finite sequence of strums. 
More precisely, if we have a sequence of loops $c_0,\dots,c_n$ in $G_+$ where for each $i$, either $c_{i-1}$ is a strum of $c_i$ or $c_i$ is a strum of $c_{i-1}$, then we say that $c_0$ and $c_n$ are sweep-equivalent.

With this terminology in place, we have the following more precise version of \Cref{thm:vbspaflowcorr}(ii).

\begin{prop} \label{prop:looporbitcorr}
There is a bijection $\mathcal{F}$ from the sweep-equivalence classes of loops in $G_+$ to the closed orbits of $\phi^\sharp$, such that $\mathcal{F}(c)$ is homotopic to $c$ in $M$ for each loop $c$.
\end{prop}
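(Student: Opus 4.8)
The plan is to build the bijection $\mathcal{F}$ by factoring through the correspondence of \Cref{thm:vbspaflowcorr}(ii), which already gives a bijection between homotopy classes of loops in the dual graph $G$ and closed orbits of $\phi^\sharp$, and then to understand precisely which loops in $G$ arise as strums of loops in $G_+$. First I would make the following elementary observation: a strum replaces an edge of $G_+ \setminus G$ — the vertical edge of a sector $S$ running from its bottom corner to its top corner — by a path in $G$ consisting of one bottom side followed by one top side of $S$ (in either of the two ways). Since the bottom sides and top sides of each sector are, by \Cref{prop:vbssector}, oriented consistently from bottom corner toward top corner, a strum does not change the homotopy class of the loop in $M$: the vertical edge and either stramming path cobound a disc (a half of the diamond sector $S$). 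Hence \emph{sweep-equivalent loops in $G_+$ are homotopic in $M$}, so the assignment (sweep-equivalence class) $\mapsto$ (homotopy class in $M$) is well-defined, and composing with the bijection of \Cref{thm:vbspaflowcorr}(ii) produces a well-defined map $\mathcal{F}$ with the required property that $\mathcal{F}(c)$ is homotopic to $c$.

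The substance of the proof is then to show $\mathcal{F}$ is a bijection, and since \Cref{thm:vbspaflowcorr}(ii) is already a bijection onto closed orbits of $\phi^\sharp$, it suffices to show that the natural map
\[
\{\text{sweep-equivalence classes of loops in } G_+\} \longrightarrow \{\text{homotopy classes of loops in } G\}/(\text{realized by closed orbits of }\phi^\sharp)
\]
is a bijection. For surjectivity, every loop of $G$ is in particular a loop of $G_+$ (as $G \subset G_+$), and a loop sitting inside $G$ is trivially sweep-equivalent to itself; combined with \Cref{thm:vbspaflowcorr}(ii) this shows every closed orbit of $\phi^\sharp$ is $\mathcal{F}$ of some sweep-equivalence class. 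For injectivity I would argue that if two loops $c, c'$ in $G_+$ are homotopic in $M$ to the same closed orbit $\gamma$ of $\phi^\sharp$, then they are sweep-equivalent. The key input here is the no-perfect-fits hypothesis, which by \Cref{prop:npfhtpyclassunique} (together with \Cref{rmk:perfectfitdefns}) means that a closed orbit of $\phi^\sharp$ is determined up to free homotopy in $M$ by its homotopy class, and more importantly that the correspondence of \Cref{thm:vbspaflowcorr}(ii) is genuinely a bijection on homotopy classes — i.e. distinct homotopy classes of loops in $G$ give distinct closed orbits. So it remains to see that two loops in $G_+$ that are freely homotopic in $M$ are sweep-equivalent. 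I would handle this by first replacing $c$ and $c'$ by sweep-equivalent loops $\bar c, \bar c'$ lying entirely in $G$ (replace each vertical edge by a strum), reducing to the claim that two loops of $G$ that are homotopic in $M$ are related by a sequence of strums through $G_+$. This last claim should follow from the combinatorial description of homotopies of loops in the dual graph of a veering triangulation given in \cite{LMT23} (or \cite{Tsathesis}): a homotopy between two such loops can be broken into elementary "disc-pushing" moves across sectors, and each such move across a diamond sector is exactly a composition of two strums (push across one triangular half, then the other), with the vertical edge serving as the intermediate state in $G_+$.

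The main obstacle I anticipate is the injectivity argument, specifically identifying the set of elementary homotopy moves that suffice to connect any two homotopic loops of $G$ and verifying that each is realized by strums. A clean way to organize this is to note that $G_+$ is a spine for a 2-complex (roughly, $G$ together with the cores of the sectors, capped appropriately), so that $\pi_1$ of that complex has $\pi_1(G)$ as a quotient with relations coming precisely from the diamond sectors, and that the Tietze/van Kampen relations for these 2-cells are implemented by strums; one must then check the 2-complex has the homotopy type of (the relevant piece of) $M$ so that "homotopic in $M$" coincides with "equal in $\pi_1$ of the 2-complex". This is where I would lean most heavily on the veering-triangulation machinery of Landry–Minsky–Taylor, and I would phrase the final step as a citation to their analysis of how closed orbits correspond to loops in the dual graph, adding only the observation that the blow-up versus blow-down bookkeeping (keeping track of $\pi_1(M)$ rather than $\pi_1(Y)$) goes through verbatim, exactly as in the proof of \Cref{thm:vbspaflowcorr} above.
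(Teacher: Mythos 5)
Your construction factors $\mathcal{F}$ through free homotopy classes in $M$, and this is where the argument breaks. \Cref{thm:vbspaflowcorr}(ii) only asserts that the \emph{collection} of homotopy classes of dual-graph loops equals the collection of homotopy classes of closed orbits of $\phi^\sharp$; it is not a bijection between homotopy classes and closed orbits, because the map from closed orbits to free homotopy classes is not injective. Indeed \Cref{prop:npfhtpyclassunique} explicitly permits distinct closed orbits of $\phi^\sharp$ to be homotopic when they lie on the same boundary torus, and this really happens: the attracting and repelling closed orbits on each component of $\partial Y^\sharp$ are distinct primitive orbits all isotopic to the degeneracy class (the paper's zeta-function bookkeeping in \Cref{subsec:zetafunctions}, where $2k$ primitive orbits of $\phi^\sharp$ share one homotopy class on $\partial\nu(\gamma)$, depends on exactly this multiplicity). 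Consequently no map defined by ``sweep class $\mapsto$ homotopy class $\mapsto$ the orbit in that class'' can be a well-defined bijection onto the closed orbits of $\phi^\sharp$, and your pivotal injectivity claim --- that two loops of $G$ which are freely homotopic in $M$ are sweep-equivalent --- is false in general: sweep-equivalence is strictly finer than free homotopy, precisely so that the several boundary orbits in one homotopy class correspond to several distinct sweep classes. The van Kampen--style argument you sketch (pushing across diamond sectors realizes all relations of $\pi_1(M)$ by strums) would, if it worked, prove this false statement, so it cannot be completed; strums only push across sectors in one combinatorial pattern and do not generate all homotopies across the 2-cells.

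The paper's proof avoids homotopy classes altogether: it cites \cite[Lemma 3.17]{LMT23}, which puts sweep-equivalence classes of loops in $G$ in bijection with dynamic (half-)planes modulo deck transformations; by \Cref{thm:vbspaflowcorr}(i) these correspond to periodic leaves of the blown-up unstable foliation, hence to closed orbits of $\phi^\sharp$, and the compatibility with homotopy classes is then a property of this bijection rather than its definition. The part of your argument that does survive is the easy reduction from $G_+$ to $G$: every loop of $G_+$ is sweep-equivalent to any of its strums, and all its strums are sweep-equivalent to each other, so the sweep classes of $G_+$ and of $G$ coincide; this is exactly the second half of the paper's proof. To repair your write-up you should replace the ``bijection on homotopy classes'' step by an appeal to the dynamic-plane correspondence (or some equivalent input identifying sweep classes with periodic unstable leaves), not by \Cref{prop:npfhtpyclassunique}.
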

\begin{proof}
The results of \cite{LMT23} imply that such a bijection $\mathcal{F}$ can be defined on the sweep-equivalence classes of loops in $G$. 
Indeed, \cite[Lemma 3.17]{LMT23} implies that the sweep-equivalence classes of loops are in bijection with dynamic (half-)planes, modulo deck transformation. By \Cref{thm:vbspaflowcorr}(i), these are in turn in bijection with periodic leaves of the blown-up unstable foliation, which are in bijection with the closed orbits of $\phi^\sharp$.

Meanwhile, each loop $c_+$ in $G_+$ is sweep-equivalent to a loop in $G$, namely any one of its strums, and all the strums of $c_+$ are sweep-equivalent. So the sweep-equivalence classes of loops in $G_+$ are the same as the sweep-equivalence classes of loops in $G$.
\end{proof}

\begin{defn}[Closed multi-orbit $\gamma_\x$]
Let $(\Sigma, \boa, \bob)$ be the Heegaard diagram of a veering branched surface. Define the closed multi-orbit associated to a Heegaard state $\x\in \mathfrak{S}(\Sigma, \boa, \bob)$ to be the closed multi-orbit corresponding to $\mu_\x$ under the bijection of \Cref{prop:looporbitcorr}.
\end{defn}

We record the following consequence of our definition.

\begin{prop} \label{prop:looporbithomotopic}
For each Heegaard state $\x$ the multi-loop $\mu_\x$ is homotopic to the multi-orbit $\gamma_\x$ in the 3-manifold $M$. \qed
\end{prop}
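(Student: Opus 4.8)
The plan is to observe that \Cref{prop:looporbithomotopic} is essentially a repackaging of the homotopy clause of \Cref{prop:looporbitcorr}, once one unwinds how $\gamma_\x$ was constructed from $\mu_\x$. First I would use \Cref{lemma:stateloopisloop} to write the multi-loop $\mu_\x = \bigcup_i e_{\x,i}$ as a disjoint union of finitely many embedded loops $c_1, \dots, c_p$ of the augmented dual graph $G_+$. By the definition of $\gamma_\x$, the associated closed multi-orbit is then $\bigcup_{j=1}^p \mathcal{F}(c_j)$, where $\mathcal{F}$ denotes the bijection of \Cref{prop:looporbitcorr} from sweep-equivalence classes of loops in $G_+$ to closed orbits of $\phi^\sharp$.

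Next I would invoke the homotopy clause of \Cref{prop:looporbitcorr}: for each $j$, the closed orbit $\mathcal{F}(c_j)$ is homotopic to $c_j$ inside $M$. Carrying out these $p$ homotopies simultaneously --- which is unobstructed, since a free homotopy of loops imposes no constraint that distinct loops remain disjoint --- produces a homotopy in $M$ from the map $\bigsqcup_{j} S^1 \to M$ representing $\mu_\x$ to the map representing $\gamma_\x$. This is precisely the asserted homotopy between $\mu_\x$ and $\gamma_\x$ in $M$.

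I do not expect any genuine obstacle here; the mathematical content is entirely contained in \Cref{prop:looporbitcorr} (hence in the work of Landry--Minsky--Taylor). The only matters requiring care are bookkeeping: checking that applying $\mathcal{F}$ componentwise to the multi-loop $\mu_\x$ is the correct reading of ``the closed multi-orbit corresponding to $\mu_\x$'', and confirming that ``homotopic in $M$'' is being interpreted at the level of maps out of a disjoint union of circles, so that assembling the per-component homotopies into a single homotopy of multi-loops is legitimate. Because $\mu_\x$ has only finitely many components and each individual homotopy can be taken supported in a neighbourhood of the corresponding loop, this assembly is immediate.
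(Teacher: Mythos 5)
Your argument is correct and is exactly the paper's (implicit) reasoning: the proposition is stated with no separate proof because it follows immediately from the definition of $\gamma_\x$ together with the homotopy clause of \Cref{prop:looporbitcorr} applied componentwise to the loops of $\mu_\x$. Your extra bookkeeping about assembling the per-component free homotopies is harmless but not needed beyond what the paper already takes as evident.
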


\subsection{$\Spinc$-grading of states} \label{subsec:vbsspincgrading}

The correspondence between Heegaard states and closed multi-orbits is not only conceptually pleasing, it also allows us to pin down the $\spinc$-grading of the generators.

We first recall Ozsv\' ath and Szab\' o's $\epsilon$ function:
Suppose we are given two states $\x=x_1+\dots+x_d$ and $\y = y_1+\dots+y_d$. 
We can take a collection of arcs $a$ in $\alpha_1 \cup \dots \cup \alpha_d \subset \Sigma$ with $\partial a=y_1 +\dots+y_d-x_1 -\dots-x_d$. 
Similarly, we can take a collection of arcs $b$ in $\beta_1\cup \dots \cup \beta_d \subset \Sigma$ with $\partial b=y_1 +\dots+y_d-x_1 -\dots-x_d$. 
Then the difference $a-b$ is a closed $1$-chain in $\Sigma$.
We define $\epsilon(\x,\y)$ to be its image in $H_1(M)$.

\begin{lemma} \label{lemma:epsilonrelspinc}
For each pair of Heegaard states $\x$ and $\y$ we have that 
 \[\s(\x)-\s(\y)= PD[\epsilon(\x,\y)]\] 
where $PD:  H_1(M) \to H^2(M, \partial M)$ denotes Poincar\' e duality.   \qed
\end{lemma}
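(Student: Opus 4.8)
\emph{Proof strategy.} The identity is the sutured-manifold analogue of Ozsv\'ath--Szab\'o's comparison of the $\spinc$-structures attached to two Heegaard states (and of Juh\'asz's adaptation to $SFH$), and the plan is to reproduce that argument. First I would set up the obstruction-theoretic bookkeeping. Since $M$ is an oriented $3$-manifold it is parallelizable, so after trivializing $TM$ a representative of a $\spinc$-structure is a map $M\to S^2$ that is constant near $\partial M$, taken up to homotopy rel a neighbourhood of $\partial M$ and rel modification inside a ball. Two such maps can be homotoped to agree on the $2$-skeleton of a CW structure on $(M,\partial M)$, and the resulting relative difference cochain represents the primary obstruction class in $H^2(M,\partial M;\pi_2 S^2)=H^2(M,\partial M)$; the higher obstruction in $H^3(M,\partial M;\pi_3 S^2)$ is exactly the indeterminacy killed by the ball relation. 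This realises $\Spinc(M,\Gamma)$ as an $H^2(M,\partial M)\cong H_1(M)$-torsor, and the difference class is additive: $\s(\x)-\s(\z)=(\s(\x)-\s(\y))+(\s(\y)-\s(\z))$.

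Since $\epsilon$ is likewise additive --- $\epsilon(\x,\z)=\epsilon(\x,\y)+\epsilon(\y,\z)$, by concatenating the defining arc systems --- it suffices to prove the identity when $\x$ and $\y$ are related by an \emph{elementary move}: their matchings differ by a single $k$-cycle $(i_1\cdots i_k)\in\mathfrak{S}_d$, they agree in all other coordinates, and the points being exchanged form a closed ``polygon'' in the diagram (for each $l$, $x_{i_l}\in\alpha_{i_l}\cap\beta_{\sigma(i_l)}$ and $y_{i_l}\in\alpha_{i_l}\cap\beta_{\sigma(i_{l+1})}$, indices mod $k$). Any two Heegaard states of a balanced diagram are joined by a finite chain of such moves, because the symmetric difference of two matchings is a disjoint union of cycles of this form, which may be resolved one at a time.

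For such a move I would choose the cancellation balls attached to the unchanged coordinates to be identical for $\x$ and $\y$, so that $V_\x$ and $V_\y$ both coincide with $\nabla f$ outside a regular neighbourhood $N$ of the critical points $P_{i_1},\dots,P_{i_k},Q_{\sigma(i_1)},\dots,Q_{\sigma(i_k)}$ and the $2k$ gradient trajectories $\gamma_{x_{i_l}},\gamma_{y_{i_l}}$. Then $\s(\x)-\s(\y)$ is the image in $H^2(M,\partial M)$ of a relative class $\theta\in H^2(N,\partial N)\cong H_1(N)$, and the crux is to identify $\theta$ from the two local pictures --- $\nabla f$ desingularised by $1$--$2$ cancellations along the $\gamma_{x_{i_l}}$ versus along the $\gamma_{y_{i_l}}$. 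A direct inspection should give $\theta=PD[L]$, where $L$ is the closed loop $\bigcup_l\bigl(\gamma_{x_{i_l}}\cup\overline{\gamma_{y_{i_l}}}\bigr)$ with its natural orientation. Finally, writing each $\gamma_{x_{i_l}}$ as an arc from $P_{i_l}$ to $x_{i_l}$ inside the compressing disc bounded by $\alpha_{i_l}$ followed by an arc from $x_{i_l}$ to $Q_{\sigma(i_l)}$ inside the compressing disc bounded by $\beta_{\sigma(i_l)}$, and similarly for the $\gamma_{y_{i_l}}$, simple connectivity of these discs shows $L$ is homologous in $M$ to $a-b$, where $a\subset\boa$ and $b\subset\bob$ are arc systems with $\partial a=\partial b=\y-\x$; that is, $[L]=\epsilon(\x,\y)$, which closes the argument.

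The step I expect to be the main obstacle is this local-model identification: pinning $\theta$ down as $PD[L]$ requires careful orientation bookkeeping, and one must check that the freedom in extending $V_\x$ and $V_\y$ across the cancellation balls does not alter the class modulo the ball relation. A minor point to dispatch along the way is that $\epsilon(\x,\y)$ does not depend on the choices of $a$ and $b$: two choices differ by a $1$-cycle in $\boa$ (respectively $\bob$), hence by a combination of whole curves $\alpha_i$ (respectively $\beta_j$), and these bound compressing discs in $M$, so they vanish in $H_1(M)$.
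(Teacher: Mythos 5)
The paper does not actually prove this lemma---it is quoted, with the terminal \qed, from the standard literature (it is \cite[Lemma 4.7]{Juh06}, the sutured version of \cite[Lemma 2.19]{OS04c})---and your proposal is precisely that standard argument: make $V_\x$ and $V_\y$ agree outside a neighbourhood of the gradient trajectories, identify the resulting difference class in $H^2(M,\partial M)$ as the Poincar\'e dual of the closed $1$-cycle $\gamma_\x-\gamma_\y$, and push that cycle through the compressing discs into $\Sigma$ to recognize it as $\epsilon(\x,\y)$. Your reduction to single-cycle moves is a harmless elaboration, and the one step you defer (the local identification $\theta=PD[L]$, including the sign, which is sensitive to the orientation conventions for $\nabla f$ and for $PD$) is exactly the step the original sources also dispatch by direct inspection, so the proposal is correct and matches the intended proof.
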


In other words, the $\epsilon$ function allows one to compute the relative $\spinc$-grading of states.

\begin{lemma} \label{lemma:statesepsilon}
If $\x$ is an Heegaard state of the Heegaard diagram associated to a veering branched surface then  
\[\epsilon(\x, \xb)=[\mu_\x]=[\gamma_\x]\ .\]
\end{lemma}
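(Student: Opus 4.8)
The second equality $[\mu_\x] = [\gamma_\x]$ in $H_1(M)$ is exactly \Cref{prop:looporbithomotopic}, so the content of the lemma is the identity $\epsilon(\x,\xb) = [\mu_\x]$. The plan is to compute $\epsilon(\x,\xb)$ directly from its definition, by exhibiting a convenient pair of arc systems on the $\alpha$- and on the $\beta$-curves, and then to recognize the resulting $1$-cycle — after applying the deformation retraction of the handlebody $U = \mathcal{N}(G)$ onto the graph $G$ — as a strum of $\mu_\x$.

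I would choose the arcs as follows. Recall from \Cref{prop:vbssector} that each sector $S_i$ is a diamond whose boundary, viewed in $G$, is a union of two oriented edge-paths running from the bottom corner $v_i$ to the top corner, each through a side corner; identify $\beta_i$ with a pushoff of $\partial S_i$ into $\Sigma_0 = \partial U$. Since $\x$ assigns to $S_i$ one of its four corners, set $b_i \subset \beta_i$ to be: the constant arc if $\x(S_i) = v_i$; the sub-arc covering the bottom side from $\x(S_i)$ down to $v_i$ if $\x(S_i)$ is a side corner; and the sub-arc covering a top side followed by the adjacent bottom side, from $\x(S_i)$ down to $v_i$, if $\x(S_i)$ is the top corner. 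By construction the reverse of $b_i$ — the path from $v_i$ up to $\x(S_i)$ — is, in every case, exactly the portion of a strum of $\mu_\x$ replacing the edge $e_{\x,i}$ of $G_+$ (for a side-corner assignment $e_{\x,i}$ already lies in $G$, and nothing is replaced). For the $\alpha$-curves, observe that the state $\x$ meets the circle $\alpha_v$ associated to a triple point $v$ in a single point, lying on the $\beta$-curve of the unique sector that $\x$ sends to $v$; likewise $\xb$ meets $\alpha_v$ in the single point on the $\beta$-curve of the unique sector whose bottom corner is $v$. Let $a_v \subset \alpha_v$ be either of the two arcs joining these two points; since $\alpha_v$ bounds a disc in $M$, this choice is immaterial for the homology class below.

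With $a = \sum_v a_v$ and $b = \sum_i b_i$, the next step is the routine verification that $\partial a = \partial b = \xb - \x$ as $0$-chains — this uses \Cref{prop:diagrambalanced} together with the fact that $\x$ picks each triple point exactly once, so that the endpoints of the $b_i$ (respectively the $a_v$) sum to $\xb - \x$. Thus $\epsilon(\x,\xb) = [a-b] \in H_1(M)$, and it remains to identify this class. The cycle $a-b$ lies in $\Sigma_0 = \partial U \subset U$. Under the deformation retraction $r\colon U \to G$, each $a_v$ collapses to the constant path at $v$, while each $b_i$ maps onto the corresponding boundary arc of the diamond $S_i$; assembling these, $r(a-b)$ equals a strum $c$ of the embedded multi-loop $\mu_\x = \bigcup_i e_{\x,i}$. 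Since $r$ is a homotopy equivalence we get $[a-b] = [c]$ in $H_1(U)$, hence in $H_1(M)$; and $[c] = [\mu_\x]$ in $H_1(M)$ because a strum is by definition a homotopy carried out inside $M$. This gives $\epsilon(\x,\xb) = [\mu_\x] = [\gamma_\x]$. (As consistency checks: for $\x = \xb$ this reads $0 = [\varnothing]$, and for $\x = \xt$ it recovers $\epsilon(\xt,\xb) = [\mu_{\xt}]$, in line with \Cref{eg:topbottomloop}.)

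The argument is essentially bookkeeping, and I do not expect a genuine obstacle; the care is in (i) pinning down the local picture of the diagram near a triple point — that each $\alpha_v$ carries exactly one point of $\x$ and one of $\xb$, on the prescribed $\beta$-curves — which rests on the diamond structure of \Cref{prop:vbssector} and on each triple point being the bottom corner of exactly one sector (by \Cref{prop:diagrambalanced} and the local model \Cref{fig:veeringcondition}); (ii) checking case by case, according to whether $\x(S_i)$ is the bottom, a side, or the top corner of $S_i$, that the reverse of $b_i$ matches the edge $e_{\x,i}$ of $G_+$ under strumming, which must be read against the definitions of $e_{\x,i}$ and of $G_+$; and (iii) verifying that $r$ sends $b_i$ to that boundary arc and $a_v$ to a point, so that no stray multiples of the nullhomologous curves $\alpha_v$ survive. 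The one thing that genuinely needs attention is lining up the conventions for $\epsilon$, for the orientations of the sides of a diamond, and for strums, so that the final sign comes out as $+[\mu_\x]$ rather than $-[\mu_\x]$.
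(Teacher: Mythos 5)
Your proposal is correct and follows essentially the same route as the paper's proof: compute $\epsilon(\x,\xb)$ from its definition, note that the $\alpha$-arcs collapse to constant paths under the retraction of $\mathcal{N}(G)$ onto $G$, and take the $\beta$-arcs to be the sub-arcs of $\partial S_i$ realizing $e_{\x,i}$ (strummed when $e_{\x,i}$ is the vertical edge of $G_+$), so that the resulting cycle represents $[\mu_\x]$. Your version is somewhat more explicit about the case analysis and the orientation bookkeeping, but there is no substantive difference in approach.
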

\begin{proof}
Since $M$ retracts to $B$, we have $H_1(M) \cong H_1(B)$
We shall think $B$ as the $2$-complex obtained from its dual graph $\Gamma$ by attaching the sectors $S_i$ so that we have an identification 
\[\frac{H_1(G)}{\text{Span}\langle [\partial S_1 ], \dots , [\partial S_1] \rangle }\simeq H_1(M) \ . \]

To compute $\epsilon(\x,\xb)$ we have to choose a collection of arcs $a$ in $\alpha_1 \cup \dots \cup \alpha_d \subset \Sigma$ with $\partial a=\xb-\x$, and a collection of arcs $b$ in $\beta_1\cup \dots \cup \beta_d \subset \Sigma$ with $\partial b=\xb-\x$, so that $\epsilon(\xb, \x)$ is the image of $a-b$ in 
\[\frac{H_1(\Sigma)}{\text{Span}\langle[\alpha_1], \dots , [\alpha_d], [\beta_1], \dots , [\beta_d] \rangle }\simeq H_1(M) \ . \]

Meanwhile, we observe that there is a natural identification induced by a retraction:
\[\frac{H_1(\Sigma)}{\text{Span}\langle[\alpha_1], \dots , [\alpha_d]\rangle} = H_1(G)\]
and that under this identification the multi-arc $a$ correspond to $d$ constant paths positioned at the vertices. Thus we have only to choose the multi-arc $b$ so that $\epsilon(\x, \xb)$ is the image of $b$ under the projection 
\[\frac{H_1(\Sigma)}{\text{Span}\langle[\alpha_1], \dots , [\alpha_d]\rangle} = H_1(G) \to \frac{H_1(G)}{\text{Span}\langle [\partial S_1 ], \dots , [\partial S_1] \rangle } \cong H_1(M) \ . \]

To choose $b$, recall that $\mu_\x$ is the union of the edges $e_{\x,i}$. 
For each $i$, if $e_{\x,i}$ is not the vertical edge of $G_+$ in $S_i$, then $e_{\x,i}$ can be considered as a directed arc in $\partial S_i = \beta_i$. We include this directed arc in $b$. 
If $e_{\x,i}$ is the vertical edge of $G_+$ in $S_i$, then we first strum $e_{\x,i}$ by homotoping it into a path in $\partial S_i \subset G$ that transverses a bottom side and a top side of $S_i$. Then we include this directed arc in $b$.
Then by construction the image of $b$ in $H_1(M)$ equals $[\mu_\x]$.

Finally, by \Cref{prop:looporbithomotopic}, we have $[\mu_\x] = [\gamma_\x]$.
\end{proof}

With the relative gradings pinned down, it suffices to compute the (absolute) $\spinc$-grading of a single state.
We will do so in \Cref{prop:topbotspincgrading} below. 
In fact we will compute the $\spinc$-grading for two states.

Recall that $\phi^\sharp$ is the blown-up flow on $M$. 
We define $\s_{\phi^\sharp}$ to be the $\spinc$-structure determined by any vector field close to being tangent to the oriented flow lines of $\phi^\sharp$. 
(The reason for this awkward definition is because $\phi^\sharp$ may not be a smooth flow.)
We define $\overline{\s_{\phi^\sharp}}$ to be the $\spinc$-structure determined by any vector field close to being tangent to the reversed flow lines of $\phi^\sharp$. 

\begin{prop} \label{prop:topbotspincgrading}
The $\text{spin}^c$-grading of the top state $\xt$ is $\s_{\phi^\sharp}$. Similarly, the $\text{spin}^c$ grading of $\xb$ is $\overline{\s_{\phi^\sharp}}$.
\end{prop}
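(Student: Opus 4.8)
The plan is to compute the absolute $\spinc$-gradings of $\xb$ and $\xt$ by choosing a \emph{preferred} compatible Morse function adapted to $B$ and identifying the resulting vector fields of \Cref{subsec:spincgrading}. Since \Cref{lemma:epsilonrelspinc} and \Cref{lemma:statesepsilon} already give $\s(\x)-\s(\xb)=PD[\gamma_\x]$ for every state $\x$, it in principle suffices to compute $\s(\xb)$ (and then handle $\xt$ as below), so the core task is to show $\s(\xb)=\overline{\s_{\phi^\sharp}}$. First I would fix a Morse function $f\colon M\to[0,3]$ compatible with $(\Sigma,\boa,\bob)$ that is \emph{subordinate to $B$}: its index-$1$ critical points lie at the triple points of $B$, its index-$2$ critical points lie in the interiors of the sectors, the prescribed unstable and stable circles cut out $\boa$ and $\bob$, and away from the critical points $\nabla f$ is transverse to the $I$-fibres of the fibred neighbourhood $\mathcal{N}(B)$. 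The block decomposition of $\mathcal{N}(B)$ (one block per triple point, one per sector), together with the diamond shape of each sector from \Cref{prop:vbssector}, makes it straightforward to write such an $f$ down locally and to see that the pieces glue. With this choice, the intersection point of $\xb$ on $\beta_i$ sits at the \emph{bottom} vertex of the diamond $S_i$, so the gradient trajectory it determines runs from the index-$1$ critical point at that vertex up to the index-$2$ critical point of $S_i$; as $i$ ranges over all sectors these trajectories are pairwise disjoint and realize a perfect matching of all the index-$1$ and index-$2$ critical points. Cancelling all of these matched pairs, the field $V_{\xb}$ becomes homotopic rel $\partial M$ to a nowhere-zero vector field $W$ that is transverse to the $I$-fibres of $\mathcal{N}(B)$ throughout $\mathcal{N}(B)$, i.e.\ ``tangent to $B$'' there.

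The heart of the proof is to pin down the homotopy class of $W$, which I would do block by block. On a sector block one checks that after the cancellation $W$ can be combed tangent to $S_i$ and flows coherently between the bottom and top vertices of the diamond; on a triple-point block the two incoming and two outgoing branch edges, together with the compressing disc of the $\alpha$-curve there, determine how $W$ threads through, and the veering condition \Cref{defn:vbs}(3) (compare \Cref{fig:veeringcondition}) is exactly what makes the local pictures assemble into a globally defined flow-like field tangent to $B$. Over the complementary cusped torus shells the field is then essentially forced by its boundary values on $\partial\mathcal{N}(B)$ together with the fixed germ $X_0$ near $\partial M$, modulo care with the cutting along the annuli $A_i$ of \Cref{subsec:vbstoheegaarddiagram}. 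A final sign bookkeeping — comparing the combed direction of $W$ along $\brloc(B)$ with the edge orientations of the dual graph $G$, which record the direction of $\phi^\sharp$ by \Cref{thm:vbspaflowcorr}(ii), and comparing $X_0$ near $\partial M$ with the germ of a vector field directing $\phi^\sharp$ — then identifies $W$, hence $\s(\xb)$, as the class of a field close to being tangent to the \emph{reversed} flow lines of $\phi^\sharp$, which is $\overline{\s_{\phi^\sharp}}$ by definition.

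For the top state one can run the identical argument with the two vertices of each diamond interchanged: the trajectories determined by $\xt$ match each top-vertex index-$1$ critical point to the index-$2$ critical point of its sector, the combed field flows the other way along $\brloc(B)$, and one obtains $\s(\xt)=\s_{\phi^\sharp}$. One can also deduce this from $\s(\xb)=\overline{\s_{\phi^\sharp}}$ more cheaply: by \Cref{lemma:epsilonrelspinc} and \Cref{lemma:statesepsilon}, $\s(\xt)=\overline{\s_{\phi^\sharp}}+PD[\gamma_{\xt}]$, and since $\gamma_{\xt}$ traverses exactly the vertical edges of $G_+$ (\Cref{eg:topbottomloop}) one is left with a direct computation, on the sectors of $B$, that $PD[\gamma_{\xt}]$ realizes the difference $\s_{\phi^\sharp}-\overline{\s_{\phi^\sharp}}$ (the relative Euler class of the plane field of $\phi^\sharp$).

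I expect the main obstacle to be the block-by-block identification of $W$ in the second paragraph: making precise, through the veering combinatorics at each triple point — in particular which pair of the four branch edges the compressing disc separates — that cancelling the matched pairs along the $\xb$-trajectories is globally consistent with combing $W$ tangent to $B$, and then getting every orientation and coorientation convention right so that the answer comes out to be the reversed flow for $\xb$ (and the forward flow for $\xt$) rather than the other way around.
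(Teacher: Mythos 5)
Your proposal follows essentially the same route as the paper: choose a Morse function adapted to $B$ with index-one critical points at the triple points and index-two critical points at the centers of the sectors, cancel the critical points along the trajectories determined by $\xb$ (resp.\ $\xt$), and then rotate/comb the resulting gradient field so that it is tangent to the sectors and negatively (resp.\ positively) transverse to the branch locus, which pins down its homotopy class as $\overline{\s_{\phi^\sharp}}$ (resp.\ $\s_{\phi^\sharp}$). The paper streamlines the ``block by block'' identification you anticipate as the main obstacle by first observing that being tangent to the sectors and positively transverse to $G$ already characterizes the homotopy class of $X_{\phi^\sharp}$, but this is a packaging difference, not a different argument.
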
 
\begin{proof}
By construction of $B$, we can fix a vector field $X_{\phi^\sharp}$ close to being tangent to the oriented flow lines of $\phi^\sharp$ that is tangent to the sectors of $B$ and positively transverse to its branch locus $G$. 
Observe that these properties characterize the homotopy class of $X_{\phi^\sharp}$, that is, any vector field $V$ with these two properties is homotopic to $X_{\phi^\sharp}$.

Choose a Morse function $f:M\to \R$ adapted to the Heegaard diagram as explained in \Cref{subsec:spincgrading}. 
Note that we can arrange things so that the Morse flow is tangent to the sectors (away from the boundary) and that there is one index-2 critical points $P_i$ at the center of each sector $S_i$, and one index-1 critical point $Q_i$ at the center of each compressing disk $D_i$ of the $\alpha$-curve corresponding to the vertex $v_i$.
A local model for the Morse flow is shown as the black arrows in \Cref{fig:topstatespinc}.

Each point $x_i\in \beta_i \cap \alpha_{\sigma(i)}$ of the state $\xt$ gives rise to a flow line $\gamma_{i}$ connecting the critical point $P_{\sigma(i)}$ to the critical point $Q_i$. After removing neighborhoods of $\gamma_{i}$ from $M$ we can rotate $V=\nabla f$ as suggested by the green arrows in \Cref{fig:topstatespinc} to make $V$ positively transverse to the singular locus of $B$, and hence homotopic to  $X_{\phi^\sharp}$.
This shows that the $\spinc$-grading of $\xt$ is $\s_{\phi^\sharp}$.

A similar argument, now rotating $\nabla f$ downwards away from neighborhoods of $\gamma_{i}$ shows that the $\spinc$-grading of $\xb$ is $\overline{\s_{\phi^\sharp}}$. 
\end{proof}

\begin{figure}
    \centering
    \selectfont\fontsize{8pt}{8pt}
\begingroup%
  \makeatletter%
  \providecommand\color[2][]{%
    \errmessage{(Inkscape) Color is used for the text in Inkscape, but the package 'color.sty' is not loaded}%
    \renewcommand\color[2][]{}%
  }%
  \providecommand\transparent[1]{%
    \errmessage{(Inkscape) Transparency is used (non-zero) for the text in Inkscape, but the package 'transparent.sty' is not loaded}%
    \renewcommand\transparent[1]{}%
  }%
  \providecommand\rotatebox[2]{#2}%
  \newcommand*\fsize{\dimexpr\f@size pt\relax}%
  \newcommand*\lineheight[1]{\fontsize{\fsize}{#1\fsize}\selectfont}%
  \ifx\svgwidth\undefined%
    \setlength{\unitlength}{181.67941597bp}%
    \ifx\svgscale\undefined%
      \relax%
    \else%
      \setlength{\unitlength}{\unitlength * \real{\svgscale}}%
    \fi%
  \else%
    \setlength{\unitlength}{\svgwidth}%
  \fi%
  \global\let\svgwidth\undefined%
  \global\let\svgscale\undefined%
  \makeatother%
  \begin{picture}(1,1.1478003)%
    \lineheight{1}%
    \setlength\tabcolsep{0pt}%
    \put(0,0){\includegraphics[width=\unitlength,page=1]{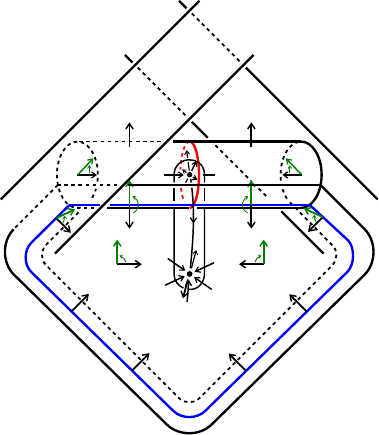}}%
    \put(0.3941924,0.41704351){\color[rgb]{0,0,0}\makebox(0,0)[lt]{\lineheight{1.25}\smash{\begin{tabular}[t]{l}$Q_i$\end{tabular}}}}%
    \put(0.41082775,0.71288232){\color[rgb]{0,0,0}\makebox(0,0)[lt]{\lineheight{1.25}\smash{\begin{tabular}[t]{l}$P_i$\end{tabular}}}}%
  \end{picture}%
\endgroup%

    \caption{Reasoning that the $\spinc$-grading of $\xt$ is $\s_\phi$.}
    \label{fig:topstatespinc}
\end{figure}

\Cref{thm:introspinc} can be deduced from combining these equations.

\begin{proof}[Proof of \Cref{thm:introspinc}]
We compute
\begin{align*}
\s(\x) - \overline{\s_{\phi}} &= \s(\x) - \s(\xb) & \text{by \Cref{prop:topbotspincgrading}} \\
&= PD[\epsilon(\x,\xb)] & \text{by \Cref{lemma:epsilonrelspinc}} \\
&= PD[\mu_\x] & \text{by \Cref{lemma:statesepsilon}.}
\end{align*}
\end{proof}

\Cref{thm:introtopbotgenerator} follows from combining the results of this section.

\begin{proof}[Proof of \Cref{thm:introtopbotgenerator}]
In \Cref{eg:topbottomloop} we showed that $\mu_{\xb} = \varnothing$. This implies that $\gamma_{\xb} = \varnothing$.
In \Cref{prop:topbotspincgrading} we showed that $\s(\xb) = \overline{\s_{\phi^\sharp}}$.
For any other state $\y$, combining \Cref{lemma:epsilonrelspinc} and \Cref{lemma:statesepsilon}, we have $\s(\y) - \s(\xb) = [\gamma_\y]$, where $\gamma_\y \neq \varnothing$ since $\y \neq \xb$.

In \Cref{prop:topbotspincgrading} we showed that $\s(\xt) = \s_{\phi^\sharp}$.
To show the statement about `top $\spinc$-grading', we define a multi-loop $\overset{\vee}{\mu}_\y$ and a multi-orbit $\overset{\vee}{\gamma}_\y$ by replacing $\xb$ by $\xt$ in the definitions of $\mu_\y$ and $\gamma_\y$.
More precisely, suppose we are given a state $\y$. 
For each sector $S_i$, let $\overset{\vee}{e}_{\y,i}$ be the path in the augmented dual graph $G_+$ that connects $\y(S_i)$ to the top corner of $S_i$. 
The same argument as in \Cref{lemma:stateloopisloop} shows that $\overset{\vee}{\mu}_\y := \bigcup_i \overset{\vee}{e}_{\y,i}$ is a multi-loop of $G_+$.
Note that $\overset{\vee}{\mu}_\y$ may not be embedded.
Nevertheless, we can define $\overset{\vee}{\gamma}_\y$ to be the closed multi-orbit corresponding to $\overset{\vee}{\mu}_\y$ under the bijection of \Cref{prop:looporbitcorr}. 

The same argument as in \Cref{lemma:statesepsilon} shows that $\epsilon(\xt,\y) = [\overset{\vee}{\mu}_\y]=[\overset{\vee}{\gamma}_\y]$ in $H_1(Y^\sharp)$. Thus by \Cref{lemma:epsilonrelspinc}, we have $\s(\xt) - \s(\y) = [\overset{\vee}{\gamma}_\y]$.
\end{proof}

\section{Combinatorics of the chain complex} \label{sec:chaincomplexcombin}

\subsection{Color of triple points} \label{sec:vbscolor}

Let $B$ be a veering branched surface on a compact oriented 3-manifold with torus boundary $M$.
As in the previous sections we shall denote by $G$ the dual graph, i.e. the $1$-skeleton of $B$, and $G$ the augmented dual graph (see \Cref{defn:augmenteddualgraph}).

\begin{defn}[Red/blue triple points] \label{defn:vbscolor}
A triple point $v$ of $B$ is called \textit{blue} or \textit{red} depending on whether in a neighborhood of $v$ the branched surface looks as in \Cref{fig:vbscolor} left or right. 
\end{defn}

\begin{figure}
    \centering
    \selectfont\fontsize{10pt}{10pt}
\begingroup%
  \makeatletter%
  \providecommand\color[2][]{%
    \errmessage{(Inkscape) Color is used for the text in Inkscape, but the package 'color.sty' is not loaded}%
    \renewcommand\color[2][]{}%
  }%
  \providecommand\transparent[1]{%
    \errmessage{(Inkscape) Transparency is used (non-zero) for the text in Inkscape, but the package 'transparent.sty' is not loaded}%
    \renewcommand\transparent[1]{}%
  }%
  \providecommand\rotatebox[2]{#2}%
  \newcommand*\fsize{\dimexpr\f@size pt\relax}%
  \newcommand*\lineheight[1]{\fontsize{\fsize}{#1\fsize}\selectfont}%
  \ifx\svgwidth\undefined%
    \setlength{\unitlength}{175.35233986bp}%
    \ifx\svgscale\undefined%
      \relax%
    \else%
      \setlength{\unitlength}{\unitlength * \real{\svgscale}}%
    \fi%
  \else%
    \setlength{\unitlength}{\svgwidth}%
  \fi%
  \global\let\svgwidth\undefined%
  \global\let\svgscale\undefined%
  \makeatother%
  \begin{picture}(1,0.5367177)%
    \lineheight{1}%
    \setlength\tabcolsep{0pt}%
    \put(0,0){\includegraphics[width=\unitlength,page=1]{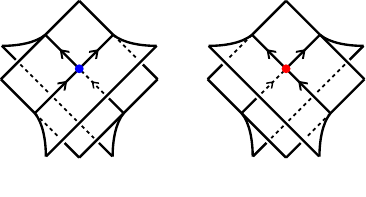}}%
    \put(0.16083516,0.00086324){\color[rgb]{0,0,1}\makebox(0,0)[lt]{\lineheight{1.25}\smash{\begin{tabular}[t]{l}bLue\end{tabular}}}}%
    \put(0.76408306,0.00086324){\color[rgb]{1,0,0}\makebox(0,0)[lt]{\lineheight{1.25}\smash{\begin{tabular}[t]{l}Red\end{tabular}}}}%
  \end{picture}%
\endgroup%

    \caption{The color of a triple point is determined by the local form of the branched surface. Here the orientation on $M$ is used to distinguish between the two local pictures.}
    \label{fig:vbscolor}
\end{figure}

Note that in \Cref{fig:vbscolor} the two local pictures are distinguished by the orientation of the 3-manifold $M$. Reversing the orientation of $M$ has the effect of switching the coloring of triple points. 

A handy mnemonic for \Cref{defn:vbscolor} is to point one's thumb downwards at $v$. If the sectors spiral out in the \textbf{L}eft-handed manner, $v$ is b\textbf{L}ue. If they spiral out in the \textbf{R}ight-handed manner, $v$ is \textbf{R}ed.

\begin{defn}[Red/blue sectors, toggle, fans]
A sector is \textit{blue/red} if its top corner is blue/red, respectively.
We say that a sector is \emph{toggle} if the colors of its top and bottom corners are different. We say that it is \emph{fan} if the colors of its top and bottom corners are the same.
\end{defn}

We shall make repeated use of the following lemma.

\begin{lemma} \label{prop:vbstogglefan}
Let $S$ be a blue/red sector of a veering branched surface $B$. The top and side corners of $S$ are blue/red, respectively. Each bottom side of $S$ is an edge of $G$. Each top side of $s$ is the union of $\delta \geq 1$ edges of $G$.

Suppose a top side of $S$ is divided into edges $e_1,\dots,e_\delta$, listed from bottom to top. Let $S_i$ be the sector that has $e_i$ as a bottom side. If $\delta=1$, then $S_1$ is blue/red fan, respectively. If $\delta \geq 2$, then $S_1$ and $S_\delta$ are toggle while $S_i$ for $i=2,\dots,\delta-1$ are red/blue fan, respectively.
\end{lemma}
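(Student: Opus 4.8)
The plan is to reduce the lemma to a complete local description of $B$ near a single triple point of each colour, and then to obtain the global statement by a short propagation argument along a top side of $S$. Recall that the boundary of the diamond $S$ is cut by its two side corners into two top sides and two bottom sides, that the top sides are oriented towards the top corner and the bottom sides away from the bottom corner, and that $S$ is blue (resp.\ red) precisely when its top corner is. By the remark following \Cref{defn:vbscolor}, reversing the orientation of $M$ keeps $B$ veering but interchanges the two colours while preserving the toggle/fan dichotomy, so it is enough to prove the statement for a blue sector $S$.

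First I would work out the local model of $B$ around a blue triple point, refining \Cref{fig:vbscolor} (left) using the orientation/maw condition \Cref{defn:vbs}(3) (see \Cref{fig:veeringcondition}) and the fact that every sector is a diamond (\Cref{prop:vbssector}). From this picture I would extract three local facts. \emph{(a) Side corners inherit the colour of the top corner:} for any diamond $S'$, both side corners of $S'$ are triple points of the same colour as the top corner of $S'$, because the colour of a triple point is encoded by the oriented cyclic arrangement of the four branch edges around it, and a side corner of $S'$ presents to $S'$ the same arrangement that the top corner does. \emph{(b) Bottom sides are single edges:} no triple point lies in the interior of a bottom side of a diamond, while a top side may run through several triple points (this is exactly the phenomenon described after \Cref{defn:corners}); one reads this off from the local picture at a bottom corner together with the maw coorientations along the two kinds of side. \emph{(c) Colour flips across a top side:} if a triple point $v'$ lies in the interior of a top side of a diamond $S'$, then $v'$ has the colour opposite to that of $S'$. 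Together with the fact, already established in \Cref{subsec:tbstates}, that each triple point is the top corner of exactly one sector and the bottom corner of exactly one sector, these suffice.

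The rest is bookkeeping. Fact (a) applied to $S$ gives the first sentence of the lemma, and Fact (b) gives the statements about the sides. Fix a top side of $S$, subdivided from bottom to top into edges $e_1,\dots,e_\delta$ with successive endpoints $v_0,\dots,v_\delta$, where $v_0$ is a side corner of $S$ and $v_\delta$ is the top corner of $S$. Orienting the top side towards $v_\delta$, the edge $e_i$ runs from $v_{i-1}$ to $v_i$; since $e_i$ is a bottom side of the diamond $S_i$ (and, by the local model, the unique diamond having $e_i$ as a bottom side, so $S_i$ is well defined) and bottom sides point away from the bottom corner, $S_i$ has bottom corner $v_{i-1}$ and side corner $v_i$. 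By Fact (a) applied to $S_i$, the top corner of $S_i$ has the colour of $v_i$, so $S_i$ is a fan if and only if $v_{i-1}$ and $v_i$ have the same colour. Now $v_0$ and $v_\delta$ are blue by Fact (a) and the definition of a blue sector, while by Fact (c) applied to $S$ the interior vertices $v_1,\dots,v_{\delta-1}$ are all red. Hence if $\delta=1$ then $S_1$ is a fan with blue top corner, i.e.\ a blue fan; and if $\delta\ge 2$ then $S_1$ (bottom corner $v_0$ blue, top corner coloured by $v_1$ red) and $S_\delta$ (bottom corner $v_{\delta-1}$ red, top corner coloured by $v_\delta$ blue) are toggle, whereas $S_i$ for $2\le i\le\delta-1$ has $v_{i-1},v_i$ both red and is therefore a red fan. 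This is precisely the assertion for blue $S$.

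I expect the main obstacle to be the verification of Facts (a), (b), and especially (c): it requires drawing the local model of $B$ at blue and red triple points and tracking simultaneously the four oriented branch edges, the maw coorientations, the three local sectors with their corner types, and the handedness defining the colour. This is the same style of local analysis as in \cite{Tsa23} (whose branch-loop orientation convention is opposite to ours, cf.\ the discussion in the proof of \Cref{prop:vbssector}); concretely, the cleanest route is to enumerate, for each coloured triple point, the three incident sectors together with the corner of each that sits there and the colours of their top corners, from which (a)--(c) follow at once.
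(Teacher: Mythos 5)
Your overall architecture is reasonable and the final bookkeeping is correct: granting your facts (a), (b), (c), the propagation along a top side $v_0,\dots,v_\delta$ does yield the lemma, and your justification of (b) via the maw coorientation (into $S$ along bottom sides, out of $S$ along top sides, so an interior triple point of a bottom side would force $S$ to be a flap sector there, with the maw pointing the wrong way) is sound. Note also that the paper does not prove this lemma at all but cites \cite[Observation 2.6]{FG13} and \cite[Propositions 2.8 and 2.11]{Tsa23b}, so you are attempting a genuinely independent argument.

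The gap is that facts (a) and (c), which carry the entire colour content of the lemma, are not local statements and are not proven. The colour of a triple point (\Cref{defn:vbscolor}) is an invariant of the germ of $B$ at that single point, read off from the handedness of the spiralling of the flaps. Fact (a) compares the colours of \emph{two distinct} triple points (a side corner of $S'$ versus its top corner), and fact (c) compares the colour of an interior vertex of a top side of $S'$ with the colour of the top corner of $S'$ --- in both cases the second point may be far away along $\partial S'$. Your justification of (a) is essentially a restatement: at the top corner, $S'$ sits in the quadrant bounded by the two \emph{incoming} edge-germs, while at a side corner it sits in a quadrant bounded by one incoming and one outgoing edge-germ, so the two corners do \emph{not} ``present the same arrangement'' to $S'$, and in any case no arrangement seen from within one sector determines the colour of the triple point, which depends on the transverse data of how the flaps curve. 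Fact (c) receives no justification, and you acknowledge it is the main obstacle. Your proposed remedy --- enumerating, at each coloured triple point, the incident sectors ``together with \dots the colours of their top corners'' --- cannot be executed locally, since those top corners are other triple points whose colours are not visible in the local picture. What is missing is an edge-by-edge statement: for each edge $e\colon v \to w$ of $G$, a relation between the colours of $v$ and $w$ expressed in terms of the smooth structure along $e$ (which of the two sectors having $e$ in a top side is the flap along $e$, and to which side it curves), which can then be propagated around $\partial S$. This identification is delicate --- it is exactly what distinguishes $\delta=1$ from $\delta\geq 2$ in the lemma --- and it is where the actual work in \cite{Tsa23b} lies. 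As it stands, your argument assumes the first assertion of the lemma (fact (a)) and the substance of the remaining assertions (fact (c)) rather than proving them.
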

\begin{proof}
This statement first appeared in \cite[Observation 2.6]{FG13} in the language of veering triangulations. In \cite[Propositions 2.8 and 2.11]{Tsa23b} the statement was translated in the language of veering branched surfaces we are using in our set up.
\end{proof}

See \Cref{fig:vbssectors} for an example illustrating \Cref{prop:vbstogglefan}.

\begin{figure}
    \centering
    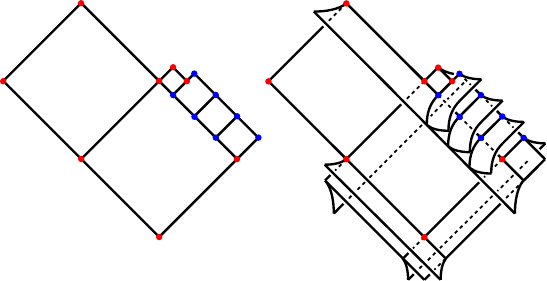
    \caption{An example of the local configuration of sectors in a veering branched surface.}
    \label{fig:vbssectors}
\end{figure}

\begin{prop} \label{prop:vbsbranchcurvecolor}
Every branch loop of a veering branched surface must meet triple points of both colors.
\end{prop}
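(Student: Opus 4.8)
The plan is to argue by contradiction. Suppose some branch loop $\ell$ meets triple points of only one colour; without loss of generality every triple point of $\ell$ is blue, the all-red case following by reversing the orientation of $M$, which interchanges the two colours while changing neither the branch loops nor their veering orientations. I will use repeatedly the diamond shape of sectors (\Cref{prop:vbssector}): the sides of a sector are oriented from its bottom corner toward its top corner, so every edge in a top side points ``upward'', from a side corner toward the top corner. I will also use the colour bookkeeping of \Cref{prop:vbstogglefan}, together with the elementary fact that an edge of the branch locus is the unique bottom side of exactly one sector and lies in a top side of exactly two sectors.

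\emph{First} I observe that for each edge $e$ of $\ell$ the sector $b(e)$ having $e$ as its bottom side is a blue \emph{fan}. Indeed $e$ joins the bottom corner of $b(e)$ to a side corner of $b(e)$; both endpoints of $e$ lie on $\ell$, hence are blue, and since the side corners of a sector carry its colour (\Cref{prop:vbstogglefan}) the sector $b(e)$ is blue; its top and bottom corners then having the same colour, $b(e)$ is a fan.

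\emph{Next}, the heart of the argument: I claim some red sector has $e$ in one of its top sides. One of the two sectors having $e$ in a top side is the sector $S(w)$ whose top corner is the terminal vertex $w$ of $e$ — the two incoming edges at $w$ are precisely the final edges of the two top sides of $S(w)$, so $e$ is the last edge of such a top side, and $S(w)$ is blue because $w$ is. If this top side had length $\ge 2$, then \Cref{prop:vbstogglefan} would force the sector having $e$ as a bottom side, namely $b(e)$, to be a toggle, contradicting the previous paragraph; hence $e$ is the whole top side, running from a side corner of $S(w)$ to its top corner $w$. Let $X$ be the other sector with $e$ in a top side. If $e$ were the last edge of $X$'s top side then $w$ would be the top corner of $X$, forcing $X=S(w)$; if $e$ were the first edge, the initial vertex of $e$ would be a side corner of $X$, so $X$ would be blue, and \Cref{prop:vbstogglefan} would again make $b(e)$ a toggle; and if $e$ were an interior edge of $X$'s top side with $X$ blue, then both endpoints of $e$ would be interior triple points of that top side, which \Cref{prop:vbstogglefan} forces to be red, contradicting that $\ell$ is all blue. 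The only surviving possibility is that $X$ is red, as claimed.

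\emph{Finally}, since a diamond has no corner in the interior of a top side, the edges making up the top side of $X$ that contains $e$ are consecutive edges of a single branch loop; as this top side contains the edge $e\in\ell$, it is a sub-path of $\ell$. But its two endpoints are a side corner and the top corner of the red sector $X$, hence are red by \Cref{prop:vbstogglefan}, while lying on $\ell$ — contradicting that $\ell$ meets only blue triple points. I expect the middle step to be the delicate one: it is where the shape of the proof is least dictated in advance, and it works only because the first step pins $b(e)$ down as a fan rather than a toggle, which is exactly what rules out every configuration save the one we want; the remaining bookkeeping of which corner of a sector each endpoint of an edge occupies is routine from the diamond picture.
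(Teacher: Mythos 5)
Your argument is correct, and it is genuinely different from the paper's "proof", which consists of a single citation to an external reference ([Tsa23b, Proposition 2.12]); you have in effect supplied a self-contained derivation from exactly the two facts the paper does record, namely the diamond structure of sectors (\Cref{prop:vbssector}) and the toggle/fan bookkeeping of \Cref{prop:vbstogglefan}. The skeleton is sound: for an all-blue branch loop $\ell$ and any edge $e$ of $\ell$, the sector $b(e)$ with bottom side $e$ is pinned down as a blue fan because the two endpoints of $e$ are its bottom corner and a side corner; then among the two remaining occurrences of $e$ in top sides, the one in $S(w)$ forces $\delta=1$ there, and the other occurrence survives your case analysis only as an interior edge of a top side of a red sector $X$; finally that whole top side is a subpath of $\ell$ ending at the red top corner of $X$, which is the contradiction. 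Two small remarks. First, in the interior-edge case you invoke "interior triple points of a top side are red", which is not the literal statement of \Cref{prop:vbstogglefan}; it does follow (the endpoints of $e_i$ for $2\le i\le \delta-1$ are the bottom corner and a side corner of the red fan $S_i$), but the shorter route is to note that \Cref{prop:vbstogglefan} makes $b(e)=S_i$ a \emph{red} fan, directly contradicting your first step. Second, the case analysis is implicitly about the three \emph{local} occurrences of $e$ in sector boundaries rather than about three a priori distinct global sectors; this is harmless because the exclusion "$e$ is the last edge of a top side $\Rightarrow$ $X=S(w)$" really excludes that occurrence (the last edges of the two top sides of $S(w)$ are the two distinct incoming edges at $w$), but it is worth phrasing that way to avoid worrying about a global sector meeting $e$ more than once.
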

\begin{proof}
This is the content of \cite[Proposition 2.12]{Tsa23b}.
\end{proof}

\subsection{Structure of the Heegaard diagram: local picture}

In this section we relate the structure of the Heegaard diagram associated to a veering branched surface $B$ to the combinatorics of the veering branched surface itself.

Let $(\Sigma, \alpha, \beta)$ be the Heegaard diagram associated to a veering branched surface $B$. Recall that the Heegaard surface $\Sigma$ is obtained from the boundary of a regular neighbourhood $\Sigma_0= \partial \mathcal{N}(G)$ of the 1-skeleton $G=B^{(1)}$ by cutting along a multicurve $\mathbf{c}=\{ c_1, \dots, c_k\}$ disjoint from the $\alpha$- and the $\beta$-curves. We shall call these the \emph{cutting curves}, and observe that being parallel to the branch loops of $B$, they have a natural orientation. 

Let $v$ be a triple point of $B$, i.e. a vertex of $G$. In a small neighborhood $N$ of $v$ the surface $\Sigma_0$ intersects $N$ in a 4-holed sphere as illustrated in \Cref{fig:vbstoheegaardtriplepoint}. Furthermore, among all the $\alpha$-curves, only the one associated to $v$ meets $N$. We denote this $\alpha$-curve by $\alpha_v$, and observe that it cuts $\Sigma \cap N$ into two pairs of pants $P_1$ and $P_2$.

We will think of $P_1$ and $P_2$ as one-holed cylinders, where the hole lies along the $\alpha$-curve $\alpha_v$ corresponding to the triple point. Locally there are exactly two cutting curves intersecting  $\Sigma \cap N$, one passing through $P_1$, and the other passing through $P_2$. Note that since the cutting curves are oriented we have a way to distinguish the two boundary components of the cylinder $P_1$, and $P_2$. In the pictures we shall use the convention that \emph{lower boundary} is where the cutting arc enters $P_i$, while the \emph{upper boundary} is where the cutting arc exits $P_i$. 

There are locally six $\beta$-curves that meet $N$, corresponding to the six local sectors that meet $v$. (Here the word `local' is crucial: these are only arcs of $\beta$-curves that may reconnect outside of the neighbourhood $N$.)
Among the six $\beta$-curves, two of them do not meet $\alpha_v$. These two $\beta$-curves correspond to the two sectors not having $v$ as a corner (in the local picture of the triple point these are the sectors represented by the two `flaps' coming in). Each cylinder $P_i$ contains exactly one of these two $\beta$-curves. This $\beta$-curve runs between the upper and lower boundary components of the cylinder. 

Among the remaining four $\beta$-curves, there is one that corresponds to the local sector that has $v$ lying on its bottom corner. This $\beta$-curve meets both cylinders $P_1$ and $P_2$ in an arc that runs between the hole and the upper boundary component.
Similarly, the $\beta$-curve that corresponds to the local sector that has $v$ lying on its top corner meets both cylinders $P_1$ and $P_2$ in an arc that runs between the hole and the lower boundary component.

Each of the remaining two $\beta$-curves corresponds to a local sector $S$ that has $v$ lying on a side corner.
Each one of these $\beta$-curves meet both cylinders $P_1$ and $P_2$ giving rise to two arcs of $\beta$-curves on each one of them.
The boundary of $S$ consists of two edges of $G$, one of them pointing towards $v$ and the other away from $v$.
Without loss of generality suppose $P_1$ contains the edge pointing towards $v$.
Then the $\beta$-arc determined by $S$ on $P_1$ runs from the lower boundary component to the hole, while the $\beta$-arc determined by $S$ on $P_2$ runs from the hole to the upper boundary component.

Summarizing,  each cylinder $P_i$ intersects the $\beta$-curves in five arcs. 
Four of these arcs meet the hole carved by $\alpha_v$.
Furthermore, since we distinguished the upper and lower boundary components of $P_i$, we can refer to these four arcs as the \emph{upper left}, \emph{upper right}, \emph{lower left}, \emph{lower right} arcs unambiguously. The remaining $\beta$-arc runs between the upper and lower boundary components of $P_i$. We refer to this as the \emph{lone arc}.

There is now a difference in combinatorics: On the cylinder $P_i$ the lone arc can lie on the left or on the right of the cutting arc. This is where the combinatorics of \Cref{sec:vbscolor} plays its role.

\begin{lemma}\label{lemma:localmodel}
If $v$ is blue triple point, then the cutting arc lies to the right of the lone $\beta$-arc.
If $v$ is red triple point, then the cutting arc lies to the left of the lone $\beta$-arc.
\end{lemma}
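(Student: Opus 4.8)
The plan is to verify \Cref{lemma:localmodel} by a direct inspection of the local model near $v$, and to deduce the red case from the blue case by reversing the orientation of $M$. First I would make the local picture completely explicit. Fix the model of a blue triple point as in \Cref{fig:vbscolor} left, retaining all of its structure: the two branch loops of $B$ crossing at $v$, the orientations they carry by the veering condition \Cref{defn:vbs}(3), the maw coorientations on the four incident edges of $G$, and the six local sectors. Onto this I would superimpose the four-holed sphere $\Sigma_0 \cap N$, the curve $\alpha_v$ drawn as in \Cref{fig:vbstoheegaardtriplepoint}, which splits $\Sigma_0 \cap N$ into the two one-holed cylinders $P_1$ and $P_2$, the five $\beta$-arcs lying on each $P_i$, and the two cutting arcs --- one in each $P_i$ --- which run parallel to the two branch loops and inherit orientations from them. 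All of this except the relative position of the lone $\beta$-arc and the cutting arc has already been recorded in the discussion preceding the lemma; the content of the proof is precisely the claim about that relative position.

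Next I would carry out the verification in the blue case. In each cylinder $P_i$ the lone $\beta$-arc comes from one of the two ``flap'' sectors, that is, a sector of $B$ meeting $v$ only along an edge rather than at a corner, and it runs between the two ends of the cylinder, as does the cutting arc. Whether that flap attaches to $G$ on the left or the right of the oriented edge along which the nearby cutting curve runs is exactly what is encoded by the two local models of \Cref{fig:vbscolor}: the left-handed spiralling of the sectors around a blue $v$ places the flap, hence the lone $\beta$-arc, to the left of the correspondingly oriented cutting arc, equivalently the cutting arc lies to its right. I would establish this simply by reading it off the explicit model --- in practice, by drawing the analogue of \Cref{fig:vbstoheegaardtriplepoint} with all five $\beta$-arcs and the oriented cutting arc included, for both $P_1$ and $P_2$ --- and checking that in both cylinders the conclusion is the same. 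For a red triple point I would then invoke the symmetry noted after \Cref{defn:vbscolor}: reversing the orientation of $M$ turns $v$ into a blue triple point and reverses the orientation of $\Sigma_0 = \partial\mathcal{N}(G)$, hence interchanges ``left'' and ``right'' for arcs on it, while leaving the curves $\alpha_v$, the $\beta$-curves, and the oriented cutting curves unchanged; applying the blue case in the reversed manifold gives the red case. (Equivalently one may just repeat the inspection using the right-hand model of \Cref{fig:vbscolor}.)

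The only real obstacle here is organizational rather than conceptual: one has to set up the local picture carefully enough that the position of the lone arc can be read off without ambiguity, keeping track of the four arcs that meet the hole carved by $\alpha_v$, the orientations of the cutting arcs, and the two cylinders simultaneously. Once the model is pinned down the verification is immediate, and the fact that the answer depends only on the colour of $v$ --- and not on which of $P_1$, $P_2$ one examines --- provides a useful internal consistency check on the drawing.
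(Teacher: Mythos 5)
Your proposal is correct and takes essentially the same approach as the paper, whose entire proof is to read the claim off the local picture (the first two rows of \Cref{fig:heegaardcombinalphaglue}); your write-up just makes that inspection explicit. The extra symmetry argument deducing the red case from the blue one by reversing the orientation of $M$ is a valid (and tidy) shortcut, but not needed beyond the direct inspection the paper performs.
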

\begin{proof}
This is evident from the local picture. See \Cref{fig:heegaardcombinalphaglue} first two rows.
\end{proof}

\begin{figure}
    \centering
    \selectfont\fontsize{12pt}{12pt}
    \resizebox{!}{10.5cm}{
\begingroup%
  \makeatletter%
  \providecommand\color[2][]{%
    \errmessage{(Inkscape) Color is used for the text in Inkscape, but the package 'color.sty' is not loaded}%
    \renewcommand\color[2][]{}%
  }%
  \providecommand\transparent[1]{%
    \errmessage{(Inkscape) Transparency is used (non-zero) for the text in Inkscape, but the package 'transparent.sty' is not loaded}%
    \renewcommand\transparent[1]{}%
  }%
  \providecommand\rotatebox[2]{#2}%
  \newcommand*\fsize{\dimexpr\f@size pt\relax}%
  \newcommand*\lineheight[1]{\fontsize{\fsize}{#1\fsize}\selectfont}%
  \ifx\svgwidth\undefined%
    \setlength{\unitlength}{291.24206543bp}%
    \ifx\svgscale\undefined%
      \relax%
    \else%
      \setlength{\unitlength}{\unitlength * \real{\svgscale}}%
    \fi%
  \else%
    \setlength{\unitlength}{\svgwidth}%
  \fi%
  \global\let\svgwidth\undefined%
  \global\let\svgscale\undefined%
  \makeatother%
  \begin{picture}(1,0.89589967)%
    \lineheight{1}%
    \setlength\tabcolsep{0pt}%
    \put(0,0){\includegraphics[width=\unitlength,page=1]{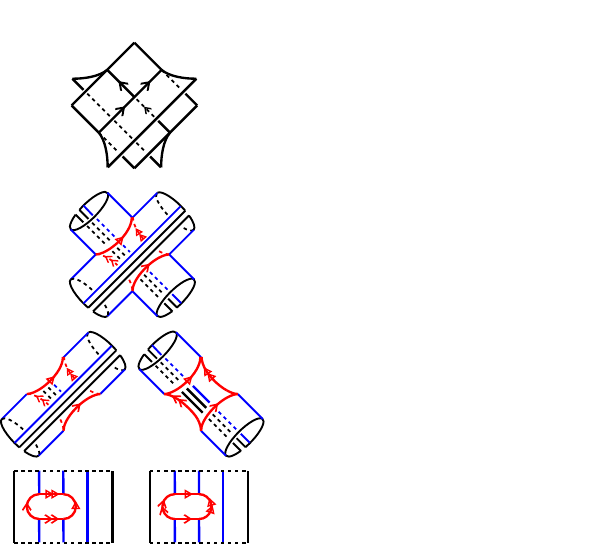}}%
    \put(0.18276867,0.8645952){\color[rgb]{0,0,0}\makebox(0,0)[lt]{\lineheight{1.25}\smash{\begin{tabular}[t]{l}bLue\end{tabular}}}}%
    \put(0,0){\includegraphics[width=\unitlength,page=2]{heegaardcombinalphaglue.pdf}}%
    \put(0.76225466,0.8645946){\color[rgb]{0,0,0}\makebox(0,0)[lt]{\lineheight{1.25}\smash{\begin{tabular}[t]{l}Red\end{tabular}}}}%
    \put(0,0){\includegraphics[width=\unitlength,page=3]{heegaardcombinalphaglue.pdf}}%
    \put(0.45711594,0.52857955){\color[rgb]{1,0.8,0}\makebox(0,0)[lt]{\lineheight{1.25}\smash{\begin{tabular}[t]{l}$\x^\bot$\end{tabular}}}}%
    \put(0.45769232,0.40556463){\color[rgb]{0.50196078,0,0.50196078}\makebox(0,0)[lt]{\lineheight{1.25}\smash{\begin{tabular}[t]{l}$\x^\top$\end{tabular}}}}%
  \end{picture}%
\endgroup%
}
    \caption{The local combinatorics of the Heegaard diagram near a triple point $v$ depends on the color of $v$.}
    \label{fig:heegaardcombinalphaglue}
\end{figure}

After cutting $\Sigma_0$ along $\mathbf{c}$ the cylinders $P_1$ and $P_2$ open up  and give rise to two punctured rectangles. These look as in \Cref{fig:heegaardcombinalphaglue} last row where the vertical sides represent portions of the boundary of the Heegaard surface $\Sigma$. 

\begin{cor}\label{cor:diagramcolors}
According to the color of the underlying triple point, there are two local models for the Heegaard diagram $(\Sigma, \boa, \bob)$ in proximity of the $\alpha$-curves. The two local models are depicted in \Cref{fig:heegaardcombinalphaglue} where the two arrows suggest how to glue the rectangles to get back a neighborhood of the $\alpha$-curve in the Heegaard diagram. 
\end{cor}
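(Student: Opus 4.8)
The plan is to assemble the two local models directly from the analysis carried out in the paragraphs preceding \Cref{lemma:localmodel}. That analysis established that in a neighborhood $N$ of a triple point $v$, the surface $\Sigma_0$ meets $N$ in a $4$-holed sphere, the only $\alpha$-curve entering $N$ is $\alpha_v$, and $\alpha_v$ cuts $\Sigma_0 \cap N$ into two one-holed cylinders $P_1$ and $P_2$, each carrying a single cutting curve running from its lower to its upper boundary component. On each $P_i$ there are exactly five $\beta$-arcs: four of them emanate from the hole carved by $\alpha_v$, occupying the upper-left, upper-right, lower-left and lower-right positions and coming, in an order determined by $P_i$, from the sector with $v$ on its bottom corner, the sector with $v$ on its top corner, and the two sectors with $v$ on a side corner (here the left/right roles of the two side-corner arcs are swapped between $P_1$ and $P_2$, according to which cylinder contains the edge of $\partial S$ pointing towards $v$); the fifth, the lone arc, comes from one of the two sectors not having $v$ as a corner and runs between the two boundary components of $P_i$.

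First I would cut $\Sigma_0$ along the multicurve $\mathbf{c}$. This opens each cylinder $P_i$ along its cutting curve into a punctured rectangle whose two vertical sides are portions of $\partial \Sigma$; the hole becomes the $\alpha_v$-arc in the interior, the four hole arcs become the four $\beta$-arcs meeting it, and the lone arc becomes a $\beta$-arc with one endpoint on each vertical side. The only surviving combinatorial ambiguity is the side of the cut on which the lone arc lies, and \Cref{lemma:localmodel} resolves it: the lone arc lies to the left of the cut when $v$ is blue and to the right when $v$ is red. Recording how $P_1$ and $P_2$ glue back together along $\alpha_v$ --- the two arrows in the figure --- then yields precisely the two pictures of \Cref{fig:heegaardcombinalphaglue}, one per color.

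Since the statement is a transcription of local data that has already been pinned down, the only thing to take care of is bookkeeping, and the sole point meriting a second look is that the local model really does depend on nothing but the color of $v$: the six $\beta$-arcs inside $N$ are only germs of $\beta$-curves that may reconnect outside $N$, but the pattern they form inside $N$ --- which germ occupies which hole position, which is the lone arc, and on which side of the cut it sits --- is determined entirely by the combinatorics of the triple point together with the two cutting curves through $N$, hence by \Cref{defn:vbscolor} alone. The two rows of \Cref{fig:heegaardcombinalphaglue} make this exhaustive case check visually transparent, so I do not expect any genuine difficulty.
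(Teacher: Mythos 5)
Your proposal is correct and follows essentially the same route as the paper, which proves the corollary by invoking \Cref{lemma:localmodel} and then cutting the local model along the $\alpha$-curve and the two arcs of cutting curves. Your additional remark that the local pattern of $\beta$-germs inside $N$ is determined by the combinatorics at the triple point alone is exactly the content already established in the paragraphs preceding \Cref{lemma:localmodel}, so nothing further is needed.
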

\begin{proof}
Apply \Cref{lemma:localmodel}, and cut the local model along the $\alpha$-curve, and the two arcs of cutting curves. 
\end{proof}

Note that in a neighborhood $N$ of a triple point $v$ there is  exactly one component $x_i^\top$ of the top Heegaard state $\xt=x_1^\top+ \dots + x_n^\top$, and exactly one component $x_j^\bot$ of the bottom  state $\xb=x_1^\bot+ \dots + x_n^\bot$.
Furthermore, if $v$ is a blue triple point then on each component  of $(\Sigma \cap N) \setminus  \alpha_v=P_1 \cup P_2$, the point  $x_i^\top$ lies on the lower left $\beta$-arc while the point of $x_j^\bot$ lies on the upper right $\beta$-arc.
Similarly, if $v$ is a red triple point then on each cylinder $P_1$ and $P_2$, the point  $x_i^\top$ lies on the lower right $\beta$-arc while the point  $x_j^\bot$ lies on the upper left $\beta$-arc.
See the purple and yellow dots on \Cref{fig:heegaardcombinalphaglue}.

\begin{rmk}
    When we identify the two rectangles along their holes to reconstruct a neighborhood of an $\alpha$-curve, the gluing must identify the components of the top and bottom Heegaard states. This, together with the fact that the identification  must respect the orientation of the Heegaard surface $\Sigma$, gives a characterization of the gluing instructions we illustrated in \Cref{fig:heegaardcombinalphaglue}.
\end{rmk}

\begin{figure}
    \centering
    \selectfont\fontsize{8pt}{8pt}
    \resizebox{!}{5cm}{
\begingroup%
  \makeatletter%
  \providecommand\color[2][]{%
    \errmessage{(Inkscape) Color is used for the text in Inkscape, but the package 'color.sty' is not loaded}%
    \renewcommand\color[2][]{}%
  }%
  \providecommand\transparent[1]{%
    \errmessage{(Inkscape) Transparency is used (non-zero) for the text in Inkscape, but the package 'transparent.sty' is not loaded}%
    \renewcommand\transparent[1]{}%
  }%
  \providecommand\rotatebox[2]{#2}%
  \newcommand*\fsize{\dimexpr\f@size pt\relax}%
  \newcommand*\lineheight[1]{\fontsize{\fsize}{#1\fsize}\selectfont}%
  \ifx\svgwidth\undefined%
    \setlength{\unitlength}{188.57798659bp}%
    \ifx\svgscale\undefined%
      \relax%
    \else%
      \setlength{\unitlength}{\unitlength * \real{\svgscale}}%
    \fi%
  \else%
    \setlength{\unitlength}{\svgwidth}%
  \fi%
  \global\let\svgwidth\undefined%
  \global\let\svgscale\undefined%
  \makeatother%
  \begin{picture}(1,0.65208364)%
    \lineheight{1}%
    \setlength\tabcolsep{0pt}%
    \put(0,0){\includegraphics[width=\unitlength,page=1]{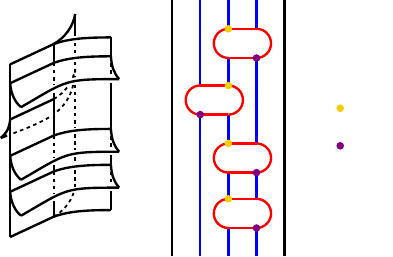}}%
    \put(0.89087745,0.36361639){\color[rgb]{1,0.8,0}\makebox(0,0)[lt]{\lineheight{1.25}\smash{\begin{tabular}[t]{l}$\x^\bot$\end{tabular}}}}%
    \put(0.89087745,0.26816508){\color[rgb]{0.50196078,0,0.50196078}\makebox(0,0)[lt]{\lineheight{1.25}\smash{\begin{tabular}[t]{l}$\x^\top$\end{tabular}}}}%
  \end{picture}%
\endgroup%
}
    \caption{Reading the combinatorics of $\Sigma \backslash \alpha$ off from the colors of triple points lying on the corresponding branch loop.}
    \label{fig:heegaardcombinbranchcurve}
\end{figure}

\subsection{Structure of the Heegaard diagram: global picture}

With the local picture in mind, we can now understand the global combinatorics. This is governed by the dual graph $G$ of the branched surface.

First of all we observe that the branch loops give us a decomposition of $G$ into a union of cyclic graphs: $G=C_1\cup \dots \cup C_l$. After cutting the Heegaard surface $\Sigma$ along the $\alpha$-curves we get a collection of holed annuli $A_1, \dots , A_l$ in one-to-one correspondence with the cyclic components $C_1, \dots, C_l$ of this decomposition. 
This is because after compressing the $\alpha$-curves, a tubular neighborhood $U=\mathcal{N}(G)$ of the dual graph $G$ decomposes into a union of solid  tori $T_1, \dots , T_l$ each containing a unique branch curve $b_i$ in its interior, and a unique cutting curve $c_i$ on its boundary.

Based on the conventions we set in \Cref{defn:vbscolor} the vertices of the dual graph $G$ can be colored blue or red.
The portion of the Heegaard diagram $(\Sigma,\boa, \bob)$ lying inside the annulus $A_i$ can be reconstructed from the cyclic component $C_i$ using this coloring: 
Each vertex $v$ of $C_i$ corresponds to an $\alpha$-curve delimiting one of the holes in the interior of the annulus $A_i$. We can thus decompose each annulus $A_i$ into a union of rectangles each containing a different $\alpha$-curve. As established in \Cref{cor:diagramcolors}, inside each of these rectangles the Heegaard diagram looks as in \Cref{fig:heegaardcombinalphaglue} left or right according to the color of the corresponding vertex.

We summarize our findings in the following proposition.

\begin{prop}\label{prop:globalcombinatorics}
Suppose that $B$ is a veering branched surface, and that $(\Sigma, \boa, \bob)$ is its corresponding Heegaard diagram. Then $\Sigma \setminus \alpha_1 \cup \dots \cup \alpha_n$ decomposes as a union of punctured annuli $A_0, \dots , A_n$ in one-to-one correspondence with the branch loops of $B$. Furthermore, each annulus $A_i$ can be decomposed into a union of punctured rectangles, one for each $\alpha$-curve it contains in its closure.  
Each of these rectangles looks like one of the two local models depicted in \Cref{fig:heegaardcombinalphaglue}, and the specific local model is decided from the color of the triple point corresponding to the $\alpha$-curve. \qed
\end{prop}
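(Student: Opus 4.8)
The plan is to assemble the proposition from the two preceding subsections: the statement is just a packaging of the local analysis (\Cref{lemma:localmodel}, \Cref{cor:diagramcolors}) together with the global description of $\Sigma$ as a cut-open neighbourhood of the dual graph recalled in \Cref{subsec:vbstoheegaarddiagram}. So the only real work is checking that the topology of the pieces fits together as advertised.

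First I would identify $\Sigma \setminus (\alpha_1 \cup \dots \cup \alpha_n)$. Recall that $\Sigma$ is obtained from $\Sigma_0 = \partial \mathcal{N}(G)$ by cutting along the cutting multicurve $\mathbf{c}$, which is disjoint from the $\alpha$- and $\beta$-curves and has one component $c_i$ running parallel to each branch loop $C_i$ of $B$. Compressing $\mathcal{N}(G)$ along the disks $D_v$ bounded by the $\alpha$-curves separates it into solid tori $T_i$, one per branch loop, with $T_i$ having core isotopic to $C_i$ and boundary torus $\partial T_i$ carrying $c_i$ as a longitude (this is the decomposition recalled in \Cref{subsec:vbstoheegaarddiagram}). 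Dually, the $C_i$-component $\Sigma_0^i$ of $\Sigma_0 \setminus (\alpha_1 \cup \dots \cup \alpha_n)$ is exactly $\partial T_i$ with one disk removed for each triple point lying on $C_i$ — equivalently, the cyclic gluing, along their edge-holes and in the order prescribed by $C_i$, of the local pairs of pants carrying the edge-holes of $C_i$. Since $c_i$ is a longitude of $\partial T_i$ disjoint from the removed disks, cutting $\Sigma_0^i$ along $c_i$ drops the genus to zero and yields a planar surface whose boundary consists of two circles coming from $c_i$ together with one circle lying inside each $\alpha_v$, $v\in C_i$; this is the punctured annulus $A_i$, and the $A_i$ are in bijection with the branch loops as claimed.

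Next I would decompose each $A_i$ into rectangles: each puncture of $A_i$ lies inside a unique $\alpha$-curve $\alpha_v$ with $v$ on $C_i$, and cutting $A_i$ along suitable arcs of the cutting curves produces one rectangular block around each such puncture, appearing in the cyclic order of $C_i$ — this is the decomposition into one rectangle per $\alpha$-curve already described in the paragraph preceding the statement. It then remains to identify each block: by \Cref{cor:diagramcolors} (which rests on \Cref{lemma:localmodel} and the colour dichotomy of \Cref{defn:vbscolor}), the pattern of $\alpha$- and $\beta$-arcs in the block, together with the positions of the components of $\xt$ and $\xb$, is exactly one of the two local models of \Cref{fig:heegaardcombinalphaglue}, selected by the colour of $v$. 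Applying this in each block completes the proof; that both models actually occur inside every $A_i$ follows from \Cref{prop:vbsbranchcurvecolor}, though this is not needed for the statement.

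The step I expect to require the most care is the bookkeeping in the second paragraph: one must make sure that the cyclic gluing of the local pairs of pants, cut open along $c_i$, really yields a sphere-with-holes rather than a higher-genus surface, and that no free $\alpha_v$-boundary gets absorbed or doubled under the gluing. Identifying $\Sigma_0^i$ with the complement of a disk per vertex on the torus $\partial T_i$, and $c_i$ with a longitude of that torus, is the cleanest way to settle this; once it is pinned down, everything else is a direct citation of \Cref{subsec:vbstoheegaarddiagram}, \Cref{lemma:localmodel}, and \Cref{cor:diagramcolors}.
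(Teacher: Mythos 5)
Your proposal is correct and follows essentially the same route as the paper: the paper's argument (given in the discussion immediately preceding the proposition, which is why the statement carries its own \qed) likewise decomposes $\mathcal{N}(G)$ into solid tori $T_i$, one per branch loop, identifies the components of $\Sigma\setminus\boa$ with the cut-open boundaries of these tori, and then invokes \Cref{cor:diagramcolors} to identify each rectangular block with one of the two colored local models. Your extra care in verifying that cutting along the longitude $c_i$ yields a planar surface is a reasonable elaboration of a step the paper treats as evident, not a different approach.
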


\subsection{Left and right domains}
We now turn to analyze the combinatorics of domains of the diagram $(\Sigma, \boa, \bob)$ associated to a veering branched surface.

To perform our task we use \Cref{prop:globalcombinatorics} to decompose the Heegaard diagram $(\Sigma, \boa, \bob)$ into a union of annuli.  
It is useful to think about the combinatorics of the $\alpha$-curves, and the $\beta$-arcs on an annulus $A_i$ corresponding to a branch loop $b_i$ as follows:
\begin{itemize}
    \item Start with an annulus $S^1 \times [0,1]$ with three strands $S^1 \times \{\frac{1}{4}\}$, $S^1 \times \{\frac{1}{2}\}$, $S^1 \times \{\frac{3}{4}\}$ running along the $S^1$-direction. We refer to these as the \emph{left}, \emph{middle}, and \emph{right} strands respectively.
    \item Going around $b_i$, we encounter a sequence of triple points that are coloured either red or blue. For each triple point $v$, we introduce a corresponding puncture on the annulus by cutting out a hole that intersects the left and middle strand if $v$ is colored blue, and cutting out a hole that intersects the middle and right strand if $v$ is colored red. See Figure \Cref{fig:heegaardcombinbranchcurve} for an example.
    \item The elementary domains bounded between $S^1 \times \{0\}$ and $S^1 \times \{\frac{1}{4}\}$, and between $S^1 \times \{\frac{3}{4}\}$ and $S^1 \times \{1\}$ are the ones that do not contain a basepoint.
\end{itemize}

Under this perspective, each elementary domain $D$ that lies on $A_i$ and does not contain a basepoint either lies between the left and middle strands, or lies between the middle and right strands. We say that $D$ is a \textit{left domain} or a \textit{right domain} respectively.

Note that each empty elementary domain $D$ has four distinguished consecutive corners that we refer to as the \emph{upper-left}, \emph{upper-right}, \emph{lower-left}, and \emph{lower-right} corners, as suggested by \Cref{fig:heegaardcombinalphaglue}.
If $D$ is a left domain, then the upper-left corner of $D$ is a coordinate of the top state $\xt$ and the lower-right corner of $D$ is a coordinate of the bottom state $\xb$.
Similarly, if $D$ is a right domain, then the upper-right corner of $D$ is a coordinate of the top state $\xt$ and the lower-left corner of $D$ is a coordinate of the bottom state $\xb$.

Furthermore, we observe that each empty elementary domain has exactly one coordinate of $\xt$ but possibly more than one coordinate of $\xb$ on its boundary.

\begin{prop} \label{prop:effdomainemb}
In the Heegaard diagram of a veering branched surface, let $D = \sum n_i D_i$ be an  effective  domain  connecting a state $\x$ to a state $\y$. If $n_\z(D)=0$ then $D$ is an \emph{embedded} domain, that is, each  multiplicity $n_i$ is either $0$ or $1$.
\end{prop}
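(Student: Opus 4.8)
The plan is to reduce to the completely explicit local combinatorics recorded in \Cref{prop:globalcombinatorics} and \Cref{cor:diagramcolors}, and then to run a ``propagation from the basepoints'' argument that uses effectiveness and $n_\z(D)=0$ in tandem. First, by \Cref{prop:globalcombinatorics} I would cut $\Sigma$ along the $\alpha$-curves to obtain punctured annuli $A_0,\dots,A_n$, each further decomposed into punctured rectangles indexed by the triple points lying on the corresponding branch loop, each rectangle being a copy of the blue or red local model of \Cref{fig:heegaardcombinalphaglue}. This presents every elementary domain either as a basepoint-carrying domain or as an empty left/right domain, and it identifies, for each empty domain, which of its boundary arcs are $\alpha$-arcs and which are $\beta$-arcs, along with the positions of its unique $\xt$-corner and of its $\xb$-corner(s). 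Since $n_\z(D)=0$, every basepoint-carrying elementary domain has multiplicity $0$; in the annulus model these multiplicity-$0$ regions are adjacent to every empty domain, so it suffices to bound multiplicities along the empty part of each $A_j$.

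Second, and this is the heart of the argument, I would bound the multiplicity along the left-domain strip of a fixed $A_j$ (the right-domain strip being identical). Across any $\beta$-arc the multiplicity of $D$ changes by the local coefficient of $\partial_\beta D=\x-\y$. Restricted to a $\beta$-curve $\beta_i=\partial S_i$, this $1$-chain is highly constrained: by \Cref{prop:vbssector} the curve $\beta_i$ is a diamond boundary meeting the $\alpha$-curves in exactly its four corners, and $\x,\y$ each place a single point on $\beta_i$, so $\partial_\beta D|_{\beta_i}$ is a $1$-chain on a four-arc circle with boundary $\x(S_i)-\y(S_i)$. The key claim is that effectiveness, together with the adjacent multiplicity-$0$ region, forces $\partial_\beta D|_{\beta_i}$ to be an honest embedded sub-arc, in particular with all coefficients in $\{-1,0,1\}$ and no winding around $\beta_i$; were it to wind, there would be an elementary domain next to the $0$-region with strictly negative multiplicity. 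Feeding these embedded arcs into the two local models, and tracking how the multiplicity changes as one moves off a $0$-region and across successive $\beta$-arcs --- using that each empty domain has exactly one $\xt$-corner and that the lone $\beta$-arc of each rectangle separates the $0$-region from the rest --- one checks that the multiplicity never exceeds $1$.

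The step I expect to be the main obstacle is precisely this last verification: showing that two ``$+1$'' $\beta$-crossings can never stack on a single empty domain. This amounts to checking, case by case in the blue and red local models of \Cref{fig:heegaardcombinalphaglue}, that the region where $D$ has multiplicity $1$ is itself \emph{thin}, i.e.\ any $\beta$-arc one crosses out of it either returns the multiplicity to $0$ or keeps it at $1$; this in turn relies on the no-winding claim above and on \Cref{prop:vbstogglefan} controlling how consecutive sectors --- hence consecutive $\beta$-arcs around a branch loop --- are arranged. A secondary point requiring care is the consistency of the multiplicity after traversing $A_j$ once around; here \Cref{prop:vbsbranchcurvecolor}, which guarantees that each branch loop meets triple points of both colors, should be what prevents a nonzero net winding.
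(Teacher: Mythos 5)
There is a genuine gap, and it sits exactly where you flagged it: the ``main obstacle'' of showing that two $+1$ $\beta$-crossings can never stack on a single empty domain is not an incidental verification --- it \emph{is} the content of the proposition, and your proposal does not actually carry it out. Even granting your no-winding claim (which itself needs an orientation check: you must know that the two arcs of $\beta_i$ at its $\xt$-corner have their basepoint regions on opposite sides, so that effectiveness pins the common winding number to $-1$ or $0$), knowing that $\partial_\beta D$ restricted to each individual $\beta$-curve is an embedded sub-arc with coefficients in $\{-1,0,1\}$ does not prevent several \emph{distinct} $\beta$-curves from each contributing $+1$ to the multiplicity of one elementary domain as you propagate along the strip. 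The deferred ``case by case'' check in the local models is therefore not a routine tidy-up; as stated, the argument does not close.

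The paper's proof closes this in one step, using an ingredient you mention but never exploit directly: every empty elementary domain has a corner at a coordinate $p$ of $\xt$, and at such a $p$ the two \emph{opposite} quadrants are elementary domains containing basepoints. Writing \Cref{eq:initialfinallocal} at $p$ with $n_\z(D)=0$ gives $n_i+n_k\in\{-1,0,1\}$ for the two remaining quadrants, and effectiveness then forces $n_i,n_k\in\{0,1\}$ --- no propagation, no annulus bookkeeping, no monodromy consistency check around $A_j$ (so \Cref{prop:vbsbranchcurvecolor} is not needed). If you want to salvage your route, the fastest repair is to replace the entire propagation scheme by this single local computation at the $\xt$-corners; alternatively, you would need to genuinely perform the case analysis in the two local models of \Cref{fig:heegaardcombinalphaglue} showing that the multiplicity-one locus is thin, which in practice reduces to the same observation about the basepoint quadrants at $\xt$.
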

\begin{proof}
Each elementary domain $D_i$ has as a corner point a component $p$ of the top Heegaard state $\xt$.  Among the four quadrants at $p$ two opposite ones are the corners of two elementary regions containing a basepoint. Let $D_i$ and $D_k$ be the remaining two  elementary domains. Since $n_\z(D)=0$ when we write down \Cref{eq:initialfinallocal} at $p$ we get that $n_i+n_k\in \{1, 0, -1\}$. It follows from effectiveness that $n_i=1$ or $n_i=0$. 
\end{proof}

\begin{rmk} \label{rmk:embdomainssubsurfaces}
Let $D$ be an effective domain with $n_\z(D) = 0$. The fact that $D$ is embedded means that we can identify an effective domain $D$ with $n_\z(D)=0$ with the subsurface of $\Sigma$ that is the closure of the union of the elementary domains that have multiplicity $1$ in $D$.
\end{rmk}

\subsection{Admissibility} \label{subsec:admissibility}

As we pointed out in \Cref{thm:cfwelldefined}, to have a well defined Heegaard Floer chain complex one needs to stick to some special  Heegaard diagrams called admissible. 
In this subsection, we show that the diagram $(\Sigma, \boa, \bob)$ associated to a veering branched surface is admissible.
First, we recall the definition of admissibility.

\begin{defn}[Periodic domains]
Let $D_1, \dots , D_r$ denote the elementary domains that do not contain a basepoint in the associated multi-pointed diagram.
A \emph{periodic domain} is a domain $P=\sum_{i=1}^r n_i  D_i$ whose boundary $\partial P\in C_2(\overline{\Sigma})$ is a union of full $\alpha$- and $\beta$-curves. 
\end{defn}

In the notation of \Cref{defn:initialfinalstate}, a periodic domain is a domain such that $\partial_{\boa} P=0$, and $\partial_{\bob} P=0$. In the correspondence between domains and Whitney disks, periodic domains correspond to disks connecting an intersection point $\x$ to itself, that is, elements of $\pi_2(\x,\x)$ for some $\x \in \mathbb{T}_{\boa} \cap \mathbb{T}_{\bob}$. Existence of \emph{effective} periodic domains is associated with phenomena like bubbling and boundary degenerations that give fatal complications when running the Lagrangian Floer homology routine.

\begin{defn}[Admissibility] A multi-pointed Heegaard diagram is called \emph{admissible} if every periodic domain $P \neq 0$ has both positive an negative coefficients, that is, there is no effective periodic domain. 
\end{defn}

In \cite[Proposition 3.15]{Juh06} it is shown that if a sutured Heegaard diagram is not admissible it can be assumed to be so after complicating it with some Heegaard moves. In our set up this operation is not needed. Indeed, one of the key features of the veering condition is that it gives rise to Heegaard diagrams that are automatically admissible.

\begin{lemma} \label{lemma:nofullalphabetacurve}
Let $D$ be an effective domain with $n_\z(D) = 0$. Then the subsurface associated to $D$ as in \Cref{rmk:embdomainssubsurfaces} cannot contain a full $\alpha$- or $\beta$-curve. 
\end{lemma}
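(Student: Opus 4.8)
The plan is to argue by contradiction, localizing the putative full curve at a coordinate of the top state $\xt$ and then invoking the elementary multiplicity relation of \Cref{equations}.

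First, by \Cref{prop:effdomainemb} the domain $D$ is embedded, so I would replace it by the associated subsurface $\Sigma_D\subseteq\Sigma$ of \Cref{rmk:embdomainssubsurfaces}: a union of distinct empty elementary domains, none of which contains a basepoint (here one uses $n_\z(D)=0$). Suppose toward a contradiction that $\Sigma_D$ contains a full curve $C$, either an $\alpha$-curve or a $\beta$-curve. Since $\xt$ is a Heegaard state it meets every $\alpha$-curve and every $\beta$-curve in exactly one point, so there is a unique coordinate $p$ of $\xt$ lying on $C$. I would then zoom in at $p$: as established inside the proof of \Cref{prop:effdomainemb}, among the four elementary domains meeting $p$ one opposite pair consists of basepoint domains and the complementary opposite pair consists of empty elementary domains, say $D_i$ and $D_k$ in the counterclockwise labelling of \Cref{fig:initialfinallocal}.

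Next I would read off the local combinatorics at $p$. Because the $\alpha$-curve and $\beta$-curve through $p$ cross transversally and the two basepoint domains occupy diagonally opposite quadrants, each of the two arcs of $C$ abutting $p$ is flanked by one basepoint domain and one of $D_i,D_k$, and these two empty domains exhaust the pair $\{D_i,D_k\}$ (immediate from the picture in \Cref{fig:heegaardcombinalphaglue}). Since $C\subseteq\Sigma_D$, both arcs lie in $\Sigma_D$, and as $\Sigma_D$ avoids basepoint domains, the empty domain flanking each arc must have multiplicity $1$ in $D$. Thus $n_i=n_k=1$ while the two basepoint multiplicities vanish. Feeding this into \Cref{equations} at $p$, the alternating sum $n_i-n_j+n_k-n_l$ gives equal weight to the two members of an opposite pair, hence evaluates to $\pm(1+1)=\pm 2$, contradicting that it lies in $\{-1,0,1\}$. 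This contradiction proves the lemma and disposes of the $\alpha$- and $\beta$-curve cases simultaneously.

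The argument is essentially a one-point calculation once the right point — a top-state coordinate on $C$ — is identified, so the only thing requiring genuine care is the bookkeeping in the middle paragraph: that $C$ really does separate a basepoint quadrant from an empty quadrant on each side of $p$. This follows at once from \Cref{fig:heegaardcombinalphaglue} together with the observation, recalled from the proof of \Cref{prop:effdomainemb}, that the two basepoint quadrants at $p$ are opposite.
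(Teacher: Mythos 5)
Your proof is correct and follows essentially the same route as the paper's: both hinge on locating the unique coordinate $p$ of $\xt$ on the putative full curve and exploiting the fact that the two basepoint quadrants at $p$ are opposite. The only difference is the endgame — the paper argues the subsurface would have to swallow a basepoint domain on one side of the curve, whereas you force both non-basepoint quadrants at $p$ to have multiplicity $1$ and contradict \Cref{equations} — and your version is, if anything, the more airtight way to close the argument.
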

\begin{proof}
Each $\alpha$- and $\beta$-curve contains a coordinate of the top state $\xt$.
As pointed out in \Cref{prop:effdomainemb}, at each coordinate of $\xt$, there are two opposite elementary domains containing a basepoint.
If (the subsurface associated to) $D$ contained a full $\alpha$- or $\beta$-curve, then it would contain all elementary domains lying to one side of this curve, thus contain an elementary domain containing a basepoint, contradicting $n_z(D) = 0$.
\end{proof}

\begin{prop} \label{prop:admissibility} 
The diagram $(\Sigma, \boa, \bob)$ is admissible.
\end{prop}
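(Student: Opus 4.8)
The plan is a short argument by contradiction, exploiting the rigid local structure already established. Suppose $P=\sum_i n_iD_i\neq 0$ is an effective periodic domain, the sum running over the basepoint-free elementary domains, so that $n_\z(P)=0$ holds automatically. Since $\partial_{\boa}P$ and $\partial_{\bob}P$ are unions of full curves, their $0$-chain boundaries vanish, so directly from \Cref{defn:initialfinalstate} (or \Cref{equations}) the domain $P$ connects $\xt$ to $\xt$. Hence \Cref{prop:effdomainemb} applies and tells us that $P$ is embedded, and by \Cref{rmk:embdomainssubsurfaces} we may identify $P$ with the subsurface of $\overline{\Sigma}$ obtained as the closure of the union of the elementary domains occurring with multiplicity $1$.

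Now I invoke \Cref{lemma:nofullalphabetacurve}: this subsurface contains no full $\alpha$- or $\beta$-curve. On the other hand, the topological boundary of the subsurface is exactly the support of the $1$-chain $\partial P$, which by the definition of a periodic domain is a union of full $\alpha$- and $\beta$-curves. Since the subsurface is closed it contains its own boundary, so that union must be empty: $\partial P=\varnothing$, and the subsurface is therefore a union of connected components of $\overline{\Sigma}$. But every component of $\overline{\Sigma}$ carries a basepoint (every component of $\Sigma$ has nonempty boundary, by the balancedness condition recalled in \Cref{subsec:heegaarddiagram}), so the elementary domain around such a basepoint would occur with multiplicity $1$ in $P$, contradicting $n_\z(P)=0$ unless the subsurface is empty, i.e. $P=0$. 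This contradiction proves the proposition.

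There is no genuinely hard step here — the content was already extracted in \Cref{prop:globalcombinatorics} and \Cref{lemma:nofullalphabetacurve} — and in fact one can bypass \Cref{lemma:nofullalphabetacurve} entirely by mimicking the proof of \Cref{prop:effdomainemb}: each basepoint-free elementary domain $D_i$ has a corner at a unique coordinate $p$ of $\xt$, the two elementary domains opposite $D_i$ across $p$ both contain basepoints, so \eqref{eq:initialfinallocal} at $p$ (with initial and final state both $\xt$) reads $n_i+n_k=0$ with $D_k$ the fourth domain at $p$, whence effectiveness forces $n_i=n_k=0$; ranging over all $p$ gives $P=0$. The only points deserving a word of care are checking that $P$ connects a state to a state so that \Cref{prop:effdomainemb} and \Cref{lemma:nofullalphabetacurve} are applicable, and the remark that every component of $\overline{\Sigma}$ meets $\z$; both are immediate from the setup.
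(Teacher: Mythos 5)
Your proof is correct and takes essentially the same route as the paper's, which simply observes that a nonzero effective periodic domain would contain a full $\alpha$- or $\beta$-curve in its boundary, contradicting \Cref{lemma:nofullalphabetacurve}; your alternative second argument likewise just unwinds that lemma's proof at the coordinates of $\xt$. Your only addition is the (worthwhile) check that the degenerate case $\partial P=\varnothing$ is also impossible because every component of $\overline{\Sigma}$ carries a basepoint, a point the paper's one-line proof passes over silently.
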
 
\begin{proof}
An effective periodic domain contains a full $\alpha$- or $\beta$-curve in its boundary, but \Cref{lemma:nofullalphabetacurve} states that this impossible.
\end{proof}

Thus we can consider the Heegaard Floer chain complex  $CF(\Sigma, \boa, \bob) $. As consequence of \Cref{thm:sfhindependence} we have that the homology of $CF(\Sigma, \boa, \bob) $ only depends on the topology of the sutured manifold  $(M,\Gamma)$.
This completes the proof of \Cref{thm:introinformal}.

\Cref{lemma:nofullalphabetacurve} is stronger than \Cref{prop:admissibility} since it implies that no boundary component of an effective domain $D$ with $n_\z(D)$ can have a boundary component lying along a full $\alpha$- or $\beta$-curve.
In other words, each boundary component of $D$ has corners.
Each corner $p$ can be either \emph{convex}, if $D$ contains one of the four quadrants at $p$, or \emph{concave}, if $D$ contains three of the four quadrants at $p$.

The following proposition will be used in the sequel paper \cite{AT25b}.

\begin{prop}
Let $D$ be an effective domain with $n_\z(D) = 0$. Then the subsurface associated to $D$ as in \Cref{rmk:embdomainssubsurfaces} cannot contain a coordinate of $\xt$ in the interiors of its sides.
\end{prop}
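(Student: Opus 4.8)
The plan is to argue by contradiction, using only the local combinatorics at a coordinate of $\xt$ that was already extracted in the proofs of \Cref{prop:effdomainemb} and \Cref{lemma:nofullalphabetacurve}. By \Cref{prop:effdomainemb} the domain $D$ is embedded, so I would regard it as the subsurface of \Cref{rmk:embdomainssubsurfaces}, whose local multiplicities $n_i$ all lie in $\{0,1\}$. Suppose for contradiction that some coordinate $p=x^\top_v$ of $\xt$ lies in the interior of a side of this subsurface. Since $p$ is an intersection point of an $\alpha$-curve with a $\beta$-curve, it has four quadrants, and the key input (recalled in the proof of \Cref{prop:effdomainemb}) is that two \emph{opposite} ones among these are corners of elementary domains that contain a basepoint. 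As $n_\z(D)=0$, the multiplicity of $D$ on each of those two quadrants is $0$. Hence, reading cyclically around $p$, the multiplicities of $D$ form the pattern $(n',0,n'',0)$ with $n',n''\in\{0,1\}$, i.e.\ the two zeros sit in an opposite pair of quadrants.

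Next I would record the elementary local characterization of sides: a point of $\partial D$ lies in the interior of a side precisely when the cyclic multiplicity pattern of $D$ around it is a rotation of $(\mu,\mu,\mu',\mu')$ with $\mu\neq\mu'$. Indeed, near an interior point of a side $\partial D$ is a single smooth arc running along one of the two curves through that point, with $D$ filling the two quadrants on one side of the arc — which are adjacent in the cyclic order — and neither of the two on the other side. (The remaining configurations at an $\alpha\cap\beta$ point of $\partial D$, namely a convex corner, a concave corner, or a valence-four self-touching point of the subsurface, give the distinct patterns $(1,0,0,0)$, $(1,1,1,0)$, $(1,0,1,0)$ up to rotation and are not interiors of sides.) Comparing the two patterns finishes the proof: in $(\mu,\mu,\mu',\mu')$ any opposite pair of quadrants carries one $\mu$ and one $\mu'$, so matching it against $(n',0,n'',0)$, whose zeros are opposite, would force $\mu=\mu'=0$, contradicting $\mu\neq\mu'$.

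I do not expect a genuine obstacle here; the proof is a short bookkeeping argument with the multiplicity equations \eqref{eq:initialfinallocal}. The only point requiring some care is getting the local dictionary right — correctly distinguishing "interior of a side" from convex/concave corners and from the valence-four case in terms of cyclic multiplicity patterns — and being precise that the fact being invoked, that at a coordinate of $\xt$ the two basepoint quadrants are opposite, is exactly what is established inside the proof of \Cref{prop:effdomainemb}. Everything else is immediate from effectiveness together with $n_\z(D)=0$.
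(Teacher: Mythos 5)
Your proof is correct and follows the same route as the paper's: the key fact in both is that at a coordinate of $\xt$ the two basepoint-containing elementary domains sit in \emph{opposite} quadrants, while an interior point of a side of $D$ has the two filled quadrants \emph{adjacent}, which is incompatible with $n_\z(D)=0$. Your version merely spells out the cyclic multiplicity patterns more explicitly than the paper's two-line argument.
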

\begin{proof}
This again uses the fact that at each coordinate of $\xt$, there are two opposite elementary domains containing a basepoint.
If $D$ contains a coordinate of $\xt$ in the interior of one of its sides, then it must contain one of these elementary domains, contradicting $n_z(D) = 0$.
\end{proof}

\subsection{Non-vanishing results} 

We conclude this section studying the homology of the chain complex $CF(\Sigma, \boa, \bob) $ associated to a veering branched surface $B$ via its Heegaard diagram. We shall concentrate on the portion of the chain complex relative to the $\spinc$-grading of the flow $\s_\phi$ and its conjugate, where the top generator $\xt$, and the bottom generator $\xb$ are located.

\begin{prop} \label{prop:topstatenontrivial}
The top Heegaard state $\xt$ is not the initial or final states of any effective domain $D$ with $n_\z(D)=0$.
\end{prop}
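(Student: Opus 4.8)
The plan is to argue by contradiction: assume $D\neq 0$ is an effective domain with $n_\z(D)=0$ whose initial or final state is $\xt$. By \Cref{prop:effdomainemb}, $D$ is embedded, so I identify $D$ with the subsurface of $\Sigma$ it cuts out (\Cref{rmk:embdomainssubsurfaces}). The key local input is the one already used in the proof of \Cref{prop:effdomainemb}: at every coordinate $p$ of $\xt$, two opposite quadrants are corners of basepoint-containing elementary domains, so $D$ has multiplicity $0$ there, and \Cref{eq:initialfinallocal} reduces to $n_P+n_{P'}=\delta_p$, where $n_P,n_{P'}$ are the multiplicities of the other two (opposite) quadrants and $\delta_p$ equals $1$ if $p$ lies in the initial but not the final state of $D$, equals $-1$ in the opposite case, and equals $0$ otherwise; effectiveness of $D$ forces $\delta_p\in\{0,1\}$. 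I combine this with the observation (recorded just before \Cref{prop:effdomainemb}) that every basepoint-free elementary domain — in particular every elementary domain with positive multiplicity in $D$ — has exactly one corner on $\xt$.

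When $\xt$ is the \emph{final} state of $D$ the argument closes immediately: each coordinate $p$ of $\xt$ then lies in the final state, so $\delta_p\leq 0$, hence $\delta_p=0$ and both non-basepoint quadrants at $p$ have multiplicity $0$. If some elementary domain $D_i$ had $n_i\geq 1$, its unique $\xt$-corner would be such a $p$ and $D_i$ one of those two quadrants, forcing $n_i=0$, a contradiction; so $D=0$. (Equivalently, $\xt$ is then contained in, hence equal to, the initial state, so $D$ is a periodic domain and vanishes by \Cref{prop:admissibility}.)

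The case where $\xt$ is the \emph{initial} state is the real content, and I expect to need the global structure of the diagram from \Cref{prop:globalcombinatorics}, since now $\delta_p$ may be $1$. Here I would fix an elementary domain $D_i\subseteq D$ with $n_i=1$ and let $p$ be its $\xt$-corner; then $\delta_p=1$, so the opposite non-basepoint quadrant at $p$ has multiplicity $0$ and $p$ is a convex corner of $\partial D$. From $p$ I would trace out the boundary component of $D$ through it: along the $\beta$-curve through $p$, which is the boundary of the sector $S$ having $p$ as its top corner, $\partial D$ cannot turn in the interior of a top side of $S$ because no $\alpha$-curve meets $\beta$ there (\Cref{prop:vbssector}, together with the fact that each $\beta$-curve meets the $\alpha$-curves in exactly its four corners), so it is forced to a side corner of $S$; continuing, one reads off the successive local models of \Cref{fig:heegaardcombinalphaglue} according to the colours of the triple points met, using \Cref{prop:vbstogglefan} to track how the top sides are subdivided and the toggle/fan pattern of neighbouring sectors. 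The goal of this trace is to show that after finitely many steps $\partial D$ is forced either to run along an entire $\alpha$- or $\beta$-curve, which is impossible by \Cref{lemma:nofullalphabetacurve}, or into a configuration that can only be completed to an effective domain by including a basepoint-containing elementary domain, contradicting $n_\z(D)=0$. Carrying out this finite case analysis — verifying that at each triple point the boundary is always pushed toward a fresh top corner and so can never close up — is the step I expect to be the main obstacle.
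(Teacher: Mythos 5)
Your proposal does not close, and the two halves have different problems. For the half you claim is immediate (the case where $\xt$ is the final state), everything hinges on the sign with which the two basepoint-free quadrants enter \Cref{eq:initialfinallocal} at a coordinate $p$ of $\xt$. You assume the equation reads $n_P+n_{P'}=\delta_p$, so that effectiveness forces $\delta_p\ge 0$ while $p$ lying in the final state forces $\delta_p\le 0$. But in this diagram the two basepoint-free quadrants at a coordinate of $\xt$ occupy the negatively-signed positions, i.e.\ the equation reads $-n_i-n_k=\delta_p$ (this is the sign used, consistently, in the paper's proofs of both non-vanishing statements). With that sign, when $\xt$ is the final state one only gets $n_i+n_k=-\delta_p\in\{0,1\}$, which does not force vanishing; it is the \emph{initial}-state case that closes at the $\xt$-corners, and even then only for the left domains. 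So your ``easy'' case is not established, and you have the easy and hard directions interchanged.

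For the other half you correctly anticipate that more structure is needed, but the boundary-tracing argument you sketch is left entirely unexecuted, and it is not the route the paper takes. The missing idea is the left/right dichotomy for basepoint-free elementary domains together with the coordinates of the \emph{bottom} state: one first kills every left domain by writing \Cref{eq:initialfinallocal} at its $\xt$-corner, where the two basepoint-containing quadrants are opposite and the remaining pair must sum to $-\delta_p\le 0$; one then kills every right domain by writing the same equation at its $\xb$-corner, where two opposite quadrants are left domains already known to vanish and the remaining two right-domain coefficients must sum to $\delta_q\in\{-1,0\}$ because $\xt\cap\xb=\varnothing$. This second step is purely local; no tracing of $\partial D$ through the local models, and no appeal to \Cref{prop:vbstogglefan} or to the colours of triple points, is needed. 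You never invoke the coordinates of $\xb$ at all, and without them (or a completed version of your global trace) the argument has a genuine gap.
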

\begin{proof}
Suppose $\xt$ is the initial state of some effective domain $D = \sum n_i D_i$. In \Cref{prop:effdomainemb} we showed that $D$ must be embedded, so each elementary domain has coefficient $1$ or $0$.

First we claim that no left elementary domain $D_i$ can be included in $D$. 
To prove this, we consider the top left corner of $D_i$, which is a coordinate $p \in \xt$ of the top generator.
As we observed in the proof of \Cref{prop:effdomainemb}, of the four regions showing up at $p$, two contain a basepoint, and the coefficients of the remaining two, $D_i$ and $D_k$ say, must satisfy \Cref{eq:initialfinallocal}. This reads as $-n_i-n_k=1$, or $-n_i-n_k=0$ depending if $p$ is a component of the the terminal state of $D$. Since $D\geq 0$ the first case is not possible, and the second case is only possible if $n_i=0$.

Secondly, we claim that no right elementary domain $D_i$ can be included in $D$. 
To prove this, we consider the bottom right corner of $D_i$, which is a coordinate $p \in \xb$ of the bottom generator.
Of the four regions showing up at $p$, two of them are left domains hence have coefficient $0$ by the paragraph above, and the coefficients of the remaining two, $D_i$ and $D_k$ say, must satisfy \Cref{eq:initialfinallocal}. 
Since $\xt$ and $\xb$ do not have any coordinates in common, $p$ does not lie in the initial state of $D$, and so this reads as $n_i+n_k=-1$, or $n_i+n_k=0$ depending if $p$ is part of the terminal state or not. This leads to a contradiction in both cases unless $n_i=0$.

The proof that $\xt$ is not the final state of any effective domain is similar.
\end{proof}

\begin{prop} \label{prop:botstatenontrivial}
The bottom Heegaard state $\xb$ is not the initial or final states of any effective domain $D$ with $n_\z(D)=0$.
\end{prop}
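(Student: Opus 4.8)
The plan is to mirror the proof of \Cref{prop:topstatenontrivial}, interchanging the roles of the top and bottom states and, correspondingly, of left and right elementary domains. Suppose $\xb$ is the initial state of a nonzero effective domain $D=\sum_i n_i D_i$ with $n_\z(D)=0$. By \Cref{prop:effdomainemb} the domain $D$ is embedded, so every $n_i$ is $0$ or $1$, and it suffices to force every $n_i=0$, contradicting $D\neq 0$. As in \Cref{prop:topstatenontrivial}, the engine is the local relation \Cref{eq:initialfinallocal}: at a cleverly chosen intersection point one knows that two of the four surrounding elementary domains have multiplicity $0$ --- either because they contain a basepoint, or because they were shown absent from $D$ in an earlier round --- and one combines this with $D\geq 0$ and with the fact that $\xt\cap\xb=\varnothing$ (so no coordinate of $\xt$ lies in the initial state $\xb$) to force the remaining multiplicities to vanish.

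Carrying this out, I would track, for each left or right elementary domain occurring in $D$, a suitable corner of it --- a coordinate of $\xt$ or of $\xb$ read off from the local models of \Cref{cor:diagramcolors} and \Cref{fig:heegaardcombinalphaglue} --- and apply \Cref{eq:initialfinallocal} there. One round should eliminate all right domains, and a second round, using the first, should then eliminate all left domains, giving $D=0$. The case in which $\xb$ is the final rather than the initial state of an effective domain is handled identically, exchanging ``initial'' and ``final'' in \Cref{eq:initialfinallocal}; together with \Cref{prop:topstatenontrivial} this shows that $\xt$ and $\xb$ are cycles that are not boundaries in $CF(\Sigma,\boa,\bob)$.

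The step I expect to be the main obstacle is precisely this local bookkeeping, which is \emph{not} a verbatim mirror of \Cref{prop:topstatenontrivial} because of the asymmetry recorded in \Cref{sec:vbstosfh}: each empty elementary domain carries exactly one coordinate of $\xt$ but possibly several coordinates of $\xb$, and a coordinate of $\xt$ is always flanked by two basepoint regions whereas a coordinate of $\xb$ need not be. One therefore has to identify, from the two local models of \Cref{fig:heegaardcombinalphaglue} (and keeping straight whether each triple point is blue or red), the right combinatorial entry point for each of the two rounds --- which corner of a right (resp. left) domain to use and which sign it then contributes in \Cref{eq:initialfinallocal}. Once that dictionary is fixed, the remaining computations are routine transcriptions of those in \Cref{prop:topstatenontrivial}.
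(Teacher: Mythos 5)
Your overall strategy --- embeddedness via \Cref{prop:effdomainemb}, then two rounds of \Cref{eq:initialfinallocal} combined with effectiveness --- is the right one, and you have correctly located the crux in the asymmetry between $\xt$ and $\xb$. But the guiding principle you propose for resolving it, interchanging left and right domains along with top and bottom states, points the wrong way, and since you explicitly defer the bookkeeping to "once that dictionary is fixed," this is a genuine gap rather than a routine verification. Your first round is supposed to eliminate all right domains before anything is known about the left ones. The only intersection points at which two of the four surrounding elementary domains are unconditionally known to have multiplicity zero are the coordinates of $\xt$, which are flanked by two opposite basepoint regions; as you yourself note, the coordinates of $\xb$ have no such property --- the opposite pair there consists of left domains, whose vanishing is exactly what your plan postpones to the second round. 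So a first round anchored at the $\xb$-coordinates cannot get started, and the left/right interchange is not available: the dependency only runs one way.

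The paper keeps the order of \Cref{prop:topstatenontrivial} unchanged. Round one eliminates the left domains by writing \Cref{eq:initialfinallocal} at the top-left corner of each left domain, a coordinate $p \in \xt$, where two opposite regions contain basepoints; round two eliminates the right domains at their bottom-right corners, coordinates of $\xb$, where the opposite pair consists of the left domains just shown to vanish. The only thing that changes relative to \Cref{prop:topstatenontrivial} is which case of \Cref{eq:initialfinallocal} is in force, and the observation that makes the signs come out right is precisely that $\xt$ and $\xb$ share no coordinates: when $\xb$ is the final state, a point $p \in \xt$ cannot lie in the final state, so the equation at $p$ reads $-n_i - n_k = 1$ or $0$ and effectiveness forces $n_i = n_k = 0$; the second round is then identical in spirit. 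Rewriting your two rounds with this order and with this explicit use of $\xt \cap \xb = \varnothing$ closes the gap; the initial-state case is then handled by the symmetric bookkeeping.
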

\begin{proof}
The proof follows the same strategy as \Cref{prop:topstatenontrivial}.
Suppose $\xb$ is the final state of some effective domain $D = \sum n_i D_i$. 

First we claim that no left elementary domain $D_i$ can be included in $D$. 
To achieve this we write down \Cref{eq:initialfinallocal} at the top left corner of $D_i$ where a coordinate $p \in \xt$ of the top state lies. Again, we observe that two of the four regions meeting at $p$  contain a basepoint. Since $\xt$ and $\xb$ do not have any coordinates in common, $p$ does not lie in the final state of $D$, and so the coefficients $n_i$ and $n_k$ of the two remaining regions must satisfy $-n_i-n_k=1$, or $-n_i-n_k=0$. Again since $D\geq 0$ the only possibility is that $n_i=n_j=0$.

Secondly, we claim that no right elementary domain $D_i$ can be included in $D$.
To achieve this we write down \Cref{eq:initialfinallocal} at the bottom right corner of $D_i$, which is occupied by a coordinate $p \in \xb$ of the bottom state. 
The equation at this point writes as $n_i+n_k=-1$, or $n_i+n_k=0$ since two opposite regions at $p$ are left domains. Again we reach a contradiction with the fact that $D$ is effective.

The proof that $\xb$ is not the initial state of any effective domain is similar.
\end{proof}

We are now ready to complete the proof of \Cref{thm:introtopbotnontrivial} and its corollaries.

\begin{proof}[Proof of \Cref{thm:introtopbotnontrivial}]
Since $\xt$ is not the initial state of any effective domain, there are no Whitney discs $\phi \in \pi_2(\x,\y)$ with holomorphic representatives by \Cref{lemma:holdisctodomain}. So $\partial \x = 0$.

Since $\xt$ is not the final state of any effective domain, there are no Whitney discs $\phi \in \pi_2(\y,\x)$ with holomorphic representatives by \Cref{lemma:holdisctodomain}. So $\x$ does not appear in the differential of any state $\y$. In particular $\x$ does not lie in the image of the differential.

Thus $\xt$ determines a non-trivial homology class in $SFH(M,\Gamma)$. The proof for $\xb$ is similar.
\end{proof}

\begin{proof}[Proof of \Cref{cor:introdimgeq2}]
Under the context of the corollary, $SFH(Y^\sharp,\s_{\phi^\sharp})$ contains $\xt$ hence has dimension $\geq 1$, and $SFH(Y^\sharp,\overline{\s_{\phi^\sharp}})$ contains $\xb$ hence has dimension $\geq 1$.
\end{proof}

\begin{proof}[Proof of \Cref{cor:introfibered}]
Our first task is to compute the image of $\s(\xt)$ under the chosen identification $\Spinc(Y^\sharp) \cong H_1(Q)$ that sends $\overline{\s_{\phi^\sharp}}$ to $0$.
Let $b_1,\dots,b_l$ be the branch loops of $B$. Observe that $2[\mu_{\xt}] = 3 \sum_i [b_i]$ in $H_1(Y^\sharp)$. This is because if we take two copies of the vertical edge of $G_+$ in each sector and strum them in the two possible ways, the resulting collection of loops traverses each edge $e$ of $G$ three times, since $e$ meets three sectors.
Thus $\s(\xt) = \overline{\s_{\phi^\sharp}} + [\mu_\x]$ is sent to $\frac{3}{2} \sum_i [b_i]$ under our identification $\Spinc(Y^\sharp) \cong H_1(Q)$.

Still using our identification, since $\s(\xb) = \overline{\s_{\phi^\sharp}}$ is sent to $0$, the non-trivial homology class determined by $\xb$ lives in $SFH(Y^\sharp,0)$.
Meanwhile, since $\s(\xt)$ is sent to $\frac{3}{2} \sum_i [b_i]$, and $\langle \frac{3}{2} \sum_i [b_i], [S^\sharp] \rangle = \frac{3}{2} \sum_{x \in S \cap \mathcal{C}} p_x = \frac{3e}{2}$, the non-trivial homology class determined by $\xb$ lives in $SFH(Y^\sharp,\frac{3e}{2})$.

Finally, for any state $\y$ that is not the top or bottom state, \Cref{thm:introtopbotgenerator} states that $\s(\y) - \s(\xb)$ is the homology class of a nonempty closed multi-orbit, thus $\langle \s(\y) - \s(\xb), [S^\sharp] \rangle > 0$. 
This implies that $\langle \s(\y), [S^\sharp] \rangle > \langle \s(\xb), [S^\sharp] \rangle = 0$.
Similarly, since $\s(\xt) - \s(\y)$ is the homology class of a nonempty closed multi-orbit, thus $\langle \s(\xt) - \s(\y), [S^\sharp] \rangle > 0$.
This implies that $\langle \s(\y), [S^\sharp] \rangle < \langle \s(\xt), [S^\sharp] \rangle = \frac{3e}{2}$.
Hence any other non-trivial homology classes must live in $SFH(Y^\sharp,n)$ for $0<n<\frac{3e}{2}$.
\end{proof}

\section{Polynomial invariants} \label{sec:polyinv}

There are two goal of this section:
In \Cref{subsec:polysdefn}, we define the taut, veering, and anti-veering polynomials associated to a veering branched surface $B$.
In \Cref{subsec:zetafunctions}, we explain the connection between these polynomial invariants and zeta functions of various flows derived from the pseudo-Anosov flow $\phi$ corresponding to $B$.

\subsection{Algebraic preliminaries}

We first recall some algebraic facts. Let $G$ be a finitely generated free abelian group. The \emph{group ring} of $G$ (with $\mathbb{Z}$-coefficients) is the ring 
$$\mathbb{Z}[G] = \left\{ \sum_{g \in G} a_g g \mid a_g \in \mathbb{Z}, \text{all but finitely many $a_g$ are zero} \right\}.$$
Given a basis $\{g_1,\dots,g_n\}$ of $G$, we can identify $\mathbb{Z}[G]$ with the multivariable polynomial ring $\mathbb{Z}[t_1,t_1^{-1},\dots,t_n,t_n^{-1}]$ by sending $\sum_{i_1,\dots,i_n} a_{g_1^{i_1} \dots g_n^{i_n}} g_1^{i_1} \dots g_n^{i_n}$ to $\sum_{i_1,\dots,i_n} a_{g_1^{i_1} \dots g_n^{i_n}} t_1^{i_1} \dots t_n^{i_n}$.
In particular, $\mathbb{Z}[G]$ is a unique factorization domain.

Let $[D]$ be a $s \times r$ matrix with entries in $\mathbb{Z}[G]$.
If $r \geq s$, we define the \emph{(zeroth) Fitting invariant} of $[D]$, which we denote as $\Fit([D])$, to be the greatest common divisor of the $s \times s$ submatrices of $[D]$.
If $r < s$, we define $\Fit([D])=0$.
Note that $\Fit([D])$ is only well-defined up to multiplication by a unit of $\mathbb{Z}[G]$.

Let $\mathcal{S}$ and $\mathcal{R}$ be finitely generated free $\mathbb{Z}[G]$-modules and let $D:\mathcal{R} \to \mathcal{S}$ be a $\mathbb{Z}[G]$-module homomorphism. 
Given a basis $\beta_\mathcal{R}$ for $\mathcal{R}$ and a basis $\beta_\mathcal{S}$ for $\mathcal{S}$, $D$ can be represented by a matrix $[D]_{\beta_\mathcal{S}}^{\beta_\mathcal{R}}$ with entries in $\mathbb{Z}[G]$.
We define the \emph{(zeroth) Fitting invariant} of $D$ to be $\Fit(D)=\Fit([D]_{\beta_\mathcal{S}}^{\beta_\mathcal{R}})$.

Let $\mathcal{M}$ be a $\mathbb{Z}[G]$-module. Suppose $\mathcal{M}$ is finitely presented, then there exists finitely generated free $\mathbb{Z}[G]$-modules $\mathcal{S}$ and $\mathcal{R}$, and a $\mathbb{Z}[G]$-module homomorphism $D:\mathcal{R} \to \mathcal{S}$ such that $\mathcal{M} \cong \coker D$.
We define the \emph{(zeroth) Fitting invariant} of $\mathcal{M}$ to be $\Fit(D)$.
It is a classical algebraic fact that this definition is independent of the choice of $D$, up to multiplication by a unit of $\mathbb{Z}[G]$.
See \cite[Chapter 3.1]{Nor76} for more details.

\subsection{Relations and polynomials} \label{subsec:polysdefn}

For the rest of this section, let $B$ be a veering branched surface on a 3-manifold $M$. Let $G=H_1(M)/\text{Torsion}$. Let $\widehat{M}$ be the universal abelian cover of $M$, i.e. $\widehat{M}$ is the covering space of $M$ corresponding to the kernel of the homomorphism $\pi_1(M) \to H_1(M) \to G$. 
Let $\widehat{B}$ be the lift of $B$ to $\widehat{M}$.

The deck transformation group $G$ acts freely on the set of sectors/edges/triple points of $\widehat{B}$, where orbits can be identified with the sectors/edges/triple points of $B$, respectively.
It follows that the free $\mathbb{Z}$-module generated by the sectors/edges/triple points of $\widehat{B}$ is a free $\mathbb{Z}[G]$-module with rank equals to the number of sectors/edges/triple points of $B$, respectively.
We denote these finitely generated free $\mathbb{Z}[G]$-modules as $\mathcal{S}$, $\mathcal{E}$, and $\mathcal{T}$, respectively.

Over the next few subsections, we will define some relations in $\mathcal{S}$. 
These relations will in turn define modules whose Fitting ideal gives the respective polynomial invariants.

\subsubsection{The face relation and the taut polynomial}

Let $\widehat{e}$ be an edge of $\widehat{B}$. Let $\widehat{b}_1$, $\widehat{b}_2$, and $\widehat{t}$ be the sectors of $\widehat{B}$ that contain $\widehat{e}$ in its boundary, where the maw coorientation on $\widehat{e}$ points out of $\widehat{b}_1$ and $\widehat{b}_2$, and into $\widehat{t}$. See \Cref{fig:polyrels} left.
The \emph{face relation} at $\widehat{e}$ is the relation $\widehat{b}_1+\widehat{b}_2-\widehat{t}$. 
A mnemonic for this relation is to think of the widths of $\widehat{b}_1$ and $\widehat{b}_2$ adding together to become the width of $\widehat{t}$ as one moves across $\widehat{e}$.

Let $D_\Theta:\mathcal{E} \to \mathcal{S}$ be the $\mathbb{Z}[G]$-module homomorphism defined by sending $\widehat{e}$ to the face relation at $\widehat{e}$. The \emph{taut module} $\mathcal{M}_\Theta$ is defined to be the cokernel of $D_\Theta$.
The \emph{taut polynomial} $\Theta$ is defined to be the Fitting invariant of $\mathcal{M}_\Theta$.

\subsubsection{The tetrahedron relation and the veering polynomial}

Let $\widehat{v}$ be a triple point of $\widehat{B}$.
We set up some notation for the edges and sectors that meet $\widehat{v}$. The reader should refer to \Cref{fig:polyrels} right for the next few paragraphs.

\begin{figure}
    \centering
    \selectfont\fontsize{8pt}{8pt}
    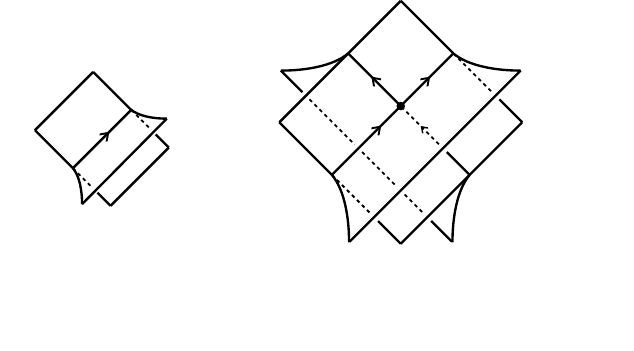
    \caption{The relations used to define the taut, veering, and anti-veering polynomials.}
    \label{fig:polyrels}
\end{figure}

Let $\widehat{e}^b_1$ and $\widehat{e}^b_2$ be the two edges of $\widehat{B}$ that have terminal vertex at $\widehat{v}$, and let $\widehat{e}^t_1$ and $\widehat{e}^t_2$ be the two edges of $\widehat{B}$ that have initial vertex at $\widehat{v}$, where $\widehat{e}^b_i$ and $\widehat{e}^t_i$ lies on the same branch loop locally at $\widehat{v}$ for each $i$.

Let $\widehat{b}$, $\widehat{f}_1$, $\widehat{f}_2$, $\widehat{s}_1$, $\widehat{s}_2$, and $\widehat{t}$ be the sectors of $\widehat{B}$ that contain $\widehat{v}$ in its boundary, where
\begin{itemize}
    \item $\widehat{b}$ is the sector lying below $\widehat{v}$,
    \item $\widehat{t}$ is the sector lying above $\widehat{v}$,
    \item $\widehat{f}_i$ contains $\widehat{e}^b_i * \widehat{e}^t_i$ in its boundary, and
    \item $\widehat{s}_i$ contains $\widehat{e}^b_i * \widehat{e}^t_{i+1}$ in its boundary (where the indices are taken mod $2$).
\end{itemize}

The \emph{tetrahedron relation} at $\widehat{v}$ is the relation $\widehat{b}+\widehat{f}_1+\widehat{f}_2-\widehat{t}$.
Similar to the face relation, one can interpret the tetrahedron relation as the widths of $\widehat{b}$, $\widehat{f}_1$, and $\widehat{f}_2$ adding up to give the width of $\widehat{t}$.

Let $D_V:\mathcal{T} \to \mathcal{S}$ be the $\mathbb{Z}[G]$-module homomorphism defined by sending $\widehat{v}$ to the tetrahedron relation at $\widehat{v}$. The \emph{veering module} $\mathcal{M}_V$ is defined to be the cokernel of $D_V$.
The \emph{veering polynomial} $V$ is defined to be the Fitting invariant of $\mathcal{M}_V$.

Observe that the tetrahedron relation $\widehat{b}+\widehat{f}_1+\widehat{f}_2-\widehat{t}$ at $\widehat{v}$ can be written as the sum $(\widehat{b}+\widehat{f}_1-\widehat{s}_1)+(\widehat{s}_1+\widehat{f}_2-\widehat{t})$, which is exactly the sum of the face relations at $\widehat{e}^b_1$ and $\widehat{e}^t_2$. 
Building on this, Landry-Minsky-Taylor showed that the taut and veering polynomials are related.
To state the relation precisely, we have to introduce some terminology. 

An \textit{anti-branch loop} of $B$ (called an AB-cycle in \cite{LMT24}) is an embedded loop of the dual graph $G$ that is not smoothly embedded at every vertex.
That is, an anti-branch loop `takes a turn' at each vertex.
Each edge of $G$ lies in exactly one anti-branch loop. In other words, $G$ admits a canonical decomposition as a union of anti-branch loops.

Note that each anti-branch loop lies on $B$. 
We say that an anti-branch loop $a$ is \emph{orientation-preserving} if the plane bundle $TB|_a$ is orientable, otherwise we say that $a$ is \emph{orientation-reversing}.
By considering the bases on $B$ determined by pairs of orientations on adjacent edges in $a$, one can see that $a$ is orientation-preserving if and only if it is of even length.

\begin{thm}[{\cite[Theorem 6.1 and Lemma 6.2]{LMT24}}] \label{thm:tautveering}
Let $B$ be a veering branched surface on a 3-manifold $M$. 
Let $a_1,\dots,a_m$ be the anti-branch loops of $B$.

Suppose $b_1(M) \geq 2$. Then 
\begin{equation} \label{eq:tautveering}
V = \Theta \cdot \prod_{\text{$a_i$ ori. pres.}} (1-[a_i]) \prod_{\text{$a_i$ ori. rev.}} (1+[a_i]).
\end{equation}

If instead $b_1(M)=1$, let $t$ be a generator of $G$. Then \Cref{eq:tautveering} holds up to possibly multiplying the left hand side by $1+t$ or $1-t$. \qed
\end{thm}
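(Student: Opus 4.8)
The plan is to promote the observation recorded just before the statement --- that the tetrahedron relation at a lifted triple point $\widehat v$ of $\widehat B$ is the \emph{sum} of the two face relations at $\widehat e^b_1$ and $\widehat e^t_2$ --- to a factorization of $D_V$. Concretely, define a $\mathbb{Z}[G]$-module homomorphism $\psi\colon \mathcal{T} \to \mathcal{E}$ sending $\widehat v$ to $\widehat e^b_1 + \widehat e^t_2$, with the deck-translation coefficients dictated by the chosen lifts; then $D_V = D_\Theta \circ \psi$, so $\operatorname{im} D_V \subseteq \operatorname{im} D_\Theta$ and $\mathcal{M}_\Theta = \mathcal{S}/\operatorname{im} D_\Theta$ is a quotient of $\mathcal{M}_V = \mathcal{S}/\operatorname{im} D_V$. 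This yields a short exact sequence
\begin{equation*}
0 \longrightarrow K \longrightarrow \mathcal{M}_V \longrightarrow \mathcal{M}_\Theta \longrightarrow 0, \qquad K := \operatorname{im} D_\Theta/\operatorname{im} D_V \cong \mathcal{E}\big/\bigl(\psi(\mathcal{T}) + \ker D_\Theta\bigr).
\end{equation*}
Since $\Theta \neq 0$ (the taut polynomial of a veering branched surface is nonzero, cf.\ \cite{LMT24}), all three modules are $\mathbb{Z}[G]$-torsion, and the Fitting invariant is multiplicative in short exact sequences of torsion modules over the Noetherian UFD $\mathbb{Z}[G]$. Writing $\doteq$ for equality up to a unit, this gives $V \doteq \Theta \cdot \Fit(K)$, so the whole content of the theorem is the computation $\Fit(K) \doteq \prod_{\text{$a_i$ ori. pres.}}(1-[a_i])\,\prod_{\text{$a_i$ ori. rev.}}(1+[a_i])$.

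To compute $K$ I would first identify $\ker D_\Theta$. The tetrahedron relation at $\widehat v$ also equals the sum of the face relations at the \emph{other} pair of edges $\widehat e^b_2$ and $\widehat e^t_1$, so the ``turn relation'' $\rho_{\widehat v} := \widehat e^b_1 + \widehat e^t_2 - \widehat e^b_2 - \widehat e^t_1$ (again with the appropriate deck coefficients) lies in $\ker D_\Theta$. A rank count --- $\rank \mathcal{E} = 2\rank \mathcal{S}$ together with $\mathcal{M}_\Theta$ torsion --- shows $\ker D_\Theta$ has rank equal to the number of triple points, matching the number of the $\rho_{\widehat v}$; one then has to verify that the $\rho_{\widehat v}$ actually \emph{generate} $\ker D_\Theta$, not merely a finite-index submodule, which I expect to follow from torsion-freeness of the relevant subquotient of $\mathcal{E}$. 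Granting $\ker D_\Theta = \langle \rho_{\widehat v} \rangle$, one obtains a square presentation of $K$ whose columns are the $\psi(\widehat v)$ together with the $\rho_{\widehat v}$, all supported on edges. Imposing $\psi(\widehat v) = 0$ identifies the incoming edge at $\widehat v$ with minus a deck-translate of the outgoing edge along one of the two anti-branch turns at $\widehat v$, and imposing $\rho_{\widehat v} = 0$ does the same along the other turn. Since the edges of $G$ are partitioned by anti-branch loops, following these identifications once around an anti-branch loop $a_i$ of length $k_i$ collapses all of its edges to a single generator $e$ subject to $e = (-1)^{k_i}[a_i]\,e$; hence $a_i$ contributes the cyclic summand $\mathbb{Z}[G]/(1-(-1)^{k_i}[a_i])$ to $K$. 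By the parity criterion recorded before the statement, $k_i$ is even exactly when $a_i$ is orientation-preserving, so $K \cong \bigoplus_i \mathbb{Z}[G]/(1-(-1)^{k_i}[a_i])$ and $\Fit(K) \doteq \prod_i (1-(-1)^{k_i}[a_i])$, which is the asserted product.

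The hard part is the bookkeeping concentrated in the previous paragraph: (i) showing $\ker D_\Theta$ is generated by the turn relations and not a larger submodule, and (ii) --- more delicate --- tracking the deck-transformation coefficients through the chain of identifications so that the monomial accumulated around each $a_i$ is exactly $[a_i] \in G$ (up to the harmless replacement $[a_i]\mapsto[a_i]^{-1}$, which does not change the principal ideal) and the accumulated sign is exactly $(-1)^{k_i}$; this is where the precise local model at a triple point (\Cref{fig:veeringcondition}, \Cref{fig:polyrels}) and the compatibility of the orientations on the branch loops are used. Finally, when $b_1(M)=1$ and $G=\langle t\rangle$, the rank/multiplicativity argument acquires an ambiguity of a factor $1\pm t$ --- exactly the discrepancy allowed in the statement --- which I would pin down as in \cite{LMT24} by comparing leading terms directly.
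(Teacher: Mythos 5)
First, a point of orientation: the paper does not prove this statement at all --- it is imported from \cite{LMT24} (the statement carries its own end-of-proof mark), and the only in-paper comparison is the proof of the analogous factorization \Cref{thm:tautantiveering} in \Cref{sec:factorantiveerpoly}, which follows the Landry--Minsky--Taylor two-divisibility strategy. Your route is genuinely different, and much of it is sound: the factorization $D_V=D_\Theta\circ\psi$ is exactly the observation recorded before the statement, the short exact sequence and the identification $K\cong\mathcal{E}/(\psi(\mathcal{T})+\ker D_\Theta)$ are correct, the multiplicativity of $\Fit$ (as a gcd of the zeroth Fitting ideal) in short exact sequences of torsion modules over the Noetherian UFD $\mathbb{Z}[G]$ is true (it follows from additivity of $\mathrm{length}_{R_{(p)}}(M_{(p)})$ at height-one primes, and deserves a citation or proof rather than an assertion), and the computation $\mathcal{E}/(\psi(\mathcal{T})+\langle\rho_{\widehat{v}}\rangle)\cong\bigoplus_i\mathbb{Z}[G]/(1-(-1)^{k_i}[a_i])$ is right, including the parity criterion.

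The genuine gap is the claim that the turn relations $\rho_{\widehat{v}}$ generate $\ker D_\Theta$. Without it you only have a surjection $\bigoplus_i\mathbb{Z}[G]/(1-(-1)^{k_i}[a_i])\twoheadrightarrow K$, hence only $\Fit(K)\mid\prod_i(1-(-1)^{k_i}[a_i])$ and therefore only the easy divisibility $V\mid\Theta\cdot\prod(\cdots)$ (the analogue of \Cref{prop:factorantiveeringeasydir}). The rank count shows at best that the $\rho_{\widehat{v}}$ span a full-rank submodule of $\ker D_\Theta$; the quotient $\ker D_\Theta/\langle\rho_{\widehat{v}}\rangle$ is a \emph{quotient} of a torsion-free module, not a submodule of one, so ``torsion-freeness of the relevant subquotient'' gives you nothing, and there is no soft reason for this quotient to vanish or even to have trivial divisor. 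The tell is that your argument never uses $b_1(M)\geq 2$: if the generation claim held by a general argument, you would obtain the uncorrected identity also when $b_1(M)=1$, where the theorem explicitly allows a $1\pm t$ discrepancy. In your framework the theorem is precisely equivalent to the statement that $\ker D_\Theta/\langle\rho_{\widehat{v}}\rangle$ has trivial divisor (up to $1\pm t$ when $b_1=1$), so the entire difficulty has been relocated into the step you flag but do not carry out. The known proofs avoid it by instead establishing the hard divisibility one loop at a time (as in \Cref{lemma:branchsimplefactor}) and then taking a gcd over a generating set of $H_1$ via \Cref{lemma:branchsimplerelprime} --- which is exactly where $b_1(M)\geq 2$ enters. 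To salvage your approach you would need to prove the generation statement directly and explain how it degenerates when $b_1(M)=1$; otherwise you should fall back on the two-divisibility scheme.
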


\subsubsection{The anti-tetrahedron relation and the anti-veering polynomial} \label{subsec:antiveeringpoly}

We continue using the notation in \Cref{fig:polyrels}. 
The \emph{anti-tetrahedron relation} at the triple point $\widehat{v}$ is the relation $\widehat{b}-\widehat{s}_1-\widehat{s}_2+\widehat{t}$. 

Let $D_A:\mathcal{T} \to \mathcal{S}$ be the $\mathbb{Z}[G]$-module homomorphism defined by sending $\widehat{v}$ to the anti-tetrahedron relation at $\widehat{v}$. The \emph{anti-veering module} $\mathcal{M}_A$ is defined to be the cokernel of $D_A$.
The \emph{anti-veering polynomial} $A$ is defined to be the Fitting invariant of $\mathcal{M}_A$.

Observe that the anti-tetrahedron relation $\widehat{b}-\widehat{s}_1-\widehat{s}_2+\widehat{t}$ at $\widehat{v}$ can be written as the difference $(\widehat{b}+\widehat{f}_1-\widehat{s}_1)-(\widehat{s}_2+\widehat{f}_1-\widehat{t})$, which is exactly the difference of the face relations at $\widehat{e}^b_1$ and $\widehat{e}^t_1$.
Since this observation will be used repeatedly in \Cref{sec:factorantiveerpoly}, we record it more formally as a lemma.

\begin{lemma} \label{lemma:facereldiff}
Let $\widehat{e}^b$ and $\widehat{e}^t$ be edges of $\widehat{B}$ such that $\widehat{e}^b * \widehat{e}^t$ takes a turn at a vertex $\widehat{v}$. 
Then
$$D_\Theta(\widehat{e}^b) - D_\Theta(\widehat{e}^t) = D_A(\widehat{v}).$$ \qed
\end{lemma}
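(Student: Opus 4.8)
The plan is to prove \Cref{lemma:facereldiff} by a direct local computation at the triple point $\widehat v$, using the labeling of sectors and edges of \Cref{fig:polyrels} (right). As the authors note just before the statement, the lemma is a formalization of the observation that the anti-tetrahedron relation at $\widehat v$ equals the difference of two face relations, so the work is essentially to make that observation precise.

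First I would recall the two ingredients. By definition, for an edge $\widehat e$ of $\widehat B$ bounding the sectors $\widehat b_1, \widehat b_2, \widehat t$, with the maw coorientation along $\widehat e$ pointing out of $\widehat b_1$ and $\widehat b_2$ and into $\widehat t$, one has $D_\Theta(\widehat e) = \widehat b_1 + \widehat b_2 - \widehat t$; and at the triple point $\widehat v$ one has $D_A(\widehat v) = \widehat b - \widehat s_1 - \widehat s_2 + \widehat t$ in the notation of \Cref{fig:polyrels}.

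Next I would pin down the two edges in play. The hypothesis that $\widehat e^b * \widehat e^t$ takes a turn at $\widehat v$ identifies, after relabeling the two branch loops through $\widehat v$ if necessary, $\widehat e^b$ and $\widehat e^t$ with the two edges whose face relations appear in the decomposition recorded just above the lemma, i.e., in the notation of \Cref{fig:polyrels}, $\widehat e^b = \widehat e^b_1$ and $\widehat e^t = \widehat e^t_1$. Reading off the local model — and using \Cref{prop:vbssector}, which says each sector is a diamond whose bottom sides are single edges, together with the defining property that the maw coorientation on a branch edge points from its two-sector side to its one-sector side — I would record: $\widehat e^b_1$ is bounded by $\widehat b$, $\widehat f_1$, $\widehat s_1$, with the maw pointing out of $\widehat b$ and $\widehat f_1$ and into $\widehat s_1$, so $D_\Theta(\widehat e^b_1) = \widehat b + \widehat f_1 - \widehat s_1$; and $\widehat e^t_1$ is bounded by $\widehat f_1$, $\widehat s_2$, $\widehat t$, with the maw pointing out of $\widehat f_1$ and $\widehat s_2$ and into $\widehat t$, so $D_\Theta(\widehat e^t_1) = \widehat f_1 + \widehat s_2 - \widehat t$.

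Finally I would subtract: the $\widehat f_1$-terms cancel, leaving $D_\Theta(\widehat e^b) - D_\Theta(\widehat e^t) = (\widehat b + \widehat f_1 - \widehat s_1) - (\widehat f_1 + \widehat s_2 - \widehat t) = \widehat b - \widehat s_1 - \widehat s_2 + \widehat t = D_A(\widehat v)$, which is the assertion. The only step carrying any content is the identification of the three sectors incident to each of $\widehat e^b$ and $\widehat e^t$ and the direction of the maw coorientation along each; this is "evident from the local picture" in the same spirit as \Cref{lemma:localmodel}, and is exactly the bookkeeping the authors have already carried out in the paragraph introducing the lemma, so I expect this middle step to be the part requiring the most care but not a genuine obstacle.
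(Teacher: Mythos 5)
Your local computation is precisely the paper's own proof: the authors justify \Cref{lemma:facereldiff} by the observation immediately preceding it, namely that the difference of the face relations at $\widehat{e}^b_1$ and $\widehat{e}^t_1$ is $(\widehat{b}+\widehat{f}_1-\widehat{s}_1)-(\widehat{s}_2+\widehat{f}_1-\widehat{t})=\widehat{b}-\widehat{s}_1-\widehat{s}_2+\widehat{t}=D_A(\widehat{v})$, and your identification of the three sectors adjacent to each of these two edges, with the maw coorientations, agrees with theirs.

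The step that does not hold up is your reduction of the hypothesis. You claim that ``$\widehat{e}^b * \widehat{e}^t$ takes a turn at $\widehat{v}$'' allows you to take $\widehat{e}^b=\widehat{e}^b_1$ and $\widehat{e}^t=\widehat{e}^t_1$ after relabeling the two branch loops. But under the labeling convention fixed in the text (and forced by the formulas you and the paper use, since the sector $\widehat{f}_1$ appearing in both face relations is the one attached along the branch arc that $\widehat{e}^b_1$ and $\widehat{e}^t_1$ jointly form), $\widehat{e}^b_i$ and $\widehat{e}^t_i$ lie on the \emph{same} branch loop locally, so $\widehat{e}^b_1 * \widehat{e}^t_1$ is the smooth concatenation through $\widehat{v}$; the turning concatenations are $\widehat{e}^b_1 * \widehat{e}^t_2$ and $\widehat{e}^b_2 * \widehat{e}^t_1$, and relabeling the branch loops only swaps the indices, preserving which pairs are smooth and which turn. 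For a genuinely turning pair the identity fails: $D_\Theta(\widehat{e}^b_1)-D_\Theta(\widehat{e}^t_2)=(\widehat{b}+\widehat{f}_1-\widehat{s}_1)-(\widehat{s}_1+\widehat{f}_2-\widehat{t})=\widehat{b}+\widehat{f}_1-\widehat{f}_2-2\widehat{s}_1+\widehat{t}\neq D_A(\widehat{v})$. So what your calculation actually proves is the identity for edges that follow the same branch loop through $\widehat{v}$ --- which is also exactly how the lemma is used later, applied to consecutive edges $\widehat{e}_{i,k},\widehat{e}_{i,k+1}$ of a branch loop in \Cref{prop:factorantiveeringeasydir} and \Cref{lemma:branchsimplefactor}, where no turn occurs. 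The phrase ``takes a turn'' in the statement is thus at odds with the convention of \Cref{fig:polyrels}, and your relabeling step silently converts the turning hypothesis into the smooth one; as written that step is wrong, and the honest fix is to prove (and state) the identity for the pairs lying along a common branch loop, or to flag the discrepancy in the hypothesis explicitly.
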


We have the following analogue of \Cref{thm:tautveering} for the anti-veering polynomial.

\begin{thm} \label{thm:tautantiveering}
Let $B$ be a veering branched surface on a 3-manifold $M$. Let $b_1,\dots,b_n$ be the branch loops of $B$.

Suppose $b_1(M) \geq 2$, then 
\begin{equation} \label{eq:tautantiveering}
A = \Theta \cdot \prod_i (1-[b_i]).
\end{equation}

If instead $b_1(M)=1$, let $t$ be a generator of $G$. Then \Cref{eq:tautantiveering} holds up to possibly multiplying the left hand side by $1+t$ or $1-t$.
\end{thm}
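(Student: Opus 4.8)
The plan is to prove \Cref{thm:tautantiveering} in close parallel with the proof of \Cref{thm:tautveering} in \cite{LMT24}, systematically replacing the tetrahedron relation and anti-branch loops by the anti-tetrahedron relation and branch loops, with \Cref{lemma:facereldiff} playing the r\^ole of the analogous local identity used there. The starting point is the factorization of $D_A$ through $D_\Theta$. Define $\partial\colon\mathcal T\to\mathcal E$ by sending a triple point $\widehat v$ to $\widehat e^b-\widehat e^t$, where $\widehat e^b\ast\widehat e^t$ is the branch-loop concatenation at $\widehat v$ appearing in \Cref{lemma:facereldiff} (and where the deck-transformation monomial relating the chosen lifts is recorded in the usual way). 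Then \Cref{lemma:facereldiff} says exactly that $D_A=D_\Theta\circ\partial$. In particular $D_A(\mathcal T)\subseteq D_\Theta(\mathcal E)$, so $\mathcal M_\Theta$ is a quotient of $\mathcal M_A$ and there is a short exact sequence of $\mathbb Z[G]$-modules
\[0\longrightarrow Q\longrightarrow \mathcal M_A\longrightarrow \mathcal M_\Theta\longrightarrow 0,\qquad Q:=D_\Theta(\mathcal E)/D_A(\mathcal T).\]

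Next I would reduce the theorem to a single order computation. Since $b_1(M)\ge2$, the taut polynomial is nonzero (\cite{LMT24}), which is equivalent to $\mathcal M_\Theta$ being a torsion $\mathbb Z[G]$-module; one then checks that $\partial$ is injective and that $Q$ and $\mathcal M_A$ are torsion as well. Because $\mathbb Z[G]$ is a regular UFD, the Fitting invariant of a torsion module equals its order (the product over height-one primes of its localized lengths), and this order is multiplicative in short exact sequences. Applying this to the sequence above gives $A=\Theta\cdot\mathrm{ord}(Q)$, so it remains to prove $\mathrm{ord}(Q)=\prod_i(1-[b_i])$.

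The crux is this last step. Writing $Q=\mathcal E/(\partial\mathcal T+\ker D_\Theta)$, we see that $Q$ is simultaneously a quotient of $\coker\partial$ and of $D_\Theta(\mathcal E)$. Decomposing $\brloc(B)=b_1\cup\cdots\cup b_n$ into branch loops organizes the free generators of $\mathcal E$, and propagating the defining relations $\partial(\widehat v)=\widehat e^b-(\text{monomial})\,\widehat e^t$ around each branch loop $b_i$ telescopes to produce the monodromy factor $1-[b_i]$ — exactly as the anti-branch loops produced the factors $1\pm[a_i]$ in \Cref{thm:tautveering}. The delicate part, which I expect to be the main obstacle, is to show that the discrepancy between $Q$ and $\coker\partial$ — that is, the contribution of $\ker D_\Theta$ — does not change the order; this is precisely where the veering combinatorics enters, via \Cref{lemma:facereldiff} together with the local sector structure recorded in \Cref{prop:vbstogglefan}, mirroring the corresponding bookkeeping in \cite{LMT24} for the veering polynomial. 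Finally, when $b_1(M)=1$ the ground ring is $\mathbb Z[t^{\pm1}]$ and the length/order accounting above can fail to distinguish a unit from $1\pm t$ at one step; following \cite{LMT24}, this is exactly the source of the stated ambiguity and is handled by the same device.
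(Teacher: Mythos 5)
Your reformulation via the short exact sequence $0 \to Q \to \mathcal{M}_A \to \mathcal{M}_\Theta \to 0$ with $Q = D_\Theta(\mathcal{E})/D_A(\mathcal{T})$ is genuinely different from the paper's argument (which never forms this sequence and instead proves two divisibilities between gcds of minors), and, granting that the modules involved are torsion, additivity of length at height-one primes does reduce the theorem to $\mathrm{ord}(Q) = \prod_i(1-[b_i])$. But your route to that computation is miscalibrated. Since the dual graph is $4$-valent, $\rank \mathcal{E} = 2n$ while $\rank \mathcal{T} = n$; each vertex supplies only one relation $\wh{e}^b - \wh{e}^t$ yet lies on two branch-loop strands, so you cannot telescope all the way around every branch loop, and $\coker\partial$ has rank $n$ over the fraction field — it has no finite order at all. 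Hence $\ker D_\Theta$ (also of rank $\geq n$) is not a small ``discrepancy'' to be checked at the end: it is what makes $Q$ torsion in the first place, and the whole theorem lives in the interaction between $\partial\mathcal{T}$ and $\ker D_\Theta$. The correct form of your telescoping happens in $\mathcal{S}$, not $\mathcal{E}$: iterating \Cref{lemma:facereldiff} around $b_i$ shows the face relations at all edges of $b_i$ coincide in $Q$ and are annihilated by $1-[b_i]$, giving a surjection $\bigoplus_i \mathbb{Z}[G]/(1-[b_i]) \twoheadrightarrow Q$ and therefore only $\mathrm{ord}(Q) \mid \prod_i(1-[b_i])$, i.e.\ $A \mid \Theta\cdot\prod_i(1-[b_i])$ — the paper's easy direction, \Cref{prop:factorantiveeringeasydir}.

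The reverse divisibility — that this surjection induces an equality of orders — is the actual content of the theorem, and your proposal asserts rather than proves it. It cannot follow from \Cref{lemma:facereldiff} and local sector combinatorics alone, because the statement is genuinely false as an equality when $b_1(M)=1$: the $1\pm t$ ambiguity is exactly a nontrivial contribution of $\ker D_\Theta$, so global input about $M$ must enter. The paper supplies it in three steps that your framework would still need to import: \Cref{lemma:branchsimplefactor} shows, by an explicit column reduction, that $\Theta\cdot\prod_i(1-[b_i])$ divides $A\cdot(1-(-1)^k[c])$ for every branch-simple loop $c$ of complexity $k$; \Cref{lemma:manybranchsimple} shows branch-simple loops generate the first homology of the dual graph; and the gcd computation of \Cref{lemma:branchsimplerelprime} is where the hypothesis $b_1(M)\geq 2$ is finally used. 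Separately, your order argument presupposes $\Theta \neq 0$ (equivalently, that $\mathcal{M}_\Theta$ is torsion); this is not free, is not needed by the paper's divisibility argument, and should not be cited casually since the nonvanishing statements in the literature are themselves consequences of factorizations of this kind.
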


The proof of \Cref{thm:tautantiveering} is deferred to \Cref{sec:factorantiveerpoly} since it involves ideas that do not play a role elsewhere in this paper.

\subsection{Zeta functions} \label{subsec:zetafunctions}

In this subsection, we explain the connection between the previously defined polynomials and zeta functions. This material was explained to us by Jonathan Zung.

Let $G$ be a finitely generated free abelian group.
The \emph{group ring of formal power series} in $G$ (with $\mathbb{Z}$-coefficients) is the ring
$\mathbb{Z}[[G]] = \left\{ \sum_{g \in G} a_g g \mid a_g \in \mathbb{Z} \right\}$.
For $g \in G$, we write $(1-g)^{-1}$ to mean $\sum_{k=0}^\infty g^k$ in $\mathbb{Z}[[G]]$.

Given a basis $\{g_1,\dots,g_n\}$ of $G$, we can identify $\mathbb{Z}[[G]]$ with the multivariable formal power series ring $\mathbb{Z}[[t_1,t_1^{-1},\dots,t_n,t_n^{-1}]]$ by sending $\sum_{i_1,\dots,i_n} a_{g_1^{i_1} \dots g_n^{i_n}} g_1^{i_1} \dots g_n^{i_n}$ to $\sum_{i_1,\dots,i_n} a_{g_1^{i_1} \dots g_n^{i_n}} t_1^{i_1} \dots t_n^{i_n}$.

A \textit{partial flow} on a space $X$ is a continuous map $\phi$ from a closed subset $K \subset \mathbb{R} \times X$ to $X$ such that
\begin{itemize}
    \item $K$ contains $\{0\} \times X$,
    \item $\phi(0,x)=x$ for all $x \in X$, and
    \item $\phi(s,\phi(t,x))=\phi(s+t,x)$ for all $s,t \in \mathbb{R}$, $x \in X$, whenever the two sides are well-defined.
\end{itemize}
More intuitively, a partial flow is a flow except some flow lines may only be well-defined up to some finite (forward or backward) time.
We say that $\phi$ is a \textit{forward semi-flow} if the domain $K$ contains $[0,\infty) \times X$.

Let $G = H_1(X)/\text{Torsion}$. We define the \textit{zeta function} of $\phi$ to be 
$$\zeta_\phi = \prod_{\text{primitive $\gamma$}} (1-[\gamma])^{-1} \in \mathbb{Z}[[G]]$$
where the product is taken over all primitive closed orbits $\gamma$, and $[\gamma] \in G$ denotes the homology class of $\gamma$, provided that the coefficient of each $g \in G$ converges in the (possibly) infinite product. 
See \cite{Fri86}, \cite{Pol20}, and \cite{JZ24} for more on dynamical zeta functions.

Now let $\phi$ be a pseudo-Anosov flow on a closed 3-manifold $Y$.
A \textit{flow box} of $\phi$ is a compact subset $F$ homeomorphic to $I_s \times I_u \times [0,1]$ where:
\begin{itemize}
    \item $I_s$ and $I_u$ are (possibly degenerate) closed intervals.
    \item For each $(s_0,u_0) \in I_s \times I_u$, the oriented segment $\{s_0\} \times \{u_0\} \times [0,1]$ is a segment of a flow line.
    \item For each $s_0 \in [0,1]$, the slice $\{s_0\} \times I_u \times [0,1]$ lies on a leaf of the stable foliation.
    \item For each $u_0 \in [0,1]$, the slice $I_s \times \{u_0\} \times [0,1]$ lies on a leaf of the unstable foliation.
\end{itemize}

We refer to $I_s \times I_u \times \{1\}$ as the \emph{top rectangle}, $I_s \times I_u \times \{0\}$ as the \emph{bottom rectangle}, $\partial I_s \times I_u \times [0,1]$ as the \emph{stable rectangles}, and $I_s \times \partial I_u \times [0,1]$ as the \emph{unstable rectangles} of $F$.
A \emph{$s$-subrectangle} of the top rectangle of $F$ is a subset of the form $J_s \times I_u \times \{1\}$ where $J_s$ is a closed subinterval of $I_s$. A \emph{$u$-subrectangle} of the bottom face of a $F$ is a subset of the form $I_s \times J_u \times \{0\}$ where $J_u$ is a closed subinterval of $I_u$.

A \textit{flow box decomposition} for $\phi$ is a decomposition of $Y$ into finitely many flow boxes $F_1,\dots,F_n$ with disjoint interiors such that the intersection of the top rectangle of $F_i$ and the bottom rectangle of $F_j$ is a (possibly empty) union of $s$-subrectangles of the former and a (possibly empty) union of $u$-subrectangle of the latter, for every $i,j$.
See \Cref{fig:flowboxdecomp} middle.

\begin{figure}
    \centering
    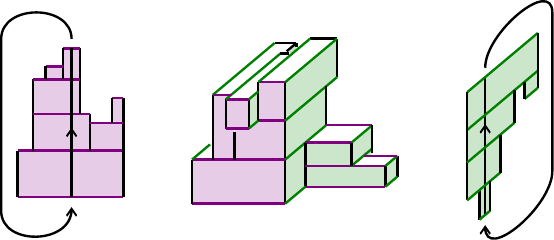
    \caption{A flow box decomposition. Left: An unstable spine with an unstable spine orbit. Middle: The flow boxes. Right: A stable spine with a stable spine orbit.}
    \label{fig:flowboxdecomp}
\end{figure}

Fix a flow box decomposition $\mathcal{F}$.
A \textit{stable spine} is a component of the union of stable rectangles in $Y$. 
See \Cref{fig:flowboxdecomp} right.
Each stable spine lies on stable leaf.
In fact, since there are only finitely many stable rectangles, each stable spine must lie along a periodic stable leaf, i.e. an stable leaf that contains a primitive closed orbit. 
We refer to such a closed orbit as a \textit{stable spine orbit}, and we denote the collection of stable spine orbits as $\mathcal{C}^s$. 
We define the \textit{unstable spines} and \textit{unstable spine orbits} in a symmetric way.
See \Cref{fig:flowboxdecomp} left.

The flow boxes in $\mathcal{F}$ are glued along their stable rectangles, along their unstable rectangles, and along their top and bottom rectangles. 
We define the \textit{unstable ungluing} of $\mathcal{F}$ to be the subset $D^u \mathcal{F} \subset Y$ obtained by ungluing the unstable rectangles.
See \Cref{fig:unstableunglue}.
Note that $D^u \mathcal{F}$ is homotopy equivalent to the complement of the unstable spine orbits $M = Y \backslash \nu(\mathcal{C}^u)$.
Going forward, we will always implicitly identify $H_1(D^u \mathcal{F})/\text{Torsion}$ with $H_1(M)/\text{Torsion}$.

\begin{figure}
    \centering
    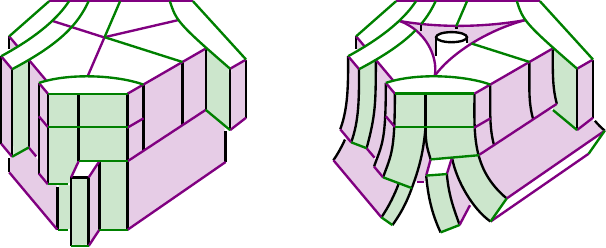
    \caption{Left: A flow box decomposition $\mathcal{F}$ near an unstable spine orbit. Right: The unstable ungluing of $\mathcal{F}$.}
    \label{fig:unstableunglue}
\end{figure}

There is a natural map $D^u \mathcal{F} \to Y$ obtained by regluing the unstable rectangles.
Pulling back the flow lines of $\phi$, we have a forward semi-flow $D^u \phi$ on $D^u \mathcal{F}$.
Here, some backward flow lines are not well-defined because they run into the edges where two unstable rectangles meet along their top edges, where they `split apart' into two flow lines lying on the two unstable rectangles.
We denote the collection of primitive closed orbits of $D^u \phi$ that are the preimages of the stable/unstable spine orbits as $D^u \mathcal{C}^{s/u}$.

Let us compare the (semi-)flows $D^u \phi$ and the blow-up $\phi^\sharp$ of $\phi$ along $\mathcal{C}$.
As pointed out above, these flows are defined on homotopy equivalent spaces.
They share the same primitive closed orbits, with the same corresponding homology class, except: 
For each $\gamma \in \mathcal{C}^u$, let $\gamma_1,\dots,\gamma_k$ be the subset of elements in $D^u \mathcal{C}^u$ that map to $\gamma$. Then all the $\gamma_i$ have the same homotopy class, while $\phi^\sharp$ has $2k$ many primitive closed orbits of this homotopy class on $\partial \nu(\gamma)$. Thus 
\begin{equation} \label{eq:zetablowuptounstable}
\zeta_{\phi^\sharp} = \zeta_{D^u \phi} \prod_{\gamma \in D^u \mathcal{C}^u} (1-[\gamma])^{-1}.
\end{equation}

Meanwhile, we define the \emph{double ungluing} of $\mathcal{F}$ to be the subset $D^2 \mathcal{F} \subset Y$ obtained by ungluing both the stable and unstable rectangles.
See \Cref{fig:doubleunglue}.
There is a natural map $D^2 \mathcal{F} \to D^u \mathcal{F}$ obtained by regluing the stable rectangles. 
Pulling back the flow lines of $D^u \phi$, we have a partial flow $D^2 \phi$.
We denote the collection of primitive closed orbits of $D^2 \phi$ that are the preimages of the orbits in $D^u \mathcal{C}^{s/u}$ as $D^2 \mathcal{C}^{s/u}$.
(Equivalently, one can also define $D^2 \phi$ by pulling back $\phi$ along the map $D^2 \mathcal{F} \to Y$ that reglues the stable and unstable rectangles, and define $D^2 \mathcal{C}^{s/u}$ to be the collection of primitive closed orbits that are the preimages of the stable/unstable spine orbits.)

\begin{figure}
    \centering
    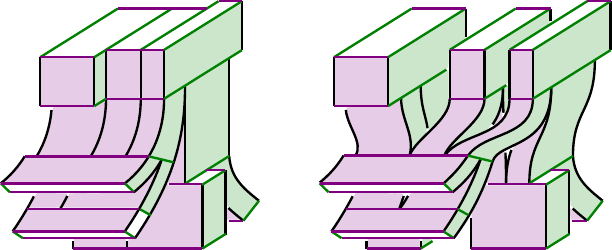
    \caption{Left: The unstable ungluing of a flow box decomposition $\mathcal{F}$. Right: The double ungluing of $\mathcal{F}$.}
    \label{fig:doubleunglue}
\end{figure}

Let us compare the partial flows $D^2 \phi$ and $D^u \phi$.
These flows are not defined on homotopy equivalent spaces (in general). However, $D^2 \mathcal{F}$ naturally embeds in $M = Y \backslash \nu(\mathcal{C}^u)$, thus we have a homomorpism $\iota: H_1(D^2 \mathcal{F})/\text{Torsion} \to H_1(M)/\text{Torsion}$.
The flows share the same primitive closed orbits, with the same corresponding homology class (once we apply $\iota$ to the classes of $D^2 \phi$), except:
Each $\gamma \in D^u \mathcal{C}^s$ is either (i) mapped homeomorphically onto by two primitive closed orbits $\widehat{\gamma}_1, \widehat{\gamma_2}$ of $D^2 \phi$ in the same homotopy class in $M$, or (ii) double covered by a primitive closed orbit $\widehat{\gamma}$ of $D^2 \phi$ in the homotopy class $[\gamma]^2$ in $M$.
We say that $\gamma$ is \emph{orientation-preserving} in case (i) and \emph{orientation-reversing} in case (ii).
Thus
\begin{equation} \label{eq:zetaunstabletodouble}
\zeta_{D^2 \phi} = \zeta_{D^u \phi} \prod_{\text{$\gamma \in D^u \mathcal{C}^s$ ori. pres.}} (1-[\gamma])^{-1} \prod_{\text{$\gamma \in D^u \mathcal{C}^s$ ori. rev.}} (1+[\gamma])^{-1}.
\end{equation}

To explain the connection between zeta functions and the polynomial invariants in the previous subsection, we have to apply these formulas to a particular flow box decomposition.

Let $\mathcal{C}$ be a finite collection of closed orbits such that $\phi$ has no perfect fits relative to $\mathcal{C}$. 
Let $B$ be the corresponding veering branched surface on the 3-manifold $M=Y \backslash \nu(\mathcal{C})$.
As before, we write $G=H_1(M)/\text{Torsion}$.

\begin{prop}[Zung] \label{prop:zetadoubleveering}
There exists a flow box decomposition $\mathcal{F}$ for $\phi$ such that:
\begin{enumerate}
    \item The unstable spine orbits are the orbits in $\mathcal{C}$.
    \item The orbits in $D^u \mathcal{C}^s$ are the ones homotopic to the anti-branch loops of $B$.
    \item The zeta function $\zeta_{D^2 \phi}$ of the partial flow on the double ungluing is well-defined. The image of $\zeta_{D^2 \phi}$ in $\mathbb{Z}[[G]]$ is the reciprocal of the veering polynomial of $B$.
\end{enumerate}
\end{prop}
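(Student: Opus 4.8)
The plan is to build the flow box decomposition $\mathcal{F}$ directly out of the veering triangulation $\Delta$ dual to $B$ -- equivalently, out of the decomposition of the orbit space $\mathcal{O}$ into maximal rectangles used in the proof of \Cref{thm:vbspaflowcorr}. Each tetrahedron $t_R$ of $\widetilde{\Delta}$ sits over a maximal rectangle $R\subset\mathcal{O}$ with its faces transverse to $\widetilde{\phi}$, so after slightly thickening in the flow direction, truncating the blown-up ideal vertices, quotienting by $\pi_1 Y$, and filling back $\nu(\mathcal{C})$, one gets a flow box decomposition of $Y$ with one flow box $F_v$ per triple point $v$ of $B$: the top and bottom rectangles of the $F_v$ record how maximal rectangles stack along the flow, and the stable and unstable rectangles record the side faces of the tetrahedra. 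Since $B$ carries the blown-up unstable foliation and the complementary regions of $B$ are cusped torus shells around $\mathcal{C}$ (\Cref{thm:vbspaflowcorr}(i)), one can arrange the fill-in so that the unstable rectangles of $\mathcal{F}$ assemble exactly along the unstable leaves through the orbits of $\mathcal{C}$, giving (1). A symmetric inspection shows the stable rectangles assemble along the ``turning'' combinatorics of the dual graph $G$: the stable spines run along the periodic stable leaves carrying the closed orbits freely homotopic to the anti-branch loops of $B$, and a stable spine orbit is doubly covered in $D^2\mathcal{F}$ (resp.\ covered by two orbits) precisely when the corresponding anti-branch loop is orientation-reversing (resp.\ preserving). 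This gives (2), and is exactly the matching that makes \eqref{eq:zetablowuptounstable} and \eqref{eq:zetaunstabletodouble} consistent with \Cref{thm:tautveering}.

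For (3), I would pass to the flow graph $\Phi$ of $B$: the (multi)graph with vertex set the sectors of $B$ and, for each triple point $v$ with surrounding sectors named as in \Cref{subsec:polysdefn}, edges $t(v)\to b(v)$, $t(v)\to f_1(v)$, $t(v)\to f_2(v)$, each weighted by the class in $G$ comparing the relevant lift appearing in the tetrahedron relation to a fixed basis lift; here $t(v)$ is the sector having $v$ as its bottom corner, which identifies triple points with sectors bijectively (\Cref{prop:diagrambalanced}). The double ungluing $D^2\mathcal{F}$ is a disjoint union of flow boxes reglued only along top/bottom rectangles, so a flow line of $D^2\phi$ just passes from box to box, recording at each face of $\Delta$ it crosses an edge of $\Phi$; a closed orbit thus yields a primitive cyclic path of $\Phi$, and conversely a cyclic path yields a bi-infinite nested sequence of maximal rectangles contracting in the stable direction forward and in the unstable direction backward, hence a single flow line which periodicity forces to be closed -- the same argument as in the proof of \Cref{thm:vbspaflowcorr}(ii), now run inside $D^2\mathcal{F}\hookrightarrow M$ so that homology classes are tracked in $G$. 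This is a bijection between primitive closed orbits of $D^2\phi$ and primitive cyclic paths of $\Phi$, preserving homology. Finally, normalizing the basis lift of each sector $t(v)$ to be the one lying above $\widehat{v}$, the tetrahedron relation $\widehat{b}+\widehat{f}_1+\widehat{f}_2-\widehat{t}$ shows that the matrix of $D_V$ in these bases is $-I+A$, where $A$ is the weighted adjacency matrix of $\Phi$; since $D_V$ is square, $V=\det(-I+A)=\pm\det(I-A)$ up to a unit of $\mathbb{Z}[G]$.

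It then remains to combine this with the standard zeta--determinant identity: for a matrix $A$ over a commutative ring, $\det(I-A)^{-1}=\exp\!\big(\sum_{n\ge 1}\tfrac1n\operatorname{tr}(A^n)\big)=\prod_{c}(1-w(c))^{-1}$, the product over primitive cyclic paths $c$ of the underlying multigraph with $w(c)$ the product of the edge weights, proved formally by regrouping closed walks into iterates of primitive cyclic paths. Together with the orbit--path bijection this gives $\zeta_{D^2\phi}=\prod_{\gamma}(1-[\gamma])^{-1}=\det(I-A)^{-1}=V^{-1}$ in $\mathbb{Z}[[G]]$, up to the irrelevant unit ambiguity of $V$. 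For the left-hand side to converge in $\mathbb{Z}[[G]]$ one needs only finitely many primitive cyclic paths of $\Phi$ in each class of $G$; I would deduce this from the fact (Landry-Minsky-Taylor \cite{LMT24}) that the flow graph of a veering triangulation is strongly connected and the veering cone in $H^1(M;\mathbb{R})$ is nonempty, so there is $\omega\in H^1(M;\mathbb{R})$ restricting on $\Phi\subset D^2\mathcal{F}\subset M$ to a cocycle strictly positive on every edge; a cyclic path carrying class $g$ then has bounded length, leaving finitely many.

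I expect the genuine obstacle to lie in the first paragraph: making $\mathcal{F}$ precise enough that (1) and (2) can literally be read off while, at the same time, its transition structure is verifiably the flow graph $\Phi$. The delicate points are the bookkeeping relating the subdivision of a flow box's top rectangle into $s$-subrectangles to the ``a top side is a union of $\delta\ge 1$ edges'' phenomenon of \Cref{prop:vbstogglefan}, and carrying the $G$-weights consistently through the $\pi_1 Y$-quotient, the fill-in of $\nu(\mathcal{C})$, and the embedding $D^2\mathcal{F}\hookrightarrow M$. Once $\mathcal{F}$ is pinned down, the nested-rectangle correspondence, the linear algebra with $D_V$, and the zeta--determinant identity are all routine.
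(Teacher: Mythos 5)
Your second half (the orbit--cycle dictionary, the identity $V=\pm\det(I-A_\Phi)$ via the tetrahedron relations and the triple-point/sector bijection, and the Bowen--Lanford regrouping of closed walks into primitive cycles) matches the paper's proof, except that the paper simply cites \cite[Theorem 4.8]{LMT24} for the Perron-polynomial identification rather than re-deriving it. The genuine gap is exactly where you suspect it, but it is worse than ``bookkeeping'': your proposed $\mathcal{F}$, with one flow box $F_v$ per triple point (i.e.\ per tetrahedron of $\Delta$), cannot have the properties the proposition needs. A tetrahedron with flow-transverse faces is not a flow box: its two top faces meet its two bottom faces along the four side edges, so the would-be top and bottom rectangles intersect and the product structure degenerates over $\partial R$; in particular its stable/unstable rectangles are degenerate, so there is essentially nothing to unglue and items (1) and (2) cannot be ``read off''. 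More fundamentally, even after truncating to honest boxes, a decomposition with one box per tetrahedron glued along faces has as its top-to-bottom transition graph the \emph{dual} graph $G$ of $B$ (one transition per face of $\Delta$, hence $2n$ edges), not the flow graph $\Phi$ (vertex set the sectors, $3n$ edges); your sentence ``recording at each face of $\Delta$ it crosses an edge of $\Phi$'' conflates these two graphs. Cycles of $G$ encode all free homotopy classes of closed orbits of $\phi^\sharp$ (\Cref{thm:vbspaflowcorr}(ii)), and $\det(I-A_G)$ is not the veering polynomial; the entire dynamical content of item (3) is that the double-unglued partial flow sees only the cycles of $\Phi$. The paper avoids this by taking $\mathcal{F}$ to be the decomposition encoded by $\Phi$ itself --- one flow box per \emph{edge} of $\Phi$, with tops and bottoms connected at its vertices --- and cites \cite{AT24} and \cite[Chapter 2]{Tsathesis} for its existence and for items (1) and (2); the identification of the stable spines with the anti-branch loops in particular hinges on the fan structure of \Cref{prop:vbstogglefan} near the edges of $\Delta$, which your sketch does not engage with. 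So the part you defer is precisely the substance of the proposition, and the decomposition you sketch is the wrong one.

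Two smaller points. First, your convergence argument leans on the flow graph being strongly connected and on the existence of a class $\omega$ strictly positive on its edges; neither is available in general (flow graphs can have several strongly connected pieces, cf.\ the ``infinitesimal components'' studied by Agol--Tsang), so if you want to justify well-definedness of $\zeta_{D^2\phi}$ in $\mathbb{Z}[[G]]$ you should argue finiteness of cycles in each $G$-class differently (the paper itself is brief on this point). Second, your flow graph edges are oriented $t(v)\to b(v), f_1(v), f_2(v)$, opposite to the dynamically meaningful direction $b,f_1,f_2\to t$ used in \cite{LMT24}; this is harmless for the determinant but would reverse the orbit--cycle correspondence you need in item (3), so fix the convention when you repair the construction.
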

\begin{proof}
The \textit{flow graph} of a veering branched surface $B$ is a directed graph introduced by Landry-Minsky-Taylor \cite{LMT24} that is embedded in $B$ and that can be defined from the combinatorics of $B$. This is different from the dual graph, and is commonly denoted as $\Phi$. The exact definition of it is not important for the purposes of this paper. The important fact is that $\Phi$ encodes a flow box decomposition, in the sense that one can obtain a flow box decomposition $\mathcal{F}$ by thickening each edge of $\Phi$ into a flow box and connecting their top and bottom rectangles appropriately at the vertices. This is shown in \cite[Section 5]{AT24}, but see \cite[Chapter 2]{Tsathesis} for a more updated exposition, where items (1) and (2) are also shown.

It remains for us to explain item (3). Note that $D^2 \mathcal{F}$ is homotopy equivalent to $\Phi$. Going forward, we will always implicitly identify $H_1(D^2 \mathcal{F})/\text{Torsion}$ with $H_1(\Phi)/\text{Torsion} = H_1(\Phi)$. Since $\Phi$ encodes $\mathcal{F}$, the primitive closed orbits of $D^2 \phi$ correspond exactly to the primitive cycles of $\Phi$. Thus $\zeta_{D^2 \phi} = \prod_{\text{primitive cycles $c$}} (1-[c])^{-1}$.

The key fact now is that this last expression can be expressed as the Perron polynomial of $\Phi$. This is essentially a classical result of Bowen-Lanford \cite{BL70}, but we will include an explanation.

Take the obvious simplicial structure on $\Phi$ as a graph. Consider the group $C_1(\Phi)$ of 1-chains on $\Phi$. Note that $H_1(\Phi)$ is a subspace of $C_1(\Phi)$. 
The adjacency matrix of $\Phi$ is the matrix $A \in M_{V(\Phi) \times V(\Phi)}(\mathbb{Z}[C_1(\Phi)])$ defined by $A_{vu} = \sum_{\text{edges $e:u \to v$}} e$. 
The \emph{Perron polynomial} of $\Phi$ is the determinant $P_\Phi = \det(I-A)$. A priori $P_\Phi$ is an element of $\mathbb{Z}[C_1(\Phi)]$, but since every product involves cycles of edges, $P_\Phi$ is in fact an element of $\mathbb{Z}[H_1(\Phi)]$.

We compute
\begingroup
\allowdisplaybreaks
\begin{align*}
-\log \det(I-A) &= \sum_{n=1}^\infty \frac{1}{n} \tr A^n \\
&= \sum_{n=1}^\infty \frac{1}{n} \sum_{\text{length $n$ based cycles $c$}} c \\
\multicolumn{2}{l}{\text{where a \textit{based cycle} is a cycle along with a vertex of the underlying primitive cycle}} \\
&= \sum_{n=1}^\infty \frac{1}{n} \sum_{d|n} \sum_{\text{length $d$ based primitive cycles $c$}} c^{\frac{n}{d}} \\
\multicolumn{2}{l}{\text{by writing each based cycle as a multiple of its underlying based primitive cycle}} \\
&= \sum_{n=1}^\infty \frac{1}{n} \sum_{d|n} d \sum_{\text{length $d$ primitive cycles $c$}} c^{\frac{n}{d}} \\
\multicolumn{2}{l}{\text{since there are $d$ ways to choose a vertex on a length $d$ primitive cycle}} \\
&= \sum_{d=1}^\infty \sum_{m=1}^\infty \frac{1}{m} \sum_{\text{length $d$ primitive cycles $c$}} c^m \\
\multicolumn{2}{l}{\text{where we write $n=md$}} \\
&= \sum_{\text{primitive cycles $c$}} \sum_{m=1}^\infty \frac{1}{m} c^m \\
&= - \sum_{\text{primitive cycles $c$}} \log (1-c).
\end{align*}
\endgroup
Taking the exponential of both sides gives $P_\gamma^{-1} = \zeta_\Phi$.

Finally, we can conclude by \cite[Theorem 4.8]{LMT24}, which states that the veering polynomial of $B$ is the image of the Perron polynomial in $\mathbb{Z}[[G]]$.
\end{proof}

\begin{prop}[Zung] \label{prop:zetavbstaut}
Suppose $b_1(M) \geq 2$. Then the zeta function $\zeta_{D^u \phi}$ of the partial flow on the unstable ungluing is the reciprocal of the taut polynomial of $B$.

If instead $b_1(M)=1$, let $t$ be a generator of $G$. Then the same result holds up to possibly multiplying $\zeta_{D^u \phi}$ by $(1 \pm t)^{-1}$.
\end{prop}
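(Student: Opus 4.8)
The plan is to derive \Cref{prop:zetavbstaut} by combining \Cref{prop:zetadoubleveering}, the comparison formula \eqref{eq:zetaunstabletodouble}, and the taut--veering factorization of \Cref{thm:tautveering}. Fix the flow box decomposition $\mathcal{F}$ furnished by \Cref{prop:zetadoubleveering}, so that the unstable spine orbits are exactly the orbits of $\mathcal{C}$, the orbits of $D^u\mathcal{C}^s$ are homotopic in $M$ to the anti-branch loops $a_1,\dots,a_m$ of $B$, and $\zeta_{D^2\phi}=V^{-1}$ in $\mathbb{Z}[[G]]$, where $V$ is the veering polynomial of $B$. The formula \eqref{eq:zetaunstabletodouble} modifies the Euler product defining the zeta function only by the finitely many factors indexed by orbits of $D^u\mathcal{C}^s$; in particular the convergence of $\zeta_{D^2\phi}$ from \Cref{prop:zetadoubleveering}(3) forces $\zeta_{D^u\phi}$ to converge, and rearranging \eqref{eq:zetaunstabletodouble} yields $\zeta_{D^u\phi} = \zeta_{D^2\phi}\cdot\Pi = V^{-1}\cdot\Pi$, where
\[
\Pi \;:=\; \prod_{\substack{\gamma\in D^u\mathcal{C}^s \\ \text{ori. pres.}}}(1-[\gamma]) \,\cdot\, \prod_{\substack{\gamma\in D^u\mathcal{C}^s \\ \text{ori. rev.}}}(1+[\gamma]).
\]

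The next step is to identify $\Pi$ with the correction factor $\prod_{a_i\text{ ori. pres.}}(1-[a_i])\prod_{a_i\text{ ori. rev.}}(1+[a_i])$ appearing in \eqref{eq:tautveering}. By \Cref{prop:zetadoubleveering}(2) the orbits of $D^u\mathcal{C}^s$ are in bijection with the anti-branch loops of $B$, and the bijection preserves homology classes, so $[\gamma]=[a_i]$ in $G$. It remains to check that $\gamma$ is orientation-preserving in the ungluing sense (case (i) preceding \eqref{eq:zetaunstabletodouble}) exactly when the corresponding anti-branch loop $a_i$ is orientation-preserving, i.e.\ $TB|_{a_i}$ is orientable, equivalently $a_i$ has even length. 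The geometric content is that ungluing the stable rectangles along the stable spine carrying $\gamma$ separates a neighbourhood of $\gamma$ into two parallel sheets, or wraps it up as a connected double cover, according to whether the normal framing of that spine along $\gamma$ is untwisted or carries a half-twist; since $B$ carries the blown-up unstable foliation and $a_i$ is the dual-graph representative of the periodic unstable leaf through $\gamma$, this half-twist is present precisely when $TB$ fails to be orientable along $a_i$. Granting this, $\Pi$ equals the correction factor term by term.

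With the matching established, \Cref{thm:tautveering} gives, in the case $b_1(M)\geq 2$, that the correction factor equals $V\,\Theta^{-1}$, and hence
\[
\zeta_{D^u\phi} \;=\; V^{-1}\cdot V\,\Theta^{-1} \;=\; \Theta^{-1},
\]
which is the asserted equality (understood, as usual, up to units of $\mathbb{Z}[[G]]$; note that each $1\pm[a_i]$ is a unit there, so $\Theta^{-1}=V^{-1}\cdot\Pi$ is genuinely a well-defined power series). When $b_1(M)=1$, \Cref{thm:tautveering} only supplies the factorization after possibly multiplying $V$ by $1+t$ or $1-t$; feeding this into the same computation gives $\zeta_{D^u\phi}=\Theta^{-1}$ up to a factor of the form $1\pm t$, which is the stated conclusion.

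The power-series bookkeeping in $\mathbb{Z}[[G]]$ is routine; the step I expect to be the real obstacle is the orientation-matching claim of the second paragraph. Making it precise requires unwinding the construction of the flow box decomposition $\mathcal{F}$ from the flow graph of $B$ (as in \cite[Section 5]{AT24} and \cite[Chapter 2]{Tsathesis}) and verifying that the ``annulus versus M\"obius band'' alternative for the stable spine neighbourhood of $\gamma$ is controlled by the parity of $a_i$ --- equivalently, by orientability of $TB|_{a_i}$.
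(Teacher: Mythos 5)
Your argument is exactly the paper's proof of this proposition: combine \Cref{prop:zetadoubleveering}(2)--(3) with \Cref{eq:zetaunstabletodouble} and \Cref{thm:tautveering}, matching the orientation-preserving/reversing dichotomy for orbits in $D^u\mathcal{C}^s$ with that for anti-branch loops via the fact that $B$ carries the blown-up unstable foliation. The orientation-matching step you flag as the main obstacle is precisely the point the paper disposes of with its one-line appeal to \Cref{thm:vbspaflowcorr}(i), and your sketch of why the stable-spine neighbourhood is an annulus versus a M\"obius band according to the parity of $a_i$ is the intended justification.
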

\begin{proof}
By \Cref{prop:zetadoubleveering}(2), the orbits in $D^u \mathcal{C}^s$ are the ones homotopic to the anti-branch loops.
Moreover, since $B$ carries the blown-up unstable foliation (\Cref{thm:vbspaflowcorr}(i)), such a orbit is orientation-preserving if and only if the corresponding anti-branch loop is orientation-preserving.
Thus the proposition follows from \Cref{prop:zetadoubleveering}(3), \Cref{thm:tautveering}, and \Cref{eq:zetaunstabletodouble}.
\end{proof}

\begin{prop} \label{prop:zetablowupantiveering}
Suppose $b_1(M) \geq 2$. Then the zeta function $\zeta_{\phi^\sharp}$ of the blown-up flow is the reciprocal of the anti-veering polynomial of $B$.

If instead $b_1(M)=1$, let $t$ be a generator of $G$. Then the same result holds up to possibly multiplying $\zeta_{\phi^\sharp}$ by $(1 \pm t)^{-1}$ and/or $1 \pm t$.
\end{prop}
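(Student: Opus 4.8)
The plan is to chain together three facts already available: the comparison formula \eqref{eq:zetablowuptounstable} between $\zeta_{\phi^\sharp}$ and the zeta function $\zeta_{D^u \phi}$ of the unstable ungluing; the identification $\zeta_{D^u \phi} = \Theta^{-1}$ from \Cref{prop:zetavbstaut}; and the factorization $A = \Theta \cdot \prod_i (1-[b_i])$ of the anti-veering polynomial from \Cref{thm:tautantiveering}. The only ingredient not already stated is a description of the orbits in $D^u \mathcal{C}^u$ (the primitive closed orbits of $D^u \phi$ lying over the unstable spine orbits) paralleling the description of $D^u \mathcal{C}^s$ in \Cref{prop:zetadoubleveering}(2): I will argue that they are precisely the ones homotopic to the branch loops of $B$.

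Concretely, I would first combine \eqref{eq:zetablowuptounstable} with \Cref{prop:zetavbstaut} (in the case $b_1(M) \geq 2$) to obtain, in $\mathbb{Z}[[G]]$,
$$\zeta_{\phi^\sharp} = \zeta_{D^u \phi} \prod_{\gamma \in D^u \mathcal{C}^u} (1-[\gamma])^{-1} = \Theta^{-1} \prod_{\gamma \in D^u \mathcal{C}^u} (1-[\gamma])^{-1}.$$
Next I would show that, for the flow box decomposition $\mathcal{F}$ provided by \Cref{prop:zetadoubleveering} (the one built from the flow graph of $B$), the orbits in $D^u \mathcal{C}^u$ are exactly the loops homotopic to the branch loops $b_1,\dots,b_n$ of $B$: over each $\gamma \in \mathcal{C}$, ungluing the unstable rectangles produces one closed orbit of $D^u \phi$ on $T_\gamma$ for each cusp curve of the complementary region of $B$ meeting $T_\gamma$, and each such cusp curve is isotopic in $M$ to the corresponding branch loop, hence to the degeneracy/ladderpole class on $T_\gamma$ by \Cref{thm:vbspaflowcorr}(i); counting with multiplicities, this gives $\prod_{\gamma \in D^u \mathcal{C}^u}(1-[\gamma]) = \prod_i (1-[b_i])$. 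Substituting into \Cref{thm:tautantiveering} then yields $\zeta_{\phi^\sharp}^{-1} = \Theta \prod_i (1-[b_i]) = A$. For $b_1(M)=1$ the identical chain of equalities holds after tracking the $(1\pm t)^{\pm 1}$ ambiguities coming from \Cref{prop:zetavbstaut} and from \Cref{thm:tautantiveering}, which accumulate into the stated correction factors.

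The main obstacle is the second step — checking that the preimages in $D^u \phi$ of the unstable spine orbits carry the homology classes of the branch loops, with the correct multiplicity. This is the exact analogue for branch loops of what \Cref{prop:zetadoubleveering}(2) records for anti-branch loops, so the most economical route is to extract it from the same construction, namely the passage from the flow graph $\Phi$ to $\mathcal{F}$ in \cite[Chapter 2]{Tsathesis} (see also \cite[Section 5]{AT24}). One must also confirm that the ungluing near $\gamma$ introduces no orientation-reversing phenomenon of the type appearing for $D^u \mathcal{C}^s$ in \eqref{eq:zetaunstabletodouble}; this should hold because the unstable spine orbits lie in the interiors of the unstable rectangles rather than along their non-smooth edges, so no $(1+[\gamma])$ factors occur here. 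Everything else in the argument is formal manipulation in $\mathbb{Z}[[G]]$.
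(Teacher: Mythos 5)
Your proposal is correct and follows essentially the same route as the paper: the paper's proof likewise chains \eqref{eq:zetablowuptounstable}, \Cref{prop:zetavbstaut}, and \Cref{thm:tautantiveering}, and identifies the orbits in $D^u\mathcal{C}^u$ with the branch loops via \Cref{thm:vbspaflowcorr}(i) (phrased there through the repelling closed orbits of $\phi^\sharp$ on $\partial M$ and the degeneracy class, rather than through the cusp curves, but this is the same fact). The multiplicity and orientation points you flag are already built into the derivation of \eqref{eq:zetablowuptounstable}, so no further argument is needed there.
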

\begin{proof}
The orbits in $D^u \mathcal{C}^u$ are the ones homotopic to the repelling closed orbits of $\phi^\sharp$ on $\partial M$, which are in turn homotopic to the branch loops of $B$ (\Cref{thm:vbspaflowcorr}(i)).
Thus the proposition follows from \Cref{prop:zetavbstaut}, \Cref{thm:tautantiveering}, and \Cref{eq:zetablowuptounstable}.
\end{proof}

\section{Factorization of the anti-veering polynomial} \label{sec:factorantiveerpoly}

The goal of this section is to show \Cref{thm:tautantiveering}.
Our proof will follow the proof of \Cref{thm:tautveering} in \cite{LMT24} closely.
Throughout this section, we fix a veering branched surface $B$.
We let $b_1,\dots,b_n$ be the branch loops of $B$, and write $h_i =[b_i] \in H_1(M)/\text{Torsion} =: G$ for the homology class of $b_i$.

\subsection{An alternate presentation of $\mathcal{M}_\Theta$}

The starting point of relating the taut and anti-veering polynomials is the following lemma.

\begin{lemma} \label{lemma:alttautmodule}
The taut module $\mathcal{M}_\Theta$ equals the cokernel of $D_\Theta \oplus D_A:\mathcal{E} \oplus \mathcal{T} \to \mathcal{S}$.
\end{lemma}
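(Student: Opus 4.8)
The plan is to show that adjoining the anti-tetrahedron relations to the face relations does not enlarge the submodule of $\mathcal{S}$ that they generate, so that the two cokernels coincide on the nose. The whole content is in fact already packaged in \Cref{lemma:facereldiff}. First I would recall the formal fact that, for $\mathbb{Z}[G]$-module homomorphisms $D_\Theta\colon \mathcal{E}\to\mathcal{S}$ and $D_A\colon\mathcal{T}\to\mathcal{S}$, the cokernel of $D_\Theta\oplus D_A\colon \mathcal{E}\oplus\mathcal{T}\to\mathcal{S}$ is $\mathcal{S}$ modulo the submodule $\operatorname{im}(D_\Theta)+\operatorname{im}(D_A)$. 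Hence it suffices to prove the inclusion $\operatorname{im}(D_A)\subseteq\operatorname{im}(D_\Theta)$: once this is known, $\operatorname{im}(D_\Theta\oplus D_A)=\operatorname{im}(D_\Theta)$, and therefore $\coker(D_\Theta\oplus D_A)=\mathcal{S}/\operatorname{im}(D_\Theta)=\coker(D_\Theta)=\mathcal{M}_\Theta$.

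For the inclusion, I would argue generator by generator. Since $\mathcal{T}$ is freely generated over $\mathbb{Z}[G]$ by the triple points of $\wh{B}$, it is enough to check that $D_A(\wh{v})\in\operatorname{im}(D_\Theta)$ for every triple point $\wh{v}$. Because the dual graph is $(2,2)$-valent, there is at each $\wh{v}$ a pair of edges $\wh{e}^b,\wh{e}^t$ of $\wh{B}$ with $\wh{e}^b*\wh{e}^t$ taking a turn at $\wh{v}$ — concretely one may take $\wh{e}^b=\wh{e}^b_1$ and $\wh{e}^t=\wh{e}^t_1$ in the notation of \Cref{fig:polyrels}, matching the decomposition $\wh{b}-\wh{s}_1-\wh{s}_2+\wh{t}=(\wh{b}+\wh{f}_1-\wh{s}_1)-(\wh{s}_2+\wh{f}_1-\wh{t})$ recorded in \Cref{subsec:antiveeringpoly}. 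By \Cref{lemma:facereldiff}, $D_A(\wh{v})=D_\Theta(\wh{e}^b)-D_\Theta(\wh{e}^t)$, which visibly lies in $\operatorname{im}(D_\Theta)$. This establishes $\operatorname{im}(D_A)\subseteq\operatorname{im}(D_\Theta)$ and completes the argument.

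I do not expect a genuine obstacle here: the statement is an immediate corollary of \Cref{lemma:facereldiff}, and the only point worth double-checking while writing is that \Cref{lemma:facereldiff} applies to \emph{any} anti-branch turn $\wh{e}^b*\wh{e}^t$ at $\wh{v}$, so the choice made above is legitimate and the resulting element of $\operatorname{im}(D_\Theta)$ is well defined independently of it. It may also be worth remarking, for the reader's orientation, that this lemma is precisely the algebraic shadow of the factorization strategy of \cite{LMT24}, where the analogous statement for the veering module (replacing $D_A$ by $D_V$ and using the other decomposition of the tetrahedron relation into two face relations) is the starting point of the proof of \Cref{thm:tautveering}.
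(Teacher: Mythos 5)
Your proposal is correct and follows exactly the paper's argument: the paper's proof is the one-line observation that \Cref{lemma:facereldiff} gives $D_A(\mathcal{T}) \subset D_\Theta(\mathcal{E})$, from which the equality of cokernels is immediate. You have simply spelled out the same reasoning in more detail.
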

\begin{proof}
\Cref{lemma:facereldiff} implies that $D_A(\mathcal{T}) \subset D_\Theta(\mathcal{E})$ from which the lemma follows.
\end{proof}

One direction of \Cref{thm:tautantiveering} follows quickly from this presentation.

\begin{prop} \label{prop:factorantiveeringeasydir}
We have $A \mid \Theta \cdot \prod_i (1-h_i)$.
\end{prop}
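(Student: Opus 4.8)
The statement to prove is $A \mid \Theta \cdot \prod_i (1-h_i)$, where $A = \Fit(\mathcal{M}_A)$, $\Theta = \Fit(\mathcal{M}_\Theta)$, and the product runs over branch loops $b_i$ with $h_i = [b_i]$. My plan is to build an explicit presentation of $\mathcal{M}_\Theta$ that realizes $\mathcal{M}_\Theta$ as a quotient of $\mathcal{M}_A$ tensored against the branch-loop data, and then apply the standard Fitting-ideal inequalities (the Fitting ideal of a quotient contains the Fitting ideal of the original module; the Fitting ideal of a direct sum is the product of the Fitting ideals; passing to quotients by a cyclic relation $(1-h_i)$ multiplies appropriately). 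Concretely, by \Cref{lemma:alttautmodule} we have $\mathcal{M}_\Theta = \coker(D_\Theta \oplus D_A : \mathcal{E}\oplus\mathcal{T} \to \mathcal{S})$, so there is a surjection $\mathcal{M}_A = \coker(D_A) \twoheadrightarrow \mathcal{M}_\Theta$ obtained by further quotienting by the image of $D_\Theta$. The kernel of this surjection is generated (as a $\mathbb{Z}[G]$-module) by the images of the face relations $D_\Theta(\widehat e)$ in $\mathcal{M}_A$.

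**Key steps, in order.** First, I would fix the canonical decomposition of the dual graph $G$ of $B$ into branch loops $b_1,\dots,b_n$: each edge of $\widehat B$ projects to an edge lying in exactly one $b_i$. Using \Cref{lemma:facereldiff}, along each branch loop the face relations at consecutive edges differ by an anti-tetrahedron relation, i.e. differ by an element of $D_A(\mathcal{T})$. Therefore in $\mathcal{M}_A$ the classes of $D_\Theta(\widehat e)$ for all lifts $\widehat e$ of edges in a fixed branch loop $b_i$ are translates of one another by the deck transformation $h_i \in G$: if $\widehat e$ and $\widehat e'$ are successive edges along a lift of $b_i$, then $[D_\Theta(\widehat e')] = [D_\Theta(\widehat e)]$ in $\mathcal{M}_A$ modulo $D_A$, and going all the way around $b_i$ multiplies by $h_i$. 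Hence the kernel of $\mathcal{M}_A \twoheadrightarrow \mathcal{M}_\Theta$ is generated by finitely many elements $w_1,\dots,w_n \in \mathcal{M}_A$ (one $w_i$ per branch loop), where $w_i$ is the image of a chosen face relation along $b_i$.

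Second, I would assemble this into a presentation. Let $\mathcal{W} = \bigoplus_i \mathbb{Z}[G]\cdot w_i$ be the free module on these generators, with a map $\mathcal{W} \to \mathcal{M}_A$ whose cokernel is $\mathcal{M}_\Theta$. Combining this with a free presentation $\mathcal{R}_A \xrightarrow{D_A'} \mathcal{T} \to \mathcal{M}_A \to 0$ of $\mathcal{M}_A$ — where here I mean: lift each $w_i$ to an element of $\mathcal{S}$ and adjoin it to $D_A:\mathcal{T}\to\mathcal{S}$ — gives $\mathcal{M}_\Theta = \coker(D_A \oplus (w_i) : \mathcal{T}\oplus\mathbb{Z}[G]^n \to \mathcal{S})$. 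Now I invoke the structural fact that each $w_i$ is, up to the action of $(1-h_i)$, already in the image: more precisely the relation $w_i$ and its translate $h_i w_i$ are linked by $D_A$-relations, so adding the single generator $w_i$ to the presentation has the effect of a presentation matrix that, after column operations using the known cyclic structure along $b_i$, contributes a factor dividing $(1-h_i)$. Then the Fitting-ideal multiplicativity under this kind of stabilization yields $\Theta \cdot \prod_i(1-h_i) \in (A)$, i.e. $A \mid \Theta\cdot\prod_i(1-h_i)$.

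**The main obstacle.** The delicate point — and the one I expect to absorb most of the work — is step two: showing precisely that adjoining the generator $w_i$ (which kills a whole $h_i$-orbit of face relations at once) changes the Fitting invariant by exactly a divisor of $(1-h_i)$, rather than something coarser. This requires a careful bookkeeping of the presentation matrix: one must identify, along each branch loop $b_i$, the block of $D_\Theta$-columns corresponding to edges of $b_i$, observe via \Cref{lemma:facereldiff} that consecutive columns differ by $D_A$-columns, and perform column reduction so that only one representative column survives, multiplied by a $(1-h_i)$-type factor coming from the cyclic return map around $b_i$. This is exactly the mechanism used in the proof of \Cref{thm:tautveering} in \cite{LMT24} (their ori.-pres./ori.-rev. case split corresponds to whether the return map around the loop is $+h_i$ or $-h_i$); here, since we are dealing with branch loops rather than anti-branch loops, the return map is always orientation-preserving on the relevant bundle, so we always get the factor $(1-h_i)$ and never $(1+h_i)$. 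Making this column-reduction argument rigorous at the level of Fitting ideals (as opposed to determinants of square matrices) is where I would be most careful, using the gcd-of-minors definition directly and the fact that $\mathbb{Z}[G]$ is a UFD.
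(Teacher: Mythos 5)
Your first two steps coincide with the paper's proof in substance: via \Cref{lemma:alttautmodule} and a telescoping application of \Cref{lemma:facereldiff} along each branch loop, one reduces to a presentation of $\mathcal{M}_\Theta$ by the matrix $[D'] = [\,w_1 \cdots w_n \mid D_A\,]$, where $w_i = D_\Theta(\widehat{e}_{i,1})$ is one chosen face relation per branch loop, so that $\Theta = \Fit([D'])$; and one records the identity $(1-h_i)\,w_i = \sum_k D_A(\widehat{v}_{i,k}) \in D_A(\mathcal{T})$. (The paper gets the same presentation by an explicit change of basis of $\mathcal{E}\oplus\mathcal{T}$; your module-theoretic route to it is fine, since the zeroth Fitting invariant is presentation-independent.)

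The gap is in the decisive last step, and as described it would fail. You propose to ``perform column reduction so that only one representative column survives, multiplied by a $(1-h_i)$-type factor,'' and then invoke ``Fitting-ideal multiplicativity under this kind of stabilization.'' Column operations only add $\mathbb{Z}[G]$-multiples of other columns and leave the Fitting invariant unchanged; they cannot introduce the non-unit scalar $1-h_i$ on a surviving column. If such a reduction were possible, it would show $\Theta \doteq \det[D_A] = A$, which contradicts the very factorization $A = \Theta\cdot\prod_i(1-h_i)$ being established; and there is no general multiplicativity statement for Fitting ideals that substitutes for this. The missing idea (and the paper's actual move) is to compare two \emph{different} matrices rather than to reduce one: set $[D''] = [\,(1-h_1)w_1 \cdots (1-h_n)w_n \mid D_A\,]$. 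Since each $(1-h_i)w_i$ lies in the column span of the square block $[D_A]$, every maximal minor of $[D'']$ is a multiple of $\det[D_A] = A$ by multilinearity, while $A$ itself occurs as a minor, so $\Fit([D'']) = A$; on the other hand every maximal minor of $[D'']$ is a maximal minor of $[D']$ times a subproduct of the $(1-h_i)$, whence $A = \Fit([D'']) \mid \Fit([D'])\cdot\prod_i(1-h_i) = \Theta\cdot\prod_i(1-h_i)$. Finally, the appeal to the orientation-preserving/reversing dichotomy of \cite{LMT24} is a red herring here: the factor is always $1-h_i$ simply because \Cref{lemma:facereldiff} telescopes with a constant sign around a branch loop and $h_i\widehat{e}_{i,1}$ is the deck translate of the chosen lift; no bundle-orientability case analysis enters this proposition (that mechanism is what produces the $1\pm[a_i]$ factors along \emph{anti}-branch loops in \Cref{thm:tautveering}).
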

\begin{proof}
For each $i$, let $e_{i,1}, \dots, e_{i,l_i}$ be the edges of $b_i$, in that order, and pick edges $\widehat{e}_{i,j}$ that map to $e_{i,j}$, in a way such that the terminal vertex of $\widehat{e}_{i,j}$ equals the initial vertex of $\widehat{e}_{i,j+1}$, which we denote as $\widehat{v}_{i,j}$, for $j=1,\dots,l_i$.

Then given any basis $\beta_{\mathcal{T}}$ of $\mathcal{T}$, since $\widehat{v}_{i,j} \in \mathcal{T}$ for all $i,j$,
$$\beta_{\mathcal{E} \oplus \mathcal{T}} = \{\widehat{e}_{i,1} \mid i=1,\dots,n\} \cup \{ \widehat{e}_{i,1}-\widehat{e}_{i,j}-\sum_{k=1}^{j-1} \widehat{v}_{i,k} \mid i=1,\dots,n, j=2,\dots,l_i\} \cup \beta_{\mathcal{T}}$$
is a basis for $\mathcal{E} \oplus \mathcal{T}$.

But by applying \Cref{lemma:facereldiff}, we have
$$D_\Theta(\widehat{e}_{i,1}-\widehat{e}_{i,j}) = D_\Theta(\widehat{e}_{i,1}) - D_\Theta(\widehat{e}_{i,j}) = \sum_{k=1}^{j-1} (D_\Theta(\widehat{e}_{i,k}) - D_\Theta(\widehat{e}_{i,k+1})) = \sum_{k=1}^{j-1} D_A(\widehat{v}_{i,k}).$$
Hence the map $D_\Theta \oplus D_A:\mathcal{E} \oplus \mathcal{T}$ takes each $\widehat{e}_{i,1}-\widehat{e}_{i,j}-\sum_{k=1}^{j-1} \widehat{v}_{i,k}$ to $0$.

In other words, the nonzero columns of $[D_\Theta \oplus D_A]^{\beta_{\mathcal{E} \oplus \mathcal{T}}}_{\beta_\mathcal{S}}$ forms the submatrix
$$[D'] = \begin{bmatrix} [D_\Theta(\widehat{e}_{1,1})]_{\beta_\mathcal{S}} & \cdots & [D_\Theta(\widehat{e}_{n,1})]_{\beta_\mathcal{S}} & [D_A]^{\beta_\mathcal{T}}_{\beta_\mathcal{S}} \end{bmatrix}$$
and 
\begin{equation} \label{eq:factorantiveeringeasydir1}
\Theta = \Fit([D_\Theta \oplus D_A]^{\beta_{\mathcal{E} \oplus \mathcal{T}}}_{\beta_\mathcal{S}}) = \Fit([D']).
\end{equation}

Meanwhile, consider the matrix 
$$[D''] = \begin{bmatrix} (1-h_1) \cdot [D_\Theta(\widehat{e}_{1,1})]_{\beta_\mathcal{S}} & \cdots & (1-h_n) \cdot [D_\Theta(\widehat{e}_{n,1})]_{\beta_\mathcal{S}} & [D_A]^{\beta_\mathcal{T}}_{\beta_\mathcal{S}} \end{bmatrix}.$$
Applying \Cref{lemma:facereldiff} as above, we have 
$$(1-h_i) \cdot D_\Theta(\widehat{e}_{i,1}) = D_\Theta(\widehat{e}_{i,1}) - D_\Theta(h_i \widehat{e}_{i,1}) = \sum_{k=1}^{l_i} D_A(\widehat{v}_{i,k}) \in D_A(\mathcal{T})$$
for each $i$.
Thus the determinant of any $\rank(\mathcal{T}) \times \rank(\mathcal{T}) =: t \times t$ submatrix of $[D'']$ is a multiple of $\det [D_A]^{\beta_\mathcal{T}}_{\beta_\mathcal{S}}$.
In fact, since $[D_A]^{\beta_\mathcal{T}}_{\beta_\mathcal{S}}$ is a $t \times t$ submatrix of $[D^\sharp]$, we have 
\begin{equation} \label{eq:factorantiveeringeasydir2}
\Fit([D'']) = \det [D_A]^{\beta_\mathcal{T}}_{\beta_\mathcal{S}} = A.
\end{equation}

Meanwhile, every $t \times t$ submatrix of $[D'']$ is a $t \times t$ submatrix of $[D']$ with some columns multiplied by $1-h_i$, thus 
\begin{equation} \label{eq:factorantiveeringeasydir3}
\Fit([D'']) \mid \Fit([D']) \cdot \prod (1-h_i).
\end{equation}

The lemma now follows from combining Equations \eqref{eq:factorantiveeringeasydir1}, \eqref{eq:factorantiveeringeasydir2}, and \eqref{eq:factorantiveeringeasydir3}.
\end{proof}

\subsection{Factorization with branch-simple loops}

To show the other direction of \Cref{thm:tautantiveering}, we have to introduce new ideas.

Observe that each loop $c$ of $G$ is either a multiple of a branch loop, or it can be uniquely written as a concatenation $c_1*\dots*c_k$ where each $c_i$ is a maximal subsegment of a distinct branch loop, i.e. such that $c_i*c_{i+1}$ takes a turn at the terminal vertex of $c_i$. We refer to the value of $k$ as the \emph{complexity} of $c$.
For notational consistency, if $c$ is a multiple of a branch loop, we define its complexity to be $0$.

We say that a loop $c$ of $G$ is \emph{branch-simple} if it is embedded and in the decomposition $c=c_1*\dots*c_k$ above, each $c_i$ lies along a distinct branch loop.
We also define branch loops to be branch-simple.

\begin{lemma} \label{lemma:branchsimplefactor}
Let $c$ be a branch-simple loop of complexity $k$. Then $\Theta \cdot \prod (1-h_i) \mid A \cdot (1-(-1)^k[c])$. 
\end{lemma}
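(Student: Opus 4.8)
The plan is to prove Lemma \ref{lemma:branchsimplefactor} by induction on the complexity $k$, building on the presentation of $\mathcal{M}_\Theta$ from Lemma \ref{lemma:alttautmodule} and the machinery developed in Proposition \ref{prop:factorantiveeringeasydir}. The base case $k = 0$ is exactly Proposition \ref{prop:factorantiveeringeasydir} (together with the observation that when $c$ is a multiple of a branch loop, $1 - (-1)^0[c] = 1 - [c]$ is, up to sign, one of the factors $1 - h_i$, so the divisibility $\Theta \cdot \prod(1 - h_i) \mid A \cdot (1 - [c])$ is automatic — or one can even take $k=1$ as the real base, noting that a single maximal subsegment followed by a turn is the situation of \Cref{lemma:facereldiff}). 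The key structural input is that a branch-simple loop $c = c_1 * \dots * c_k$ takes a turn at the terminal vertex $\widehat{v}_i$ of each $c_i$, so by \Cref{lemma:facereldiff} each such turn contributes a generator of $D_A(\mathcal{T})$, i.e.\ $D_\Theta(\widehat{e}) - D_\Theta(\widehat{e}') = D_A(\widehat{v}_i)$ where $\widehat{e}, \widehat{e}'$ are the incoming and outgoing edges of $c$ at $\widehat{v}_i$.

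The inductive step is where the real work happens. Given a branch-simple loop $c$ of complexity $k \geq 1$, write $c = c_1 * c'$ where $c_1$ is a maximal subsegment of one branch loop $b_{i_1}$ and $c' = c_2 * \dots * c_k$. Lift $c$ to a path $\widehat{c}$ in $\widehat{M}$; the deck transformation $h = [c]$ acts, and $c_1$ lies along $b_{i_1}$ with $h_{i_1} = [b_{i_1}]$. I would set up a change of basis for $\mathcal{E} \oplus \mathcal{T}$ analogous to the one in Proposition \ref{prop:factorantiveeringeasydir}: replace the edges of $\widehat{c}$ along each maximal branch-subsegment $c_j$ by telescoping differences $\widehat{e}_{j,1} - \widehat{e}_{j,m} - \sum(\text{vertex corrections})$, which \Cref{lemma:facereldiff} sends to sums of $D_A(\widehat{v})$'s, hence to $0$ after quotienting. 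The subtle point is tracking the turn between $c_j$ and $c_{j+1}$, which also yields a $D_A(\widehat{v})$ term, so the whole loop $\widehat{c}$ — i.e.\ the difference $D_\Theta(\widehat{e}_{1,1}) - D_\Theta(h \cdot \widehat{e}_{1,1})$, or more precisely the appropriate relation encoding that going around $c$ returns you to where you started shifted by $h$ — becomes expressible purely in terms of $D_A(\mathcal{T})$, up to the sign $(-1)^k$ that records the parity of the number of turns (each turn flips an orientation of the branch plane, cf.\ the discussion of orientation-preserving anti-branch loops before \Cref{thm:tautveering}). This should give: $(1 - (-1)^k [c]) \cdot D_\Theta(\widehat{e}_{1,1}) \in D_A(\mathcal{T})$, exactly mirroring the computation $(1-h_i) \cdot D_\Theta(\widehat{e}_{i,1}) = \sum_k D_A(\widehat{v}_{i,k})$ in Proposition \ref{prop:factorantiveeringeasydir}.

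From there the Fitting-invariant bookkeeping is essentially the same as in Proposition \ref{prop:factorantiveeringeasydir}: one compares a matrix $[D']$ (representing $D_\Theta$ on a suitable basis, whose Fitting invariant is $\Theta$) with a matrix $[D'']$ obtained by multiplying one distinguished column by $(1 - (-1)^k[c])$ and appending/overlapping the columns of $[D_A]_{\beta_\mathcal{S}}^{\beta_\mathcal{T}}$ (whose Fitting invariant is $A$). One needs $\Fit([D'']) \mid \Fit([D']) \cdot (1 - (-1)^k[c]) \cdot \prod(1 - h_i)$ from the column-scaling, and $\Fit([D'']) $ divisible by $A$ (in fact $\Fit([D'']) = A \cdot (\text{something})$) because every $t \times t$ minor reduces to $\det[D_A]$ after the relation above absorbs the scaled column into the $D_A$-columns; combining these gives $\Theta \cdot \prod(1-h_i) \mid A \cdot (1 - (-1)^k[c])$. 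I expect the main obstacle to be the sign tracking: carefully verifying that the accumulated orientation flips across the $k$ turns of a branch-simple loop produce precisely the factor $(1 - (-1)^k[c])$ rather than, say, $(1 \mp [c])$ with the wrong parity — this requires being scrupulous about how each application of \Cref{lemma:facereldiff} at a turn interacts with the choice of local orientations of $TB$ used to set up the signs in the face relation, and is the place where the "branch-simple" hypothesis (each $c_i$ on a \emph{distinct} branch loop, so no cancellation or doubling back) is genuinely used.
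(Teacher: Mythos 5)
Your overall scaffolding (the alternate presentation from \Cref{lemma:alttautmodule}, a distinguished set of columns scaled by $1-h_i$, and a comparison of Fitting invariants as in \Cref{prop:factorantiveeringeasydir}) matches the paper's strategy, but the two steps where the content of the lemma actually lives both have gaps.

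First, the central claimed relation $(1-(-1)^k[c])\cdot D_\Theta(\wh{e}_{1,1}) \in D_A(\mathcal{T})$, obtained by ``telescoping through the turns with a sign from orientation flips,'' does not hold and is not where the $(-1)^k$ comes from. \Cref{lemma:facereldiff} telescopes the face relations along \emph{consecutive edges of a single branch loop}; at the $k$ turning vertices of $c$, where $c$ jumps from $b_i$ to $b_{i+1}$, the identity relating the two face relations is the \emph{tetrahedron} relation (a sum landing in $D_V(\mathcal{T})$), not an anti-tetrahedron relation, so the telescoping breaks modulo $D_A(\mathcal{T})$ precisely at each turn. The actual mechanism in the paper is different: each of the $k$ branch loops met by $c$ contributes a column $(1-h_i)D_\Theta(\wh{e}_{i,1})=\sum_j D_A(\wh{v}_{i,j})$ (telescoping around the \emph{entire} loop $b_i$), and modulo the remaining columns these $k$ columns reduce to a cyclic block with entries $D_A(\wh{v}_{i-1,m_{i-1}})+D_A(\wh{v}_{i,m_i})$ and one corner entry twisted by $[c]^{-1}$; the factor $1-(-1)^k[c]^{-1}$ is the determinant of that block, the sign being that of the $k$-cycle permutation. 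Your single-column picture cannot produce this, and it also never explains why the branch loops \emph{not} met by $c$ contribute to the product $\prod_i(1-h_i)$ on the left (the paper handles them via a connectivity argument choosing vertices $w_i$ shared with earlier loops). Relatedly, the ``induction on $k$'' is never actually used: no inductive hypothesis for $c'$ of complexity $k-1$ enters your argument.

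Second, the final assembly runs the divisibility the wrong way. From $\Fit([D''])\mid \Fit([D'])\cdot(1-(-1)^k[c])\cdot\prod(1-h_i)$ together with $A\mid\Fit([D''])$ you can only conclude $A\mid \Theta\cdot(1-(-1)^k[c])\cdot\prod(1-h_i)$, which is a variant of the \emph{easy} direction (\Cref{prop:factorantiveeringeasydir}), not the statement of the lemma. To get $\Theta\cdot\prod(1-h_i)\mid A\cdot(1-(-1)^k[c])$ you need an \emph{exact} determinant identity on a single well-chosen $t\times t$ minor: $\Theta$ divides $\det[D']$ because $[D']$ is a maximal minor of the presentation matrix of $\mathcal{M}_\Theta$, and then $\det[D']\cdot\prod(1-h_i)=\det[D''']=(1-(-1)^k[c]^{-1})\cdot A$ by the column operations above. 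Without that equality the Fitting-invariant bookkeeping cannot be reversed.
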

\begin{proof}
Recall that $b_1,...,b_n$ are the branch cycles of $B$. 
For each $i=1,\dots,n$, let $e_{i,1}, \dots, e_{i,l_i}$ be the edges of $b_i$, in that order, and let $v_{i,j}$ be the terminal vertex of $e_{i,j}$. 
Up to relabeling, we can assume $c = c_1 * \dots * c_k$ where $c_i = e_{i,1} * \dots * e_{i,m_i}$ for some $m_i < l_i$.

For $i=k+1,\dots,n$, we claim that, up to relabeling the branch loops, there is a vertex $w_i$ that lies on $b_i$ and another branch loop $b_j$ for $j < i$. 

If $n=k$, then the claim is vacuous, so we assume $n \geq k+1$. Since $G$ is connected, $\bigcup_{j=1}^k b_j$ must meet $\bigcup_{j=k+1}^n b_j$ at some vertex $w_{k+1}$. Up to relabeling $b_{k+1},\dots,b_n$, we can assume $w_{k+1}$ lies in $b_{k+1}$. If $n=k+1$ then we are done, otherwise $\bigcup_{j=1}^{k+1} b_j$ must meet $\bigcup_{j=k+2}^n b_j$ at some vertex $w_{k+2}$, and we repeat the argument inductively.

Up to relabeling, we can assume that $v_{i,1}=w_i$.

For each vertex $v$ of $B$, we choose a lift $\widehat{v}$ of $v$, and for each edge $e$ of $B$, we choose a lift $\widehat{e}$ of $e$.
We can choose these lifts so that 
\begin{itemize}
    \item for $i=1,\dots,n$, $j=1,\dots,l_i$, $\widehat{v_{i,j}}$ is the terminal vertex of $\widehat{e_{i,j}}$, and
    \item for $i=1,\dots,k-1$, $\widehat{v_{i,m_i}}$ is the initial vertex of $\widehat{e_{i+1,1}}$.
\end{itemize}
See \Cref{fig:branchsimplefactorlift}.

\begin{figure}
    \centering
    \selectfont\fontsize{6pt}{6pt}
    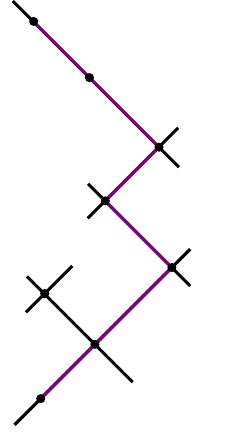
    \caption{Choosing lifts for vertices and edges for the proof of \Cref{lemma:branchsimplefactor}.}
    \label{fig:branchsimplefactorlift}
\end{figure}

Then $\beta_\mathcal{T} = \{\widehat{v} \mid \text{vertex $v$ of $B$}\}$ is a basis of $\mathcal{T}$. 
We partition $\beta_\mathcal{T}$ into $\beta'_\mathcal{T} \sqcup \beta''_\mathcal{T} := \{\widehat{v_{1,1}},\dots,\widehat{v_{n,1}}\} \sqcup \{\widehat{v} \mid v \neq v_{i,1}\}$, which induces a splitting $\mathcal{T} = \mathcal{T}' \oplus \mathcal{T}''$.
We write $D'_A = D_A|_{\mathcal{T}'}$ and $D''_A = D_A|_{\mathcal{T}''}$.
Similarly, $\beta_\mathcal{E} = \{\widehat{e} \mid \text{edge $e$ of $B$}\}$ is a basis of $\mathcal{E}$. 
We partition $\beta_\mathcal{E}$ into $\beta'_\mathcal{E} \sqcup \beta''_\mathcal{E} := \{\widehat{e_{1,1}},\dots,\widehat{e_{n,1}}\} \sqcup \{\widehat{e} \mid e \neq e_{i,1}\}$.

Now consider the matrix 
$$[D'_\Theta \oplus D''_A]_{\beta_\mathcal{S}}^{\beta'_\mathcal{E} \sqcup \beta''_\mathcal{T}} = \begin{bmatrix} D_\Theta(\widehat{e_{1,1}}) & \cdots & D_\Theta(\widehat{e_{n,1}}) & [D''_A]_{\beta_\mathcal{S}}^{\beta''_\mathcal{T}} \end{bmatrix}.$$
This is a $\rank(\mathcal{T}) \times \rank(\mathcal{T}) =: t \times t$ submatrix of $[D_\Theta \oplus D_A]_{\beta_\mathcal{S}}^{\beta_\mathcal{E} \sqcup \beta_\mathcal{T}}$.
Thus 
\begin{equation} \label{eq:branchsimplefactor1}
\Theta = \Fit(D_\Theta \oplus D_A) \mid \det([D_\Theta \oplus D_A]_{\beta_\mathcal{S}}^{\beta'_\mathcal{E} \sqcup \beta''_\mathcal{T}})
\end{equation}

Meanwhile, consider the matrix 
$$[D'''] = \begin{bmatrix} (1-h_1) \cdot D_\Theta(\widehat{e_1}) & \cdots & (1-h_n) \cdot D_\Theta(\widehat{e_n}) & [D''_A]_{\beta_\mathcal{S}}^{\beta''_\mathcal{T}} \end{bmatrix}$$
obtained by multiplying the $i^{\text{th}}$ column of the previous matrix by $1-h_i$, for $i=1,\dots,n$.
We have 
\begin{equation} \label{eq:branchsimplefactor2}
\det [D'''] = \det ([D'_\Theta \oplus D''_A]_{\beta_\mathcal{S}}^{\beta'_\mathcal{E} \sqcup \beta''_\mathcal{T}}) \cdot \prod_{i=1}^n (1-h_i).
\end{equation}

For the first column, we can apply \Cref{lemma:facereldiff} as in \Cref{prop:factorantiveeringeasydir} to get
\begin{footnotesize}
\begin{align*}
(1-h_1) \cdot D_\Theta(\widehat{e_1}) &= \sum_{j=1}^{l_1} D_A(\widehat{v_{1,j}}) \\
&\in D_A(\widehat{v_{1,1}}) + D_A(\widehat{v_{1,m_1}}) + \spn(\{D_A(\widehat{v_{k+1,1}}),\dots,D_A(\widehat{v_{n,1}})\} \cup D_A(\beta''_\mathcal{T})) \\
&= D_A([c]^{-1} \widehat{v_{k,m_k}}) + D_A(\widehat{v_{1,m_1}}) + \spn(\{D_A(\widehat{v_{k+1,1}}),\dots,D_A(\widehat{v_{n,1}})\} \cup D_A(\beta''_\mathcal{T}))
\end{align*}
\end{footnotesize}

For each of the next $k-1$ columns, we get
\begin{footnotesize}
\begin{align*}
(1-h_i) \cdot D_\Theta(\widehat{e_i}) &= \sum_{j=1}^{l_i} D_A(\widehat{v_{i,j}}) \\
&\in D_A(\widehat{v_{i,1}}) + D_A(\widehat{v_{i,m_i}}) + \spn(\{D_A(\widehat{v_{k+1}}),\dots,D_A(\widehat{v_n})\} \cup D_A(\beta''_\mathcal{T})) \\
&= D_A(\widehat{v_{i-1,m_{i-1}}}) + D_A(\widehat{v_{i,m_i}}) + \spn(\{D_A(\widehat{v_{k+1,1}}),\dots,D_A(\widehat{v_{n,1}})\} \cup D_A(\beta''_\mathcal{T}))
\end{align*}
\end{footnotesize}

For the next $n-k$ columns, we get
\begin{align*}
(1-h_i) \cdot D_\Theta(\widehat{e_i}) &= \sum_{j=1}^{l_i} D_A(\widehat{v}_{i,j}) \\
&\in D_A(\widehat{v_{i,1}}) + \spn(\{D_A(\widehat{v_{i+1,1}}),\dots,D_A(\widehat{v_{n,1}})\} \cup D_A(\beta''_\mathcal{T}))
\end{align*}
where we have used the defining property of $v_{j,1} = w_j$ to know that $\widehat{v_{i,2}},\dots,\widehat{v_{i,l_i}}$ do not map to $\widehat{v_{j,1}}$ for $j < i$.

Using elementary column operations, we can thus reduce $[D''']$ to 
\begin{small}
\begin{multline*}
\left[ \begin{matrix}
D_A([c]^{-1} \widehat{v_{k,m_k}}) + D_A(\widehat{v_{1,m_1}}) & D_A(\widehat{v_{1,m_1}}) + D_A(\widehat{v_{2,m_2}}) & \cdots & D_A(\widehat{v_{k-1,m_{k-1}}}) + D_A(\widehat{v_{k,m_k}})
\end{matrix} \right.\\
\left. \begin{matrix}
D_A(\widehat{v_{k+1,1}}) & \cdots & D_A(\widehat{v_{n,1}}) & [D''_A]_{\beta_\mathcal{S}}^{\beta''_\mathcal{T}}
\end{matrix} \right]
\end{multline*}
\end{small}

Thus 
\begin{small}
\begin{equation} \label{eq:branchsimplefactor3}
\begin{aligned}
&\det [D'''] \\
&= (1-(-1)^k [c]^{-1}) \det \begin{bmatrix} D_A(\widehat{v_{1,1}}) & \cdots & D_A(\widehat{v_{k,1}}) & D_A(\widehat{v_{k+1,1}}) & \cdots & D_A(\widehat{v_{n,1}}) & [D''_A]_{\beta_\mathcal{S}}^{\beta''_\mathcal{T}} \end{bmatrix} \\
&= (1-(-1)^k [c]^{-1}) \det ([D_A]_{\beta_\mathcal{S}}^{\beta_\mathcal{T}}) \\
&= (1-(-1)^k [c]^{-1}) \cdot A
\end{aligned}
\end{equation}
\end{small}

The lemma follows from Equations \eqref{eq:branchsimplefactor1}, \eqref{eq:branchsimplefactor2}, and \eqref{eq:branchsimplefactor3}.
\end{proof}

\subsection{Abundance of branch-simple loops}

The last step is to show that there are many branch-simple loops.

\begin{lemma} \label{lemma:manybranchsimple}
The group $H_1(\Gamma)$ is generated by branch-simple loops.
\end{lemma}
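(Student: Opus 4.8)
The plan is to reduce the statement to the case of a single loop and then argue by induction on complexity.

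First, the dual graph $G$ is strongly connected: it is connected and every vertex has both in-degree and out-degree $2$, so a set $X$ of vertices receiving no edge from its complement would also send no edge to its complement, hence be a union of connected components and so all of $G$. It then follows by a standard argument that the directed cycles of $G$ generate $H_1(G)$, so it suffices to show the class of every loop of $G$ lies in the subgroup $H$ spanned by branch-simple loops. Given a loop $c$, of complexity $k$ (with $k=0$ exactly when $c$ is a multiple of a branch loop, which is branch-simple), I first strip full circuits off each maximal smooth subsegment of $c$ along its branch loop: each stripped circuit is a branch loop, and $[c]$ is thereby rewritten as a $\mathbb{Z}$-combination of such classes plus the class of a loop of complexity $\le k$ whose segments are subarcs of branch loops. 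So we may assume $c=c_1*\cdots*c_k$ is of this form, and we induct on the pair (complexity, length), ordered lexicographically, the base case being the branch loops.

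For $k\ge 1$ the essential case is when two of the segments, $c_i$ and $c_j$ with $i<j$, lie on the same branch loop $b$. Here I close $c_i$ and $c_j$ up into a power $b^r$ of $b$ by inserting the two forward subarcs $\mu_1,\mu_2$ of $b$ needed to join them smoothly, obtaining the relation $[c_i]+[c_j]=r[b]-[\mu_1]-[\mu_2]$. Substituting this into $[c]=[c_i]+[\delta]+[c_j]+[\epsilon]$, where $\delta,\epsilon$ are the intervening subwalks of $c$, and eliminating each backwards term $-[\mu_t]$ via a further relation $[\mu_t]+[\mu_t']=r_t[b]$ with $\mu_t'$ the forward arc completing that circuit, yields $[c]=N[b]+[\delta*\mu_1']+[\epsilon*\mu_2']$ for some integer $N$, where $\delta*\mu_1'$ and $\epsilon*\mu_2'$ are honest directed loops whose numbers of smooth segments are at most $j-i$ and $k-(j-i)$ respectively, hence both have complexity $<k$; the inductive hypothesis then gives $[c]\in H$. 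The remaining configurations — $c$ embedded with segments on pairwise distinct branch loops, in which case $c$ is branch-simple; or $c$ non-embedded with segments on distinct branch loops, in which case splitting at a repeated vertex, or performing a strand-swap there, produces strictly shorter loops of complexity $\le k$ — are then handled by the induction.

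The main obstacle is making the induction genuinely terminate. For the surgery step this means (a) handling orientations correctly — every subarc of $b$ that ends up traversed against the flow of $c$ must be traded for a true directed loop using the identity ``a forward circuit of a branch loop is that branch loop'' — and (b) verifying that the two output loops really drop in complexity, which is what forces termination. For the non-embedded cases the obstacle is the more tedious verification that each move (stripping circuits, splitting at a repeated vertex, strand-swapping) either lowers the complexity or strictly shortens $c$ without raising the complexity; the key elementary input is that at a vertex traversed twice by $c$ the two passages are either both smooth or both turns, which dichotomizes that analysis.
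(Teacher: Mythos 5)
Your proof is correct and follows essentially the same route as the paper's: reduce to loops of the dual graph, strip off full branch circuits so each smooth segment is embedded, and induct on complexity, resolving coincident branch loops by closing the two segments up with arcs of that branch loop and resolving self-intersections by cut-and-paste at the repeated vertex. The only real difference is organizational — your lexicographic induction on (complexity, length) packages the termination argument that the paper handles by a separate "repeat until the loops become embedded or drop in complexity" step.
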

\begin{proof}
It is a general fact that for any strongly connected directed graph $G$, $H_1(G)$ is generated by the loops of $G$.
Hence it suffices to show that every loop of $G$ can be written as a linear combination of branch-simple loops. We do this by induction on the complexity.

If a loop $c$ has complexity $0$, then it is a multiple of a branch loop, and the statement is clear.
If a loop $c$ has complexity $1$, then it consists of one path $c_1$ along a branch loop $b_1$. Up to subtracting multiples of $b_1$, we can assume that $c_1$ is embedded, at which point $c$ is branch-simple.

Inductively, suppose a loop $c$ has complexity $k$. Write $c$ as a concatenation $c_1*\dots*c_k$ where each $c_i$ lies along a branch loop $b_i$. Up to subtracting multiples of $b_i$, we can assume that each $c_i$ is embedded.
If the $b_i$ are distinct and if $c$ is embedded, then $c$ is branch-simple itself.

Suppose otherwise that some of the $b_i$ coincide, say $b_1=b_j$, then we perform the following operation: 
Let $c'$ be the loop obtained by starting at the initial vertex of $c_1$, traveling along $b_1$ to the terminal vertex of $c_j$, then travel along $b$ until we return to the initial vertex of $c_1$. 
Let $c''$ be the loop obtained by starting at the initial vertex of $c_j$, traveling along $b_1$ to the terminal vertex of $c_1$, then travel along $b$ until we return to the initial vertex of $c_j$.

Depending on the configurations of $c_1$ and $c_j$ along $b_1$, we have $[c']+[c'']=[c]+[b]$, $[c']+[c'']=[c]-[b]$, or $[c']+[c'']=[c]$.
One can also check that the complexity of $c'$ equals $j-1$, while that of $c'' $ equals $k-j+1$. 
Hence we are done by the induction hypothesis.

The remaining case is if the $b_i$ are all distinct, but $c$ is not embedded. In this case, let $v$ be a self-intersection vertex of $c$. 
Since all $c_i$ are embedded, and all $b_i$ are distinct, $c$ passes through $v$ exactly twice, and at both occurrences $v$ lies in the interior of some $c_i$, say $v$ lies in the interior of $c_1$ and $c_j$.

We perform a similar operation as above:
Let $c'$ be the loop obtained by starting at $v$, traveling along $b$ in the direction of $c_1$ until we return to $v$ (but from the direction of $c_j$).
Let $c''$ be the loop obtained by starting at $v$, traveling along $b$ in the direction of $c_j$ until we return to $v$ (but from the direction of $c_1$).
In other words, we cut-and-paste $c$ at $v$.

Then $[c']+[c'']=[c]$, the complexity of $c'$ equals $j$, and the complexity of $c''$ equals $k-j+2$.
If $j \neq 2,k$, then we are done by the induction hypothesis.

Otherwise, we note that the lengths of $c'$ and $c''$ are strictly less than that of $c$. Thus by repeating this operation, we either eventually reduce to curves of smaller complexity, or embedded curves of the same complexity $k$. In the latter case, the embedded curves are branch-simple.
\end{proof}

\begin{lemma} \label{lemma:branchsimplerelprime}
Let $G$ be a finitely generated free abelian group. Suppose $Z \subset G$ is a subset that generates $G$, and suppose for every $z \in Z$, there is an associated sign $\epsilon_z = 1$ or $-1$. 

Suppose $\rank(G) \geq 2$. Then $\gcd\{1-\epsilon_z z \mid z \in Z\} = 1$.

If instead $\rank(G) \geq 1$, let $t$ be a generator of $G$. Then $\gcd\{1-\epsilon_z z \mid z \in Z\}$ equals $1$, $1+t$, or $1-t$.
\end{lemma}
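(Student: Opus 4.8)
The plan is to determine the irreducible factors of $d := \gcd\{1-\epsilon_z z : z \in Z\}$ in the UFD $\mathbb{Z}[G]$. Suppose $p$ is an irreducible element dividing $d$; then $(p)$ is prime, so $R := \mathbb{Z}[G]/(p)$ is an integral domain, and in $R$ one has $\overline{z} = \epsilon_z$ for every $z \in Z$. Since $Z$ generates $G$, the image of $G$ in $R^\times$ is contained in $\{\pm 1\}$, so $R$ is generated as a ring by $\pm 1$; that is, the structure map $\mathbb{Z} \to R$ is surjective, and $R$ is an integral domain quotient of $\mathbb{Z}$, hence $R \cong \mathbb{Z}$ or $R \cong \mathbb{Z}/q$ for some prime $q$.

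First I would rule out $R \cong \mathbb{Z}/q$: in that case $q$ maps to $0$ in $R$, so $q \in (p)$ and $p \mid q$; but $q$ is a prime integer, hence prime in $\mathbb{Z}[G]$ (as $\mathbb{Z}[G]/(q) = \mathbb{F}_q[G]$ is a domain), so $p$ is an associate of $q$ and $R \cong \mathbb{F}_q[G]$, which is infinite since $G$ is infinite — contradicting $R \cong \mathbb{Z}/q$. Therefore $R \cong \mathbb{Z}$, and $(p) = \ker(\mathbb{Z}[G] \to \mathbb{Z})$ for the ring homomorphism determined by a character $\chi : G \to \{\pm 1\}$; concretely $(p) = (g - \chi(g) : g \in G)$. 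Now I would choose a basis $g_1, \dots, g_n$ of $G$ with $\chi(g_i) = 1$ for all $i \geq 2$ (possible for any $\chi$). Then $p$ divides $g_1 - \chi(g_1) = t_1 \mp 1$, which is irreducible in $\mathbb{Z}[G] = \mathbb{Z}[t_1^{\pm 1}, \dots, t_n^{\pm 1}]$, so $p$ is an associate of $t_1 \mp 1$; but $p$ also divides $g_2 - 1 = t_2 - 1$, and setting $t_1 = \pm 1$ shows $t_1 \mp 1 \nmid t_2 - 1$ once $n \geq 2$ — a contradiction. Hence if $\operatorname{rank}(G) \geq 2$ the element $d$ has no irreducible factors, i.e. $d$ is a unit, which is the first claim.

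In the rank-one case the same argument instead shows that every irreducible factor of $d$ is an associate of $t_1 - 1$ (when $\chi$ is trivial) or of $t_1 + 1$ (when $\chi$ is nontrivial), so $d$ is an associate of $(1-t)^a(1+t)^b$ for some $a,b \geq 0$; it then remains to see that $a,b \leq 1$ and not both equal $1$. Writing each $z \in Z$ as $t^{m_z}$, the fact that $Z$ generates $\langle t\rangle$ gives $\gcd\{m_z\} = 1$, so some $m_z$ is odd. Differentiating, $1 - \epsilon_z t^{m_z}$ has at worst a simple zero at $t = 1$ and at $t = -1$, so neither $(1-t)^2$ nor $(1+t)^2$ divides $d$; and for the term with $m_z$ odd, $1-\epsilon_z t^{m_z}$ does not vanish at both $t = 1$ and $t = -1$, so $1 - t^2 = (1-t)(1+t)$ cannot divide $d$ either. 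It follows that $d$ is an associate of $1$, $1-t$, or $1+t$, as required. The few genuinely degenerate cases — when the identity of $G$ lies in $Z$, or when $\operatorname{rank}(G) = 0$ — are checked directly (e.g. the identity occurring with sign $-1$ forces $2 = 0$ in any quotient $R$ as above, hence no irreducible factors and $d = 1$).

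The main obstacle I anticipate is the step ruling out $R \cong \mathbb{Z}/q$, i.e. ensuring that no "integer" prime can divide $d$; this is what genuinely uses that $G$ is infinite, and it is also where one must know that $\mathbb{Z}[G]$ is a UFD (so that $p \mid q$ with $q$ prime forces associates) rather than merely a Noetherian domain. By contrast, the exponent-bounding endgame in rank one and the verification that $t_1 \mp 1$ is irreducible are routine once the structure above is in place.
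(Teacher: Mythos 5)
Your argument is correct, and it is worth noting that the paper itself does not prove this lemma at all: it simply cites \cite[Lemma 6.17 and Remark 6.18]{LMT24}, so your proposal supplies a genuine self-contained proof where the paper defers to a reference. The structure is sound: an irreducible common factor $p$ is prime in the UFD $\mathbb{Z}[G]$, the quotient $R=\mathbb{Z}[G]/(p)$ is a domain in which every $z\in Z$ (hence every element of $G$) maps to $\pm 1$, so $R$ is a domain quotient of $\mathbb{Z}$; the finite quotients are excluded because $\mathbb{Z}[G]/(q)=\mathbb{F}_q[G]$ is an infinite domain; and then $(p)=\bigl(g-\chi(g) : g\in G\bigr)$ forces $p$ to be an associate of both $t_1\mp 1$ and a divisor of $t_2-1$ when $\rank(G)\geq 2$, which is impossible. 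The rank-one endgame (derivative bound on the order of vanishing at $t=\pm 1$, plus the existence of an odd exponent $m_z$ from $\gcd\{m_z\}=1$) correctly pins the gcd down to $1$, $1-t$, or $1+t$. Two small points you should make explicit in a final write-up: (i) the derivative argument must be applied to a $z$ with $m_z\neq 0$ (e.g.\ the odd one), since for $z$ the identity the element $1-\epsilon_z z$ is $0$ or $2$ and contributes nothing to bounding the exponents; and (ii) you should note that the gcd is nonzero, which holds because $Z$ generates a group of positive rank and hence contains a non-identity element. Neither affects the validity of the argument.
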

\begin{proof}
This is a straightforward exercise in algebra. See \cite[Lemma 6.17 and Remark 6.18]{LMT24} for a proof.
\end{proof}

\begin{prop} \label{prop:factorantiveeringharddir}
Suppose $\rank(G) \geq 2$. Then $\Theta \cdot \prod_i (1-h_i) \mid A$.

If instead $\rank(G) \geq 1$, let $t$ be a generator of $G$. Then $\Theta \cdot \prod_i (1-h_i) \mid A$ or $A \cdot (1+t)$ or $A \cdot (1-t)$.
\end{prop}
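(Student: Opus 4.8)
The plan is to derive the proposition formally from \Cref{lemma:branchsimplefactor}, \Cref{lemma:manybranchsimple}, and \Cref{lemma:branchsimplerelprime}, following the proof of \Cref{thm:tautveering} in \cite{LMT24}. The first step is to observe that $G$ is generated by homology classes of branch-simple loops: by \Cref{lemma:manybranchsimple} the first homology of the dual graph is generated by classes of branch-simple loops, and since $M$ deformation retracts onto $B$, which is obtained from the dual graph by attaching $2$-cells along the sector boundaries, the natural map from the first homology of the dual graph to $G = H_1(M)/\mathrm{Torsion}$ is surjective. Because $G$ is finitely generated, I may then pick finitely many branch-simple loops $c_1,\dots,c_r$, of complexities $k_1,\dots,k_r$, whose classes $z_i := [c_i] \in G$ are pairwise distinct and generate $G$. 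Setting $\epsilon_{z_i} := (-1)^{k_i} \in \{1,-1\}$ defines a sign on each element of $Z := \{z_1,\dots,z_r\}$, which is unambiguous precisely because the $z_i$ are distinct.

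The second step combines the factorization lemma with an algebraic gcd computation. \Cref{lemma:branchsimplefactor} applied to each $c_i$ gives $\Theta \cdot \prod_i (1-h_i) \mid A \cdot (1-\epsilon_{z_i} z_i)$ for $i=1,\dots,r$. Since $\mathbb{Z}[G]$ is a unique factorization domain, any common divisor of the right-hand sides divides their greatest common divisor, from which the common factor $A$ may be extracted (the case $A=0$ being vacuous, as everything divides $0$); hence $\Theta \cdot \prod_i (1-h_i) \mid A \cdot \gcd\{1-\epsilon_z z : z \in Z\}$. Now \Cref{lemma:branchsimplerelprime}, applied to the generating set $Z$ with these signs, says that when $\rank(G) \geq 2$ this gcd is a unit, so $\Theta \cdot \prod_i (1-h_i) \mid A$; and when $\rank(G) = 1$, writing $t$ for a generator of $G$, the gcd is a unit times one of $1$, $1+t$, $1-t$, so $\Theta \cdot \prod_i (1-h_i)$ divides $A$, $A(1+t)$, or $A(1-t)$. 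This is exactly the statement of the proposition, and together with \Cref{prop:factorantiveeringeasydir} it yields \Cref{thm:tautantiveering}.

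I do not anticipate any genuine difficulty: the substantive content sits in \Cref{lemma:branchsimplefactor} and \Cref{lemma:branchsimplerelprime}, which are already established, and what remains is bookkeeping. The points to watch are: (i) arranging the chosen branch-simple loops to have distinct classes so that the sign function on $Z$ is well-defined, which is automatic since one keeps only a finite generating subset of a free abelian group; (ii) the elementary UFD identity $\gcd(A x_1,\dots,A x_r) = A \cdot \gcd(x_1,\dots,x_r)$ up to units; and (iii) the fact that only the finitely many chosen loops enter the argument, so one need not worry whether other branch-simple loops realizing the same homology class have complexities of the same parity. The most delicate of these is (iii), but it causes no trouble because \Cref{lemma:branchsimplerelprime} only demands a single consistent choice of sign on a \emph{generating} set, not on all branch-simple classes.
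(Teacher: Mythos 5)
Your proposal is correct and is exactly the paper's argument: the paper's proof of this proposition is the one-line assertion that it follows from \Cref{lemma:branchsimplefactor}, \Cref{lemma:manybranchsimple}, and \Cref{lemma:branchsimplerelprime}, and you have simply supplied the routine bookkeeping (pushing the generating set of branch-simple loops from the dual graph into $G$, extracting the common factor $A$ from the gcd in the UFD $\mathbb{Z}[G]$, and applying the gcd lemma). The points you flag as delicate, including the choice of a single sign per generator, are handled correctly.
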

\begin{proof}
This follows from \Cref{lemma:branchsimplefactor}, \Cref{lemma:manybranchsimple}, and \Cref{lemma:branchsimplerelprime}.
\end{proof}

\Cref{thm:tautantiveering} follows from \Cref{prop:factorantiveeringeasydir} and \Cref{prop:factorantiveeringharddir}.

\section{Categorification of the anti-veering polynomial} \label{sec:sfhcategorifies}

Let $B$ be a veering branched surface on a 3-manifold $M$ with torus boundary. For the duration of this section, we fix a numbering on the set of sectors $\{S_1,\dots,S_n\}$ and fix a numbering on the set of triple points $\{v_1,\dots,v_n\}$.

Recall that each state $\x$ is a bijective mapping $\{S_1,\dots,S_n\} \to \{v_1,\dots,v_n\}$ such that $\x(S_i)$ is a corner of the sector $S_i$. 
(There are four possible choices for each $\x(S_i)$ but not all choices give rise to a bijection $\{S_1,\dots,S_n\} \to \{v_1,\dots,v_n\}$ and hence to a generator.)   
Using our numbering, we can associate to $\x$ an element of the symmetric group $\mathfrak{S}_n$.
In particular, we can make sense of the \emph{sign} of $x$, $\sgn x \in \mathbb{Z}/2$.

\begin{defn}[The $\nu$-grading]
For a generator $\x$ we define 
$$\nu(\x) = \# \{S_i \mid \text{$\x(S_i)$ is a side corner of $S_i$}\} + \sgn \x \in \mathbb{Z}/2$$
and extend $\nu$ into a $\mathbb{Z}/2$-grading on $SFC$.
\end{defn}

In general, if $\nu$ is a $\mathbb{Z}/2$-grading on a vector space $V$, then we denote by $V^\even$ the subspace of $V$ consisting of the elements that have grading $0$ and $V^\odd$ the subspace of $V$ consisting of the elements that have grading $1$. The \emph{Euler characteristic} $\chi_\nu(V)$ of $V$ (with respect to $\nu$) is defined to be $\dim V^\even - \dim V^\odd \in \mathbb{Z}$.

The goal of this section is to prove the following theorem.

\begin{thm} \label{thm:SFHcategorifiesantiveering}
Let $B$ be a veering branched surface for a 3-manifold with torus boundary $M$.  Denote by  $\Gamma$ the sutured structure induced by $B$ on $\partial M$, then $SFH(M,\Gamma)$ equipped with the $\nu$-grading induced by $B$ categorifies the veering polynomial $A_B$. That is  
$$A_B = \sum_{\s \in \text{Spin}^c(M,\Gamma)} \chi_\nu(SFH(M,\Gamma, \s ))  \cdot \s$$
up to multiplication by a unit in the group ring $\mathbb{Z}[G]$.
\end{thm}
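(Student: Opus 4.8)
The plan is to compare, on the nose, the matrix for the Heegaard Floer differential restricted to a single $\spinc$-class against the matrix $[D_A]_{\beta_{\mathcal S}}^{\beta_{\mathcal T}}$ whose determinant is the anti-veering polynomial $A$. The starting point is \Cref{prop:generators}, which identifies the generators of $SFC(\Sigma,\boa,\bob)$ with ways of assigning to each sector $S_i$ one of its corners so that each triple point is used exactly once; combinatorially this is the same data as a bijection $\{S_1,\dots,S_n\}\to\{v_1,\dots,v_n\}$ subject to the corner constraint, i.e. a permutation in $\mathfrak S_n$ (with the convention that $\xb$ corresponds to the identity permutation under our numbering, which we may arrange). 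On the algebraic side, the module $\mathcal S$ is free of rank $n$ on the sectors, $\mathcal T$ is free of rank $n$ on the triple points, and $D_A$ sends $\widehat v$ to its anti-tetrahedron relation $\widehat b-\widehat s_1-\widehat s_2+\widehat t$. So $[D_A]$ is an $n\times n$ matrix over $\mathbb Z[G]$ whose $(\text{sector},\text{triple point})$ entry records which of the four corners of the sector that triple point occupies, with sign $+1$ for top/bottom and $-1$ for the two side corners, decorated by the appropriate group element. Expanding $\det[D_A]=\sum_{\sigma\in\mathfrak S_n}\sgn(\sigma)\prod_i [D_A]_{S_i,v_{\sigma(i)}}$ shows that each nonzero term of the determinant is exactly the data of a state $\x$, with sign $(-1)^{\#\{i:\x(S_i)\text{ is a side corner}\}}\cdot\sgn(\x)=(-1)^{\nu(\x)}$ and with a group-ring coefficient that records $[\mu_\x]=[\gamma_\x]\in G$ (using \Cref{lemma:statesepsilon} and \Cref{thm:introspinc} to identify the group element attached to $\x$ with a shift of $\s(\x)$ away from $\s(\xb)=\overline{\s_{\phi^\sharp}}$). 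Therefore, up to the overall unit $\overline{\s_{\phi^\sharp}}$, we get $A=\det[D_A]=\sum_{\x}(-1)^{\nu(\x)}\,\s(\x)$.

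Next I would reorganize this sum by $\spinc$-structure: $\sum_{\x}(-1)^{\nu(\x)}\s(\x)=\sum_{\s\in\Spinc(M,\Gamma)}\big(\#\{\x:\s(\x)=\s,\ \nu(\x)=0\}-\#\{\x:\s(\x)=\s,\ \nu(\x)=1\}\big)\cdot\s=\sum_{\s}\chi_\nu(SFC(\Sigma,\boa,\bob,\s))\cdot\s$. The final ingredient is that passing from the chain level to homology preserves the graded Euler characteristic: $\chi_\nu(SFC(\Sigma,\boa,\bob,\s))=\chi_\nu(SFH(M,\Gamma,\s))$. This holds provided the differential $\partial$ is homogeneous of odd degree with respect to $\nu$ (so that it restricts to maps $SFC^{\even}\to SFC^{\odd}$ and $SFC^{\odd}\to SFC^{\even}$ within each $\spinc$-summand), since then the ranks cancel in the alternating sum exactly as in the usual rank--nullity argument. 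So the crux is to verify that $\nu$ drops (changes parity) along every differential, i.e. that for every index-$1$ domain $D$ from $\x$ to $\y$ with $n_\z(D)=0$ one has $\nu(\x)-\nu(\y)=1\in\mathbb Z/2$.

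I expect this last point to be the main obstacle. The strategy I would use is to decompose any such domain into a sum of ``elementary pieces'' using the combinatorial structure of the Heegaard diagram established in \Cref{prop:globalcombinatorics}, \Cref{prop:effdomainemb} (embeddedness) and \Cref{rmk:embdomainssubsurfaces}, and to compute the change in $\nu$ contributed by each piece. Concretely, one can decompose $D$ into bigons and into the ``diamond'' pieces coming from the two local models of \Cref{fig:heegaardcombinalphaglue}; each such piece moves a single coordinate of the state between adjacent corners of sectors along edges of the dual graph $G$. For the sign part of $\nu$: each such elementary move is a transposition-like alteration of the permutation, contributing $1$ to $\sgn$ (mod $2$) per coordinate swapped, and composing along the boundary of $D$ this matches the parity of the number of elementary pieces; I would track this via the standard fact that $\sgn\x-\sgn\y$ equals (mod $2$) the number of ``jumps'' recorded by $\partial_\alpha D$ and $\partial_\beta D$. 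For the side-corner count part of $\nu$: \Cref{prop:vbstogglefan} controls exactly which corners are side corners as one crosses from a bottom side to a top side of a sector, so each elementary move toggling between a side corner and a non-side corner contributes $1$, and I would check that the parities add up so that the total change $\nu(\x)-\nu(\y)$ is forced to be $\mu(D)\bmod 2=1$. An alternative, cleaner route — which I would pursue if the direct bookkeeping gets unwieldy — is to show directly that $\nu(\x)-\nu(\y)\equiv\mu(D)\pmod 2$ for \emph{every} domain $D$ (not just index-$1$ ones), by verifying the congruence on a spanning set of $\pi_2(\x,\y)$ (bigons, the local diamond domains, and periodic domains, the latter being impossible by \Cref{prop:admissibility}) together with the fact that $\mu$ is additive; this reduces the whole problem to a finite check on the two local models of \Cref{fig:heegaardcombinalphaglue} plus the Euler-measure/average-multiplicity formula of \Cref{thm:lipshitzindex}. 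Either way, once $\partial$ is shown to be $\nu$-homogeneous of odd degree the theorem follows by assembling the displayed computations, with the ``up to a unit in $\mathbb Z[G]$'' caveat absorbing both the overall shift by $\overline{\s_{\phi^\sharp}}$ and the unit ambiguity in the definition of the Fitting invariant.
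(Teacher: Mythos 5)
Your overall architecture matches the paper's: expand $\det[D_A]_{\beta_\mathcal{S}}^{\beta_\mathcal{T}}$ as a sum over bijections, identify each nonzero term with a Heegaard state $\x$ carrying sign $(-1)^{\nu(\x)}$ and group element $[\mu_\x]$, regroup by $\spinc$-structure, and then pass from $SFC$ to $SFH$ by showing the differential is $\nu$-homogeneous of odd degree. You also correctly identify the crux as the congruence $\nu(\x)-\nu(\y)\equiv\mu(D)\pmod 2$ for effective domains $D$ with $n_\z(D)=0$, which is exactly \Cref{prop:munumod2}.

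The gap is in how you propose to prove that congruence. Both of your suggested routes rely on factoring a domain through intermediate Heegaard states: the first decomposes $D$ into bigons and diamond pieces each "moving a single coordinate of the state," and the second invokes additivity of $\mu$ over a "spanning set of $\pi_2(\x,\y)$." But additivity of $\mu$ applies to concatenations $\pi_2(\x,\y)\ast\pi_2(\y,\z)$, and an arbitrary effective domain in this diagram need not admit any factorization as a concatenation of elementary pieces whose intermediate endpoints are genuine Heegaard states (a state must occupy \emph{every} $\alpha$- and $\beta$-curve exactly once, so the corners of a sub-piece of $D$ generally do not extend to states). Moreover, $\pi_2(\x,\y)$ is a torsor over $\pi_2(\x,\x)$ rather than a group, so "verifying on a spanning set" is not well-posed without first fixing a reference domain and controlling the parity of $\mu$ on periodic classes. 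The paper circumvents all of this by introducing state-independent quantities $\overline{\mu}(D)=e(D)+\tfrac14\#(\text{convex corners})+\tfrac34\#(\text{concave corners})$ and $\overline{\nu}(D)=\sum_b(\len(b)+1)+|\partial D|$, proving $\mu(D)\equiv\overline{\mu}(D)$ and $\nu(\x)-\nu(\y)\equiv\overline{\nu}(D)$ (the latter via a cycle-decomposition argument for $\sgn(\sigma^{-1}\tau)$ and a parity analysis of side corners along $\beta$-arcs), and then proving $\overline{\mu}\equiv\overline{\nu}$ by induction, cutting $D$ along an $\alpha$- or $\beta$-arc in its interior; the resulting pieces are embedded domains that need not connect states, which is precisely why the state-free avatars are needed. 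To complete your proof you would need either this device or an argument that every effective index-relevant domain genuinely factors through states, which is not established and is likely false in general.
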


Once we prove \Cref{thm:SFHcategorifiesantiveering}, \Cref{thm:introzeta} will follow from \Cref{prop:zetablowupantiveering}.

\subsection{Relating $\mu$ and $\nu$}

The key to proving \Cref{thm:SFHcategorifiesantiveering} is the following result.

\begin{thm} \label{prop:munumod2}
In the Heegaard diagaram of a veering branched surface, let $D$ be an effective domain connecting a state $\x$ to a state $\y$. If $n_\z(D) = 0$ then \[\nu(\x)-\nu(\y) \equiv \mu(D) \pmod{2} \ .\]
\end{thm}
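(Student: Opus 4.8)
The plan is to exploit the fact, established in \Cref{prop:effdomainemb}, that an effective domain $D$ with $n_\z(D)=0$ is embedded, so that $D$ can be treated as a genuine subsurface of $\Sigma$ (as in \Cref{rmk:embdomainssubsurfaces}). First I would apply the Lipshitz index formula \eqref{eq:lipshitzindex}, writing $\mu(D) = e(D) + n_\x(D) + n_\y(D)$. Since $D$ is embedded, its Euler measure is simply $e(D) = \chi(D) - \tfrac14(\text{\# corners of }D)$, and a corner of $D$ is a convex or concave corner in the sense discussed after \Cref{prop:admissibility}; a convex corner contributes $+\tfrac14$ to $-e(D)$ and a concave one contributes $-\tfrac14$. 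Likewise $n_\x(D)$ and $n_\y(D)$ are computed by the average-multiplicity rule: a coordinate of $\x$ (or $\y$) contributes $\tfrac14$ times the number of quadrants of $D$ touching it, which is $0$, $1$, $2$, or $3$. The strategy is to reduce the whole right-hand side mod $2$ to a count of certain corner-types of the subsurface $D$, and then match this against the combinatorial definition of $\nu(\x) - \nu(\y)$.

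The main work is a local bookkeeping argument. I would use \Cref{prop:globalcombinatorics} to decompose the Heegaard diagram into punctured rectangles, one per triple point, each looking like one of the two local models in \Cref{fig:heegaardcombinalphaglue}. In each such rectangle the boundary behaviour of $D$ along the four $\beta$-arcs meeting the $\alpha$-curve is constrained by \Cref{eq:initialfinallocal}: the coefficients of the two non-basepoint elementary domains at each coordinate of $\xt$ and of $\xb$ must differ by $0$ or $\pm1$, and effectiveness pins these down to a short list of local pictures. For each local configuration I would tabulate the contribution to $e(D) + n_\x(D) + n_\y(D)$ modulo $2$, and separately the change it induces in the term $\#\{S_i \mid \x(S_i)\text{ side corner}\}$ and in $\sgn\x$ relative to $\y$. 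The point is that, away from the finitely many sectors where $\x$ and $\y$ differ, the local contributions to both sides vanish mod $2$, while at a sector where the assignment changes — which is exactly where the boundary $\partial_\beta D$ enters or exits along the $\beta$-curve corresponding to that sector — the two sides change in lockstep. Tracking how $\partial_\alpha D = \y - \x$ and $\partial_\beta D = \x - \y$ thread through the rectangles gives the sign-of-permutation term: each arc of $\partial D$ records a transposition-like swap between the $\alpha$- and $\beta$-labels, so the parity of the number of such arcs equals $\sgn\x + \sgn\y \pmod 2$.

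The hardest part will be the mod-$2$ corner count — organizing the case analysis so that the Euler-measure correction $-\tfrac14(\#\text{corners})$ combines cleanly with the average-multiplicity contributions $n_\x(D) + n_\y(D)$. The subtlety is that an elementary domain can carry more than one coordinate of $\xb$ on its boundary (noted just before \Cref{prop:effdomainemb}), so concave corners at $\xb$-points must be handled carefully; I expect the resolution is that the total number of (convex minus concave) corners at $\xt$-points, at $\xb$-points, and at other $\alpha\cap\beta$-points reassembles, after multiplying by $4$, into $4e(D) + 4n_\x(D) + 4n_\y(D)$, and that modulo $8$ this collapses to a count of the side-corner assignments that flip between $\x$ and $\y$, which is precisely $\#\{S_i : \x(S_i)\text{ side}\} - \#\{S_i : \y(S_i)\text{ side}\}$ mod $2$. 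Once the side-corner term and the sign term are both matched locally, summing over all sectors and all rectangles of the diagram yields $\mu(D) \equiv \nu(\x) - \nu(\y) \pmod 2$, completing the proof.
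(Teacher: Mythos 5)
Your opening reductions are sound and coincide with the paper's: embeddedness from \Cref{prop:effdomainemb}, the identification of $D$ with a subsurface as in \Cref{rmk:embdomainssubsurfaces}, and the plan to trade $n_\x(D)+n_\y(D)$ in \Cref{eq:lipshitzindex} for a corner count (the paper does exactly this, using the fact that any coordinate of $\x$ in the interior of $D$ is also a coordinate of $\y$, so the interior contributions cancel mod $2$). But there are two genuine gaps. First, your sign claim is wrong as stated: a boundary component of $D$ carrying $\ell$ arcs of $\alpha$-curves and $\ell$ arcs of $\beta$-curves contributes an $\ell$-cycle to $\sigma^{-1}\tau$, whose sign is $(-1)^{\ell-1}$, so $\sgn\x-\sgn\y$ has the parity of $(\#\,\beta\text{-sides}) + |\partial D|$, not of the number of arcs alone; dropping the $|\partial D|$ correction breaks the count whenever $D$ has a boundary component that is not a bigon-like rectangle. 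Second, and more seriously, the central identity is not proved but only hoped for. Your strategy is to verify the congruence by summing local contributions over the triple-point rectangles of \Cref{prop:globalcombinatorics}, but neither side localizes that way: $\sgn\x-\sgn\y$ depends on the global cycle structure of $\sigma^{-1}\tau$ (i.e.\ on how the boundary arcs close up into components), and the concave-corner contributions to the Euler measure cannot be matched sector-by-sector against the side-corner count without already knowing how $\partial D$ threads through the whole diagram. This is precisely why the paper, after isolating the state-independent quantities $\overline{\mu}(D)$ and $\overline{\nu}(D)$, proves $\overline{\mu}(D)\equiv\overline{\nu}(D)$ by induction on the pair (number of elementary domains, $-\chi(D)$), cutting $D$ along an $\alpha$- or $\beta$-arc meeting its interior (such an arc exists by \Cref{lemma:nofullalphabetacurve}) and checking three cases according to how many endpoints of the cutting arc are concave corners. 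Some such global mechanism --- an induction, or an explicit cycle-by-cycle argument on $\partial D$ --- is needed; the "tabulate local configurations and sum" step you defer is exactly the content of the theorem, and as described it would not close.
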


We start with some terminology. Let $(\Sigma, \boa, \bob)$  be the sutured Heegaard diagram associated to the veering branched surface $B$, and $D$ an  effective domain with  $n_\z(D) = 0$.
Then, by \Cref{prop:effdomainemb}, $D$ is embedded (i.e. its coefficients are either $0$ or $1$) and as in \Cref{rmk:embdomainssubsurfaces} we can imagine it as a subsurface of $\Sigma$ with \emph{convex} and \emph{concave corners}. 

More precisely, a corner of $D$ is the set of points where the boundary switches between lying on a $\alpha$-curve and lying on $\beta$-curve. A corner is concave if $D$ occupies three quadrants at the point, and is convex otherwise. A \emph{side} of $D$ is a component of the complement of the corners on $\partial D$. We define the \emph{length} of a side to be the number of points in $\alpha \cap \beta$ that is contained in its interior plus one.

\begin{prop} \label{prop:corners}
If $D$ is an effective domain connecting $\x$ to $\y$. Then the coordinates of $\x$ and $\y$ occupy alternatingly the corners of  $D$, and none of the shows up in the interior of the sides.
\end{prop}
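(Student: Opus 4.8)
The plan is to use the local model of the Heegaard diagram near each $\alpha$-curve, as recorded in \Cref{cor:diagramcolors} and \Cref{prop:globalcombinatorics}, together with the constraint equations of \Cref{lemma:holdisctodomain}--style bookkeeping, i.e. \Cref{eq:initialfinallocal}. Recall from \Cref{prop:effdomainemb} that an effective domain $D$ with $n_\z(D)=0$ is embedded, so it is an honest subsurface $R_D\subset\Sigma$ whose boundary alternates between arcs on the $\alpha$-curves and arcs on the $\beta$-curves. The boundary $\partial_\alpha D = \y-\x$ and $\partial_\beta D = \x-\y$ by \Cref{defn:initialfinalstate}. Thus as one traverses a single boundary circle of $R_D$, the endpoints of the maximal $\alpha$-arcs occurring in it are exactly the points where $\partial_\alpha D$ changes, namely the points of $\x\cup\y$ with appropriate signs, and similarly for the $\beta$-arcs. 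Since the boundary circle alternates $\alpha$-arc, $\beta$-arc, $\alpha$-arc, $\dots$, the corners of $D$ (points where the boundary switches from an $\alpha$-arc to a $\beta$-arc) must be exactly the points of $\x\cup\y$ lying on $\partial D$, and they cannot lie in the interior of a side — a point in the interior of an $\alpha$-side is interior to the $\alpha$-arc, so $\partial_\alpha D$ does not change there, forcing it not to lie in $\x\cup\y$ by \Cref{eq:initialfinallocal}.

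First I would make precise the claim that every coordinate of $\x$ and of $\y$ that appears on $\partial D$ appears as a corner. Fix a point $p\in\alpha\cap\beta$ with $p\in\x\backslash\y$; by \Cref{eq:initialfinallocal}, the alternating sum $n_i-n_j+n_k-n_l$ of the four local multiplicities equals $1$, so not all four are equal, hence $p$ is a genuine corner of $R_D$ (it is on the boundary and $R_D$ does not occupy all four or none of the four quadrants). The same argument applies to $p\in\y\backslash\x$ with alternating sum $-1$, and to $p\in\x\cap\y$ with alternating sum $0$: in the last case one checks that $p$ is either not on $\partial D$ at all, or is a corner where the domain occupies a pair of opposite quadrants — but in the embedded setting opposite-quadrant configurations are excluded (this is where I would invoke the local model of \Cref{fig:heegaardcombinalphaglue}, since $\xt$ and $\xb$, and more generally $\x$ and $\y$, have no common coordinates when $\x\ne\y$; if $\x=\y$ the statement is vacuous or reduces to the periodic-domain analysis of \Cref{prop:admissibility}). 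Conversely, every corner of $R_D$ is a point of $\alpha\cap\beta$ where the boundary switches arc-type, hence a point where $\partial_\alpha D$ or $\partial_\beta D$ changes, hence a point of $\x\cup\y$.

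Next I would establish the alternation. Walk along one boundary circle of $R_D$. It is a cyclic word in $\alpha$-arcs and $\beta$-arcs which, by definition of a corner, must strictly alternate: $a_1,b_1,a_2,b_2,\dots,a_N,b_N$. The corner between $a_r$ and $b_r$ is a point of $\x\cup\y$; I claim consecutive corners alternate between $\x$ and $\y$. This follows from tracking signs in $\partial_\alpha D = \sum(\text{endpoints of }a_r) = \y-\x$: along the arc $a_r$, oriented as part of $\partial R_D$, the initial endpoint contributes with one sign and the terminal endpoint with the opposite sign, so the two corners bounding a single $\alpha$-arc carry opposite signs in $\partial_\alpha D$, i.e. one is a coordinate of $\x$ and the other of $\y$. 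The same holds for the $\beta$-arcs with $\partial_\beta D = \x-\y$. Chaining these around the circle gives the alternation $\x,\y,\x,\y,\dots$.

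The main obstacle I anticipate is the degenerate bookkeeping at points of $\x\cap\y$ and the possibility of a boundary arc of $D$ running along a full $\alpha$- or $\beta$-curve — but the latter is already ruled out by \Cref{lemma:nofullalphabetacurve}, so every boundary circle genuinely has corners, and the former is handled by the no-common-coordinate observation for distinct states. A secondary subtlety is making sure an $\alpha$-arc (or $\beta$-arc) in $\partial D$ does not pass through a coordinate of $\x$ or $\y$ in its interior: if it did, \Cref{eq:initialfinallocal} at that interior point would have to read $0$ (since the point is interior to the arc, $\partial_\alpha D$ does not jump there), contradicting its membership in $\x\backslash\y$ or $\y\backslash\x$; and if the point were in $\x\cap\y$ one uses again that for $\x\ne\y$ this set is empty. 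I would present these two checks as short lemmas phrased purely in terms of \Cref{eq:initialfinallocal} and the embeddedness from \Cref{prop:effdomainemb}, keeping the argument local and combinatorial.
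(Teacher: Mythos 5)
Your overall architecture is the same as the paper's: rule out sides running along full $\alpha$- or $\beta$-curves via \Cref{lemma:nofullalphabetacurve}, get the alternation of corners from $\partial\partial_{\boa}D=\y-\x$ and $\partial\partial_{\bob}D=\x-\y$, and exclude coordinates from the interiors of sides by a local argument. The alternation part of your write-up is fine.

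There is, however, a genuine gap in the last step, and it sits exactly where you flagged a "subtlety": the case $p\in\x\cap\y$. Your claim that distinct Heegaard states have no common coordinates is false. Two states of this diagram can perfectly well assign the same corner to many sectors and differ only on a few; the paper's own \Cref{fig:indexeg} labels such a shared point ``$\x\cap\y$'', and \Cref{lemma:mudomain} explicitly reasons about coordinates of $\x$ lying in the interior of $D$ that are therefore also coordinates of $\y$. (The disjointness you want is true for the specific pair $\xt,\xb$, but not for arbitrary $\x\neq\y$.) This matters because at a point $p\in\x\cap\y$ the right-hand side of \Cref{eq:initialfinallocal} is $0$, and $0$ is exactly what the interior-of-a-side configuration produces (two adjacent quadrants with multiplicity $1$, two with multiplicity $0$ give $n_i-n_j+n_k-n_l=1-1+0-0=0$). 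So the local multiplicity equation alone cannot exclude a coordinate of $\x\cap\y$ from the interior of a side, and your proposed patch does not exist. The correct way to close this case — and it is how the paper argues — is global rather than local: by the alternation you have already established, each $\alpha$-side of $\partial D$ has one endpoint that is a coordinate of $\x$ and one that is a coordinate of $\y$; if an interior point of that side were also a coordinate of $\x$ (or of $\y$), the $\alpha$-curve carrying the side would contain two coordinates of the same state, contradicting the definition of a Heegaard state. The same argument works verbatim for $\beta$-sides. With that substitution your proof is complete; as a bonus, this single argument also covers the $\x\setminus\y$ and $\y\setminus\x$ cases, so the case split on \Cref{eq:initialfinallocal} becomes unnecessary for this step.
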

\begin{proof}
Let $c$  be a boundary component of the subsurface specified by $D$.
First of all note that by \Cref{lemma:nofullalphabetacurve}, $c$ cannot be a full $\alpha$- or $\beta$-curve. Thus $c$ has corners and has arcs of $\alpha$- and $\beta$-curves as sides. 
From the equations $\partial \partial_{\boa} D= \y-\x$ and $\partial \partial_{\bob} D= \x-\y$ from \Cref{defn:initialfinalstate}, it descends that the corners of $c$ are occupied alternatingly by the coordinates of $\y$ and $\x$.
Finally we note that no side of $c$ can contain a component of $\x$ (or $\y$) in its interior otherwise there would be an $\alpha$- or a $\beta$-curve containing more than one coordinate of $\x$.  
\end{proof}

We then introduce some quantities we shall use for the proof of \Cref{prop:munumod2}

\begin{defn} \label{defn:munudomain} 
Let $D$ be an embedded domain with $n_\z(D) = 0$.
We define 
\begin{equation} \label{eq:mudomain}
\overline{\mu}(D) = e(D) + \frac{1}{4} \text{\#(convex corners)} + \frac{3}{4} \text{\#(concave corners)},
\end{equation}
and 
\begin{equation} \label{eq:nudomain}
\overline{\nu}(D) = \sum_{\text{$\beta$-side $b$}} (\len(b)+1) + |\partial D|.
\end{equation}
\end{defn}

Note that these expressions depend only on $D$ and not on any initial/final states.
We first show that the right hand side of \Cref{prop:munumod2} can be replaced with $\overline{\mu}(D)$.

\begin{lemma} \label{lemma:mudomain}
Let $D$ be an effective domain connecting a state $\x$ to a state $\y$, and for which $n_\z(D) = 0$. Then $\mu(D) \equiv \overline{\mu}(D) \pmod{2}$.
\end{lemma}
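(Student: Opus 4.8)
\textbf{Proof plan for \Cref{lemma:mudomain}.}
The plan is to compare the two definitions term by term. Recall that $\mu(D) = e(D) + n_\x(D) + n_\y(D)$ by \Cref{thm:lipshitzindex}, where $n_\x(D)$ and $n_\y(D)$ are the average multiplicities at the coordinates of $\x$ and $\y$ respectively. By \Cref{prop:corners}, the coordinates of $\x$ and $\y$ occupy the corners of $D$, none in the interiors of the sides. So $n_\x(D) + n_\y(D)$ is the sum of the average multiplicities over all corners of $D$. The strategy is to compute this corner contribution modulo $2$ and match it with the corner terms of $\overline\mu(D)$.

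First I would analyze what the average multiplicity $\frac14(n_i+n_j+n_k+n_l)$ is at a corner $p$ of the embedded domain $D$. Since $D$ is embedded (\Cref{prop:effdomainemb}), each $n_\bullet \in \{0,1\}$. If $p$ is a convex corner of $D$, then $D$ occupies exactly one of the four quadrants at $p$, so the average multiplicity is $\frac14$. If $p$ is a concave corner, then $D$ occupies three of the four quadrants, so the average multiplicity is $\frac34$. Summing over all corners, $n_\x(D) + n_\y(D) = \frac14\,\#(\text{convex corners}) + \frac34\,\#(\text{concave corners})$. Adding $e(D)$ to both sides, this is exactly $\overline\mu(D)$ as defined in \Cref{eq:mudomain}. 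So in fact $\mu(D) = \overline\mu(D)$ on the nose, not merely mod $2$, and the lemma follows a fortiori.

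The one point to be careful about is whether every corner of the subsurface $D$ is genuinely occupied by a coordinate of $\x$ or $\y$ — i.e. whether the count of corners in \Cref{eq:mudomain} agrees with the count of average-multiplicity terms in $n_\x(D) + n_\y(D)$. This is precisely the content of \Cref{prop:corners}: the corners of $\partial D$ alternate between coordinates of $\x$ and of $\y$, and no coordinate lies in the interior of a side, so the two counts match. A small subtlety is that a single geometric point of $\alpha\cap\beta$ could in principle be a corner of $D$ along more than one boundary arc configuration, but since $D$ is embedded this does not occur — at an embedded corner exactly one quadrant (convex) or three quadrants (concave) belong to $D$, and each contributes one corner term. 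So the identification is exact and no genuine obstacle arises; this lemma is essentially a bookkeeping step that repackages Lipshitz's index formula in a form where only combinatorial data of the embedded subsurface appears, setting up the comparison with $\overline\nu(D)$ in the next lemma.
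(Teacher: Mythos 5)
Your corner analysis is correct and is exactly what the paper does at the boundary: at a convex corner of the embedded subsurface $D$ the average multiplicity is $\tfrac14$, at a concave corner it is $\tfrac34$, and \Cref{prop:corners} guarantees that the boundary contributions to $n_\x(D)+n_\y(D)$ come only from corners. But there is a genuine gap: $n_\x(D)=\sum_{i=1}^d n_{x_i}(D)$ runs over \emph{all} $d$ coordinates of the state, not just those on $\partial D$. A coordinate of $\x$ or $\y$ may lie in the interior of the subsurface $D$ (all four surrounding elementary domains have multiplicity $1$), in which case it contributes $1$ to $n_\x(D)+n_\y(D)$ while contributing nothing to $\overline{\mu}(D)$. \Cref{prop:corners} only rules out coordinates in the interiors of the \emph{sides} of $\partial D$; it says nothing about coordinates in the interior of $D$ itself, and these do occur (see \Cref{fig:indexeg}, bottom right, where a point of $\x\cap\y$ sits inside the domain). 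So your conclusion that $\mu(D)=\overline{\mu}(D)$ on the nose is false in general, and the argument as written does not yet establish the mod-$2$ statement either, since you have dropped the interior terms entirely.

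The paper closes this gap as follows: one has
\[
\mu(D)-\overline{\mu}(D)=\sum_{x_i\in\intr(D)}n_{x_i}(D)+\sum_{y_i\in\intr(D)}n_{y_i}(D),
\]
and applying \Cref{eq:initialfinallocal} at a point $p$ in the interior of $D$ (where $n_i-n_j+n_k-n_l=1-1+1-1=0$) shows that $p$ is a coordinate of $\x$ if and only if it is a coordinate of $\y$. Hence the two interior sums are equal integers, their total is even, and the congruence follows. You need this extra step; without it the lemma (and the reason it is stated only modulo $2$) is not accounted for.
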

\begin{proof}
By \Cref{prop:effdomainemb}, the coefficients of $D$ are either $0$ or $1$, so $D$ is an embedded domain and it makes sense to talk about $\overline{\mu}(D)$. Writing down the explicit formulae for the Maslov index $\mu$ and the index $\overline{\mu}$ from Equations \eqref{eq:lipshitzindex} and \eqref{eq:mudomain}, and using the fact that there are no coordinates  of $\x$ and $\y$  lying on the sides of $D$ except at the corners (that we proved in \Cref{prop:corners}) one computes:
\[\mu(D) - \overline{\mu}(D)= \sum_{x_i \in \text{int}(D) } n_{x_i}(D) + \sum_{y_i \in \text{int}(D) } n_{y_i}(D) \ .  \]
Writing down \Cref{eq:initialfinallocal} at the coordinates of $\x$ and $\y$ lying in the interior of $D$ one can conclude that each coordinate of $\x$ lying in the interior of $D$ is also a coordinate of $\y$, and vice versa.  Thus $\sum_{x_i \in \text{int}(D) } n_{x_i}(D) = \sum_{y_i \in \text{int}(D) } n_{y_i}(D) \in \mathbb{Z}$  showing that $\mu(D) - \overline{\mu}(D)$ is an even integer.
\end{proof}

Next we show that the left hand side of \Cref{prop:munumod2} can be replaced with $\overline{\nu}(D)$.

\begin{lemma} \label{lemma:nudomain}
Let $D$ be an effective domain connecting a state $\x$ to a state $\y$, and for which $n_\z(D) = 0$. Then $\nu(\x)-\nu(\y) \equiv \overline{\nu}(D) \pmod{2}$.
\end{lemma}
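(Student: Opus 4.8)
The plan is to prove \Cref{lemma:nudomain} by a careful bookkeeping argument that isolates the contributions to $\nu(\x)-\nu(\y)$ coming from the boundary of $D$. Recall that $\nu(\x) = \#\{S_i \mid \x(S_i) \text{ is a side corner}\} + \sgn\x$, and by \Cref{prop:corners} the corners of $\partial D$ are occupied alternately by coordinates of $\x$ and of $\y$, with no coordinate in the interior of a side. First I would observe that passing from $\y$ to $\x$ is effected precisely by the domain $D$: at each boundary component of $D$, the $\y$-corners and $\x$-corners alternate, so moving from $\y$ to $\x$ amounts to a cyclic rotation along each boundary circle of $\partial D$. I would then analyze two separate mod-$2$ contributions: (a) the change in the count of side corners used, and (b) the change in sign of the associated permutation.

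For contribution (b), the sign change: along a single boundary circle of $\partial D$ with, say, $\len$ corners of type $\x$ and $\len$ of type $\y$ alternating, passing from the matching defining $\y$ to the matching defining $\x$ is a product of transpositions whose parity is controlled by $\len$ — more precisely, each boundary component of $D$ contributes a cyclic permutation that is a $\len$-cycle (or a product whose parity is $\len-1 \bmod 2$, i.e. $\len+1 \bmod 2$). Summing over all boundary components of $D$ and using that the number of $\x$-corners on a boundary circle equals the number of $\beta$-sides on it, one gets that $\sgn\x - \sgn\y \equiv \sum_{\beta\text{-sides } b}(\cdot) + |\partial D| \pmod 2$ after accounting for the $|\partial D|$ boundary circles. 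For contribution (a), I would argue that the number of side-corner assignments changed between $\x$ and $\y$ is congruent mod $2$ to a sum over $\beta$-sides of $D$ of their lengths, using the local pictures from \Cref{cor:diagramcolors} and \Cref{fig:heegaardcombinalphaglue}: as one traverses a $\beta$-side of length $\len(b)$ inside $D$, one passes through $\len(b)-1$ interior intersection points, and the combinatorics of top/side/bottom corners along a $\beta$-side (governed by \Cref{prop:vbstogglefan}) forces the alternation between side corners and top/bottom corners in a way that contributes $\len(b)+1 \pmod 2$ per side to the difference in side-corner counts. Adding (a) and (b) yields exactly $\overline{\nu}(D) = \sum_{\beta\text{-side } b}(\len(b)+1) + |\partial D| \pmod 2$.

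I expect the main obstacle to be the careful local analysis in step (a): one must check, side by side, the two local models of \Cref{fig:heegaardcombinalphaglue} (blue versus red triple point) and verify that in each case the contribution of a $\beta$-side of $D$ to the side-corner count difference, combined with its contribution to the permutation sign, is captured uniformly by $\len(b)+1 \bmod 2$. The subtlety is that a $\beta$-curve is a union of several $\beta$-arcs in different punctured rectangles (the lower-left, lower-right, upper-left, upper-right, and lone arcs of the local picture), and one must track which corners (top, bottom, side) a given $\beta$-side of $D$ actually runs between, using \Cref{defn:corners} and the observation recorded before \Cref{prop:effdomainemb} that each empty elementary domain has exactly one $\xt$-coordinate and possibly several $\xb$-coordinates on its boundary. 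Once this local dictionary is set up, the global statement follows by summing over all boundary components of $D$, since the Euler-measure and corner-count terms appearing in $\overline{\mu}(D)$ do not enter here — those are handled separately in \Cref{lemma:mudomain} — and \Cref{prop:munumod2} then follows by combining \Cref{lemma:mudomain} and \Cref{lemma:nudomain} together with a direct mod-$2$ comparison of $\overline{\mu}(D)$ and $\overline{\nu}(D)$ using $e(D) = \chi(D) - \frac14\#\text{corners}$ and the fact that the number of convex corners minus the number of concave corners is determined by $\chi(D)$ and $|\partial D|$.
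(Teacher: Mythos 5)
Your decomposition of $\nu(\x)-\nu(\y)$ into the side-corner count and the permutation sign, with one cycle of $\sigma^{-1}\tau$ per boundary component of $D$, is exactly the paper's argument, and it works. Two small remarks. First, the local-model case analysis you flag as the main obstacle in step (a) is unnecessary: each $\beta$-curve meets the $\alpha$-curves in exactly four points, cyclically ordered top, side, bottom, side, so the two side corners are non-consecutive, and hence the two endpoints of a $\beta$-arc of length $\len(b)$ have the same side/non-side type iff $\len(b)$ is even; this single observation already gives $\sum_b \len(b) \equiv \#\{S \mid \x(S)\ \text{side}\} - \#\{S \mid \y(S)\ \text{side}\} \pmod 2$ with no reference to the colored local pictures. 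Second, watch the bookkeeping: each $\beta$-side should contribute $\len(b)$ to part (a) and $1$ (plus the global $|\partial D|$) to part (b); your later claim that each side contributes $\len(b)+1$ to part (a) alone, combined with your part (b), would double-count the number of $\beta$-sides.
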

\begin{proof}
Suppose that the $\beta$-arcs on $\partial D$ are $b_1,\dots,b_n$, where $b_i \subset \beta_i$. 
(Here if $\partial D$ does not contain a $\beta$-arc lying along $\beta_i$, we take $b_i = \varnothing$.)
Each $b_i$ connects a point $y_i \in \y$ to a point $x_i \in \x$.

Recall that each $\beta$-curve contains 4 points of $\alpha \cap \beta$, corresponding to the 4 corners of the corresponding sector.
Observe that the 2 points of $\alpha \cap \beta$ that correspond to the side corners are non-consecutive on the $\beta$-curve.
This implies that if $b_i$ is of odd length, then exactly one of $x_i$ and $y_i$ is a side corner of $S_i$, and if $b_i$ is of even length, then either both or none of $x_i$ and $y_i$ is a side corner of $S_i$.
In other words, 
\begin{small}
$$\sum_{\text{$\beta$-side $b$}} \len(b) \equiv \{S \mid \text{$\x(S)$ is side corner of $S$}\} - \{S \mid \text{$\y(S)$ is side corner of $S$}\}  \pmod{2}.$$
\end{small}

On the other hand, let $\sigma$ and $\tau$ be the permutations associated to $\x$ and $\y$. Then $x_i \in \alpha_{\sigma(i)}$ and $y_i \in \alpha_{\tau(i)}$.
Then $\sigma^{-1} \tau$ is the permutation that sends each $i$ to the $j$ for which the point $x_j$ lies in the same $\alpha$-curve as $y_i$.
In other words, the configuration of the points $x_i$ and $y_i$ on $\partial D$ determines a disjoint cycle factorization of $\sigma^{-1} \tau$, with one cycle per boundary component, and the length of each cycle being the number of $\alpha$-sides, which equals the number of $\beta$-sides, on the corresponding boundary component.
Thus 
$$\sgn \x - \sgn \y = \sgn(\sigma^{-1} \tau) \equiv \text{\# $\beta$-sides} + |\partial D| \pmod{2}$$

Combining the two equations gives the lemma.
\end{proof}

With these two lemmas, we are ready to prove the proposition.

\begin{proof}[Proof of \Cref{prop:munumod2}]
By \Cref{lemma:mudomain} and \Cref{lemma:nudomain}, it suffices to show that $\overline{\mu}(D) \equiv \overline{\nu}(D) \pmod{2}$ for every embedded domain $D$ with $n_\z(D) = 0$.
We do this by inducting on (\# elementary domains in $D$, $-\chi(D)$) with the lexicographic order.

If $D$ just consists of one elementary domain, then one can verify via Equations \eqref{eq:mudomain} and \eqref{eq:nudomain} that $\overline{\mu}(D) \equiv \overline{\nu}(D) \equiv 1 \pmod{2}$.

For the induction step, first suppose that $D$ is the disjoint union of two embedded domains $D_1$ and $D_2$. Then the number of elementary domains in $D_1$ and $D_2$ are less than that of $D$.
So we can apply the induction hypothesis to compute
$$\overline{\mu}(D) \equiv \overline{\mu}(D_1) + \overline{\mu}(D_2) \equiv \overline{\nu}(D_1)+ \overline{\nu}(D_2) \equiv \overline{\nu}(D) \pmod{2}.$$

Otherwise there is a $\alpha$-curve or a $\beta$-curve intersecting the interior of $D$. This curve must intersect $D$ in an arc $c$ (as opposed to intersecting $D$ in the whole curve) by \Cref{lemma:nofullalphabetacurve}.
Either $D \backslash c$ contains two components $D_1$ and $D_2$, in which case the number of elementary domains in $D_1$ and $D_2$ are less than that of $D$, or $D \backslash c$ has just one component, in which case $-\chi(D \backslash c) < -\chi(D)$.
This allows us to apply the induction hypothesis on $D_1, D_2$ or on $D \backslash c$. For notational simplicity we suppose we are in the former case; the latter case can be proven similarly.

\begin{figure}
    \centering
    \selectfont\fontsize{10pt}{10pt}
    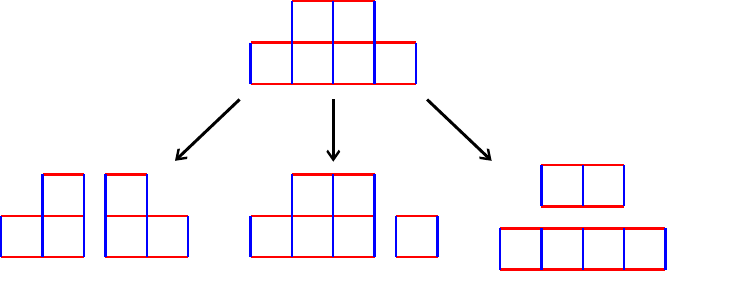
    \caption{Dividing the domain $D$ into $D_1$ and $D_2$ in order to apply the induction hypothesis. There are three cases depending on how many endpoints of $c$ lie on concave corners of $D$. Furthermore, in each case $c$ can either be an $\alpha$- or $\beta$-arc. For simplicity in this picture we only draw three of the possible six cases. }
    \label{fig:domainsplit}
\end{figure}

There are now three subcases:
\begin{enumerate}
    \item $c$ connects two points of $\partial D$ that lie in interiors of sides.
    \item $c$ connects a point of $\partial D$ that lie in interior of a side to a point that is a concave corner of $\partial D$.
    \item $c$ connects two concave corners of $\partial D$.
\end{enumerate}
See \Cref{fig:domainsplit}.
We will verify in each case that the induction hypothesis implies the proposition for $D$. In case (1) one has that 
$$\overline{\mu}(D) \equiv \overline{\mu}(D_1) + \overline{\mu}(D_2) +1 \equiv \overline{\nu}(D_1)+ \overline{\nu}(D_2) +1 \equiv \overline{\nu}(D) \pmod{2}.$$
In this chain of identities:
\begin{itemize}
    \item The first identity is because the Euler measure is additive: $e(D)=e(D_1)+e(D_2)$. Furthermore, the number of  concave corners of $D_1$  and $D_2$ equals the number of concave corners of $D$, while the number of convex corners of $D_1$  and $D_2$ equals the number of convex corners of $D$ plus $4$.
    Thus $\overline{\mu}(D) = \overline{\mu}(D_1) + \overline{\mu}(D_2) - 1$.
    \item The second identity is because of the induction hypothesis.
    \item The third identity can be verified noting that $\sum_{\text{$\beta$-side of $D$}} (\len(b)+1)$ equals
    \[\sum_{\text{$\beta$-side of $D_1$}} (\len(b)+1)+ \sum_{\text{$\beta$-side of $D_2$}} (\len(b)+1) -2 \]
    if $c \subset \boa$, and 
    \[\sum_{\text{$\beta$-side of $D_1$}} (\len(b)+1)+ \sum_{\text{$\beta$-side of $D_2$}} (\len(b)+1) - 2 \len(c) -2 \]
    if $c \subset \bob$. Furthermore, $|\partial D|+1= |\partial D_1|+ |\partial D_2|$, showing that  $\overline{\nu}(D)$ and $ \overline{\nu}(D_1) + \overline{\nu}(D_2)$ differ by an odd number.
\end{itemize}

Case (3) is analogous to case (1). In case (2) one shows similarly that
$$\overline{\mu}(D) \equiv \overline{\mu}(D_1) + \overline{\mu}(D_2) \equiv \overline{\nu}(D_1)+ \overline{\nu}(D_2) \equiv \overline{\nu}(D) \pmod{2}\ .$$
This completes the induction and concludes the proof of the proposition.
\end{proof}

\subsection{The categorification argument}

We are now ready to prove \Cref{thm:SFHcategorifiesantiveering}.

\begin{proof}[Proof of \Cref{thm:SFHcategorifiesantiveering}]
We use the notation in \Cref{subsec:antiveeringpoly}.
For each $i=1,\dots,n$, let $\widehat{S_i}$ be a lift of $S_i$ and let $\widehat{v_i}$ be a lift of $v_i$.
Then $\beta_\mathcal{S} = \{\widehat{S_1},\dots,\widehat{S_n}\}$ is a basis of $\mathcal{S}$ and $\beta_\mathcal{T} = \{\widehat{v_1},\dots,\widehat{v_n}\}$ is a basis of $\mathcal{T}$.

The $(i,j)$ entry of the matrix $[D_A]_{\beta_\mathcal{S}}^{\beta_\mathcal{T}}$ equals
\begin{small}
$$d_{i,j} = \sum \left\{g \mid \text{$\widehat{v_i}$ is the top or bottom corner of $g \cdot \widehat{S_i}$}\right\} - \sum \left\{g \mid \text{$\widehat{v_i}$ is a side corner of $g \cdot \widehat{S_i}$}\right\}.$$
\end{small}
Hence the nonzero terms in $\det [D_A]_{\beta_\mathcal{S}}^{\beta_\mathcal{T}}$ correspond to bijections $\x:\{S_1,\dots,S_N\} \to \{v_1,\dots,v_N\}$, with the sign of the term given by 
$$\sgn(\x) + \# \{S \mid \text{$\x(S)$ is a side corner of $S$}\} = \nu(\x).$$
Finally, the term itself, as an element of $G$, equals the product of the $g$, hence equals the homology class of the curve determined by $\x$.

It follows that 
\begin{align*}A_B &=\sum_{\x \in \mathfrak{S}(\Sigma, \boa, \bob)} \nu(\x ) \cdot \s(\x)  \\
&=\sum_{\s \in \text{Spin}^c(M, \Gamma)} \chi_\nu(CF(\Sigma, \boa, \bob, \s ))  \cdot \s \ ,
\end{align*}
where $CF(\Sigma, \boa, \bob, \s )$ denotes the portion of the Heegaard Floer chain complex associated to the veering branched surface $B$ generated by the Heegaard states in $\spinc$-grading $\s$. On the other hand using \Cref{prop:munumod2} we can conclude that $\nu$ is a homological grading for the chain complex $CF(\Sigma, \boa, \bob, \s )$. Thus 
$$A_B = \sum_{\s \in \text{Spin}^c(M, \Gamma)} \chi_\nu(SFH(M, \Gamma, \s ))  \cdot \s , $$
and we are done.
\end{proof}

\bibliographystyle{alphaurl}

\bibliography{bib.bib}

\end{document}